\renewcommand{\ge}{\geqslant}
\renewcommand{\le}{\leqslant}
\title{Rankin--Eisenstein classes in Coleman families}
\author{David Loeffler}
\address[Loeffler]{Mathematics Institute\\
Zeeman Building, University of Warwick\\
Coventry CV4 7AL, UK}
\email{d.a.loeffler@warwick.ac.uk}
\author{Sarah Livia Zerbes}
\address[Zerbes]{Department of Mathematics \\
University College London\\
Gower Street, London WC1E 6BT, UK}
\email{s.zerbes@ucl.ac.uk}
\thanks{The authors' research was supported by the following grants: Royal Society University Research Fellowship (Loeffler); Leverhulme Trust Research Fellowship ``Euler systems and Iwasawa theory'' (Zerbes). Parts of this paper were written while the authors were guests at MSRI in Berkeley, California, supported by NSF grant 0932078 000.}
\theoremstyle{plain}
    \newtheorem{theorem}{Theorem}[subsection]
    \newtheorem{lemma}[theorem]{Lemma}
    \newtheorem{proposition}[theorem]{Proposition}
    \newtheorem{corollary}[theorem]{Corollary}
    \newtheorem{lettertheorem}{Theorem}
\theoremstyle{definition}
    \newtheorem{definition}[theorem]{Definition}
    \newtheorem{notation}[theorem]{Notation}
    \newtheorem{assumption}[theorem]{Assumption}
\theoremstyle{remark}
    \newtheorem{remark}[theorem]{Remark}
    \newtheorem{note}[theorem]{Note}
\newtheorem*{remark*}{Remark}
\newtheorem*{convention}{Convention}
\DeclareMathOperator{\TSym}{TSym}
\DeclareMathOperator{\Sym}{Sym}
\DeclareMathOperator{\Hom}{Hom}
\DeclareMathOperator{\Fil}{Fil}
\DeclareMathOperator{\GL}{GL}
\DeclareMathOperator{\pr}{pr}
\DeclareMathOperator{\loc}{loc}
\DeclareMathOperator{\myPr}{Pr}\renewcommand{\Pr}{\myPr}
\DeclareMathOperator{\mom}{mom}
\DeclareMathOperator{\norm}{norm}
\DeclareMathOperator{\Gal}{Gal}
\DeclareMathOperator{\Res}{Res}
\DeclareMathOperator{\id}{id}
\newcommand{\EI}{\mathcal{EI}}
\newcommand{\RI}{\mathcal{RI}}
\newcommand{\dR}{\mathrm{dR}}
\newcommand{\rig}{\mathrm{rig}}
\newcommand{\et}{\text{\textup{\'et}}}
\newcommand{\la}{\mathrm{la}}
\newcommand{\alg}{\mathrm{alg}}
\newcommand{\Iw}{\mathrm{Iw}}
\newcommand{\cris}{\mathrm{cris}}
\newcommand{\frP}{\mathfrak{P}}
\newcommand{\sC}{\mathscr{C}}
\newcommand{\sR}{\mathscr{R}}
\newcommand{\sF}{\mathscr{F}}
\newcommand{\sH}{\mathscr{H}}
\newcommand{\cD}{\mathcal{D}}
\newcommand{\cE}{\mathcal{E}}
\newcommand{\cF}{\mathcal{F}}
\newcommand{\cG}{\mathcal{G}}
\newcommand{\cI}{\mathcal{I}}
\newcommand{\cO}{\mathcal{O}}
\newcommand{\cL}{\mathcal{L}}
\newcommand{\cW}{\mathcal{W}}
\renewcommand{\AA}{\mathbf{A}}
\newcommand{\CC}{\mathbf{C}}
\newcommand{\QQ}{\mathbf{Q}}
\newcommand{\RR}{\mathbf{R}}
\newcommand{\ZZ}{\mathbf{Z}}
\newcommand{\DD}{\mathbf{D}}
\DeclareFontFamily{U}{wncy}{}
\DeclareFontShape{U}{wncy}{m}{n}{<->wncyr10}{}
\DeclareSymbolFont{mcy}{U}{wncy}{m}{n}
\DeclareMathSymbol{\Sha}{\mathord}{mcy}{"58}
\newcommand{\Qp}{{\QQ_p}}
\newcommand{\Zp}{\ZZ_p}
\newcommand{\cEI}{{}_c\cE\cI}
\newcommand{\cRI}{{}_c\mathcal{RI}}
\newcommand{\cBF}{{}_c \mathcal{BF}}
\newcommand{\BF}{\mathcal{BF}}
\newcommand{\Eis}{\mathrm{Eis}}
\newcommand{\QsQ}{\QQ^S\!/\QQ}
\newcommand{\into}{\hookrightarrow}
\newcommand{\onto}{\twoheadrightarrow}
\newcommand{\htimes}{\mathop{\hat\otimes}}
\newcommand{\tbt}[4]{\begin{pmatrix}#1 & #2 \\ #3 & #4\end{pmatrix}}
\newcommand{\stbt}[4]{\left(\begin{smallmatrix}#1 & #2 \\ #3 & #4\end{smallmatrix}\right)}
\numberwithin{equation}{subsection}
\begin{document}

\begin{abstract}
 We show that the Euler system associated to Rankin--Selberg convolutions of modular forms, introduced in our earlier works with Lei and Kings, varies analytically as the modular forms vary in $p$-adic Coleman families. We prove an explicit reciprocity law for these families, and use this to prove cases of the Bloch--Kato conjecture for Rankin--Selberg convolutions.
\end{abstract}

\maketitle

\begin{center}
\emph{Dedicated to the memory of Robert F.~Coleman}
\end{center}

\setcounter{tocdepth}{1}

\tableofcontents

\section{Introduction}

 Let $p > 2$ be a prime. The purpose of this paper is to study the $p$-adic interpolation of \emph{\'etale Rankin--Eisenstein classes}, which are Galois cohomology classes attached to pairs of modular forms $f, g$ of weights $\ge 2$, forming a ``cohomological avatar'' of the Rankin--Selberg $L$-function $L(f, g, s)$.
 
 In a previous work with Guido Kings \cite{KLZ1b}, we showed that these Rankin--Eisenstein classes for \emph{ordinary} modular forms $f, g$ interpolate in 3-parameter $p$-adic families, with $f$ and $g$ varying in Hida families and a third variable for twists by characters. We also proved an ``explicit reciprocity law'' relating certain specialisations of these families to critical values of Rankin--Selberg $L$-functions, with applications to the Birch--Swinnerton-Dyer conjecture for Artin twists of $p$-ordinary elliptic curves, extending earlier works of Bertolini--Darmon--Rotger \cite{BDR-BeilinsonFlach, BDR-BeilinsonFlach2}.
 
 In this paper, we generalise these results to non-ordinary modular forms $f, g$, replacing the Hida families by Coleman families:
 
 \begin{lettertheorem}
  Let $f, g$ be eigenforms of weights $\ge 2$ and levels $N_f, N_g$ coprime to $p$ whose Hecke polynomials at $p$ have distinct roots; and let $f_\alpha, g_\alpha$ be non-critical $p$-stabilisations of $f, g$. Let $\cF, \cG$ be Coleman families through $f_\alpha, g_\alpha$ (over some sufficiently small affinoid discs $V_1, V_2$ in weight space).
  
  Then there exist classes
  \[ \cBF^{[\cF, \cG]}_{m} \in H^1\left(\QQ(\mu_m), D^{\la}(\Gamma) \htimes M_{V_1}(\cF)^* \htimes M_{V_2}(\cG)^*\right)\]
  for each $m \ge 1$ coprime to $p$ and $c > 1$ coprime to $6pN_f N_g$, such that the specialisations of the classes $\cBF^{[\cF, \cG]}_{m}$ are the Rankin--Eisenstein classes for all specialisations of $\cF$ and $\cG$, and all characters of $\Gamma$ for which these classes are defined.
 \end{lettertheorem}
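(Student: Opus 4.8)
The plan is to $p$-adically interpolate, over the weight discs $V_1$ and $V_2$, the construction of the étale Rankin--Eisenstein classes recalled above and carried out for fixed weights in \cite{KLZ1b}. Recall that there the classes are assembled in three stages. First, one takes the $\Lambda$-adic étale Eisenstein class of Kings on a modular curve $Y_1(N)$ (for a level $N$ depending on $N_f$, $N_g$ and $m$), a canonical element of $H^1_{\et}$ with coefficients in an Iwasawa sheaf. Secondly, one applies a Clebsch--Gordan (branching) map and pushes forward along a twisted, $m$-dependent Rankin--Selberg embedding $\iota\colon Y_1(N)\hookrightarrow Y_1(N_f)\times Y_1(N_g)$, obtaining the \emph{Rankin--Iwasawa classes} in $H^3_{\et}$ of the surface with coefficients of the shape $\TSym^{k_1}\sH\boxtimes\TSym^{k_2}\sH$, twisted over $\Gamma$. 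Thirdly, one pushes forward to $\Spec\QQ(\mu_m)$ via the Leray/Hochschild--Serre and Künneth edge maps and projects onto the $(f,g)$-eigenspaces. The first and third stages already carry the $\Gamma$-variable --- this is what produces the factor $D^{\la}(\Gamma)$, exactly as in the ordinary case --- so the genuinely new work lies in the second stage, in the \emph{weight} direction, where the symmetric-power coefficient sheaves must be replaced by the ``big'' overconvergent étale sheaves on the modular curves.

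Accordingly, the first step is to recall (from the theory of overconvergent étale cohomology, following Andreatta--Iovita--Stevens and the authors) the family of étale $\cO_{V_i}$-sheaves $\mathscr{D}_{V_i}$ over a suitable affinoid subdomain of the modular curve of tame level $N_f$, resp.\ $N_g$, together with three features: the specialisation isomorphisms $\mathscr{D}_{V_i}\otimes_{\cO_{V_i}}\Qp(k)\cong\TSym^k\sH_{\Qp}$ at integer weights $k\in V_i$; enough Hecke- and correspondence-functoriality to carry out the Rankin--Selberg pushforward; and the fact that the Coleman family $\cF$ (resp.\ $\cG$), via Coleman's Riesz theory for the compact operator $U_p$, cuts out $M_{V_1}(\cF)^*$ (resp.\ $M_{V_2}(\cG)^*$) as the image of a canonical finite-slope projector acting on $H^1_{\et}\bigl(Y_1(N_f)_{\overline{\QQ}},\mathscr{D}_{V_1}\bigr)$ (resp.\ its $Y_1(N_g)$ analogue).

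The heart of the matter is the family version of the second stage: one must $p$-adically interpolate, in the weight variables, the Clebsch--Gordan branching maps --- composed with restriction along $\iota$ --- relating the $\Lambda$-adic Eisenstein coefficient sheaf on $Y_1(N)$ to $\iota^*(\mathscr{D}_{V_1}\htimes\mathscr{D}_{V_2})$, obtaining a morphism of étale sheaves over $V_1\times V_2$ (and over weight space for $\Gamma$) whose specialisation at a classical point $(k_1,k_2)$ and at a locally algebraic character of $\Gamma$ recovers the classical branching map of \cite{KLZ1b}. Concretely this is a statement about $p$-adic distributions on $\Zp^2$ and the interpolation of the binomial coefficients occurring in the Clebsch--Gordan decomposition $\TSym^{k_1}\sH\otimes\TSym^{k_2}\sH\to\TSym^{k_1+k_2-2j}\sH$; the $D^{\la}(\Gamma)$-factor is precisely what allows one to absorb the joint dependence on $j$ and on the weights into a single $\Gamma$-distribution, just as in the ordinary case. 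Granting this interpolated branching map, one composes it with Kings' $\Lambda$-adic Eisenstein class, applies the Gysin pushforward $\iota_*$, then the Leray/Hochschild--Serre and Künneth maps precisely as for fixed weights --- all of which commute with base change over $V_1\times V_2$ and over weight space for $\Gamma$ --- and finally projects through the Coleman families, arriving at the classes $\cBF^{[\cF,\cG]}_m$ in the asserted cohomology group. The auxiliary parameter $c>1$ coprime to $6pN_fN_g$ is inserted exactly as in the classical case, at the level of the Eisenstein class, to clear the denominators coming from its constant term.

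Finally, the interpolation property is checked by specialisation. At every classical point $(k_1,k_2)\in V_1\times V_2$ --- where $\cF$ and $\cG$ specialise to $p$-stabilised eigenforms, using non-criticality --- and at every locally algebraic character $\chi$ of $\Gamma$ for which the Rankin--Eisenstein classes are defined, one compares the fibre of $\cBF^{[\cF,\cG]}_m$ with the classical class of \cite{KLZ1b}. Since $\mathscr{D}_{V_i}$ specialises to $\TSym^{k_i}\sH$, the interpolated branching map to the classical one, the finite-slope projector to the eigenprojector onto the specialised form, and the Gysin and Leray/Hochschild--Serre maps are manifestly compatible with base change, this reduces to bookkeeping: matching Tate twists, $U_p$-eigenvalue normalisations and the Euler factors at $p$. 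I expect the main obstacle to be the second stage: constructing the interpolated branching map with enough functoriality and the correct specialisations, and controlling the resulting finite-slope cohomology so that it is a well-behaved (coadmissible) module over the relevant affinoid algebra. This is where Coleman's spectral theory and the delicate geometry of the overconvergent modular sheaves are genuinely needed, in contrast to the ordinary case, where Hida theory keeps everything finitely generated over the Iwasawa algebra.
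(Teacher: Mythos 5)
Your outline of the weight-direction interpolation is broadly the right one and matches the paper: one replaces $\TSym^{k}\sH$ by the overconvergent \'etale distribution sheaves of Andreatta--Iovita--Stevens, interpolates the Clebsch--Gordan maps, and projects via the finite-slope eigenspaces cut out by $\cF$ and $\cG$. (One technical point you elide: the interpolated Clebsch--Gordan projector $\delta_j^*$ satisfies $\delta_j^*\circ\beta_j^* = \binom{\nabla}{j}$, which vanishes at the integer weights $0,\dots,j-1$, so it must be renormalised by dividing by $\binom{\nabla}{j}$ before projecting to $M_U(\cF)^*$; this is why one needs all integer-weight specialisations of the families to be classical.)

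The genuine gap is in the cyclotomic direction. You assert that the $D^{\la}(\Gamma)$-factor ``is what produces the factor $D^{\la}(\Gamma)$, exactly as in the ordinary case,'' i.e.\ that the $\Gamma$-variable comes for free from the $\Lambda$-adic Eisenstein class and the Leray/K\"unneth formalism. This fails outside the ordinary case. The classes $\cBF^{[f_\alpha,g_\alpha,j]}_{mp^r,a}$ over the cyclotomic tower are norm-compatible only up to the factor $\alpha_f\alpha_g$ (the second norm relation), so the normalised classes $(\alpha_f\alpha_g)^{-r}\cBF^{[\cdot,j]}_{mp^r,a}$ are \emph{unbounded} as $r\to\infty$ when $v_p(\alpha_f\alpha_g)>0$, and no inverse limit over $\Lambda(\Gamma)$ exists. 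The resulting object is only a distribution of finite order $\lambda=v_p(\alpha_f\alpha_g)$ (or $\lambda_1+\lambda_2$ in families), and establishing even that requires the two main technical inputs of the paper which your proposal does not supply: (i) a criterion (in terms of the growth of the alternating sums $p^{-hr}\sum_{j=0}^{h}(-1)^j\binom{h}{j}x_{r,j}$) for a collection of classes indexed by $(r,j)$ to be interpolated by an element of $H^1(K,D_\lambda(\Gamma,M))$, whose proof needs the functional-analytic theory of continuous cohomology with coefficients in weak-topology distribution modules; and (ii) the geometric congruences modulo $p^{hr}$ among the classes $\cBF^{[j]}_{mp^r,N,a}$ for $0\le j\le h$, proved by showing that the section $a\cdot Y_r\boxtimes Y_r+\mathcal{CG}_r\otimes\zeta_{p^r}$ dies under the pushforward $(t_{mp^r}\times t_{mp^r})_\sharp$. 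This is where the non-criticality/small-slope hypothesis $v_p(\alpha_f\alpha_g)<1+\min(k,k')$ actually enters, and it is the reason the theorem is stated over affinoid (not just wide-open) discs $V_i$, so that the modules become Banach and the norm estimates make sense. Without this step, your construction produces at best the individual specialisations, not a single class in $H^1(\QQ(\mu_m),D^{\la}(\Gamma)\htimes M_{V_1}(\cF)^*\htimes M_{V_2}(\cG)^*)$. Relatedly, your hope of ``absorbing the joint dependence on $j$ and on the weights into a single $\Gamma$-distribution'' at the level of the branching map is not how the paper proceeds and is not obviously feasible: the paper interpolates the Clebsch--Gordan map in the weight variables for each \emph{fixed} $j$, and the interpolation in $j$ is done afterwards, purely cohomologically, via the congruence argument just described.
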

 
 Here $M_{V_1}(\cF)^*$ and $M_{V_2}(\cG)^*$ are families of Galois representations over $\cO(V_1)$ and $\cO(V_2)$ attached to $\cF$ and $\cG$, and $D^{\la}(\Gamma)$ is the algebra of distributions on the cyclotomic Galois group $\Gamma$. A slightly modified version of this theorem holds for weight 1 forms as well. For a precise statement, see Theorem \ref{thm:3varelt} below. 
  
 The proof of Theorem \ref{thm:3varelt} reveals some new phenomena which may be of independent interest; the Galois modules in which these classes lie are, in a natural way, \'etale counterparts of the modules of ``nearly overconvergent modular forms'' introduced by Urban \cite{Urban-nearly-overconvergent}.
 
 \begin{lettertheorem}
  The image of the class $\cBF^{[\cF, \cG]}_{1}$ under an appropriately defined Perrin-Riou ``big logarithm'' map is Urban's 3-variable $p$-adic Rankin--Selberg $L$-function for $\cF$ and $\cG$.
 \end{lettertheorem}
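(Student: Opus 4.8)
The plan is to follow the standard strategy for an explicit reciprocity law: compute the image of the big Beilinson–Flach class $\cBF^{[\cF,\cG]}_1$ under a Perrin–Riou regulator (``big logarithm'') map, and recognise the result as a known $p$-adic $L$-function by comparing it, weight-by-weight and character-by-character, with the Rankin–Selberg $L$-values it is supposed to compute. First I would specialise: by Theorem A, the classes $\cBF^{[\cF,\cG]}_1$ interpolate the étale Rankin–Eisenstein classes for all arithmetic points of $V_1 \times V_2 \times \cW$. For each such point $(k, k', j)$ in the range of ``geometric'' weights, one has a classical Rankin–Eisenstein class in the Galois cohomology of the Rankin–Selberg motive, whose image under the Bloch–Kato dual exponential (or logarithm, depending on the position of $j$ relative to the Hodge–Tate weights) is computed by a syntomic/Besser-type regulator formula; the output is an explicit multiple of $L(f, g, 1+j)$ (times a product of local Euler-type factors at $p$ coming from the $p$-stabilisations $\alpha_f, \alpha_g$, and an elementary ratio of Gamma factors and powers of $\pi$). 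This is exactly the content of the one-variable explicit reciprocity law that underlies the earlier works with Kings, so I would cite that as the input at each arithmetic point.

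Next I would assemble these pointwise formulas into a single identity of analytic functions on $V_1 \times V_2 \times \cW$. The key point is that both sides are rigid-analytic on the parameter space: the left-hand side because the big logarithm map, applied to the big class $\cBF^{[\cF,\cG]}_1 \in H^1(\Qp, D^{\la}(\Gamma)\htimes M_{V_1}(\cF)^*\htimes M_{V_2}(\cG)^*)$, lands in a space of analytic functions by the construction of the Perrin–Riou map in families (here the ``nearly overconvergent'' structure of the target module is what makes the regulator interpolate correctly across the $\Fil$-filtration jump); and the right-hand side because Urban's three-variable $p$-adic Rankin–Selberg $L$-function is by definition a rigid-analytic function on (a cover of) $V_1\times V_2\times\cW$. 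Since the two analytic functions agree on the Zariski-dense set of arithmetic points in the geometric range — up to the elementary fudge factors, which are themselves interpolated by an explicit analytic function (a product of $p$-adic Gamma functions, $\langle\cdot\rangle$-type characters and Euler factors) — they agree identically. One subtlety to address carefully is the normalisation of Urban's $L$-function: I would need to match his choice of periods and of complex/$p$-adic embeddings with the normalisation implicit in the big logarithm map, and verify that the two families of Euler factors at $p$ (from the $p$-stabilisation on the motivic side, and from Urban's modular-symbol construction) coincide; this is bookkeeping but must be done with care since sign or twist errors here would invalidate the applications to Bloch–Kato.

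The main obstacle I expect is the construction and control of the ``big logarithm'' map itself in the non-ordinary setting, and the verification that it genuinely interpolates the Bloch–Kato logarithms/dual-exponentials at \emph{all} the relevant arithmetic points, including those where the Hodge–Tate weight $j$ forces a transition between the ``$\log$'' and ``$\exp^*$'' regimes. In the ordinary case this is handled by Perrin–Riou's original machinery applied to the ordinary sub-representation; here one must instead use a Coleman-family version, working with the $(\varphi,\Gamma)$-module (or Robba-ring) attached to $M_{V_1}(\cF)^*\htimes M_{V_2}(\cG)^*$ and the triangulation coming from the finite-slope condition, and feed in a family version of the Perrin–Riou exponential (in the style of Nakamura, or of the authors' own earlier work on the cyclotomic variable). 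The compatibility of this map with specialisation — i.e.\ that specialising the big logarithm recovers the classical Bloch–Kato maps, with the correct $\varepsilon$-factors — is the technical heart; once it is in place, the density argument above closes the proof.
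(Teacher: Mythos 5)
Your proposal follows essentially the same route as the paper: the regulator is built from Liu's triangulation of $\DD^{\dag}_{\rig}(M_{V_1}(\cF)^* \htimes M_{V_2}(\cG)^*)$ together with the rank-one Perrin--Riou map and Nakamura's $\exp$/$\exp^*$ for $(\varphi,\Gamma)$-modules over the Robba ring, it is paired against the Eichler--Shimura vectors $\eta_\cF \otimes \omega_\cG$, and the identity with Urban's $L_p(\cF,\cG)$ is obtained by comparison at a Zariski-dense set of integer points followed by analytic continuation.

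One point in your pointwise input needs correcting, because it is precisely where the density argument could fail. The comparison must be carried out at points $(k,k',j)$ in the \emph{geometric} range $0 \le j \le \min(k,k')$, and at such points $s=1+j$ is \emph{not} a critical value: neither side of the identity is ``an explicit multiple of $L(f,g,1+j)$'' with Gamma factors and powers of $\pi$, and Urban's $p$-adic $L$-function satisfies no interpolation formula in terms of complex $L$-values there. What \cite[Theorem 6.5.9]{KLZ1a} actually supplies is a direct equality between the syntomic regulator of the motivic Rankin--Eisenstein class and the value of the $p$-adic $L$-function itself at $(k,k',1+j)$ --- a $p$-adic Beilinson formula --- and that is the input fed into the density argument. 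Conversely, at the critical points $k'+1 \le j \le k$ the class is defined only by interpolation and admits no independent regulator computation, so no comparison is possible there; the relation to critical complex $L$-values is the \emph{output} of Theorem B (via Theorem C), not an input. Likewise the comparison set consists of integer characters of $\Gamma$ only: as the paper's addendum stresses, the argument gives (and needs) no information at twists $j+\chi$ with $\chi$ a nontrivial finite-order character, so ``character-by-character'' matching is not part of the proof.
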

 
 See Theorem \ref{thm:explicitrecip} for a precise statement. In order to define the Perrin-Riou logarithm in this context, one needs to work with triangulations of $(\varphi, \Gamma)$-modules over the Robba ring; we use here results of Ruochuan Liu \cite{Liu-triangulation}, showing that the $(\varphi, \Gamma)$-modules of the Galois representations $M_{V_1}(\cF)^*$ and $M_{V_2}(\cG)^*$ admit canonical triangulations.
 
 Specialising this result at a point corresponding to a critical value of the Rankin--Selberg $L$-function, and applying the Euler system machine of Kolyvagin and Rubin, we obtain a case of the Bloch--Kato conjecture for Rankin convolutions:
 
 \begin{lettertheorem}[Theorem \ref{thm:BKSel} and Corollary \ref{cor:ptduality}]
  Let $f, g$ be eigenforms of levels coprime to $p$ and weights $r, r'$ respectively, with $1 \le r' < r$; and let $s$ be an integer such that $r' \le s \le r-1$ (equivalently, such that $L(f, g, s)$ is a critical value of the Rankin--Selberg $L$-function). Suppose $L(f, g, s) \ne 0$. Then, under certain technical hypotheses, the Bloch--Kato Selmer groups $H^1_{\mathrm{f}}(\QQ, M(f) \otimes M(g)(s))$ and  $H^1_{\mathrm{f}}(\QQ, M(f)^* \otimes M(g)^*(1-s))$ are both zero, where $M(f)$ and $M(g)$ are the $p$-adic representations attached to $f$ and $g$.
 \end{lettertheorem}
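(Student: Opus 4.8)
The plan is to deduce the two Selmer-vanishing statements from the Euler system machine of Kolyvagin and Rubin, fed with the families of classes from Theorem~\ref{thm:3varelt}, using the explicit reciprocity law of Theorem~\ref{thm:explicitrecip} to convert the hypothesis $L(f,g,s)\neq 0$ into the required non-vanishing of a ``bottom class'', and then using global duality to pass between the two Selmer groups; the first part will give Theorem~\ref{thm:BKSel} and the duality step Corollary~\ref{cor:ptduality}. Concretely, I would fix a $G_{\QQ}$-stable $\Op$-lattice $T$ in $M(f)^{*}\otimes M(g)^{*}$, specialise the classes $\cBF^{[\cF,\cG]}_{m}$ at the points of weight space corresponding to $f$ and $g$ to get classes in $H^{1}(\QQ(\mu_{m}), T \htimes D^{\la}(\Gamma))$ for all $m$ coprime to $p$, and then evaluate the cyclotomic variable at the character attached to the twist $1-s$ (legitimate since $r'\le s\le r-1$ lies in the range where the classes are defined), obtaining classes $c_{m}\in H^{1}(\QQ(\mu_{m}), T(1-s))$. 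Writing $V = M(f)^{*}\otimes M(g)^{*}(1-s)$, one checks from the norm-compatibility relations built into the construction (the Euler factors at primes $\ell\nmid m p N_{f}N_{g}$, as in \cite{KLZ1b}) that $\{c_{m}\}$, after removing the $(c^{2}-\dots)$ smoothing factor for a suitable auxiliary $c$ coprime to $6pN_{f}N_{g}$, is an Euler system for $V$, unramified outside $pN_{f}N_{g}$ and crystalline at $p$ — crystallinity of $\loc_{p}(c_{1})$ coming from the motivic origin of the Rankin--Eisenstein classes together with the $(\varphi,\Gamma)$-module triangulations used in Theorem~\ref{thm:explicitrecip}.

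The crucial input is non-vanishing of $\loc_{p}(c_{1})$. By Theorem~\ref{thm:explicitrecip} the image of $\cBF^{[\cF,\cG]}_{1}$ under the Perrin--Riou big logarithm is Urban's three-variable $p$-adic Rankin--Selberg $L$-function; specialising at $(f,g,s)$ and using its interpolation property expresses the value there as $L(f,g,s)$ times an explicit product of Euler-type factors at $p$, Gauss sums and periods. Provided these auxiliary factors are non-zero at this point (no ``trivial zero''), and provided the point lies in the interpolation region — which one arranges by deforming $f,g$ within their Coleman families, transporting the motivic class via Theorem~\ref{thm:3varelt}, and/or replacing $s$ by $r+r'-1-s$ using the functional equation — the hypothesis $L(f,g,s)\neq 0$ forces the Bloch--Kato logarithm of $\loc_{p}(c_{1})$ to be non-zero, so $\loc_{p}(c_{1})$ spans a non-torsion line in $H^{1}_{\mathrm{f}}(\Qp,V)$. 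The Euler system bound then yields $H^{1}_{\mathrm{f}}(\QQ, V^{*}(1)) = H^{1}_{\mathrm{f}}(\QQ, M(f)\otimes M(g)(s)) = 0$; this is where the ``technical hypotheses'' are used, namely absolute irreducibility of the residual representations of $f$ and $g$, largeness of the image of the residual representation attached to $M(f)^{*}\otimes M(g)^{*}$ (projectively containing $\SL_{2}\times\SL_{2}$, say), absence of exceptional congruences, $p$ not too small relative to $r,r'$, and non-criticality of the $p$-stabilisations.

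For Corollary~\ref{cor:ptduality} I would then invoke the Poitou--Tate global Euler characteristic formula for $V$: since $f$ and $g$ are cuspidal and $r\neq r'$, both $H^{0}(\QQ,V)$ and $H^{0}(\QQ,V^{*}(1))$ vanish; the local conditions defining $H^{1}_{\mathrm{f}}(\Qp,V)$ and $H^{1}_{\mathrm{f}}(\Qp,V^{*}(1))$ are exact annihilators under local Tate duality; and, because $s$ lies in the critical range, the archimedean and $\Fil^{0}D_{\dR}$ correction terms cancel, so $\dim H^{1}_{\mathrm{f}}(\QQ,V) = \dim H^{1}_{\mathrm{f}}(\QQ,V^{*}(1))$. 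Hence vanishing of one Selmer group forces vanishing of the other; combined with the functional equation $s\leftrightarrow r+r'-1-s$ (which keeps $L(f,g,\cdot)$ non-zero on the reflected point), this gives that both $H^{1}_{\mathrm{f}}(\QQ, M(f)\otimes M(g)(s))$ and $H^{1}_{\mathrm{f}}(\QQ, M(f)^{*}\otimes M(g)^{*}(1-s))$ vanish throughout $r'\le s\le r-1$.

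The main obstacle, I expect, is neither the abstract Euler system machine nor the duality argument — both essentially formal once their inputs are available — but the transfer of non-vanishing from the $p$-adic $L$-function to the complex $L$-value at $(f,g,s)$: controlling the interpolation factors (ruling out trivial zeros), reaching points of the critical range lying outside the naive interpolation region by deforming in the Coleman families while tracking the local-at-$p$ behaviour through Liu's triangulations, and verifying the large-image hypotheses in the genuinely non-ordinary case.
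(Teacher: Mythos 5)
There is a genuine gap, and it sits at the heart of the argument: your mechanism for converting $L(f,g,s)\neq 0$ into local input at $p$ is the wrong one. For $s$ in the critical range ($j = s-1$ with $k'+1 \le j \le k$ in the paper's notation, i.e.\ $j > k'$), the specialisation $c_1$ of the Beilinson--Flach class is \emph{not} crystalline at $p$, and the explicit reciprocity law computes its image under the \emph{dual exponential} $\exp^*$ --- equivalently, its image in the singular quotient $H^1_{\mathrm{s}}(\Qp, V^*(1)) = H^1(\Qp,V^*(1))/H^1_{\mathrm{f}}(\Qp,V^*(1))$ --- not its Bloch--Kato logarithm. (In the regulator map of \S\ref{sect:explicitrecip} the branch $j \le k'$ gives $\log$ and the branch $j > k'$ gives $\exp^*$; critical values live entirely in the second branch, and the reflection $s \mapsto r+r'-1-s$ keeps you there, so neither the functional equation nor deformation in the family lets you reach the ``geometric'' range where the class is crystalline.) Thus your claims that the Euler system is ``crystalline at $p$'' and that $L(f,g,s)\neq 0$ forces $\loc_p(c_1)$ to span a line in $H^1_{\mathrm{f}}(\Qp,V)$ are not just unproved but false; what is true (Theorem \ref{thm:ESexists}(iv)--(v), via Theorem \ref{thm:BFeltisSelmer} and the reciprocity law) is that $\exp^*(\loc_p c_1)$ is nonzero in a specific $\varphi$-eigenspace of $\Fil^0\DD_{\cris}$. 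Consequently the Euler system must be run with Rubin's relaxed-at-$p$ local condition (condition (ii$'$)(b) of \cite[\S 9.1]{Rubin-Euler-systems}), and what it directly bounds is only the \emph{strict} Selmer group of $V$, not $H^1_{\mathrm{f}}(\QQ,V)$.

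The second missing idea is that the paper constructs \emph{two} Euler systems, $c_m^{\alpha_f\alpha_g}$ and $c_m^{\alpha_f\beta_g}$, from the two $p$-stabilisations $g_\alpha, g_\beta$ of $g$ (this is why $p$-regularity and nobility of both stabilisations appear among the hypotheses). Their bottom classes have $\exp^*$-images in \emph{distinct} $\varphi$-eigenspaces, hence are linearly independent in the $2$-dimensional singular quotient $H^1_{\mathrm{s}}(\Qp,V^*(1))$. Poitou--Tate duality identifies the images of $\loc_p^{\mathrm{s}}$ on the relaxed Selmer group of $V^*(1)$ and of $\loc_p^{\mathrm{f}}$ on $H^1_{\mathrm{f}}(\QQ,V)$ as orthogonal complements; surjectivity of $\loc_p^{\mathrm{s}}$ (which needs both classes) forces $\loc_p^{\mathrm{f}} = 0$, and only then does the vanishing of the strict Selmer group upgrade to $H^1_{\mathrm{f}}(\QQ,V)=0$. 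A single class with nonzero singular residue, which is all your proposal provides, does not close this step. Your treatment of the corollary ($H^1_{\mathrm{f}}(\QQ,V^*(1))=0$ by duality and Euler-characteristic counting) is in the right spirit, but it too depends on having both classes, since they furnish the basis of $H^1_{\mathrm{relaxed}}(\QQ,V^*(1))$ in Corollary \ref{cor:ptduality}(3).
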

 
 One particularly interesting case is when $f = f_E$ is the modular form attached to an elliptic curve $E$, and $g$ is a weight 1 form corresponding to a 2-dimensional odd irreducible Artin representation $\rho$. In this case, the Bloch--Kato Selmer group $H^1_{\mathrm{f}}(\QQ, M(f) \otimes M(g)(1))$ is essentially the $\rho$-isotypical part of the $p$-Selmer group of $E$ over the splitting field of $\rho$, so we obtain new cases of the finiteness of Selmer (and hence Tate--Shafarevich) groups. See Theorem \ref{thm:sha} for the precise statement. 
 
 \begin{remark*}
  Since this paper was originally submitted, it has come to light that there are some unresolved technical issues in the paper \cite{Urban-nearly-overconvergent} upon which Theorem B, and hence Theorem C, relies. We hope that these issues will be resolved in the near future; as a temporary expedient, we have given in \S \ref{sect:appendix} below an alternate proof of a weaker form of Theorem B which avoids these problems, and thus suffices to give an unconditional proof of Theorem C.
 \end{remark*}
 
 This paper could not have existed without the tremendous legacy of mathematical ideas left by the late Robert Coleman. We use Coleman's work in three vital ways: firstly, Coleman was the first to construct the $p$-adic families of modular forms along which we interpolate; secondly, the Perrin-Riou big logarithm map is a generalisation of Coleman power series in classical Iwasawa theory (introduced in Coleman's Cambridge Part III dissertation); and finally, the results of \cite{KLZ1a} giving the link to values of $p$-adic $L$-functions, which are the main input to Theorem B, are proved using Coleman's $p$-adic integration theory. We are happy to dedicate this paper to the memory of Robert Coleman, and we hope that his work continues to inspire other mathematicians as it has inspired us.
  
 \subsection*{Acknowledgements}
 
 During the preparation of this paper, we benefitted from conversations with a number of people, notably Fabrizio Andreatta, Pierre Colmez, Hansheng Diao, Henri Darmon, Adrian Iovita, Guido Kings, Ruochuan Liu, Jay Pottharst, Karl Rubin and Chris Skinner. We would also like to thank the anonymous referee for several helpful comments and corrections.
 
 Large parts of the paper were written while the authors were visiting the Mathematical Sciences Research Institute in Berkeley, California, for the programme ``New Geometric Methods in Automorphic Forms'', and it is again a pleasure to thank MSRI for their support and the organisers of the programme for inviting us to participate.


\section{Analytic preliminaries}
 \label{sect:analyticprelim}
 The aim of this section is to extend some of the results of Appendix A.2 of \cite{LLZ14}, by giving a criterion for a collection of cohomology classes to be interpolated by a distribution-valued cohomology class.

 \subsection{Continuous cohomology}
  \label{sect:banach}

   We first collect some properties of Galois cohomology of profinite groups acting on ``large'' topological $\Zp$-modules (not necessarily finitely generated over $\Zp$). A very rich theory is available for groups $G$ satisfying some mild finiteness hypotheses (see e.g.~\cite[\S 1.1]{Pottharst-analytic}); but we will need to consider the Galois groups of infinite $p$-adic Lie extensions, which do not have good finiteness properties, so we shall proceed on a somewhat ad hoc basis, concentrating on $H^0$ and $H^1$.

  \begin{definition}
   \begin{enumerate}[(i)]
    \item If $G$ is a profinite group, a \emph{topological $G$-module} is an abelian topological group $M$ endowed with an action of $G$ which is (jointly) continuous as a map $G \times M \to M$.
    \item For $G$ and $M$ as in (i), we define the cohomology groups $H^*(G, M)$ as the cohomology of the usual complex of \emph{continuous} cochains $C^\bullet(G, M)$.
    \item We equip the groups $C^i(G, M) = \operatorname{Maps}(G^i, M)$ with the compact-open topology (equivalently, the topology of uniform convergence).
   \end{enumerate}
  \end{definition}

  With these definitions, the groups $C^*(G, -)$ define a functor from topological $G$-modules to complexes of \emph{topological} groups (i.e.~the topology is functorial in $M$, and the differentials $C^i(G, M) \to C^{i+1}(G, M)$ are continuous). Hence the cocycles $Z^i(G, M)$ are closed in $C^i(G, M)$. However, the cochains $B^i(G, M)$ need not be closed in general, so the quotient topology on the cohomology groups $H^i(G, M)$ may fail to be Hausdorff; and the subspace and quotient topologies on $B^i(G, M)$ may not agree. Our next goal is to show that these pathologies can be avoided for $i = 1$ and some special classes of modules $M$.

  Let $A$ be a Noetherian Banach algebra over $\Qp$. Then any finitely-generated $A$-module has a unique Banach space structure making it into a Banach $A$-module \cite[Proposition 3.7.3/3]{BGR}.

  \begin{proposition}
   \label{prop:boundariesclosed}
   Let $M$ be a finitely-generated free $A$-module, equipped with a continuous $A$-linear action of a profinite group $G$. Then:
   \begin{enumerate}
    \item the space $B^1(G, M)$ is closed in $Z^1(G, M)$;
    \item the subspace topology induced by $B^1(G, M) \into Z^1(G, M)$ coincides with the quotient topology induced by $M \onto B^1(G, M)$;
    \item the quotient map $M \onto B^1(G, M)$ has a continuous section (not necessarily $A$-linear or $G$-equivariant).
   \end{enumerate}
  \end{proposition}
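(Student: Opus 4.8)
The plan is to reduce everything to a statement about the continuous $G$-equivariant map $d^0 \colon M \to C^1(G,M)$ sending $m \mapsto (g \mapsto gm - m)$, whose image is exactly $B^1(G,M)$. Since $M^G = \ker d^0$ is a closed $A$-submodule of the Banach $A$-module $M$, the quotient $M/M^G$ is again a finitely-generated $A$-module, hence carries its canonical Banach structure, and $d^0$ factors as $M \onto M/M^G \xrightarrow{\bar d^0} Z^1(G,M)$ with $\bar d^0$ a continuous $A$-linear injection. The three assertions then follow at once from the single claim: \emph{$\bar d^0$ is a closed embedding} (i.e.\ a homeomorphism onto its image with closed image), combined with the fact that a surjection of Banach spaces admits a continuous section.

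For the closed-embedding claim, the first idea I would try is to produce a continuous $A$-linear retraction. Because $G$ acts continuously on the finitely-generated $A$-module $M$ and $G$ is profinite (hence compact), some open subgroup $G_0 \le G$ acts trivially modulo any chosen open submodule; more usefully, the image of $A[G] \to \operatorname{End}_A(M)$ lands in a finitely-generated $A$-subalgebra, and by compactness of $G$ one can choose a \emph{uniform} bound: there is a constant so that $\|gm\| \le C\|m\|$ for all $g$, i.e.\ $M$ may be re-normed $G$-invariantly (replace $\|m\|$ by $\sup_{g} \|gm\|$, which is equivalent since the sup is finite and bounded below by $\|m\|$). Working with this invariant norm, I want to split the submodule $B^1(G,M) \subseteq Z^1(G,M)$, or equivalently split $M^G \hookrightarrow M$. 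Here is where the Noetherian Banach hypothesis does real work: one invokes the fact that for a Noetherian Banach $\Qp$-algebra $A$, finitely-generated $A$-modules and the $A$-linear maps between them behave like finite-dimensional vector spaces for the purposes of open-mapping/closed-graph arguments — in particular, any continuous $A$-linear surjection of finitely-generated Banach $A$-modules is open (this is essentially \cite[3.7.3/3]{BGR} together with the open mapping theorem for Banach spaces over $\Qp$). Applying the open mapping theorem to $\bar d^0 \colon M/M^G \to Z^1(G,M)$, which is a continuous $A$-linear bijection onto its image $B^1(G,M)$, gives that it is a homeomorphism onto $B^1(G,M)$ with the subspace topology; this is exactly part (2). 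That $B^1(G,M)$ is then closed in $Z^1(G,M)$ — part (1) — follows because a Banach subspace which is the continuous bijective image of a Banach space, once we know the subspace topology is complete, must be closed (a complete subspace of a Hausdorff topological vector space is closed).

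For part (3), the quotient map $M \onto B^1(G,M)$ is a continuous surjection of Banach $\Qp$-spaces once we equip $B^1(G,M)$ with its (now unambiguous) topology from part (2); and every continuous surjection of $\Qp$-Banach spaces admits a continuous (generally nonlinear) section — one can take a bounded set-theoretic lift and then, since $\Qp$-Banach spaces of countable type (which these are, being built from finitely-generated modules over a countable-type algebra) have the bounded approximation/lifting property, upgrade to a continuous section; alternatively, cite the standard Bartle--Graves-type theorem in the non-archimedean setting. I would present this last point as a direct appeal to such a lifting theorem rather than reproving it.

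The main obstacle I anticipate is purely at the level of the open mapping / closed image step: one must be careful that $B^1(G,M)$, a priori only defined as an abstract subquotient, really is complete (so that the open mapping theorem applies), and that the two candidate topologies are genuinely comparable before one can conclude they coincide. The cleanest route is the factorization through $M/M^G$ as above: then $M/M^G$ is manifestly a Banach $A$-module (finitely generated over Noetherian $A$), $\bar d^0$ is a continuous linear injection into the Banach space $Z^1(G,M)$, its image is $B^1(G,M)$, and the only real content is that $\bar d^0$ is a \emph{topological} embedding — which, since the source is Banach, is equivalent to $\bar d^0$ being bounded below, i.e.\ $\|\bar d^0(\bar m)\| \gtrsim \|\bar m\|$. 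Proving this inequality — equivalently, that $B^1(G,M)$ is closed — is the heart of the matter; I would do it by the open mapping theorem after verifying completeness, rather than by an explicit estimate.
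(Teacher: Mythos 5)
Your reduction to the map $\bar d^0 : M/M^G \to Z^1(G,M)$ is sensible, and your treatment of part (3) (continuous sections of surjections of $\Qp$-Banach spaces, as in Colmez's open image theorem) is fine. But the argument for part (1) — which you correctly identify as the heart of the matter — is circular and is never actually supplied. The open mapping theorem applies to a continuous bijection between two \emph{Banach} spaces; to apply it to $\bar d^0$ viewed as a map onto $B^1(G,M)$ with the \emph{subspace} topology from $Z^1(G,M)$, you need to know that subspace is complete, which for a subspace of a Banach space is exactly the same as being closed, i.e.\ part (1). You acknowledge this ("after verifying completeness") but give no argument for the verification. Nor can the issue be dodged by giving $B^1(G,M)$ its canonical Banach topology as a finitely-generated $A$-module: that does make $\bar d^0$ a homeomorphism onto $B^1$ with the quotient topology, but the resulting continuous injection $B^1(G,M) \into Z^1(G,M)$ of Banach spaces need not have closed image and need not be a topological embedding — continuous injections of Banach spaces with dense, non-closed image are commonplace. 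So the inequality $\|\bar d^0(\bar m)\| \gtrsim \|\bar m\|$ that you defer is precisely the content you have not proved.

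The paper closes this gap with a structural input your proposal lacks: writing $C^1(G,M) \cong C^1(G,\Qp)^{\oplus d} \htimes_{\Qp} A$, one sees that $C^1(G,M)$ is an \emph{orthonormalizable} $A$-Banach module (every $\Qp$-Banach space is orthonormalizable, and orthonormalizability is preserved by base extension), and then invokes the fact that any finitely-generated submodule of an orthonormalizable Banach module over a Noetherian Banach algebra is closed (Buzzard, \emph{Eigenvarieties}, Lemma 2.8). Since $B^1(G,M)$ is manifestly finitely generated over $A$, this gives (1) directly, after which (2) and (3) follow from the open image theorem exactly as you intend. Without Buzzard's lemma (or some equivalent, such as an explicit lower bound on $\|\bar d^0\|$), your proof does not go through.
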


  \begin{proof}
   We begin by noting that $Z^1(G, M)$ is, by definition, a closed subspace of the space $C^1(G, M)$ of continuous functions from $G$ to $M$, and since $M$ is Banach, the topology of $Z^1(G, M)$ is the Banach topology induced by the supremum norm on $C^1(G, M)$. However, if $M \cong A^{\oplus d}$ then we have
   \[ C^1(G, M) = C^1(G, \Qp) \htimes_{\Qp} M = C^1(G, \Qp)^{\oplus d} \htimes_{\Qp} A \]
   as a topological $A$-module. Since $C^1(G, \Qp)^{\oplus d}$ is orthonormalizable as a $\Qp$-Banach space (every $\Qp$-Banach space has this property), it follows that $C^1(G, M)$ is orthonormalizable as an $A$-Banach module, as orthonormalizability is preserved by base extension. However, $B^1(G, M)$ is manifestly finitely-generated as an $A$-module, and any finitely-generated submodule of an orthonormalizable $A$-Banach module is closed \cite[Lemma 2.8]{buzzard-eigenvarieties}. This proves (1).

   Parts (2) and (3) now follow from the open image theorem \cite[Proposition I.1.3]{colmez98}, which shows that any continuous surjective map between $\Qp$-Banach spaces has a continuous section (and, in particular, a continuous bijection between $\Qp$-Banach spaces must be a homeomorphism).
  \end{proof}

  \begin{remark}
   It seems likely that this result is true for any finitely-generated $A$-module $M$ with $G$-action (without assuming that $M$ be free) but we do not know how to prove this.
  \end{remark}

  \begin{definition}
   If $X$ and $Y$ are two $\Qp$-Banach spaces, let $\cL_w(X, Y)$ denote the space of continuous linear maps $X \to Y$ equipped with the weak topology (the topology of pointwise convergence).
  \end{definition}

  Now if $M$ is a $\Qp$-Banach space with a continuous action of a profinite group $G$, then $\cL_w(X, M)$ also acquires a continuous $G$-action by composition, for any Banach space $X$.

  \begin{proposition}
   Suppose the differential $d: M \to B^1(G, M)$ has a continuous section. Then the differential
   \[ \cL_w(X, M) \to B^1(G, \cL_w(X, M)) \]
   also has a continuous section, for any Banach space $X$.
  \end{proposition}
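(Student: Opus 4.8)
The plan is to trivialize everything coordinatewise with respect to an orthonormal basis of $X$, and then apply the given section of $d$ one coordinate at a time; the only subtle point will be topological.

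First I would fix an orthonormal basis $(e_i)_{i\in I}$ of $X$ (possible since every $\Qp$-Banach space is orthonormalizable), which identifies $\cL_w(X,M)$ with the space $\ell^\infty(I,M)$ of uniformly bounded $I$-indexed families in $M$, via $\phi\mapsto(\phi(e_i))_i$, carrying the $G$-action on $\cL_w(X,M)$ over to the diagonal action. The topological point to check is that a weakly convergent net in $\cL_w(X,M)$ is pointwise bounded, hence uniformly bounded by the uniform boundedness principle, so that the weak topology agrees with the topology of coordinatewise convergence on each bounded subset of $\ell^\infty(I,M)$; since $G$ is profinite, continuous cochains have bounded image, so under this identification the complex $C^\bullet(G,\cL_w(X,M))$ becomes the complex of uniformly bounded families of continuous cochains of $M$, with differentials and the cocycle and coboundary conditions all imposed coordinatewise. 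In particular $B^1(G,\cL_w(X,M))$ consists precisely of the uniformly bounded families $(b_i)_i$ with each $b_i\in B^1(G,M)$ admitting a uniformly bounded lift to $M$.

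Next, let $s\colon B^1(G,M)\to M$ be a continuous section of $d$; after replacing it if necessary by the section supplied by the open image theorem I may assume $\|s(b)\|\le C\|b\|$ for a constant $C$, so that $s$, and hence $\pi:=s\circ d\colon M\to M$, is bounded on bounded subsets. I would then define $\sigma\colon B^1(G,\cL_w(X,M))\to\cL_w(X,M)$ by letting $\sigma\bigl((b_i)_i\bigr)$ be the continuous linear map $X\to M$ sending $e_i\mapsto s(b_i)$. This is well defined because $(s(b_i))_i$ is uniformly bounded, $(b_i)_i$ being uniformly bounded and $s$ bounded on bounded sets; and the failure of $\Qp$-linearity of $s$ does no harm here, since $s$ is only ever fed the individual coordinates $b_i$ and the result is re-linearized afterwards. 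That $d\circ\sigma=\mathrm{id}$ is immediate from $d\circ s=\mathrm{id}$ applied coordinatewise, and a glance at the cocycle formula shows the coboundary $g\mapsto(e_i\mapsto g\,s(b_i)-s(b_i))$ indeed represents $(b_i)_i$.

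Finally I would check continuity of $\sigma$. Since $B^1(G,\cL_w(X,M))$ carries the quotient topology coming from $d$, this reduces to showing that $\sigma\circ d\colon\cL_w(X,M)\to\cL_w(X,M)$ — which is just the coordinatewise application $(m_i)_i\mapsto(\pi(m_i))_i$ of $\pi$ — is weakly continuous. For each $x=\sum_i a_i e_i\in X$ one must see that $\phi\mapsto\sum_i a_i\,\pi(\phi(e_i))$ is continuous for the weak topology; this holds because $\pi$ is continuous (yielding convergence in each coordinate) and bounded on bounded subsets (so that, as $a_i\to0$ and weakly convergent nets are uniformly bounded, the tails of the sum converge uniformly along such a net). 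I expect this last topological bookkeeping to be the only real obstacle — matching the weak and coordinatewise topologies on bounded sets and using the boundedness of $s$ on bounded sets to control the tails — whereas the algebraic heart, namely that the non-linear coordinatewise construction becomes an honest continuous linear section after re-linearizing, is immediate once the identifications are set up.
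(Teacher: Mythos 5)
Your set-theoretic construction is fine (coordinatewise application of the non-linear $s$ followed by re-linearization does yield a genuine section landing in $\cL_w(X,M)$, granted that $s$ is bounded on bounded sets), but the continuity verification has a genuine gap, and it sits exactly where you predicted the ``only real obstacle'' would be. The assertion that weakly convergent nets in $\cL_w(X,M)$ are uniformly bounded is false: Banach--Steinhaus upgrades \emph{pointwise bounded} families to uniformly bounded ones, and a weakly convergent \emph{sequence} is pointwise bounded, but for a net the index beyond which $L_\alpha(x)$ is close to $L(x)$ depends on $x$, so the family need not be pointwise bounded at all; and since the weak topology is not first countable, sequential continuity would not suffice anyway. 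Concretely, a basic weak neighbourhood of $L_0$ constrains only finitely many evaluations $L(e_i)$, while your $\sigma(dL)(x)=\sum_i a_i\,\pi(L(e_i))$ depends on infinitely many coordinates, with no uniform bound on $\|L(e_i)\|$ available over such a neighbourhood; so the tail cannot be controlled, and your identification of the weak topology with the coordinatewise topology, valid only on bounded subsets, cannot be globalized. (A secondary point: you equip $B^1(G,\cL_w(X,M))$ with the quotient topology, whereas the application of this proposition in the inflation--restriction argument composes the section with a map $G\to B^1(H,M)$ that is continuous for the subspace topology inherited from $C^1$; the quotient topology is finer on the source, so continuity for it is a strictly weaker statement.)

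The paper's proof avoids all of this by never decomposing $x$ into basis coordinates: given $\mu\in B^1(G,\cL_w(X,M))$ and $x\in X$, evaluate the whole coboundary at $x$ to get $\mu_x\in B^1(G,M)$ and apply the given section $\phi$ \emph{once}, setting $\tilde\phi(\mu)(x)=\phi(\mu_x)$. For fixed $x$ this is the composite of the evaluation map $B^1(G,\cL_w(X,M))\to B^1(G,M)$ (continuous for the subspace topologies, since $\mathrm{ev}_x$ is weakly continuous) with the continuous $\phi$, and continuity of $\tilde\phi$ then follows from the very definition of the weak topology on the target --- no uniform bounds or tail estimates enter. If you wish to salvage the coordinatewise construction, note that over $\Qp$ a continuous linear surjection of Banach spaces admits a continuous \emph{linear} section (so, when $B^1(G,M)$ is closed, one may take $s$ linear and bounded); then your $\sigma$ literally coincides with $\tilde\phi$ and the problem disappears. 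With a non-linear $s$, however, your argument does not go through as written.
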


  \begin{proof}
   Let $\phi: B^1(G, M) \to M$ be a section. We use this to define $\tilde\phi: B^1(G, \cL_w(X, M)) \to \cL_w(X, M)$ as follows. Given $\sigma \in B^1(G, \cL_w(X, M))$, we may compose with an arbitrary $x \in X$ to obtain an element $\sigma_x \in B^1(G, M)$, and $\phi(\sigma_x)$ is then an element of $M$. This defines a map from $B^1(G, \cL_w(X, M))$ to the space of linear maps $X \to M$; however, for any $\mu \in B^1(G, \cL_w(X, M))$ we may write $\mu = dL$ for some continuous $L$, and we can then describe the image of $\mu(g)$ as the map obtained by composing $L$ with $M \rTo^d B^1(G, M) \rTo^\phi M$, which is thus continuous. This defines a continuous map $\tilde\phi$ such that the diagram
   \begin{diagram}
    B^1(G, \cL_w(X, M)) &\rTo^{\tilde\phi} & \cL_w(X, M) \\
    \dTo & & \dTo\\
    B^1(G, M)&\rTo^\phi & M
   \end{diagram}
   commutes for every $x \in X$. However, in order to show that the top horizontal arrow is continuous, it suffices (by the definition of the weak topology) to show that the diagonal composition is continuous for every $x$. Since the left vertical arrow is obviously continuous, and $\phi$ is continuous by assumption, this completes the proof.
  \end{proof}

  \begin{proposition}
   \label{prop:infres}
   If $M$ is a topological $G$-module, $H \trianglelefteq G$ is a closed subgroup, and there exists a continuous section $B^1(H, M) \to M$, then there is an exact sequence
   \[ 0 \to H^1(G/H, M^H) \to H^1(G, M) \to H^1(H, M)^{G/H} \to H^2(G/H, M^H). \]
  \end{proposition}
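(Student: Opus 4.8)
The plan is to transcribe the classical five‑term exact sequence of low‑degree terms (truncated after $H^2(G/H,M^H)$) into the setting of continuous cochains, checking continuity at each step; the continuous section $\phi\colon B^1(H,M)\to M$ is needed at exactly one place, namely to construct the transgression map.

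First I would write down the maps. Inflation $H^1(G/H,M^H)\to H^1(G,M)$ is induced by pulling cocycles back along the quotient map $\pi\colon G\to G/H$, which is continuous and open; restriction $H^1(G,M)\to H^1(H,M)$ by restricting cocycles to $H$, and the standard computation with the conjugation action (using that $H$ is normal) shows its image is $G/H$‑invariant. None of this needs $\phi$. Exactness at $H^1(G/H,M^H)$ and at $H^1(G,M)$ is then the usual argument: if the inflation of $c\in Z^1(G/H,M^H)$ equals $dm$ for some $m\in M$, restricting to $H$ forces $m\in M^H$, so $c$ was already a coboundary; and if $z\in Z^1(G,M)$ has $z|_H=dm$, then $z-dm$ vanishes on $H$, is constant on the cosets $gH$, and takes values in $M^H$, so it descends along $\pi$ to a continuous cocycle on $G/H$ — here the only point to check is continuity of the descended cochain, which holds because $\pi$ is a topological quotient map.

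The substantive part is the transgression $\mathrm{tg}\colon H^1(H,M)^{G/H}\to H^2(G/H,M^H)$ and exactness of the sequence at $H^1(H,M)^{G/H}$. Given a continuous cocycle $u\colon H\to M$ whose class is $G/H$‑invariant, the assignment $g\mapsto g\cdot u-u$ is a continuous map $G\to C^1(H,M)$ — by the exponential law, since $H$ is compact — landing in the closed subspace $B^1(H,M)$. Composing with $\phi$ gives a \emph{continuous} function $g\mapsto m_g:=\phi(g\cdot u-u)$ with $h m_g-m_g=(g\cdot u)(h)-u(h)$ for all $g,h$. A direct computation, using the description of the $G/H$‑action on $Z^1(H,M)$ by conjugation, then shows that $(g_1,g_2)\mapsto m_{g_1}+g_1 m_{g_2}-m_{g_1 g_2}$ is $M^H$‑valued and, after the standard normalisation, represents a class in $H^2(G/H,M^H)$ independent of the choices of $u$ and of the lift; this is $\mathrm{tg}([u])$. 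One then checks that $\mathrm{tg}\circ\mathrm{res}=0$ (a cocycle on $H$ that extends to $G$ lets one take $m_\bullet$ cohomologous to a genuine coboundary) and, conversely, that $\mathrm{tg}([u])=0$ allows the $m_g$ to be corrected by an $M^H$‑valued $1$‑cochain on $G/H$ so that $u$ extends to a cocycle on $G$, giving exactness.

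The main obstacle is exactly this transgression step: the classical diagram chase goes through verbatim, but to produce a transgression cocycle that is genuinely \emph{continuous} one must lift the $B^1(H,M)$‑valued map $g\mapsto g\cdot u-u$ to $M$ continuously, which is possible precisely because of the hypothesised section $\phi$. Equivalently, $\phi$ is what makes the short exact sequence $0\to M^H\to M\to B^1(H,M)\to 0$ stay exact after applying $\operatorname{Maps}((G/H)^\bullet,-)$, which is the input needed to identify the low‑degree terms of the Hochschild--Serre spectral sequence in continuous cochains. Beyond that, the work is bookkeeping — tracking that each $G/H$‑cochain manufactured in the argument really does descend continuously from $G$ — a point worth care given that, as noted before Proposition \ref{prop:boundariesclosed}, quotient topologies on continuous cohomology can be pathological in this generality.
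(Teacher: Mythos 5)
Your proposal is correct and follows essentially the same route as the paper: the first two exactness statements are the standard cocycle computations, and the one genuinely delicate point --- using the hypothesised continuous section of $M \to B^1(H,M)$ to lift the continuous map $g \mapsto g\cdot u - u$ from $B^1(H,M)$-valued to $M$-valued, after which the classical inflation--restriction argument (as in \cite[Proposition 1.6.5]{NSW}) goes through verbatim --- is exactly where the paper also invokes the section. Your additional remark identifying the section with the exactness of $0 \to M^H \to M \to B^1(H,M) \to 0$ after applying continuous cochains on $G/H$ is a nice conceptual gloss but does not change the argument.
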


  \begin{proof}
   The injectivity of the first map, and the exactness at $H^1(G, M)$, are easily seen by a direct cocycle computation (which is valid for arbitrary topological $G$-modules).

   Exactness at $H^1(H, M)^{G/H}$ is much more subtle. Let $\sigma: H \to M$ be a continuous cocycle whose class $[\sigma] \in H^1(H, M)$ is $G$-invariant. Then, for any $g \in G$, the element $\sigma^g - \sigma$ lies in $B^1(H, M)$, where $\sigma^g$ is the cocycle $h \mapsto g \sigma(g^{-1} h g)$. This defines a continuous map $G \to B^1(H, M)$.

   By hypothesis, the differential $M \to B^1(H, M)$ has a continuous section. Composing this with the above map, we obtain a continuous map $\phi: G \to M$ such that $g\sigma(g^{-1}hg) - \sigma(h) = (h-1) \phi(g)$ for all $h\in H$ and $g \in G$. We may now argue as in the usual proof of the exactness of the inflation-restriction exact sequence for discrete modules \cite[Proposition 1.6.5]{NSW} to define a continuous 1-cochain $\tilde\sigma: G \to M$ such that $\tilde\sigma |_{H} = \sigma$ and $d\tilde\sigma \in Z^2(G/H, M^H)$, which gives exactness at $H^1(H, M)^{G/H}$.
  \end{proof}

  \begin{remark}
   The hypotheses of this proposition are satisfied, in particular, for any module of the form $M = \cL_w(X, N)$ where $X$ is any Banach space, $N$ is finitely-generated and free over a Noetherian Banach algebra $A$, and the group $H$ acts $A$-linearly on $N$ and trivially on $X$. This covers all the cases we shall need below.
  \end{remark}


 \subsection{Distributions}

  For $\lambda \in \RR_{\ge 0}$, we define the Banach space $C_\lambda(\Zp, \Qp)$ of order $\lambda$ functions on $\Zp$ as in \cite{colmez-fonctions}. This has a Banach basis consisting of the functions $p^{\lfloor \lambda\ell(n)\rfloor} \binom{x}{n}$ for $n \ge 0$, where $\ell(n)$ denotes the smallest integer $L \ge 0$ such that $p^L > n$. We define $D_\lambda(\Zp, \Qp)$ as the continuous dual of $C_\lambda(\Zp, \Qp)$; for $f \in C_\lambda(\Zp, \Qp)$ and $\mu\in D_\lambda(\Zp, \Qp)$ we shall sometimes write $\int f\, \mathrm{d}\mu$ for the evaluation $\mu(f)$. The space $D_\lambda(\Zp, \Qp)$ has a standard norm defined by
  \[ \|\mu\|_{\lambda} = \sup_{n \ge 0} p^{-\lfloor \lambda\ell(n)\rfloor} \left\| \int_{x \in U} \binom{x}{n}\, \mathrm{d}\mu\right\|. \]

  \begin{proposition}
   \label{prop:normbound}
   For any integer $h \ge \lfloor \lambda \rfloor$, the standard norm on $D_\lambda(\Zp, \Qp)$ is equivalent to the norm defined by
   \[ \sup_{n \ge 0} \sup_{a \in \Zp} p^{-\lfloor \lambda n \rfloor}\left\| \int_{x \in a + p^n \Zp}\left( \frac{x-a}{p^n}\right)^h \, \mathrm{d}\mu\right\|.\]
  \end{proposition}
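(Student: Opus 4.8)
The plan is to show that the two norms on $D_\lambda(\Zp,\QQ_p)$ are equivalent by comparing both to the standard norm $\|\cdot\|_\lambda$, working entirely on the level of the dual Banach space $C_\lambda(\Zp,\QQ_p)$. Since a norm on the dual of a Banach space is equivalent to the dual norm of an equivalent norm on the original space, it suffices to prove that the functions $\left\{p^{-\lfloor \lambda n\rfloor}\mathbf{1}_{a+p^n\Zp}\cdot\left(\frac{x-a}{p^n}\right)^h : n\ge 0,\ a\in\Zp\right\}$ and the Banach basis functions $p^{\lfloor\lambda\ell(n)\rfloor}\binom{x}{n}$ generate the same bounded-coefficient submodules of $C_\lambda(\Zp,\QQ_p)$, up to a uniform constant. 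Concretely, I would fix $h\ge\lfloor\lambda\rfloor$ and aim to prove that there are constants $C_1,C_2>0$ (depending only on $\lambda,h$) such that each locally-monomial function $p^{-\lfloor\lambda n\rfloor}\mathbf{1}_{a+p^n\Zp}(x)\left(\frac{x-a}{p^n}\right)^h$ has $C_\lambda$-norm $\le C_1$, and conversely each basis element $p^{\lfloor\lambda\ell(n)\rfloor}\binom{x}{n}$ can be written as a combination of the locally-monomial functions with coefficients of absolute value $\le C_2$.

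First I would recall (from Colmez, \emph{Fonctions d'une variable $p$-adique}, or \cite{colmez-fonctions}) the explicit description of the $C_\lambda$-norm of a function in terms of its Mahler expansion and, more usefully here, in terms of its Taylor expansions on balls: a function $f$ lies in $C_\lambda(\Zp,\QQ_p)$ precisely when, writing $f(x)=\sum_{j\ge 0} c_{n,a,j}\left(\frac{x-a}{p^n}\right)^j$ on each ball $a+p^n\Zp$, one has $\sup_{n,a,j} p^{-\lfloor\lambda j\rfloor}|c_{n,a,j}| <\infty$ (for $\lambda\notin\ZZ$ one can be slightly sloppier with floors), and this supremum is equivalent to $\|f\|_\lambda$. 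Granting this, the first inclusion is immediate: $p^{-\lfloor\lambda n\rfloor}\mathbf{1}_{a+p^n\Zp}(x)\left(\frac{x-a}{p^n}\right)^h$ has a single nonzero Taylor coefficient on each ball, namely $1$ on $a+p^n\Zp$ when restricted to sub-balls, and one checks the resulting normalised coefficients are bounded by a constant depending only on $\lambda$ and $h$ (using $\lfloor\lambda h'\rfloor + \lfloor\lambda n\rfloor$ versus $\lfloor\lambda(h'+n)\rfloor$ type inequalities, where the discrepancy is bounded). Conversely, to express the Mahler basis functions in terms of the locally-monomial ones, I would restrict $\binom{x}{n}$ to a ball $a+p^k\Zp$ with $p^k>n$ (so $\ell(n)=k$), where it becomes a genuine polynomial of degree $n\le h\cdot(\text{bounded factor})$ in $\frac{x-a}{p^k}$ --- here the hypothesis $h\ge\lfloor\lambda\rfloor$ enters to guarantee the powers up to $h$ suffice after accounting for the weighting, and a finite telescoping / interpolation argument over the $p^k$ balls of radius $p^{-k}$ recovers $\binom{x}{n}$ globally with controlled coefficients.

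The main obstacle I anticipate is the bookkeeping with the floor functions and with the fact that the locally-monomial norm uses $p^{-\lfloor\lambda n\rfloor}$ (a normalisation indexed by the radius exponent $n$) whereas the $C_\lambda$-norm is naturally indexed by the Taylor-degree $j$; the two agree up to a bounded multiplicative error only because a monomial $\left(\frac{x-a}{p^n}\right)^h$ viewed on a smaller ball $a'+p^{n'}\Zp$ (with $n'>n$) has Taylor coefficients of controlled size, and one must check that the combined weight $p^{-\lfloor\lambda n\rfloor}\cdot p^{-\lambda h(n'-n)}$ never beats $p^{-\lfloor\lambda j\rfloor}$ by more than a fixed factor. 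I expect this to reduce to the elementary inequality $\lfloor\lambda a\rfloor + \lfloor\lambda b\rfloor \le \lfloor\lambda(a+b)\rfloor \le \lfloor\lambda a\rfloor + \lfloor\lambda b\rfloor + 1$ and its iterates, plus the observation that only finitely many Taylor degrees $j\le h$ actually occur on the "originating" ball so the passage to sub-balls only adds genuinely $\le h$ extra to the degree. Once these estimates are in place, the open mapping / equivalence-of-norms formalism for dual Banach spaces finishes the argument immediately.
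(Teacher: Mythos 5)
The paper does not actually prove this proposition: the proof given is simply a citation of \cite{colmez98}, Lemma II.2.5. So your write-up is a reconstruction of Colmez's argument rather than an alternative to anything in the text. Your overall architecture is the right one — dualise, and compare the two families of test functions $p^{\lfloor\lambda\ell(n)\rfloor}\binom{x}{n}$ and $p^{\lfloor\lambda n\rfloor}\mathbf{1}_{a+p^n\Zp}\left(\frac{x-a}{p^n}\right)^h$ in both directions with uniformly bounded coefficients. Two small corrections to the easy direction: the functions with bounded $C_\lambda$-norm are $p^{+\lfloor\lambda n\rfloor}\mathbf{1}_{a+p^n\Zp}\left(\frac{x-a}{p^n}\right)^h$ (since $\bigl\|\mathbf{1}_{a+p^n\Zp}\left(\frac{x-a}{p^n}\right)^h\bigr\|_{C_\lambda}\sim p^{\lambda n}$, you want the $p$-adically small scalar), and such a function restricted to a strictly smaller ball $b+p^m\Zp$ has $h+1$ nonzero Taylor coefficients, not one, coming from expanding $\left((x-b)+(b-a)\right)^h$; they are controlled, so the conclusion survives.

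The genuine gap is in the converse direction. You claim that on a ball $a+p^k\Zp$ with $p^k>n$ the function $\binom{x}{n}$ ``becomes a genuine polynomial of degree $n\le h\cdot(\text{bounded factor})$'', so that a finite telescoping suffices. This is false: the restriction is still a polynomial of degree $n$ in $\frac{x-a}{p^k}$, and $n$ is unbounded relative to $h$. What saves the argument is quantitative, not degree-theoretic: the coefficients of $\left(\frac{x-a}{p^k}\right)^j$ for $j>\lfloor\lambda\rfloor$ are $p$-adically small — this is precisely the content of $\binom{x}{n}\in C_\lambda$ with norm $\sim p^{\lfloor\lambda\ell(n)\rfloor}$, i.e.\ the Amice--Colmez bounds on the divided derivatives of $\binom{x}{n}$ — so one truncates the local Taylor expansion at degree $h$ and iterates over \emph{all} radii $k\ge\ell(n)$, an infinite telescoping whose level-$k$ increment has sup-norm $O(p^{\lfloor\lambda\ell(n)\rfloor-\lambda k})$. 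One must then justify applying $\mu$ term by term: the series converges in sup-norm but its terms do not tend to $0$ in $C_\lambda$-norm, so you should estimate the candidate seminorm of the partial sums directly rather than invoke norm-convergence in $C_\lambda$. Finally, your sketch never addresses the fact that only the single exponent $j=h$ appears in the candidate norm, whereas the Taylor truncations produce local monomials of every degree $0\le j\le h$: to recover these you must vary the base point $a$ within a fixed ball and invert a Vandermonde-type system (the polynomials $(y-c)^h$ for $c=0,\dots,h$ span the degree-$\le h$ polynomials with denominators bounded in terms of $h$ and $p$). That step, together with the truncation estimate, is where the hypothesis $h\ge\lfloor\lambda\rfloor$ is actually consumed.
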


  \begin{proof}
   See \cite{colmez98}, Lemma II.2.5.
  \end{proof}

  As well as the Banach topology induced by the above norms (the so-called \emph{strong topology}), the space $D_\lambda(\Zp, \Qp)$ also has a \emph{weak topology}\footnote{This notation is somewhat misleading; it would be better to describe this as the \emph{weak-star topology}, and to reserve the term \emph{weak topology} for the topology on $D_\lambda(\Zp, \Qp)$ induced by its own continuous dual (for the strong topology), in line with the usual terminology in classical functional analysis. However, the above abuse of notation has become standard in the nonarchimedean theory, perhaps because the continuous duals of spaces such as $D_\lambda(\Zp, \Qp)$ are too pathological to be of much interest.}, which can be defined as the weakest topology making the evaluation maps $\mu \mapsto \int f\, \mathrm{d}\mu$ continuous for all $f \in C_\lambda(\Zp, \Qp)$.

  \begin{remark}
   The weak topology is much more useful for our purposes than the strong topology, since the natural map $\Zp \into D_0(\Zp, \Qp)$ given by mapping $a \in \Zp$ to the linear functional $f \mapsto f(a)$ is \emph{not} continuous in the strong topology, while it is obviously continuous in the weak topology.
  \end{remark}

  More generally, if $M$ is a $\Qp$-Banach space, we define $D_\lambda(\Zp, M) = \Hom_{\mathrm{cts}}(C_\lambda(\Zp, \Qp), M)$; as before, this has a strong topology induced by the operator norm (which we write as $\|-\|_\lambda$), and a weak topology given by pointwise convergence on $C_\lambda(\Zp, \Qp)$.
  
  \begin{proposition}
   \label{prop:banachsteinhaus}
   Let $X$ be a compact Hausdorff space, and $M$ a Banach space, and let $\sigma: X \to D_\lambda(\Zp, M)$ be a continuous map (with respect to the weak topology on $D_\lambda(\Zp, M)$). Then $\sup\{ \|\sigma(x)\|_{\lambda}: x \in X\} < \infty$.
  \end{proposition}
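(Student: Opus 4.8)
The plan is to recognise this as an instance of the nonarchimedean uniform boundedness (Banach--Steinhaus) principle. Recall that $\|-\|_\lambda$ is, by definition, the operator norm on $D_\lambda(\Zp, M) = \Hom_{\mathrm{cts}}(C_\lambda(\Zp, \Qp), M)$, so the statement asserts precisely that the family of continuous linear operators $\{\sigma(x) : x \in X\}$, from the $\Qp$-Banach space $C_\lambda(\Zp, \Qp)$ to $M$, is uniformly bounded in operator norm. I would deduce this from \emph{pointwise} boundedness by a Baire-category argument.

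First I would establish pointwise boundedness. Fix $f \in C_\lambda(\Zp, \Qp)$. By construction the weak topology on $D_\lambda(\Zp, M)$ is the coarsest one making every evaluation map $\mu \mapsto \mu(f)$ continuous, so weak continuity of $\sigma$ means exactly that each composite $x \mapsto \sigma(x)(f)$ is a continuous map $X \to M$. As $X$ is compact and $M$ is Hausdorff, its image is compact, hence bounded, giving $C_f := \sup_{x\in X}\|\sigma(x)(f)\| < \infty$ for every $f$.

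Next I would run the Baire argument. For $N \ge 1$ set $E_N = \{ f \in C_\lambda(\Zp, \Qp) : \|\sigma(x)(f)\| \le N \text{ for all } x \in X\}$; this is an intersection of closed sets (the $\sigma(x)$ being continuous), hence closed, and $\bigcup_{N} E_N = C_\lambda(\Zp, \Qp)$ by the pointwise bound. Since $C_\lambda(\Zp, \Qp)$ is a Banach space, hence a complete metric space, Baire's theorem yields an $N$ for which $E_N$ contains some ball $f_0 + \{ g : \|g\|\le r\}$ with $r>0$. For $\|g\| \le r$ and any $x$, the ultrametric inequality gives $\|\sigma(x)(g)\| = \|\sigma(x)(f_0+g) - \sigma(x)(f_0)\| \le \max(N,N) = N$; and for arbitrary nonzero $h$, choosing $c \in \Qp^\times$ with $r/p < |c|\,\|h\| \le r$ yields $\|\sigma(x)(ch)\| \le N$, hence $\|\sigma(x)(h)\| \le N/|c| < (pN/r)\,\|h\|$. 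Therefore $\|\sigma(x)\|_\lambda \le pN/r$ uniformly in $x \in X$, which is the desired bound.

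I expect the one point needing care to be the Baire step itself: one must be sure the classical uniform-boundedness argument survives passage to a nonarchimedean base field. It does, because $C_\lambda(\Zp, \Qp)$ is a genuine Banach space over the complete field $\Qp$, so Baire applies, and the ``subtraction'' and ``rescaling'' estimates are if anything cleaner than over $\RR$; none of the pathologies that can afflict spaces over more exotic valued fields intervene here. Everything else is formal, using only the compactness of $X$ and the identification of the weak topology with pointwise convergence against $C_\lambda(\Zp, \Qp)$.
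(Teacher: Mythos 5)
Your proof is correct and follows the same route as the paper: weak continuity plus compactness of $X$ gives pointwise boundedness, and then the nonarchimedean Banach--Steinhaus (uniform boundedness) principle upgrades this to a uniform bound on the operator norms. The paper simply cites Banach--Steinhaus where you spell out the Baire-category argument, but the content is identical.
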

  
  \begin{proof}
   For each $f \in C_\lambda(\Zp, \Qp)$, the map $X \to M$ given by $x \mapsto \sigma(x)(f)$ is continuous, and hence bounded. By the Banach--Steinhaus theorem, this implies that the collection of linear maps $\{ \sigma(x) : x \in X\}$ is bounded in the uniform norm.
  \end{proof}
  
  \begin{definition}
   For $h\geq 0$, denote by $LP^{[0, h]}(\Zp, \Qp)$ the space of locally polynomial functions on $\Zp$ of degree $\le h$. If $M$ is a $\Qp$-vector space, write $D_{\alg}^{[0, h]}(\Zp,M)$ for the $\Qp$-linear homomorphisms of $LP^{[0, h]}(\Zp, \Qp)$ into $M$. 
  \end{definition}
  
  \begin{remark}
   An element $\mu\in LP^{[0, h]}(\Zp, \Qp)$ is uniquely determined by a collection of values
   $\int_{a+p^n\Zp}x^i\mu(x)$ for $i\in [0,h]$, $a\in\Zp$, $n\in\mathbf{N}$, 
   satisfying the compatibility relations
   \[ \int_{a+p^n\Zp}x^i\mu(x)=\sum_{k=0}^{p-1}\int_{a+kp^n+p^{n+1}\Zp}x^i\mu(x).\]
  \end{remark}

  \begin{lemma}
   \label{lemma:wkconvergence}
   Let $(\mu_n)_{n \ge 1}$ be a sequence of elements of $D_\lambda(\Zp, M)$ which is uniformly bounded (i.e.~there is a constant $C$ such that $\|\mu_n\|_{\lambda} \le C$ for all $n$), let $\mu \in D_\lambda(\Zp, M)$, and let $h \ge \lfloor \lambda \rfloor$ be an integer. If we have $\int f \, \mathrm{d}\mu_n \to \int f \, \mathrm{d}\mu$ as $n \to \infty$ for all $f\in LP^{[0, h]}(\Zp, \Qp)$, then $\mu_n \to \mu$ in the weak topology of $D_\lambda(\Zp, M)$.
  \end{lemma}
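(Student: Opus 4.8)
The plan is to reduce weak convergence against arbitrary $f \in C_\lambda(\Zp, \Qp)$ to convergence against locally polynomial functions of degree $\le h$, using a density argument tempered by the uniform bound on the $\mu_n$. Concretely, fix $f \in C_\lambda(\Zp, \Qp)$ and $\varepsilon > 0$; I want to show $\left\| \int f\, \mathrm{d}\mu_n - \int f\, \mathrm{d}\mu \right\| < \varepsilon$ for $n$ large. The idea is to choose a locally polynomial $g$ of degree $\le h$ approximating $f$ well, split
\[ \int f\, \mathrm{d}\mu_n - \int f\, \mathrm{d}\mu = \int (f-g)\, \mathrm{d}\mu_n + \left(\int g\, \mathrm{d}\mu_n - \int g\, \mathrm{d}\mu\right) + \int (g-f)\, \mathrm{d}\mu, \]
control the outer two terms by $\|f-g\|$ times the uniform bound (for $\mu_n$) resp. $\|\mu\|_\lambda$, and kill the middle term for $n \gg 0$ by hypothesis, since $g \in LP^{[0,h]}(\Zp, \Qp)$.

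The key point that makes this work — and the main obstacle — is that it is \emph{not} enough to approximate $f$ in the sup-norm by locally polynomial functions, since the functionals $\mu_n$ are only bounded in the order-$\lambda$ norm $\|-\|_\lambda$, not the sup-norm; one needs an estimate of the form $\left\| \int h\, \mathrm{d}\nu \right\| \le c_\lambda \|h\|_{C_\lambda} \|\nu\|_\lambda$ with a constant independent of $\nu$, together with the fact that $LP^{[0,h]}(\Zp,\Qp)$ is dense in $C_\lambda(\Zp, \Qp)$ for the order-$\lambda$ topology \emph{when $h \ge \lfloor \lambda \rfloor$}. This is precisely where the hypothesis $h \ge \lfloor\lambda\rfloor$ enters: locally polynomial functions of degree $\le h$ are dense in $C_\lambda$ exactly under this constraint (one sees this from Colmez's explicit Banach basis $p^{\lfloor \lambda \ell(n)\rfloor}\binom{x}{n}$, since truncating to $n < p^L$ gives a locally polynomial function, and $\binom{x}{n}$ has degree $n$, so degrees up to roughly $p^L$ appear — but one must check that the \emph{local} polynomial degree can be kept $\le h$; this follows from the binomial-coefficient version of the norm, or equivalently from Proposition \ref{prop:normbound}, which shows $\|-\|_\lambda$ is governed by the moments $\int_{a+p^n\Zp}\left(\frac{x-a}{p^n}\right)^h\mathrm{d}\mu$ of degree exactly $h$).

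So the concrete steps are: first, record the boundedness estimate $\left\|\int h\, \mathrm{d}\nu\right\| \le \|h\|_{C_\lambda}\|\nu\|_\lambda$ for any $\nu \in D_\lambda(\Zp, M)$, which is immediate from the definition of the operator norm. Second, invoke density: given $f$ and $\varepsilon$, pick $g \in LP^{[0,h]}(\Zp,\Qp)$ with $\|f - g\|_{C_\lambda} < \varepsilon / (2\max(C, \|\mu\|_\lambda) + 1)$; here I would cite Colmez's \cite{colmez-fonctions} description of $C_\lambda$, or alternatively give the short argument via the Banach basis, noting that $h \ge \lfloor\lambda\rfloor$ guarantees the locally polynomial truncations stay in degree $\le h$ after a change of basis on each residue disc. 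Third, apply the three-term split above: the first and third terms are each $< \varepsilon/2 \cdot \frac{\max(C,\|\mu\|_\lambda)}{\max(C,\|\mu\|_\lambda)} < \varepsilon/2$-ish (being careful with the constants), and the middle term tends to $0$ by hypothesis, so for $n$ large the whole thing is $< \varepsilon$.

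I expect the genuine content to be entirely in the density statement and its interaction with the degree bound $h$; the rest is a standard $\varepsilon/3$ argument. One subtlety worth a remark: the hypothesis that the $\mu_n$ are uniformly bounded is essential and cannot be dropped — without it, weak convergence on a dense subspace does not propagate — and this is exactly the role played by Proposition \ref{prop:banachsteinhaus} in the applications, which provides such a bound automatically for continuously-varying families. It may also be cleanest to first prove the scalar case $M = \Qp$ and then note the argument is identical for general Banach $M$, since the norm estimates are uniform in the target; but since nothing simplifies, I would just carry $M$ along throughout.
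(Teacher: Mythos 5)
Your argument is correct and is exactly the paper's proof, which simply asserts that the claim "is immediate from the density of $LP^{[0,h]}(\Zp,\Qp)$ in $C_\lambda(\Zp,\Qp)$"; your three-term split with the uniform bound on $\|\mu_n\|_\lambda$ is the standard unpacking of that one-line density argument, and you correctly locate the role of the hypothesis $h \ge \lfloor\lambda\rfloor$ in the density statement itself.
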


  \begin{proof}
   This is immediate from the density of $LP^{[0, h]}(\Zp, \Qp)$ in $C_\lambda(\Zp, \Qp)$.
  \end{proof}

  Finally, if $U$ is an open subset of $\Zp$, we define $D_\lambda(U, M)$ as the subspace of $D_\lambda(\Zp, M)$ consisting of distributions supported in $U$; this is closed (in both weak and strong topology).

 \subsection{Cohomology of distribution modules}

  We now apply the theory of the preceding sections in the context of representations of Galois groups. Our arguments are closely based on those used by Colmez \cite{colmez98} for local Galois representations, but also incorporating some ideas from Appendix A.2 of \cite{LLZ14}.

  We consider either of the two following settings: either $K$ is a finite extension of $\Qp$ and $G = \Gal(\overline{K} / K)$; or $K$ is a finite extension of $\QQ$ and $G = \Gal(K^S / K)$, where $K^S$ is the maximal extension of $K$ unramified outside some finite set of places $S$ including all infinite places and all places above $p$. In both cases we write $H^*(K, -)$ for $H^*(G, -)$; this notation is a little abusive in the global setting, but this should not cause any major confusion.

  We set $K_\infty = K(\mu_{p^\infty})$, and $H = \Gal(\overline{K} / K_\infty)$ (resp. $\Gal(K^S / K_\infty)$ in the global case). Thus $H$ is closed in $G$ and the cyclotomic character identifies $\Gamma = G / H$ with an open subset of $\Zp^\times$.

  \begin{remark}
   More generally, one may take for $K_\infty$ any abelian $p$-adic Lie extension of $K$ of dimension 1; see forthcoming work of Francesc Castella and Ming-Lun Hsieh for an application of this theory in the context of anticyclotomic extensions of imaginary quadratic fields.
  \end{remark}

  As in section \ref{sect:banach} above, we let $A$ be a a Noetherian $\Qp$-Banach algebra, and $M$ a finite free $A$-module with a continuous $A$-linear action of $H$; and we fix a choice of norm $\|\cdot\|_M$ on $M$ making it into a Banach $A$-module. We shall be concerned with the continuous cohomology $H^1(K_\infty, D_{\lambda}(\Gamma, M))$, where $D_{\lambda}(\Gamma, M)$ is equipped with the weak topology. Note that this cohomology group is endowed with a supremum seminorm, since every continuous cocycle $H \to D_\lambda(\Gamma, M)$ is bounded by Proposition \ref{prop:banachsteinhaus}.

  \begin{proposition}
   \label{prop:boundfordists}
   Let $\lambda \in \RR_{\ge 0}$. Then $H^1(K_\infty, D_\lambda(\Gamma, M))$ injects into $H^1(K_\infty, D_{\alg}^{[0, h]}(\Gamma, M))$ for any integer $h \ge \lfloor \lambda \rfloor$.

   An element $\mu \in H^1(K_\infty, D_{\alg}^{[0, h]}(K_\infty, M))$ is in the image of this injection if and only if the sequence
   \[ p^{-\lfloor \lambda n\rfloor} \sup_{\gamma \in \Gamma} \left\|  \int_{\gamma \Gamma_n} \left(\frac{\chi(x) - \chi(\gamma)}{p^n}\right)^h\, \mathrm{d}\mu\right\| \tag{$\star$} \]
   is bounded as $n \to \infty$, where $\left\|\cdot\right\|$ is the norm on $H^1(K_\infty, M)$ induced by the norm of $M$. Moreover, if this condition holds, we have
   \[ \|\mu\|_\lambda \le D \sup_{n \ge 0} p^{-\lfloor \lambda n\rfloor} \sup_{\gamma \in \Gamma} \left\|  \int_{\gamma \Gamma_n} \left(\frac{\chi(x) - \chi(\gamma)}{p^n}\right)^h\, \mathrm{d}\mu\right\|, \]
   where $\|\mu\|_\lambda$ is the supremum seminorm on $H^1(K_\infty, D_\lambda(\Gamma, M))$ and  $D$ is a constant independent of $K$ and $M$.
  \end{proposition}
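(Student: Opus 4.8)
The plan is to mimic Colmez's argument (\cite{colmez98}, and its adaptation in \cite[\S A.2]{LLZ14}), using the fact that $D_\lambda(\Gamma, M)$ is a projective limit of the finite-level modules $D_{\alg}^{[0,h]}$ cut out by a single polynomial congruence. First I would set up the map: $LP^{[0,h]}(\Gamma, M)^* = D_{\alg}^{[0,h]}(\Gamma, M)$ receives the evaluation map from $D_\lambda(\Gamma, M)$ by restriction of distributions to locally polynomial functions of degree $\le h$, and this is $H$-equivariant and continuous for the weak topologies, hence induces $H^1(K_\infty, D_\lambda(\Gamma, M)) \to H^1(K_\infty, D_{\alg}^{[0,h]}(\Gamma, M))$. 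Injectivity comes from Lemma \ref{lemma:wkconvergence}: if a cocycle $\sigma: H \to D_\lambda(\Gamma, M)$ becomes a coboundary after restriction to $LP^{[0,h]}$, say equal to $(h-1)\phi$ on that subspace for some $\phi \in D_{\alg}^{[0,h]}(\Gamma, M)$, one must upgrade $\phi$ to an honest element of $D_\lambda$; here I would use Proposition \ref{prop:banachsteinhaus} to bound $\sigma$, deduce $\phi$ is bounded in the $\|\cdot\|_\lambda$-seminorm via Proposition \ref{prop:normbound} (applied with exponent $h \ge \lfloor\lambda\rfloor$), and conclude $\phi$ extends by density to all of $C_\lambda$, making $\sigma$ a coboundary in $D_\lambda$.

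For the image criterion, the characterization of $D_\lambda$ inside $D_{\alg}^{[0,h]}$ at the level of distributions is exactly Proposition \ref{prop:normbound}: a functional on $LP^{[0,h]}$ extends to $C_\lambda$ iff the normalized moments $p^{-\lfloor\lambda n\rfloor}\int_{a+p^n\Zp}\left(\frac{x-a}{p^n}\right)^h$ are uniformly bounded, and then that supremum controls $\|\cdot\|_\lambda$ up to a universal constant. The point is to transport this from distributions to cohomology classes. Given $\mu \in H^1(K_\infty, D_{\alg}^{[0,h]}(\Gamma, M))$ satisfying ($\star$), represent it by a continuous cocycle $c: H \to D_{\alg}^{[0,h]}(\Gamma, M)$; for each $n$ and each $\gamma$, the "moment cochain" $h \mapsto \int_{\gamma\Gamma_n}\left(\frac{\chi(x)-\chi(\gamma)}{p^n}\right)^h \mathrm{d}c(h)$ is a cocycle valued in $M$, and ($\star$) says these are bounded in $H^1(K_\infty, M)$ uniformly in $n, \gamma$. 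I would then reconstruct a $D_\lambda$-valued cocycle: at each finite level $n$ the cocycle $c$ already lands in $D_{[0,h]}$ at level $n$; the boundedness hypothesis plus Proposition \ref{prop:normbound} (now applied fibrewise over $M$, i.e.\ to the $M$-valued distributions $c(h)$, using that $h\ge\lfloor\lambda\rfloor$) forces $\|c(h)\|_\lambda$ to be bounded on $H$ — but this requires care, since ($\star$) bounds the cohomology class of the moment cochain, not the cochain itself. This is where I expect the real work: I would use Proposition \ref{prop:boundariesclosed} and Proposition \ref{prop:infres}, or rather the continuous-section statements, to choose the representative cocycle $c$ so that its moment cochains are bounded pointwise by (a constant times) the $H^1$-norms appearing in ($\star$); concretely, modify $c$ by a coboundary $d\psi$ with $\psi \in D_{\alg}^{[0,h]}(\Gamma, M)$ chosen via a continuous section of $M \onto B^1$ so as to "absorb" the unbounded part, invoking that $B^1(H, \cL_w(-, M))$ has a continuous section (the Remark after Proposition \ref{prop:infres}). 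Once $c$ is bounded in $\|\cdot\|_\lambda$, Lemma \ref{lemma:wkconvergence} shows it defines a continuous cocycle valued in $D_\lambda(\Gamma, M)$ mapping to $\mu$, and the final norm inequality $\|\mu\|_\lambda \le D \sup_n(\cdots)$ falls out of the constant in Proposition \ref{prop:normbound}, which is visibly independent of $K$ and $M$.

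The main obstacle is precisely the passage from a bound on cohomology classes of moment cochains (hypothesis ($\star$)) to a bound on an actual cocycle representative in the $\|\cdot\|_\lambda$-seminorm — i.e.\ showing the reconstruction process does not lose boundedness. The mechanism for this is the existence of \emph{continuous} (not equivariant) sections of the boundary maps, Proposition \ref{prop:boundariesclosed}(3) and its extension to $\cL_w$-modules, which let one choose cochain-level primitives depending continuously on the data; one then argues level by level in $n$ and takes a limit, with uniformity guaranteed by Banach--Steinhaus (Proposition \ref{prop:banachsteinhaus}). Everything else — the definition of the restriction map, injectivity, and the shape of the final estimate — is a fairly mechanical transcription of Colmez's local arguments, using Proposition \ref{prop:normbound} as the key functional-analytic input and noting that all constants are universal.
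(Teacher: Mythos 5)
Your proposal follows essentially the same route as the paper, which simply transcribes Colmez's Propositions II.2.1 and II.2.3 using exactly the supporting inputs you cite: closedness of coboundaries and continuous sections (Proposition \ref{prop:boundariesclosed}) to represent a class satisfying $(\star)$ by a cocycle satisfying $(\star)$ pointwise in the supremum norm, Proposition \ref{prop:normbound} to see that each value of that cocycle lies in $D_\lambda(\Gamma, M)$ with the stated bound, and Lemma \ref{lemma:wkconvergence} together with Banach--Steinhaus for the continuity of the resulting $D_\lambda$-valued cocycle. The one imprecision is in your injectivity step: a bound on $\sigma(g) = (g-1)\phi$ does not directly bound the primitive $\phi$ (which is only determined up to $H$-invariant distributions), so one should argue as Colmez does, via the closedness of $B^1(K_\infty, M)$ in $Z^1(K_\infty, M)$ and the continuous section of $M \onto B^1(K_\infty, M)$, rather than by trying to bound $\phi$ itself.
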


  \begin{proof}
   For the injectivity, see Proposition II.2.1 of \cite{colmez98}, where this result is proved for arbitrary Banach representations $M$ such that $B^1(K_\infty, M)$ is closed in $Z^1(K_\infty, M)$; Proposition \ref{prop:boundariesclosed} shows that this is automatic under our present hypotheses on $M$. (The argument in \emph{op.cit.} is given for $K$ local, but it applies identically in the global case too.)

   To describe the image of this map, we follow the argument of Proposition II.2.3 of \emph{op.cit.} in which the result is shown for $A = \Qp$ and $K$ local. Exactly as in \emph{op.cit.}, given any class in $H^1(K_\infty, D_{\alg}^{[0, h]}(\Gamma, M))$ satisfying $(\star)$, then we may represent it by a cocycle $g \mapsto \mu(g)$ in $Z^1(K_\infty, D_{\alg}^{[0, h]}(\Gamma, M))$ which also satisfies $(\star)$ in the supremum norm. For each $h \in H$, we see that $\mu(h)$ lies in the image of $D_\lambda(\Gamma, M) \into D^{[0, h]}_{\alg}(\Gamma, M)$. Thus $\mu$ defines a cocycle on $H$ with values in $D_\lambda(\Gamma, M)$. Moreover, the values $\|\mu(h)\|_\lambda$ for $h \in H$ are bounded above by a constant multiple of the supremum of the sequence in $(\star)$, by Proposition \ref{prop:normbound}.

   It remains to check that the cocycle $g \mapsto \mu(g)$ is continuous (for the weak topology of $D_\lambda(\Gamma, M)$). This is asserted without proof \emph{loc.cit.}, and we are grateful to Pierre Colmez for explaining the argument. Since $H$ is a compact Hausdorff space, it suffices to show that for every convergent sequence $g_n \to g$, the sequence $\mu_n := \mu(g_n)$ converges to $\mu(g)$ in $D_\lambda(\Gamma, M)$. However, by construction we know that $\int f\, \mathrm{d}\mu_n$ converges to $\int f\, \mathrm{d}\mu$ for each $f \in LP^{[0,h]}(\Gamma, \Qp)$. Since the $\mu_n$ are uniformly bounded, Lemma \ref{lemma:wkconvergence} shows that they converge weakly to $\mu(g)$ as required.
  \end{proof}

  We now consider a special case of this statement. We impose the stronger assumption that $M$ is a continuous representation of the larger group $G = \Gal(\overline{K}/K)$ (resp. $\Gal(K^S/K)$ in the global case), rather than just of $H$. We equip $D_\lambda(\Gamma, M)$ with an action of $G$ by
  \[ \int_{x \in \Gamma} f(x)\, \mathrm{d}g(\mu) =g \left( \int_{x \in \Gamma} f([g]^{-1} x) \,\mathrm{d}\mu\right)\]
  where $[g]$ is the image of $g$ in $\Gamma$.

  \begin{proposition}
   \label{prop:unbounded-iwasawa}
   Let $\lambda \in \RR_{\ge 0}$, $h \ge \lfloor \lambda \rfloor$ an integer, and suppose we are given elements $x_{n, j} \in H^1(K_\infty, M)^{\Gamma_n = \chi^j}$, for all $n \ge 0$ and $0 \le j \le h$, satisfying the following conditions:
   \begin{itemize}
    \item For all $n \ge 0$, we have $\sum_{\gamma \in \Gamma_n / \Gamma_{n + 1}} \chi(\gamma)^{-j} \gamma \cdot x_{n+1, j} = x_{n, j}$.
    \item There is a constant $C$ such that
    \[ \left\| p^{-hn} \sum_{j = 0}^h (-1)^j \binom{h}{j} x_{n, j}\right\| \le Cp^{\lfloor \lambda n \rfloor} \]
    for all $n$.
   \end{itemize}
   Then there is a unique element $\mu \in H^1\left(K_\infty, D_\lambda(\Gamma, M)\right)^\Gamma$ satisfying
   \[ x_{n, j} = \int_{\Gamma_n} \chi^j \mu \]
   for all $n \ge 0$ and $0 \le j \le h$; and there is a constant $D$ independent of $K$ and of $M$ such that
   \[ \| \mu\|_\lambda \le C D, \]
   where $\|\mu\|_\lambda$ is the seminorm on $H^1(K_\infty, D_\lambda(\Gamma, M))$ induced by the norm of $D_\lambda(\Gamma, M)$.
  \end{proposition}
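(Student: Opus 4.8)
The plan is to build the class $\mu$ in two stages: first to produce an ``algebraic'' distribution-valued class $\bar\mu \in H^1(K_\infty, D_{\alg}^{[0,h]}(\Gamma, M))$ realising the data $(x_{n,j})$, and then to recognise it, via Proposition \ref{prop:boundfordists}, as the image of a genuine class in $H^1(K_\infty, D_\lambda(\Gamma, M))$ once the growth condition $(\star)$ has been checked. For the first stage, the key observation is that, writing $\chi$ for the coordinate on $\Gamma$, every locally polynomial function of degree $\le h$ is uniquely of the form $\sum_{j=0}^h \chi^j g_j$ with each $g_j$ locally constant, so
\[ LP^{[0,h]}(\Gamma, \Qp) = \bigoplus_{j=0}^h \chi^j\cdot LC(\Gamma,\Qp) \]
as $\Gamma$-modules, $\Gamma$ acting on the $j$-th summand by translation twisted by $\chi^{-j}$. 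Dualising, $D_{\alg}^{[0,h]}(\Gamma, M) = \bigoplus_{j=0}^h N_j$ with $N_j = \Hom_{\Qp}(\chi^j LC(\Gamma,\Qp), M) \cong \varprojlim_n M[\Gamma/\Gamma_n]$ as a $G$-module, the action combining the natural action on $M$, translation on the $\Gamma/\Gamma_n$, and a $\chi^j$-twist. Since $H$ acts trivially on each $\Gamma/\Gamma_n$ and on the twist, and continuous cochains commute with the inverse limit (the $\varprojlim^1$-term vanishes, as $H^0(H, M[\Gamma/\Gamma_n]) = M^H[\Gamma/\Gamma_n]$ has surjective transition maps), one gets $H^1(K_\infty, N_j) = \varprojlim_n H^1(K_\infty, M)[\Gamma/\Gamma_n]$; unwinding the residual $\Gamma$-action identifies $H^1(K_\infty, N_j)^\Gamma$ with the set of systems $(x_n)_n$, $x_n \in H^1(K_\infty, M)^{\Gamma_n = \chi^j}$, satisfying the first bullet of the hypotheses, the identification sending a class to its ``$\gamma = 1$'' components, under which $\int_{\gamma\Gamma_n}\chi^j$ of a class is recovered as $\chi(\gamma)^j\,\gamma^{-1}\cdot x_n$. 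Feeding in the given $(x_{n,j})$ for each $j$ therefore yields a unique $\Gamma$-invariant $\bar\mu \in H^1(K_\infty, D_{\alg}^{[0,h]}(\Gamma,M))$ with $\int_{\Gamma_n}\chi^j\,\mathrm{d}\bar\mu = x_{n,j}$.

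The second stage is to check condition $(\star)$ for $\bar\mu$. Expanding $\left(\tfrac{\chi(x)-\chi(\gamma)}{p^n}\right)^h$ by the binomial theorem and inserting the formula $\int_{\gamma\Gamma_n}\chi^j\,\mathrm{d}\bar\mu = \chi(\gamma)^j\gamma^{-1}x_{n,j}$, the powers of $\chi(\gamma)$ collapse to a single factor and one obtains
\[ \int_{\gamma\Gamma_n}\left(\frac{\chi(x)-\chi(\gamma)}{p^n}\right)^h\mathrm{d}\bar\mu = \frac{(-1)^h\chi(\gamma)^h}{p^{hn}}\,\gamma^{-1}\!\left(\sum_{j=0}^h(-1)^j\binom{h}{j}x_{n,j}\right). \]
Since $\chi(\gamma) \in \Zp^\times$ has absolute value $1$ and $\Gamma$ acts isometrically on $H^1(K_\infty,M)$ (for the natural $G$-invariant choice of norm on $M$), the supremum over $\gamma$ of the norm of the left-hand side equals $\big\|p^{-hn}\sum_{j=0}^h(-1)^j\binom{h}{j}x_{n,j}\big\|$, which the second bullet bounds by $Cp^{\lfloor\lambda n\rfloor}$. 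Hence the sequence $(\star)$ for $\bar\mu$ is bounded by $C$.

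Proposition \ref{prop:boundfordists} now applies: $\bar\mu$ lies in the image of the $\Gamma$-equivariant injection $H^1(K_\infty, D_\lambda(\Gamma,M)) \into H^1(K_\infty, D_{\alg}^{[0,h]}(\Gamma,M))$, and its unique preimage $\mu$ is then automatically $\Gamma$-invariant, has $\int_{\Gamma_n}\chi^j\mu = x_{n,j}$, and satisfies $\|\mu\|_\lambda \le D\sup_n(\star)_n \le CD$ for the constant $D$ of that proposition, which is independent of $K$ and $M$. Uniqueness of $\mu$ follows from the same injectivity, since a $\Gamma$-invariant algebraic class all of whose moments $\int_{\Gamma_n}\chi^j$ vanish has all its ``$\gamma = 1$'' components zero, hence (by $\Gamma$-invariance) is zero. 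I expect the main obstacle to be the first stage — both the correct formulation of the decomposition and, above all, the unwinding that matches $\Gamma$-invariant algebraic distribution-valued classes with norm-compatible systems of local cohomology classes, including the inverse-limit/cochain bookkeeping and the $\varprojlim^1$-vanishing — together with the care needed in the binomial computation so that the precise normalisation appearing in the second hypothesis reproduces condition $(\star)$ exactly.
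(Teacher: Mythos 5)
Your proposal is correct and follows essentially the same route as the paper: construct the unique $\Gamma$-invariant class in $H^1(K_\infty, D_{\alg}^{[0,h]}(\Gamma,M))$ from the data $(x_{n,j})$ (the paper does this by noting that the functions $\chi^j\mathbf{1}_{\Gamma_n}$ and their $\Gamma$-translates span $LP^{[0,h]}(\Gamma,\Qp)$, which is the same content as your direct-sum decomposition, just stated more briefly), then verify condition $(\star)$ via the binomial identity and invoke Proposition \ref{prop:boundfordists} to lift to $D_\lambda$ with the bound $\|\mu\|_\lambda \le CD$. Your write-up simply makes explicit two steps the paper leaves implicit, namely the inverse-limit identification of $\Gamma$-invariant algebraic classes with norm-compatible systems and the binomial computation matching the growth hypothesis to $(\star)$.
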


  \begin{proof}
   We claim first that there is a unique $\mu^{\mathrm{alg}} \in H^1(K_\infty, D_{\alg}^{[0, h]}(\Gamma, M))^\Gamma$ such that
   \[ x_{n, j} = \int_{\Gamma_n} \chi^j \mu^{\mathrm{alg}}. \]
   This follows from the fact that the functions $\phi_{n, j}(x) \coloneqq x^j \mathbf{1}_{1 + p^n \Zp}(x)$ for $n \ge 0$ and $0 \le j \le h$, and their translates under $\Gamma$, span the space $LP^{[0, h]}(\Gamma, \Qp)$.

   By Proposition \ref{prop:boundfordists}, the existence of the constant $C$ implies that $\mu^{\mathrm{alg}}$ is the image of a class $\mu \in H^1(K_\infty, D_\lambda(\Gamma, M))$, which must itself be $\Gamma$-invariant since the injection $ H^1(K_\infty, D_\lambda(\Gamma, M)) \into  H^1(K_\infty, D_{\alg}^{[0, h]}(\Gamma, M))$ commutes with the action of $\Gamma$. This proposition also shows that $\|\mu\|_\lambda$ is bounded above by $CD$.
  \end{proof}

  Using the inflation-restriction exact sequence (and the fact that $\Gamma$ has cohomological dimension 1) we see that $\mu$ lifts to a class in $H^1(K, D_{\lambda}(\Gamma, M))$. This lift is not necessarily unique, but it is unique modulo $H^1(\Gamma, D_\lambda(\Gamma, M^{G_{K_\infty}}))$ (and thus genuinely unique if $M^{G_{K_\infty}} = 0$).


  \subsection{Iwasawa cohomology}
  
  We now show that there is an interpretation of the module $H^1(K, D_\lambda(\Gamma, M))$ in terms of Iwasawa cohomology. Since the group $G$ has excellent finiteness properties (unlike its subgroup $H$), we have the general finite-generation and base-change results of \cite{Pottharst-analytic} at our disposal.
  
  We now assume that $A$ is a reduced affinoid algebra over $\Qp$. By a theorem of Chenevier (see \cite[Lemma 3.18]{Chenevier-application}) we may find a Banach-algebra norm on $A$, with associated unit ball $A^\circ = \{ a \in A : \|a\| \le 1\}$, and a compatible Banach $A$-module norm on $M$ with unit ball $M^\circ \subset M$, such that $G$ preserves $M^\circ$ and $M^\circ$ is locally free as an $A^\circ$-module.
  
  \begin{definition}
   We set
   \[ H^1_{\Iw}(K_\infty, M) = \left( \varprojlim_n H^1(K_n, M^\circ) \right)[1/p]. \]
  \end{definition}
  
  This is evidently independent of the choice of lattice $M^\circ$. 
  
  \begin{proposition}
   The module $H^1_{\Iw}(K_\infty, M)$ is finitely-generated over $D_0(\Gamma, A)$, and there are isomorphisms
   \begin{align*}
    H^1(K, D_0(\Gamma, M)) &\cong H^1_{\Iw}(K_\infty, M),\\
    H^1(K, D^{\mathrm{la}}(\Gamma, M)) &\cong D^{\mathrm{la}}(\Gamma, A) \otimes_{D_0(\Gamma, A)} H^1_{\Iw}(K_\infty, M).
   \end{align*}
  \end{proposition}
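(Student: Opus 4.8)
The plan is to identify both of these $H^1$'s with Iwasawa cohomology and then lean on the finiteness and base-change formalism of \cite{Pottharst-analytic}, which is available precisely because $G$ — unlike its subgroup $H$ — has finite cohomological dimension and finite cohomology on finite modules.

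First I would set $\Lambda := A^\circ[[\Gamma]] = \varprojlim_n A^\circ[\Gamma/\Gamma_n]$ and note that $D_0(\Gamma, M)$, being the module of bounded $A$-valued measures on $\Gamma$, is $\left(\varprojlim_n M^\circ \otimes_{A^\circ} A^\circ[\Gamma/\Gamma_n]\right)[1/p]$ as a topological $G$-module. For each $n$ the finite module $M^\circ \otimes_{A^\circ} A^\circ[\Gamma/\Gamma_n]$, with its diagonal twisted $G$-action, is canonically the induced module $\mathrm{Ind}_{G_{K_n}}^{G} M^\circ$; since continuous cochains into a Mittag--Leffler inverse limit of finite modules compute the derived inverse limit, Shapiro's lemma should give $R\Gamma(K, D_0(\Gamma, M^\circ)) \cong R\varprojlim_n R\Gamma(K_n, M^\circ)$. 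Passing to $H^1$ via the Milnor exact sequence, the point to check is that the correction term $\varprojlim^1_n H^0(K_n, M^\circ)$ vanishes; this holds because $(M^\circ)^{G_{K_n}}$ stabilises (as $M^\circ$ is Noetherian) to a finitely generated $\Zp$-module on which $\Gamma$ acts through a finite quotient, and the transition maps, being corestrictions, are eventually multiplication by $p$. Inverting $p$ then yields the first isomorphism $H^1(K, D_0(\Gamma, M)) \cong \left(\varprojlim_n H^1(K_n, M^\circ)\right)[1/p] = H^1_{\Iw}(K_\infty, M)$.

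Next, for finite generation, I would invoke \cite{Pottharst-analytic}: the complex $R\varprojlim_n R\Gamma(K_n, M^\circ)$ is perfect over $\Lambda$, so after inverting $p$ it is perfect over $\Lambda[1/p] \cong D_0(\Gamma, A)$, and in particular its degree-one cohomology $H^1_{\Iw}(K_\infty, M)$ is finitely generated over $D_0(\Gamma, A)$. This perfectness is what I will reuse for the second isomorphism.

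For the second isomorphism I would first observe that, since $M$ is finite free over $A$, there is an identification $D^{\la}(\Gamma, M) \cong D^{\la}(\Gamma, A) \otimes_{D_0(\Gamma, A)} D_0(\Gamma, M)$ of $G$-modules, the $G$-action being through the $A$-action on $M$ together with translation, which factors through $\Gamma \subset D_0(\Gamma, A)^\times$ and is hence $D_0(\Gamma, A)$-linear. The ring extension $D_0(\Gamma, A) \to D^{\la}(\Gamma, A)$ is flat — after an étale base change on $\Spec A$ it becomes a finite product of maps from rings of the form $\mathcal O[[T]][1/p]$ ($\mathcal O$ a DVR) to rings of functions holomorphic on the open unit disc, which are torsion-free over the source. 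Representing the perfect complex $R\Gamma(K, D_0(\Gamma, M))$ by a bounded complex $P^\bullet$ of finite projective $D_0(\Gamma, A)$-modules and applying Pottharst's base-change theorem — to the effect that $R\Gamma(K, D^{\la}(\Gamma, M)) \cong D^{\la}(\Gamma, A) \otimes^L_{D_0(\Gamma, A)} R\Gamma(K, D_0(\Gamma, M))$ — flatness kills the higher Tor's and commutes $H^1$ past the tensor product, giving $H^1(K, D^{\la}(\Gamma, M)) \cong D^{\la}(\Gamma, A) \otimes_{D_0(\Gamma, A)} H^1_{\Iw}(K_\infty, M)$.

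The hard part will be this last step. Since $D^{\la}(\Gamma, A)$ is very far from being module-finite over $D_0(\Gamma, A)$, the natural map $D^{\la}(\Gamma, A) \otimes_{D_0(\Gamma, A)} C^\bullet(K, D_0(\Gamma, M)) \to C^\bullet(K, D^{\la}(\Gamma, M))$ is not a termwise isomorphism, and continuous cohomology cannot be commuted past the coefficient extension by hand; that this map is nonetheless a quasi-isomorphism is exactly the base-change statement of \cite{Pottharst-analytic}, so the real work lies in checking that our $M$ and the weak topologies on the distribution modules fall within the scope of those results. A more routine task is to pin down the various topological identifications in the first step and to verify $D_0(\Gamma, A) \cong A^\circ[[\Gamma]][1/p]$ for the norm fixed above.
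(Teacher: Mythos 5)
Your proposal is correct and follows essentially the same route as the paper: identify $H^1(K, D_0(\Gamma,M))$ with $\varprojlim_n H^1(K_n, M^\circ)[1/p]$ via Shapiro's lemma and the limit formalism of \cite{Pottharst-analytic} (the paper cites Theorem 1.1(3)--(4) there rather than running the Milnor sequence by hand), and then obtain the $D^{\la}$ statement from Pottharst's base-change theorem (Theorem 1.9 of \emph{op.cit.}). The only caveats are cosmetic: the modules $M^\circ \otimes_{A^\circ} A^\circ[\Gamma/\Gamma_n]$ are finitely generated over $A^\circ$ but not finite, so the limit argument should be run $I$-adically as in Pottharst rather than via finiteness, and $H^0(K_n, M^\circ)$ stabilises to a finitely generated $A^\circ$-module (not $\Zp$-module) --- but the $p$-adic completeness still kills the $\varprojlim^1$ term.
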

   
  \begin{proof}
   Let $A^\circ$ be as above. Then the ring $B^\circ = D_0(\Gamma, A^\circ) \cong A^\circ[[X]]$ is Noetherian, and it is complete and separated with respect to the ideal $I = (p, [\gamma] - 1)$, where $\gamma$ is a topological generator of $\Gamma / \Gamma_{\mathrm{tors}}$; moreover, $D_0(\Gamma, M^\circ) = B^\circ \otimes_{A^\circ} M^\circ$ is a flat $B^\circ$-module. Hence \cite[Theorem 1.1]{Pottharst-analytic} applies. By part (4) of the theorem, we see that $H^1(K, D_0(\Gamma, M^\circ))$ is finitely-generated over $D_0(\Gamma, A^\circ)$. Moreover, part (3) of the theorem shows that
   \[  
    H^1(K, D_0(\Gamma, M^\circ)) =  \varprojlim_m H^1(K_n,  D_0(\Gamma, M^\circ) / I^m),
   \]
    and every power $I^m$ contains the kernel of $D_0(\Gamma, A^\circ) \to A[\Gamma / \Gamma_n]$ for all sufficiently large $n$, so we also have an isomorphism
   \[ H^1(K, D_0(\Gamma, M^\circ)) = \varprojlim_n H^1(K_n, M^\circ \otimes_{A^\circ} A^\circ[\Gamma / \Gamma_n]) = H^1_{\Iw}(K_\infty, M^\circ), 
   \]
   where the last equality follows by Shapiro's lemma. Inverting $p$ we obtain the corresponding results with $A$-coefficients. Finally, we obtain the statement with locally analytic distributions by applying Theorem 1.9 of \emph{op.cit.} (in the case $n = \infty$). 
  \end{proof}
  
  \begin{corollary}
   In the above setting, for any $\lambda \in \RR_{\ge 0}$ there is a map
   \[ 
    H^1(K, D_{\lambda}(\Gamma, M)) \to 
    D^{\mathrm{la}}(\Gamma, A) \otimes_{D_0(\Gamma, A)} H^1_{\Iw}(K_\infty, M)
   \]
   compatible with the natural maps to $H^1(K, M(\chi^{-1}))$ for each character $\chi: M \to A^\times$.
  \end{corollary}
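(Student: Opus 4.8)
The plan is to produce the map as the composite of a functorially-induced map on continuous cohomology with the isomorphism of the preceding proposition. First I would note that, for every $\lambda \ge 0$, a locally analytic function on $\Gamma$ is \emph{a fortiori} of order $\lambda$ (the Mahler coefficients of a locally analytic function decay geometrically, hence faster than any polynomial rate; cf.\ \cite{colmez-fonctions}), so there is a $\Gamma$-equivariant inclusion $C^{\la}(\Gamma, \Qp) \into C_\lambda(\Gamma, \Qp)$. Dualising, and applying $\htimes_{\Qp} M$, this gives a $G$-equivariant map of topological $G$-modules
\[ D_\lambda(\Gamma, M) \longrightarrow D^{\la}(\Gamma, M), \]
which is continuous for the weak topologies on both sides for the trivial reason that it is restriction of functionals along $C^{\la}\subset C_\lambda$. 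Applying $H^1(K, -)$ and composing with the isomorphism $H^1(K, D^{\la}(\Gamma, M)) \cong D^{\la}(\Gamma, A) \otimes_{D_0(\Gamma, A)} H^1_{\Iw}(K_\infty, M)$ of the previous proposition yields the map of the statement; it is automatically $D_0(\Gamma, A)$-linear, since the map $D_\lambda(\Gamma,M)\to D^{\la}(\Gamma,M)$ intertwines the convolution actions of the Iwasawa algebra.

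For the compatibility with specialisation, I would fix a (locally analytic) character $\chi$ of $\Gamma$ and observe that $\chi$ lies in both $C^{\la}(\Gamma, A)$ and $C_\lambda(\Gamma, A)$, so that ``integrate against $\chi$'' defines $G$-equivariant maps $D_\lambda(\Gamma, M) \to M(\chi^{-1})$ and $D^{\la}(\Gamma, M) \to M(\chi^{-1})$ which, together with $D_\lambda(\Gamma, M)\to D^{\la}(\Gamma, M)$, form a commutative triangle — precisely because the inclusion $C^{\la}\into C_\lambda$ sends $\chi$ to $\chi$. Passing to $H^1$, it remains to check that the isomorphism of the preceding proposition identifies the coefficient map $H^1(K, D^{\la}(\Gamma, M)) \to H^1(K, M(\chi^{-1}))$ with the evident ``specialise at $\chi$'' map on $D^{\la}(\Gamma, A) \otimes_{D_0(\Gamma, A)} H^1_{\Iw}(K_\infty, M)$ (i.e.\ base change along $\chi: D^{\la}(\Gamma,A)\to A$, followed by the natural map $A\otimes_{D_0(\Gamma,A)}H^1_{\Iw}(K_\infty,M)\to H^1(K,M(\chi^{-1}))$). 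Granting that, a diagram chase gives the asserted compatibility of our map with the natural maps to $H^1(K, M(\chi^{-1}))$.

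The only step that needs genuine care is this last identification: one must know that the isomorphisms supplied by \cite{Pottharst-analytic}, and hence those of the preceding proposition, are compatible with evaluation at characters and not merely abstract isomorphisms of $D_0(\Gamma, A)$-modules. I expect this to be the main point to pin down, and I would reduce it to the base case $\lambda = 0$ — the compatibility of $H^1(K, D_0(\Gamma, M)) \cong H^1_{\Iw}(K_\infty, M)$ with the natural projections $H^1_{\Iw}(K_\infty, M) \to H^1(K_n, M) \to H^1(K, M(\chi^{-1}))$ for $\chi$ of finite order — which is built into the construction of that isomorphism via Shapiro's lemma; the case of general locally analytic $\chi$ then follows by continuity and the density of finite-order characters. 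Everything else (functoriality of continuous cohomology, the $C_\lambda$--$D_\lambda$ duality, and the elementary inclusion $C^{\la}\subset C_\lambda$) is formal.
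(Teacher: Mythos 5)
Your proposal is correct and follows exactly the paper's own argument: the continuous map $D_\lambda(\Gamma, M) \to D^{\la}(\Gamma, M)$ induced by the inclusion $C^{\la} \subset C_\lambda$ gives, by functoriality of continuous cohomology, a map on $H^1$, which one composes with the isomorphism of the preceding proposition. Your additional verification of the compatibility with specialisation at characters (which the paper leaves implicit) is also sound, and you correctly identify the only non-formal point as the compatibility of the Pottharst/Shapiro isomorphisms with evaluation at characters.
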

  
  \begin{proof}
   This follows from the fact that there is a continuous homomorphism $D_\lambda(\Gamma, A) \to D^{\mathrm{la}}(\Gamma, A)$, which gives (by the functoriality of continuous cohomology) a map
   \[  H^1(K, D_{\lambda}(\Gamma, M)) \to H^1(K, D^{\mathrm{la}}(\Gamma, M)).\]
   We now compose this with the second map from the previous proposition.
  \end{proof}
  
  \begin{proposition}
   \label{prop:iwacoho-unramified}
   If $K$ is a global field, then for every prime $v \ne p$, the inflation map
   \[ H^1(K_v^{\mathrm{nr}}, D^{\mathrm{la}}(\Gamma, M^{I_v})) \to H^1(K_v, D^{\mathrm{la}}(\Gamma, M))\]
   is an isomorphism.
  \end{proposition}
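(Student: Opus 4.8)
The plan is to run the inflation--restriction sequence for the normal subgroup $I_v \trianglelefteq G_{K_v}$ with coefficients in $N := D^{\la}(\Gamma, M)$, thereby reducing the statement to a vanishing result, which is then proved by a Frobenius non-zero-divisor argument.

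\textbf{Step 1 (reduction).} Since $v \nmid p$, the field $K_\infty = K(\mu_{p^\infty})$ is unramified at $v$, so $I_v$ maps trivially to $\Gamma$; hence $I_v$ acts on $N = D^{\la}(\Gamma,M)$ only through its action on $M$, and $N^{I_v} = D^{\la}(\Gamma, M^{I_v})$. Consequently the map in the statement is exactly the usual inflation map $H^1(\Gal(K_v^{\nr}/K_v), N^{I_v}) \to H^1(K_v, N)$ attached to $I_v \trianglelefteq G_{K_v}$. Its injectivity, and the fact that its image equals the kernel of restriction $H^1(K_v, N) \to H^1(I_v, N)$, hold for arbitrary topological modules (this is the elementary half of Proposition \ref{prop:infres}); and the image of restriction always lies in $H^1(I_v, N)^{\Gal(K_v^{\nr}/K_v)}$. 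So it suffices to prove
\[ H^1\bigl(I_v,\, D^{\la}(\Gamma, M)\bigr)^{\Gal(K_v^{\nr}/K_v)} = 0. \]

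\textbf{Step 2 (the $I_v$-cohomology).} The group $I_v$ has $p$-cohomological dimension $1$, it acts $A$-linearly on the finite free module $M$, and it acts trivially on the orthonormalizable $\Qp$-Banach spaces $X$ out of which $D^{\la}(\Gamma, -) = \cL_w(C^{\la}(\Gamma,\Qp), -)$ is assembled as a limit of Banach pieces $D_\lambda(\Gamma, M) = \cL_w(C_\lambda(\Gamma,\Qp), M)$. Working one Banach piece at a time, using Proposition \ref{prop:boundariesclosed} (closedness of coboundaries) and the subsequent proposition on $\cL_w(X,-)$ to control the $I_v$-cohomology, and then passing to the limit, one obtains a $\Gal(K_v^{\nr}/K_v)$-equivariant isomorphism
\[ H^1\bigl(I_v,\, D^{\la}(\Gamma, M)\bigr) \;\cong\; D^{\la}(\Gamma, A) \htimes_A H^1(I_v, M), \]
under which the arithmetic Frobenius $\varphi_v$ acts as translation on $D^{\la}(\Gamma, A)$ by the image $q_v$ of $\varphi_v$ in $\Gamma$, tensored with the residual $\varphi_v$-action on $H^1(I_v, M)$.

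\textbf{Step 3 (Frobenius argument).} By Step 2 the group to be killed is $\ker(\varphi_v - 1)$ acting on $D^{\la}(\Gamma, A) \htimes_A H^1(I_v, M)$. Now $q_v = \#\kappa(v)$ is an integer $\ge 2$, hence of infinite order in $\Zp^\times$; under the identification of $D^{\la}(\Gamma, A)$ with the ring of rigid-analytic functions on the character space of $\Gamma$ over $A$, translation by $q_v$ becomes multiplication by the function $\chi \mapsto \chi(q_v)$, which is non-constant on every connected component. Since $A$ is reduced with finitely many irreducible components and $H^1(I_v, M)$ is finitely generated, we may pass to the total ring of fractions and put the residual Frobenius in Jordan form; then $\varphi_v - 1$ is given over a domain by a matrix whose determinant is a finite product $\prod_i\bigl(\alpha_i \chi(q_v) - 1\bigr)$ with all $\alpha_i \neq 0$. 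No factor vanishes identically, since $\chi \mapsto \chi(q_v)$ is non-constant; hence $\varphi_v - 1$ is injective, which completes the proof.

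\textbf{Expected main obstacle.} The delicate step is Step 2: commuting continuous $I_v$-cohomology past the ``large'' coefficient algebra $D^{\la}(\Gamma, A)$, and in particular past the limit presenting $D^{\la}$ out of Banach modules, where one needs the closedness of coboundaries (Proposition \ref{prop:boundariesclosed}) together with an exactness/Mittag--Leffler argument for the limit. Once Step 2 is in place, Step 3 is a routine non-zero-divisor verification and Step 1 is purely formal.
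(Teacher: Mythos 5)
Your route is genuinely different from the paper's, which disposes of this proposition in one line: it applies the base-change isomorphism $H^1(K_v, D^{\la}(\Gamma, M)) \cong D^{\la}(\Gamma, A) \otimes_{D_0(\Gamma, A)} H^1_{\Iw}(K_{v,\infty}, M)$ of the immediately preceding proposition (to both source and target of the inflation map), quotes the well-known fact that local Iwasawa cohomology at $v \nmid p$ is computed by the unramified quotient, and tensors. Your Steps 1 and 3 are, in substance, the proof of that classical fact redone directly with $D^{\la}$-coefficients: the reduction to $H^1(I_v, N)^{\sigma_v = 1} = 0$ is formal, and the key arithmetic input --- that $\alpha\,\chi(\sigma_v) - 1$ is a non-zero-divisor in $\cO(\cW)\htimes \overline{L}$ because the image of Frobenius in $\Gamma$ is $\# k(v)$, of infinite order --- is exactly right.

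The genuine gap is Step 2, which on your route carries the entire weight of the proposition and is asserted rather than proved. The isomorphism $H^1(I_v, D^{\la}(\Gamma, M)) \cong D^{\la}(\Gamma, A) \htimes_A H^1(I_v, M)$, equivariantly for Frobenius, requires commuting continuous $H^1$ of $I_v$ both with the completed tensor product by a Banach space and with the Fr\'echet limit presenting $D^{\la}(\Gamma, A)$ as $\varprojlim_n \cO(X_n) \htimes A$. Proposition \ref{prop:boundariesclosed} and the unlabelled proposition on $\cL_w(X, M)$ that follows it only give you a continuous (non-linear) section of $M \to B^1$, which controls the coboundaries; they do not by themselves yield surjectivity of the comparison map $H^1(I_v, \cL_w(X,M)) \to \cL_w(X, H^1(I_v, M))$, nor the Mittag--Leffler argument needed to pass to the limit over $n$, and the identification of $H^1(I_v, N)$ with $\coker(t - 1 : N^{P_v} \to N^{P_v})$ for the tame generator $t$ itself needs closedness of the image when $N$ is not finitely generated. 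None of this is unreasonable, but it is precisely the kind of continuous-cohomology bookkeeping the paper's argument avoids by invoking Pottharst's finiteness results for the full Galois group rather than for $I_v$. Separately, in Step 3 passing to the total ring of fractions only proves injectivity of $\varphi_v - 1$ modulo $A$-torsion in $H^1(I_v, M)$, which need not vanish; this is easily repaired by filtering $H^1(I_v, M)$ with graded pieces $A/\frp$ and running the determinant argument over each domain $A/\frp$, using flatness of $D^{\la}(\Gamma, A)$ over $A$, but as written the step is incomplete.
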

  
  \begin{proof}
   The corresponding statement for Iwasawa cohomology is well-known; and the result now follows by tensoring with $D^{\mathrm{la}}(\Gamma, A)$.
  \end{proof}
  
  A very slightly finer statement is possible if we consider coefficients in a field:
  
  \begin{proposition}
   \label{prop:colmeztautology}
   Suppose $V$ is a finite-dimensional $p$-adic representation of $G$. Then
   \[ H^1(K, D_\lambda(\Gamma, V)) = D_\lambda(\Gamma, \Qp) \otimes_{D_0(\Gamma, \Qp)} H^1_{\Iw}(K_\infty, V).\]
  \end{proposition}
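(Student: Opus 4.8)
The plan is to reduce the statement to the base-change formalism for Iwasawa cohomology, exploiting that over the field $\Qp$ the algebra $\Lambda := D_0(\Gamma,\Qp) = \Zp[[\Gamma]][1/p]$ is especially well-behaved. First I would record the $G$-equivariant isomorphism $D_\lambda(\Gamma, V) \cong D_\lambda(\Gamma,\Qp)\otimes_{\Lambda} D_0(\Gamma, V)$: as $V$ is finite-dimensional there are no completed tensor products to worry about, both sides are visibly $D_\lambda(\Gamma,\Qp)\otimes_{\Qp}V$, and one checks that the convolution action of $\Lambda$ agrees on the two sides and that $G$ acts diagonally (through translation on $D_\lambda(\Gamma,\Qp)$ and the twisted action on $D_0(\Gamma,V)$). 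By the preceding proposition (with $A=\Qp$) together with the results of \cite{Pottharst-analytic}, the complex $R\Gamma(K, D_0(\Gamma, V))$ is canonically represented by a perfect complex $P^\bullet$ of $\Lambda$-modules (namely the Iwasawa-cohomology complex $R\Gamma_{\Iw}(K_\infty, V)$), with $H^1(P^\bullet) = H^1_{\Iw}(K_\infty, V)$. Thus it suffices to establish: (a) a base-change quasi-isomorphism $R\Gamma(K, D_\lambda(\Gamma, V)) \simeq D_\lambda(\Gamma,\Qp)\otimes^{\mathbf{L}}_\Lambda P^\bullet$; and (b) that $D_\lambda(\Gamma,\Qp)$ is flat over $\Lambda$ --- for then the derived tensor product becomes an ordinary one, cohomology commutes with it, and we obtain $H^1(K, D_\lambda(\Gamma, V)) = D_\lambda(\Gamma,\Qp)\otimes_\Lambda H^1_{\Iw}(K_\infty, V)$ with no contribution from $H^0$ or $H^2$.

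For (b): the ring $\Lambda = D_0(\Gamma,\Qp)$ is a finite product of Dedekind domains (each obtained from the two-dimensional regular local ring $\cO_L[[T]]$, for a finite extension $L/\Qp$, by inverting $p$), so a $\Lambda$-module is flat if and only if it is torsion-free, i.e.\ no nonzero element is annihilated by a non-zero-divisor. Now the natural maps $\Lambda = D_0(\Gamma,\Qp) \into D_\lambda(\Gamma,\Qp) \into D^{\la}(\Gamma,\Qp)$ are all $\Lambda$-linear, the last of them being dual to the dense inclusion $C^{\la}(\Gamma,\Qp)\into C_\lambda(\Gamma,\Qp)$; and via the Amice transform $D^{\la}(\Gamma,\Qp)$ is a finite product of rings of rigid-analytic functions on the open unit disc over finite extensions of $\Qp$ --- in particular a product of integral domains, and hence torsion-free over $\Lambda$. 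Therefore its $\Lambda$-submodule $D_\lambda(\Gamma,\Qp)$ is torsion-free, and so flat, over $\Lambda$.

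For (a): this is the affinoid ($n$ finite) case of Theorem 1.9 of \cite{Pottharst-analytic} --- with the affinoid $\Lambda$-algebra $D_\lambda(\Gamma,\Qp)$ in the role played by the Fr\'echet--Stein algebra $D^{\la}(\Gamma,\Qp)$ in the proof of the preceding proposition; its hypotheses hold precisely because $D_\lambda(\Gamma,\Qp)$ is flat over $\Lambda$ by (b), and indeed the Fr\'echet--Stein version is recovered from such affinoid statements by passage to the limit. Invoking it requires the topological input of \S\ref{sect:banach} --- closedness of the coboundaries and the open image theorem --- so that the continuous-cochain cohomology of $D_\lambda(\Gamma, V)$ behaves well, and one uses Proposition \ref{prop:boundfordists} to identify the group so computed with $H^1(K, D_\lambda(\Gamma, V))$ in the sense of continuous cochains. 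The main obstacle is precisely this step: making the base-change quasi-isomorphism rigorous and checking that the machinery of \cite{Pottharst-analytic}, developed for $D^{\la}(\Gamma,\Qp)$, carries over verbatim to the individual affinoid piece $D_\lambda(\Gamma,\Qp)$ with compatible topologies, so that $D_\lambda(\Gamma,\Qp)\otimes^{\mathbf{L}}_\Lambda P^\bullet$ really does compute the continuous Galois cohomology of $D_\lambda(\Gamma, V)$. Once this and the flatness in (b) are in hand --- perfectness of $R\Gamma_{\Iw}(K_\infty, V)$ over $\Lambda$ being already available --- the rest is formal: flatness collapses the derived tensor product and commutes with $H^1$, suppressing any $\operatorname{Tor}$ terms. (One could instead bypass perfect complexes and argue by a five-term exact-sequence chase from the $D^{\la}$-statement of the preceding proposition, using that $D^{\la}(\Gamma,\Qp)/D_\lambda(\Gamma,\Qp)$ is also $\Lambda$-torsion-free and hence flat, so that $\operatorname{Tor}^\Lambda_1$ of it against $H^1_{\Iw}(K_\infty, V)$ vanishes; but this merely repackages the same inputs.)
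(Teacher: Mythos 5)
Your step (b) is fine: after Weierstrass preparation each component of $D_0(\Gamma,\Qp)$ is a PID, $D_\lambda(\Gamma,\Qp)$ embeds into the domain $D^{\la}(\Gamma,\Qp)$, so it is torsion-free and hence flat. The gap is in step (a), and unfortunately that is where the entire content of the proposition lives. The base-change machinery of \cite{Pottharst-analytic} that underlies the preceding proposition applies to $D^{\la}(\Gamma,\Qp)$ because that ring is Fr\'echet--Stein, realised as $\varprojlim_n \cO(X_n)$ for affinoid subdiscs $X_n$ of weight space, and each $\cO(X_n)$ is a Noetherian affinoid $\Qp$-algebra to which Theorems 1.7 and 1.9 of \emph{op.~cit.} apply. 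The Banach algebra $D_\lambda(\Gamma,\Qp)$ is \emph{not} one of these affinoid pieces: it is not the ring of functions on any affinoid subdomain of $\cW$, it is not known to be Noetherian, and it is not an $I$-adic completion of $D_0(\Gamma,\Zp)$ to which Theorem 1.1(3) could be applied. So there is no theorem to invoke for the quasi-isomorphism $R\Gamma(K, D_\lambda(\Gamma,V)) \simeq D_\lambda(\Gamma,\Qp)\otimes^{\mathbf{L}}_{D_0(\Gamma,\Qp)} R\Gamma_{\Iw}(K_\infty,V)$, and asserting that the machinery ``carries over verbatim'' begs the question. The same objection defeats the alternative five-term argument you sketch: the long exact sequence attached to $0 \to D^{\la}(\Gamma,V) \to D_\lambda(\Gamma,V) \to Q \to 0$ requires you to compute $H^*(K,Q)$, which is exactly the kind of statement you are trying to prove.

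Concretely, what fails is the surjectivity direction: one must show that every continuous cocycle valued in $D_\lambda(\Gamma,V)$ comes from $H^1_{\Iw}(K_\infty,V)$ after multiplying by a \emph{uniformly bounded} denominator, and the remark following the proposition in the paper warns that this denominator is governed by the torsion of $H^2_{\Iw}(K_\infty,T)$ and is ``difficult to control a priori''. This is why the paper does not argue by base change at all: it quotes Proposition II.3.1 of \cite{colmez98} (described there as ``surprisingly nontrivial'') for the local case, and then removes the one use of local Tate duality in Colmez's argument so as to cover the global case, by twisting to arrange $H^2_{\Iw}(K_\infty,V)^\Gamma=0$ and then using Noetherianity of $\Lambda(\Gamma)$ to show that a single power of $p$ annihilates $H^2_{\Iw}(K_\infty,T)[(\gamma-1)^n]$ for all $n$. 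To repair your write-up you would either have to reproduce an argument of that type or prove from scratch a derived base-change theorem for the non-affinoid Banach algebra $D_\lambda(\Gamma,\Qp)$; neither is formal.
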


  \begin{proof}
   In the local case, this surprisingly nontrivial result is Proposition II.3.1 of \cite{colmez98}. The proof relies on local Tate duality at one point, so we shall explain briefly how this can be removed in order to obtain the result in the global case as well.

   Firstly, from the finite generation of $H^2_{\Iw}(K_\infty, V)$ as a $\Lambda(\Gamma)$-module, there exists a $k$ such that $H^2_{\Iw}(K_\infty, V(k))^{\Gamma} = 0$. We may suppose (by twisting) that we have, in fact, $H^2_{\Iw}(K_\infty, V)^{\Gamma} = 0$.

   Let $\nu_n = (\gamma - 1)^n$ where $\gamma$ is a topological generator of $\Gamma$, and let $T$ be a lattice in $V$. Then the submodules $H^2_{\Iw}(K_\infty, T)[\nu_n]$ are an ascending sequence of $\Lambda(\Gamma)$-submodules of the finitely-generated module $H^2_{\Iw}(K_\infty, T)$. Since $\Lambda(\Gamma)$ is Noetherian and $H^2_{\Iw}(K_\infty, T)$ is finitely-generated, we conclude that this sequence of modules must eventually stabilize. But all the modules in this sequence are finite, since $H^2_{\Iw}(K_\infty, V)^{\Gamma}$ vanishes by assumption; this implies that there is a uniform power of $p$ (independent of $n$) which annihilates $H^2_{\Iw}(K_\infty, T)[\nu_n]$ for all $n \ge 1$. (Compare the proof of \cite[Proposition A.2.10]{LLZ14}, which is a similar argument with $\nu_n = (\gamma - 1)^n$ replaced by $\gamma^{p^n}-1$.) With this in hand we may proceed as in \cite{colmez98}.
  \end{proof}

  \begin{remark}
   We do not know if this result is valid for general $p$-adic Banach algebras (or even for affinoid algebras). It is also significant that the map is \emph{not} an isometry with respect to the natural norms on either side; there is a denominator arising from the torsion in $H^2_{\Iw}(K_\infty, T)$, which is difficult to control a priori (and, in particular, could potentially vary as we change the field $K$ in an Euler system argument). We are grateful to Ming-Lun Hsieh for pointing this out. We shall instead control denominators by means of the proposition that follows, in which the denominator depends on an $H^0$ rather than an $H^2$.
  \end{remark}

  \begin{proposition}
   \label{prop:ming-lun-lemma}
   Suppose that $V$ is a finite-dimensional $\Qp$-linear representation of $G$ such that $H^0(K_\infty, V) = 0$, and let $D'$ be a constant annihilating the finite group $H^0(K_\infty, V/T)$, for $T$ a $G$-invariant $\Zp$-lattice in $V$.

   Let $x_{n, j}$ be a collection of elements, and $C$ a constant, satisfying the hypotheses of Proposition \ref{prop:unbounded-iwasawa}; and let $\mu \in H^1(K, D_\lambda(\Gamma, V))$ be the resulting distribution. Then for every character $\kappa$ of $\Gamma$, we have
   \[ \left\| \int_{\Gamma} \kappa\, \mathrm{d}\mu\right\| \le C D D' \|\kappa\|_{\lambda}\]
   where on the left-hand side $\|\cdot\|$ denotes the norm on $H^1(K, V(\kappa^{-1}))$ for which the unit ball is the image of $H^1(K, T(\kappa^{-1}))$ (and $D$ is as in Proposition \ref{prop:boundfordists}).
  \end{proposition}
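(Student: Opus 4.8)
The plan is to reduce the estimate to $K_\infty$, where Proposition~\ref{prop:unbounded-iwasawa} already supplies the bound $\|\mu\|_\lambda \le CD$, and then to transport it back down to $K$ by inflation--restriction, arranging matters so that the only loss incurred in the descent is controlled by the finite group $H^0(K_\infty, V/T)$. Throughout, fix on $V$ the $G$-stable norm with unit ball $T$, and write $W = V(\kappa^{-1})$ and $T_W = T(\kappa^{-1})$; if $\kappa$ is valued in a finite extension $L$ of $\Qp$ one replaces $T$ by $T \otimes_{\Zp} \cO_L$, which changes nothing essential since $\kappa$ is trivial on $H = G_{K_\infty}$, so that $W$ and $T_W$ coincide with $V$ and $T$ as $H$-modules. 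First I would record that evaluation at $\kappa$ is a $G$-equivariant map $D_\lambda(\Gamma, V) \to W$ with $\big\|\int_\Gamma \kappa\, \mathrm{d}\nu\big\|_W \le \|\kappa\|_\lambda \|\nu\|_\lambda$ for all $\nu$; hence it induces a map on cohomology compatible with restriction to $K_\infty$ and with the $\Gamma$-action, so that $\int_\Gamma \kappa\, \mathrm{d}\mu$ restricts to $\int_\Gamma \kappa\, \mathrm{d}(\mu|_{K_\infty}) \in H^1(K_\infty, W)^\Gamma$.

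By the definition of the seminorm and Proposition~\ref{prop:unbounded-iwasawa}, we may represent $\mu|_{K_\infty}$ by a cocycle $z \colon H \to D_\lambda(\Gamma, V)$ with $\sup_h \|z(h)\|_\lambda$ arbitrarily close to $CD$; then $h \mapsto \int_\Gamma \kappa\, \mathrm{d}z(h)$ represents $\int_\Gamma \kappa\, \mathrm{d}(\mu|_{K_\infty})$ and is valued in the ball of radius $(CD+\varepsilon)\|\kappa\|_\lambda$ about $0$ in $W$. Rescaling this cocycle to land in $T_W$ then shows that $\int_\Gamma \kappa\, \mathrm{d}(\mu|_{K_\infty})$ has norm at most $CD\|\kappa\|_\lambda$, up to a bounded factor, with respect to the lattice $\Im\big(H^1(K_\infty, T_W) \to H^1(K_\infty, W)\big)$.

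To descend, observe that $V^H = H^0(K_\infty, V) = 0$ forces $W^H = T_W^H = 0$, so inflation--restriction (Proposition~\ref{prop:infres} for $W$, and its classical version for the compact module $T_W$; in both cases the cokernel of restriction injects into $H^2(\Gamma, 0) = 0$) gives compatible isomorphisms $H^1(K, W) \cong H^1(K_\infty, W)^\Gamma$ and $H^1(K, T_W) \cong H^1(K_\infty, T_W)^\Gamma$. Under these, $\int_\Gamma \kappa\, \mathrm{d}\mu$ corresponds to $\int_\Gamma \kappa\, \mathrm{d}(\mu|_{K_\infty})$, and the unit ball $\Im\big(H^1(K, T_W) \to H^1(K, W)\big)$ corresponds to $\Im\big(H^1(K_\infty, T_W)^\Gamma \to H^1(K_\infty, W)^\Gamma\big)$. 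It remains to compare this last lattice, inside $H^1(K_\infty, W)^\Gamma$, with $\Im\big(H^1(K_\infty, T_W)\big) \cap H^1(K_\infty, W)^\Gamma$ --- which, for the $\Gamma$-invariant class at hand, defines the same norm as the lattice of the previous paragraph: if $x = \bar c$ with $c \in H^1(K_\infty, T_W)$ and $x$ is $\Gamma$-invariant, then $(\gamma-1)c$ lies in the kernel of $H^1(K_\infty, T_W) \to H^1(K_\infty, W)$, i.e.\ in the image of $H^0(K_\infty, W/T_W)$; since $\kappa|_H$ is trivial, $W/T_W \cong V/T$ as $H$-modules, so this group is isomorphic to $H^0(K_\infty, V/T)$ and is annihilated by $D'$, whence $D'c$ is $\Gamma$-invariant and $D'x$ lies in $\Im\big(H^1(K_\infty, T_W)^\Gamma\big)$. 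Stringing the three estimates together gives $\big\|\int_\Gamma \kappa\, \mathrm{d}\mu\big\| \le CDD'\|\kappa\|_\lambda$, after enlarging the universal constant $D$ by a bounded factor to absorb the rounding losses.

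The part I expect to require the most care is this final lattice bookkeeping --- tracking exactly which integral structure on $H^1(K, W)$ one controls after pushing the bound through $H^1(K, W) \cong H^1(K_\infty, W)^\Gamma$, and pinning the resulting defect down to the single group $H^0(K_\infty, V/T)$, which is precisely where the hypotheses $H^0(K_\infty, V) = 0$ and $\kappa|_H = 1$ get used. This is exactly why the argument is preferable to invoking Proposition~\ref{prop:colmeztautology} directly: there the analogous denominator comes from the torsion of $H^2_{\Iw}(K_\infty, T)$, which is hard to control as $K$ varies in an Euler system argument, whereas $H^0(K_\infty, V/T)$ is annihilated by $D'$ in every layer.
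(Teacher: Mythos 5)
Your argument is correct and follows essentially the same route as the paper's proof: bound $\int_\Gamma \kappa\,\mathrm{d}\mu$ over $K_\infty$ by $CD\|\kappa\|_\lambda$ using the seminorm from Proposition \ref{prop:unbounded-iwasawa}, note that the resulting integral lift in $H^1(K_\infty, T(\kappa^{-1}))$ need not be $\Gamma$-invariant but becomes so after multiplying by $D'$ (since $D'$ kills the kernel of $H^1(K_\infty, T(\kappa^{-1})) \to H^1(K_\infty, V(\kappa^{-1}))$, a quotient of $H^0(K_\infty, V/T)$), and then descend via the isomorphism $H^1(K, T(\kappa^{-1})) \cong H^1(K_\infty, T(\kappa^{-1}))^\Gamma$ coming from $H^0(K_\infty, T) = 0$. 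The only cosmetic difference is that you phrase the $D'$ step as making $(\gamma-1)c$ vanish rather than as choosing a $\Gamma$-invariant lift, which is the same computation.
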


  \begin{proof}
   We know that $\|\mu\|_\lambda \le CD$ as elements of $H^1(K_\infty, D_\lambda(\Gamma, V))^\Gamma$. So $\|\int_\Gamma \kappa\, \mathrm{d}\mu\| \le CD \|\kappa\|_\lambda$ as elements of $H^1(K_\infty, V(\kappa^{-1}))^\Gamma$.

   By the definition of the supremum seminorm, this is equivalent to stating that the class $CD \|\kappa\|_\lambda \cdot \int_\Gamma \kappa\, \mathrm{d}\mu$ is the image of a class in $H^1(K_\infty, T(\kappa^{-1}))$. This class is not uniquely determined, and hence not necessarily $\Gamma$-invariant; but the constant $D'$ was chosen to annihilate the kernel of $H^1(K_\infty, T(\kappa^{-1})) \to H^1(K_\infty, V(\kappa^{-1}))$, so $CDD' \|\kappa\|_\lambda \cdot \int_\Gamma \kappa\, \mathrm{d}\mu$ lifts to a $\Gamma$-invariant class.

   Since $H^0(K_\infty, T) = 0$, we conclude that $H^1(K, T(\kappa^{-1})) \to H^1(K_\infty, T(\kappa^{-1}))^\Gamma$ is an isomorphism; thus $CDD' \|\kappa\|_\lambda \cdot \int_\Gamma \kappa\, \mathrm{d}\mu$ is in the image of the map $H^1(K, T(\kappa^{-1})) \to H^1(K, V(\kappa^{-1}))$ as required.
  \end{proof}


\section{Cyclotomic compatibility congruences}


 In this section, we establish that the Beilinson--Flach cohomology classes constructed in \cite{LLZ14} and \cite{KLZ1b} satisfy the criteria of the previous section, allowing us to interpolate them by finite-order distributions.

 \subsection{Modular curves: notation and conventions}
 \label{sect:Galoisrep}

  For $N \ge 4$, we write $Y_1(N)$ for the modular curve over $\ZZ[1/N]$ parametrising elliptic curves with a point of order $N$. Note that the cusp $\infty$ is not defined over $\QQ$ in this model, but rather over $\QQ(\mu_N)$.
  
  More generally, for $M, N$ integers with $M + N \ge 5$, we write $Y(M, N)$ for the modular curve over $\ZZ[1/MN]$ parametrising elliptic curves together with two sections $(e_1, e_2)$ which define an embedding of group schemes $\ZZ/M\ZZ \times \ZZ / N\ZZ \into E$ (so that $Y_1(N) = Y(1, N)$). We shall only consider $Y(M, N)$ in the case $M \mid N$, in which case the Weil pairing defines a canonical map from $Y(M, N)$ to the scheme $\mu_M^\circ$ of primitive $M$-th roots of unity, whose fibres are geometrically connected.

  If $A$ is an integer prime to $MN$, we shall sometimes also consider the curve $Y(M, N(A))$ over $\ZZ[1/AMN]$, parametrising elliptic curves with points $e_1, e_2$ as above together with a cyclic subgroup of order $A$.

  If $Y$ is one of the curves $Y(M, N)$ or $Y(M, N(A))$, we write $\sH_{\Zp}$ the relative Tate module of the universal elliptic curve over $Y$, which is an \'etale $\Zp$-sheaf on $Y[1/p]$. If the prime $p$ is clear from context we shall sometimes drop the subscript and write $\sH$ for $\sH_{\Zp}$. We write $\sH_{\Qp}$ for the associated $\Qp$-sheaf. We write $\TSym^k \sH_{\Zp}$ for the sheaf of degree $k$ symmetric tensors over $\sH_{\Zp}$; note that this is \emph{not} isomorphic to the $k$-th symmetric power, although these coincide after inverting $p$.
  
  \begin{remark}
   In this paper we will frequently consider \'etale cohomology of modular curves $Y(M, N(A))$, or products of pairs of such curves. All the coefficient sheaves we consider will be inverse systems of finite \'etale sheaves of $p$-power order, and we shall always work over bases on which $p$ is invertible. To lighten the notation,  the convention that if $p$ is \emph{not} invertible on $Y$, then $H^*_{\et}(Y, -)$ is a shorthand for $H^*_{\et}(Y[1/p], -)$.
  \end{remark}

 \subsection{Iwasawa sheaves}

  We recall some definitions and notation from \cite{KLZ1b}. Let $M, N \ge 1$ be integers with $M \mid N$ and $M + N \ge 5$. Then, associated to the \'etale sheaf of abelian groups $\sH_{\Zp}$ on $Y(M, N)[1/p]$, we have a sheaf of Iwasawa algebras $\Lambda(\sH_{\Zp})$ (c.f. Section 2.3 in {\em op.cit.}). For $c>1$ coprime to $6MNp$, let
  \[\cEI_{1,N}\in H^1_{\et}(Y(M, N), \Lambda(\sH_{\Zp})(1))\]
  be the Eisenstein--Iwasawa class, as defined in \cite[\S 4.3]{KLZ1b}. We now recall the definition of the Rankin--Iwasawa class on the product $Y(M, N)^2$, which is the image of $\cEI_{1,N}$ via a three-step procedure.
  
  Firstly, let us write $\Lambda(\sH_{\Zp})^{[j]}=\Lambda(\sH_{\Zp}) \otimes \TSym^j\left(\sH_{\Zp}\right)$ for $j \ge 0$. Then we have a morphism of \'etale sheaves on  $Y(M, N)[1/p]$, the \emph{Clebsch--Gordan map},
  \[ 
   CG^{[j]}: \Lambda(\sH_{\Zp})\rTo 
   \left(\Lambda(\sH_{\Zp})^{[j]}\hat\otimes \Lambda(\sH_{\Zp})^{[j]}\right)(-j)
  \]
   as defined in \cite[Definition 5.1.1]{KLZ1b}.
  
  Secondly, let $Y(M, N)^2$ denote the fibre product $Y(M, N) \times_{\mu_M^\circ} Y(M, N)$, where $\mu_M^\circ$ is the group of primitive $M$-th roots of unity as above. We denote by $\Lambda^{[j, j]}$ the exterior tensor product $\Lambda(\sH_{\Zp})^{[j]}\boxtimes \Lambda(\sH_{\Zp})^{[j]}$ on $Y(M, N)^2$. Pushforward along the diagonal embedding $\Delta: Y(M, N) \into Y(M, N)^2$ gives a map
  \[ 
   \Delta_*:
   H^1_{\et}\left(Y(M,N), \Lambda(\sH_{\Zp})^{[j]}\hat\otimes \Lambda(\sH_{\Zp})^{[j]}(1-j)\right)
   \rTo H^3_{\et}\left(Y(M,N)^2,\Lambda^{[j,j]}(2-j)\right).
  \]
  
  Thirdly, for $a\in \ZZ / M\ZZ$, denote by $u_a$ the automorphism of $Y(M,N)^2$ which is the identity on the first $Y(M, N)$ factor and is given by $(E,e_1,e_2)\mapsto \left(E,e_1+a\frac{N}{M}e_2,e_2\right)$ on the second factor. 
  
  \begin{definition}
   For integers $M, N \ge 1$ with $M \mid N$ and $M + N \ge 5$, $j \ge 0$, $a \in \ZZ / m \ZZ$, $p$ a prime $> 2$, and $c > 1$ coprime to $6 M N p$, define the \emph{Rankin--Iwasawa} class
   \[ 
    \cRI^{[j]}_{M, N,a} = \left( (u_a)_*\circ \Delta_*\circ CG^{[j]}\right)({}_c\EI_{1,N})\in H^3_{\et}\left(Y(M, N)^2,\Lambda^{[j,j]}(2-j)\right).
   \]
  \end{definition}
  
  The primary purpose of introducing the Rankin--Iwasawa class is that it is easy to prove norm-compatibility relations for it. Our actual interest is in a second, related class, defined by pushing forward $\cRI^{[j]}_{M, N, a}$ via a degeneracy map.
  
  \begin{definition}
   \label{def:BFelt}
   For integers $m \ge 1$ and $N \ge 4$, $j \ge 0$, $a \in \ZZ / m\ZZ$, and $c > 1$ coprime to $6mNp$, define the \emph{Beilinson--Flach} class
   \[ 
   \cBF^{[j]}_{m, N, a} \in H^3\left(Y_1(N)^2 \times \mu_m^\circ, \Lambda^{[j,j]}(2-j)\right) 
   \]
   to be the image of ${}_c\RI^{[j]}_{m,mN,a}$ under the map $(t_m\times t_m)_*$, where 
   \[ t_m:Y(m,mN) \rTo Y_1(N)\times \mu_m^\circ \]
   is the map given in terms of moduli spaces as
   \[ (E,e_1,e_2)\mapsto \left( \left(E/\langle e_1\rangle,e_2 \bmod \langle e_1\rangle\right), \langle e_1,Ne_2\rangle_{E[m]}\right).\]
  \end{definition}
  
  \begin{remark}
   Note that $t_m$ corresponds to $z \mapsto z/m$ on the upper half-plane.
  \end{remark}

  Finally, recall that there are natural maps
  \[ 1 \otimes \mom^j: \Lambda(\sH_{\Zp}) \to \Lambda(\sH_{\Zp})^{[j]}\]
  which, for a geometric generic point $\eta$, are given by the maps $\Lambda(\sH_{\eta}) \to \Lambda(\sH_{\eta}) \otimes \TSym^j \sH_{\eta}$, $[x] \mapsto [x] \otimes x^{\otimes j}$.


 \subsection{Compatibility congruences}

  We now come to the key technical result required for the rest of this paper. Let $h \ge 1$. For each $r \ge 1$, we would like to prove a congruence modulo $p^{hr}$ relating the classes
  \[ \Res_{p^r}^{p^{hr}} \left(\cBF^{[j]}_{p^r, N, a}\right) \]
  for $0 \le j \le h$. Here $\Res_{p^r}^{p^{hr}}$ denotes the pullback along the natural map 
  \[ Y_1(N) \times \mu_{p^{hr}}^\circ \to Y_1(N) \times \mu_{p^{r}}^\circ, \]
  which corresponds classically to restriction of cocycles in Galois cohomology.

  \begin{definition}
   For an arbitrary $m$, let $Z(m,mN) \subseteq Y(m, m N)^2$ denote the preimage of the diagonal subvariety of $Y_1(N)$ under the natural projection map $Y(m, m N)^2 \to Y_1(N)^2$ (i.e.~the map corresponding to the identity on the upper half-plane, \emph{not} the map $t_m$). 
  \end{definition} 
  
  \begin{note} 
   The subvariety $Z(m,mN)$ is preserved by the action of $\Gamma_1(N) \times \Gamma_1(N)$, and in particular by the action of the element $u_a =\left(1, \stbt 1 a 0 1 \right)$ for any $a \in \ZZ / m \ZZ$. Since $u_a$ is an automorphism, and its inverse is $u_{-a}$, we have $(u_a)_* = (u_{-a})^*$.
  \end{note} 

  There is a canonical section of the sheaf $(\sH_{\Zp} \boxtimes \sH_{\Zp})(-1)$ over the subvariety $Z(m,mN)$, given by the Weil pairing (since along $Z(m,mN)$ the two universal elliptic curves coincide). We call this element $\mathcal{CG}$ (for ``Clebsch--Gordan''), since the Clebsch--Gordan map $CG^{[j]}$ is given by cup-product with the $j$-th divided power $\mathcal{CG}^{[j]}$ of this element. For $t \ge 1$, we write $\mathcal{CG}_t$ for the image of $\mathcal{CG}$ modulo $p^t$. Note that we have
  \[ u_a^*\left( \mathcal{CG} \right) = \mathcal{CG} \]
  for any $a \in \ZZ / m\ZZ$, since $\mathcal{CG}$ is independent of the level structure.

  Let $i$ be the inclusion of $Z(m,mN)$ into $Y(m, m N)^2$, so the diagonal embedding factors as
  \[ Y(m,mN)\rTo^\Delta Z(m,mN)\rTo^i Y(m,mN)^2.\]
  By construction, the element $\cRI^{[j]}_{m, m N, a}\in H^3_{\et}(Y(m,mN)^2,\Lambda^{[j,j]}(2-j))$ is given by 
  \begin{align}
   \cRI^{[j]}_{m, m N, a} &=i_* \circ u_{-a}^* \circ\Delta_*\circ CG^{[j]}\left(\cEI_{ 1,mN}\right)\notag \\
   &=   i_* \circ u_{-a}^* \left( \Delta_*(\cEI_{1,mN}) \cup \mathcal{CG}^{[j]} \right)\notag \\
   &= i_* \left(  (u_{a} \circ \Delta)_*(\cEI_{1,mN}) \cup \mathcal{CG}^{[j]} \right). \label{cRIdef}
  \end{align}

  We now take integers $r \ge 1$ and $h \ge 1$ as above, and we assume $p \nmid m$. We also assume that the following condition is satisfied:
  
  \begin{assumption}
   \label{assumptionN}
   We have $p^{(h-1)r} \mid N$, so there is a canonical section $Y_{hr}$ of $\sH_{hr}$ over $Y(mp^r, mp^r N)$.
  \end{assumption} 

  Under this assumption, the moment map modulo $p^{hr}$ is given by cup-product with the element $Y_{hr}$, so we obtain the following somewhat messy formula:
  
  \begin{proposition}
   For any $a \in \ZZ / mp^{hr} \ZZ$, we have the following equality modulo $p^{hr}$: 
   \begin{multline*}
    \sum_{j = 0}^h a^{h-j} (h-j)! (1 \otimes \mom^{h-j})^{\boxtimes 2} \Res_{mp^r}^{mp^{hr}} \left( \cRI_{mp^r, mp^rN, a}^{[j]} \right) \otimes \zeta_{p^{hr}}^{\otimes j} = 
    \\ i_*\left( (u_{a} \circ \Delta)_* \left(\cEI_{1, m p^r N}\right) \cup \left( a \cdot Y_{hr} \boxtimes Y_{hr} + \mathcal{CG}_{hr} \otimes \zeta_{p^{hr}} \right)^{[h]}\right).
   \end{multline*}
  \end{proposition}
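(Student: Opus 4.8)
The plan is to derive this formula directly from the defining expression \eqref{cRIdef} for the Rankin--Iwasawa classes, combined with the description (valid under Assumption \ref{assumptionN}) of the moment maps modulo $p^{hr}$ as cup-product with the canonical section $Y_{hr}$, and then to collapse the resulting sum of cup-products using the divided-power binomial theorem.

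First I would rewrite, using \eqref{cRIdef}, each class $\cRI^{[j]}_{mp^r, mp^r N, a}$ as $i_*\!\left( (u_a\circ\Delta)_*\!\left(\cEI_{1, mp^r N}\right) \cup \mathcal{CG}^{[j]}\right)$, with $i$ the closed immersion of the relevant $Z(\cdot,\cdot)$. The crucial structural point is that every operation occurring on the left-hand side of the proposition is compatible with this presentation: the restriction map $\Res^{mp^{hr}}_{mp^r}$ commutes with the pushforward $i_*$ by proper base change; the moment maps $(1\otimes\mom^{h-j})^{\boxtimes 2}$ commute with $i_*$ since they act only on the $\Lambda(\sH)$-factors of the coefficient sheaf $\Lambda^{[j,j]}$; and cup-product with the canonical sections $\mathcal{CG}$ and $Y_{hr}$ commutes with $\Res$ and, by the projection formula, with $i_*$, because these sections are pulled back from the universal elliptic curve and hence transform compatibly under all the degeneracy, restriction and diagonal maps in play (in particular $u_a^*\mathcal{CG} = \mathcal{CG}$, as already noted, and $u_a$ preserves $Z(m,mN)$). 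Consequently the left-hand side is unchanged if the restriction, the moment maps and the twist by $\zeta_{p^{hr}}^{\otimes j}$ are all performed ``inside'' the outer $i_*$ and the cup-product with $\cEI_{1,mp^r N}$.

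Next I would apply Assumption \ref{assumptionN}: since $p^{(h-1)r}\mid N$, modulo $p^{hr}$ the moment map $1\otimes\mom^{h-j}$ on each $\Lambda(\sH)$-factor is computed by cup-product with the canonical section $Y_{hr}$, so $(1\otimes\mom^{h-j})^{\boxtimes 2}$ becomes cup-product with $(Y_{hr}\boxtimes Y_{hr})^{[h-j]}$; the explicit factor $(h-j)!$ in the statement is exactly the discrepancy between the tensor-power normalisation built into the definition of $\mom^{h-j}$ and the divided-power element appearing here. At the same time $\mathcal{CG}^{[j]}$ reduces modulo $p^{hr}$ to $\mathcal{CG}_{hr}^{[j]}$, and the $(-j)$-twist carried by $\mathcal{CG}^{[j]}$ combines with $\zeta_{p^{hr}}^{\otimes j}$ to give $\left(\mathcal{CG}_{hr}\otimes\zeta_{p^{hr}}\right)^{[j]}$. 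Gathering the $j$-th summands and pulling the scalar out of the divided power via $(av)^{[h-j]} = a^{h-j}v^{[h-j]}$, the left-hand side becomes
\[ i_*\!\left( (u_a\circ\Delta)_*\!\left(\cEI_{1, mp^r N}\right) \cup \sum_{j=0}^h \left( a\cdot Y_{hr}\boxtimes Y_{hr}\right)^{[h-j]}\cup\left(\mathcal{CG}_{hr}\otimes\zeta_{p^{hr}}\right)^{[j]}\right). \]
Finally, the divided-power binomial identity $(X+Y)^{[h]} = \sum_{j=0}^h X^{[h-j]}Y^{[j]}$, applied in the divided-power algebra $\bigoplus_k\TSym^k\big((\sH\boxtimes\sH)(-1)\big)$ over $Z(mp^r, mp^r N)$ with $X = a\cdot Y_{hr}\boxtimes Y_{hr}$ and $Y = \mathcal{CG}_{hr}\otimes\zeta_{p^{hr}}$, collapses the sum to $\left(a\cdot Y_{hr}\boxtimes Y_{hr} + \mathcal{CG}_{hr}\otimes\zeta_{p^{hr}}\right)^{[h]}$, which is precisely the right-hand side.

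The part I expect to require the most care is the compatibility bookkeeping of the second paragraph together with the precise accounting of normalisation constants: one must verify rigorously that the restriction maps, the pushforwards along $\Delta$ and $i$, the two separate moment maps, and the cup-products with $Y_{hr}$, $\mathcal{CG}_{hr}$ and $\zeta_{p^{hr}}$ may be applied in any order, and that every constant is tracked correctly — in particular that the single factor $(h-j)!$ emerges correctly from the interaction between the tensor-power convention for $\mom^{h-j}$ on the two factors, the divided-power conventions for $\mathcal{CG}^{[j]}$ and $Y_{hr}^{[h-j]}$, and the ``diagonal'' nature of $\mathcal{CG}$ along $Z(mp^r, mp^r N)$ that couples the two copies of $\TSym$. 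Once these conventions are pinned down, the asserted identity is a purely formal consequence of \eqref{cRIdef} and the divided-power binomial theorem.
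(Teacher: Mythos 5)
Your proposal is correct and follows exactly the route the paper intends: the paper dismisses this as "a straightforward exercise from the definition of multiplication in the algebra $\TSym^\bullet$", noting only that the factor $(h-j)!$ arises from the identity $(Y \boxtimes Y)^{[h-j]} = (h-j)!\, Y^{[h-j]} \boxtimes Y^{[h-j]}$ — which is precisely the normalisation discrepancy you identify, and your use of the divided-power binomial theorem to collapse the sum is the intended computation. Your write-up simply supplies the bookkeeping the authors omit.
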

  
  \begin{proof}
   This is a straightforward exercise from the definition of multiplication in the algebra $\TSym^\bullet$. (The factor of $(h-j)!$ appears because $(Y \boxtimes Y)^{[h-j]} = (h - j)! Y^{[h-j]} \boxtimes Y^{[h-j]}$).
  \end{proof}
    
  We can now prove the main theorem of this section:

  \begin{theorem}
   \label{thm:congruencesfinal}
   Suppose that $p \mid N$. Then for any $a \in \ZZ / mp^{hr} \ZZ$ and any $m$ coprime to $p$, we have
   \begin{multline*}
    \sum_{j = 0}^h a^{h-j} (h-j)! \Res_{mp^r}^{mp^{\infty}} (1 \otimes \mom^{h-j})^{\boxtimes 2}\left(\cBF^{[j]}_{mp^{r}, N, a}\right) \otimes \zeta_{p^{hr}}^{\otimes j}
    \\ \in p^{hr} H_{\et}^3\left(Y_1(N)^2 \times\mu_{mp^\infty}^\circ, \Lambda^{[h, h]}(\sH_{\Zp})(2)\right).
   \end{multline*}
  \end{theorem}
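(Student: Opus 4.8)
The plan is to deduce the theorem from the preceding Proposition by pushing its identity forward along the degeneracy maps $t_{mp^r}$ and restricting to level $\mu_{mp^\infty}^\circ$. First I would reduce to the case in which Assumption \ref{assumptionN} holds: replacing $N$ by $Np^{(h-1)r}$ (which changes neither the set of primes dividing $Np$ nor the constraint on $c$) ensures $p^{(h-1)r}\mid N$, so that the canonical section $Y_{hr}$ of $\sH_{hr}$ over $Y(mp^r, mp^rN)$ exists and the Proposition is available. Since the moment maps $(1\otimes\mom^{h-j})^{\boxtimes 2}$, the restriction $\Res_{mp^r}^{mp^\infty}$, and pushforward along $Y_1(Np^{(h-1)r})\to Y_1(N)$ all commute, since the $\cBF$-classes are compatible with this pushforward (as $\cEI_{1,-}$ is), and since pushforward sends $p^{hr}H^3(\dots)$ into $p^{hr}H^3(\dots)$, the statement at the enlarged level implies it at level $N$. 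So assume $p^{(h-1)r}\mid N$ from now on.

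Now apply the pushforward along $t_{mp^r}\times t_{mp^r}$, followed by $\Res_{mp^r}^{mp^\infty}$, to both sides of the identity of the Proposition. On the left-hand side, $(t_{mp^r}\times t_{mp^r})_*$ commutes with the sheaf maps $(1\otimes\mom^{h-j})^{\boxtimes 2}$ (functoriality of \'etale pushforward) and with cyclotomic restriction (base change along the relevant Cartesian square), and by Definition \ref{def:BFelt} it sends $\cRI^{[j]}_{mp^r, mp^rN, a}$ to $\cBF^{[j]}_{mp^r, N, a}$; hence the left-hand side becomes exactly the sum in the theorem (the Tate twists matching because $\otimes\zeta_{p^{hr}}^{\otimes j}$ supplies the extra $(j)$). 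So it remains to show that
\[ \Res_{mp^r}^{mp^\infty}(t_{mp^r}\times t_{mp^r})_*\, i_*\!\left((u_a\circ\Delta)_*(\cEI_{1, mp^rN})\cup\bigl(a\cdot Y_{hr}\boxtimes Y_{hr}+\mathcal{CG}_{hr}\otimes\zeta_{p^{hr}}\bigr)^{[h]}\right)\]
lies in $p^{hr}H^3\!\left(Y_1(N)^2\times\mu_{mp^\infty}^\circ,\Lambda^{[h,h]}(\sH_{\Zp})(2)\right)$.

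The crux is the effect of $t_{mp^r}$ on the sheaf $\Lambda^{[h,h]}$. Writing $\pi\colon\cE\to\cE/\langle e_1\rangle$ for the universal isogeny built into $t_{mp^r}$, the $p$-part of $\deg\pi=mp^r$ is exactly $p^r$; since $\langle\pi_* x,\pi_* y\rangle=\langle x,y\rangle^{\deg\pi}$, the map $\wedge^2\pi_*$ on $\wedge^2\sH_{\Zp}\cong\sH_{\Zp}(1)$ is multiplication by $p^r$ up to a $p$-adic unit. The pushforward $(t_{mp^r}\times t_{mp^r})_*$ on $\Lambda^{[h,h]}$-valued cohomology factors through the sheaf map which on each of the two $\TSym^h$-factors is $\TSym^h\pi_*$ followed by the trace for $t_{mp^r}$. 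Now, along the twisted diagonal image of $u_a\circ\Delta$, the correction term $a\cdot Y_{hr}\boxtimes Y_{hr}+\mathcal{CG}_{hr}\otimes\zeta_{p^{hr}}$ is, up to a $p$-adic unit, a generator of $\wedge^2\sH_{\Zp}$ sitting inside $\sH_{\Zp}\boxtimes\sH_{\Zp}$: the element $\mathcal{CG}\otimes\zeta_{p^{hr}}$ is the inverse Weil pairing, and the point of including the symmetric term $a\cdot Y_{hr}\boxtimes Y_{hr}$, together with the $u_a$-twist, is precisely to re-express this antisymmetric element with respect to the symplectic basis adapted to the isogeny $\cE\to\cE/\langle e_1+aN e_2\rangle$ used on the second factor. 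Hence $(\pi_*\boxtimes\pi_*)$ carries it to $p^r$ times a section, so its $h$-th divided power goes to $p^{hr}$ times a section; feeding this into the factored description of the pushforward exhibits the displayed class as $p^{hr}$ times a cohomology class, as desired.

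The step I expect to be the main obstacle is making the assertions of the previous paragraph rigorous: unwinding the definition of $(t_{mp^r}\times t_{mp^r})_*$ to verify that it genuinely factors through $\TSym^h\pi_*$ on each factor, and keeping careful track of the interaction between the $u_a$-twist and the two distinct isogenies $\cE\to\cE/\langle e_1\rangle$ and $\cE\to\cE/\langle e_1+aN e_2\rangle$ on the two factors, so that the correction term is $p^r$-divisible after pushforward on \emph{each} factor and not merely in combination.
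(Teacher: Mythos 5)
Your overall architecture is the same as the paper's: reduce to Assumption \ref{assumptionN} using the compatibility of the classes under the degeneracy maps $Y_1(N')\to Y_1(N)$, factor $(t_{mp^r}\times t_{mp^r})_*$ through a morphism $(t_{mp^r}\times t_{mp^r})_\sharp$ of coefficient sheaves, show that this morphism annihilates the correction term along the image of $u_a\circ\Delta$, and take $h$-th powers. But the crux --- why the correction term is annihilated --- is precisely the step you leave open, and the mechanism you propose for it does not apply. The identity $\langle\pi_*x,\pi_*y\rangle=\langle x,y\rangle^{\deg\pi}$, i.e.\ the statement that $\wedge^2\pi_*$ is multiplication by $\deg\pi$, concerns $\pi\boxtimes\pi$ for a \emph{single} isogeny $\pi$ acting on the antisymmetric tensor. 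Here the element $a\cdot Y_{r}\boxtimes Y_{r}+\mathcal{CG}_{r}\otimes\zeta_{p^{r}}$ is not antisymmetric (the summand $a\cdot Y_r\boxtimes Y_r$ is symmetric), and, as you yourself note, two \emph{distinct} isogenies act on the two factors: at a point $(E,P,Q)\times(E,P+aNQ,Q)$ of the image of $u_a\circ\Delta$, the first factor is quotiented by $\langle P\rangle$ and the second by $\langle P+aNQ\rangle$. No degree-of-isogeny scaling argument is available in this situation, so the $p^r$-divisibility after pushforward is not established; you have correctly identified this as the main obstacle, but it remains an obstacle in your write-up.

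The correct argument is a direct computation and is in fact much simpler than the symplectic re-expression you envisage. Work modulo $p^r$ (not $p^{hr}$): at the point above one has $Y_r=NQ$ and $\mathcal{CG}_r\otimes\zeta_{p^r}=NQ\boxtimes P-P\boxtimes NQ$, so
\[ a\cdot Y_r\boxtimes Y_r+\mathcal{CG}_r\otimes\zeta_{p^r}=NQ\boxtimes(P+aNQ)-P\boxtimes NQ. \]
Each of the two pure tensors on the right has one component equal to the generator of the kernel of the isogeny acting on that factor ($P+aNQ$ in the second slot of the first term, $P$ in the first slot of the second term), so each is killed separately by $(t_{mp^r}\times t_{mp^r})_\sharp$ modulo $p^r$; no cancellation between the two factors is needed, and the fact that the two isogenies differ causes no difficulty. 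The $h$-th tensor power of an element killed mod $p^r$ is killed mod $p^{hr}$, which is where the exponent $hr$ in the theorem comes from --- note that the paper runs the computation at level $p^r$ and only then passes to the $h$-th power, rather than manipulating $Y_{hr}$ and $\mathcal{CG}_{hr}$ directly as you do. With this computation inserted, the rest of your reduction and bookkeeping goes through.
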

  
  \begin{proof}
   It follows from \cite[Theorem 5.3.1]{KLZ1b} that if $N'$ is any multiple of $N$ with the same prime divisors as $N$, then $\cBF^{[j]}_{p^{r}, N, a}$ is the image of $\cBF^{[j]}_{p^{r}, N', a}$ under pushforward along the natural degeneracy map $Y_1(N')\rightarrow Y_1(N)$. We can therefore assume without loss of generality that $N$ satisfies Assumption \ref{assumptionN}. 
   
   We may factor the map $(t_{m p^r} \times t_{m p^r})_*$ as the composite of a map on the coefficient sheaves, which is a morphism 
   \[ (t_{mp^r} \times t_{mp^r})_\sharp: \sH \boxtimes \sH \to t_{m p^r}^*(\sH) \boxtimes t_{m p^r}^*(\sH) \] 
   of sheaves on $Y(mp^r, mp^r N)^2$, followed by the pushforward via $t_{mp^r} \times t_{mp^r}$ on the underlying modular curve.
   
   We claim that when restricted to the image of $u_a \circ \Delta: Y(mp^r, mp^r N) \to Z(mp^r, mp^rN)$, the section $a \cdot Y_{r} \boxtimes Y_{r} + \mathcal{CG}_{r} \otimes \zeta_{p^{r}}$ of $\sH_r \boxtimes \sH_r$ is in the kernel of $(t_{mp^r} \times t_{mp^r})_\sharp$. 
   
   This follows from the fact that the map $(t_{mp^r} \times t_{mp^r})_\sharp$ is given by quotienting out by the first component of the level structure in each factor: on the fibre at a point $(E_1, P_1, Q_1) \times (E_2, P_2, Q_2)$ of $Y(mp^r, mp^r N)^2$, the fibre of $\sH \boxtimes \sH$ is the Tate module of $E_1 \times E_2$, and the map $(t_{mp^r} \times t_{mp^r})_\sharp$ is the quotient map $E_1 \times E_2 \to E_1 / \langle P_1 \rangle \times E_2 / \langle P_2 \rangle$. A point in the image of $u_a \circ \Delta$ is given by $(E, P, Q) \times (E, P + aNQ, Q)$ for some point $(E, P, Q)$ of $Y(mp^r, mp^r N)$, and the section $\mathcal{CG}_r \otimes \zeta_{p^r}$ is given by $N Q \boxtimes P - P \boxtimes NQ$. Thus we have
   \[ a \cdot Y_{r} \boxtimes Y_{r} + \mathcal{CG}_{r} \otimes \zeta_{p^{r}} = a NQ \boxtimes NQ + (NQ \boxtimes P - P \boxtimes NQ) = NQ \boxtimes (P + a N Q) - P \boxtimes NQ, \]
   which is annihilated by $(t_{mp^r} \times t_{mp^r})_\sharp$ as claimed.
   
   Since this element is annihilated by $(t_{mp^r} \times t_{mp^r})_\sharp$ modulo $p^r$, its $h$-th tensor power is annihilated by the same map modulo $p^{hr}$. This gives the congruence stated above. 
  \end{proof}
  
  \begin{remark}
   \label{remark:weasel}
   We shall in fact use a slight refinement of this theorem. Let $\cE$ be the universal elliptic curve over $Y_1(N)$, and let $D' = C - \{0\} \subset \cE[p]$, where $C$ is the universal level $p$ subgroup. Then there is a subsheaf $\sH_{\Zp} \langle D' \rangle$ of $\sH_{\Zp}$, which is the preimage of $D'$ under reduction modulo $p$, and a corresponding sheaf of Iwasawa modules $\Lambda(\sH_{\Zp}\langle D'\rangle)$.
   
   The Beilinson--Flach elements for $p \mid N$ are, by construction, the images of elements of the group
   \[ H^3_{\et}\left(Y_1(N) \times \mu_m^\circ, (\Lambda(\sH_{\Zp} \langle D' \rangle) \otimes \TSym^j \sH_{\Zp})^{\boxtimes 2}(2) \right); \]
   and exactly the same argument as above shows that we have a congruence modulo $p^{hr}$ in this group. We will need this below, in order to interpolate our elements in Coleman families.
  \end{remark}
  

\subsection{Galois representations: notation and conventions}
 \label{sect:galrep}

  In this section, we shall fix notations for Galois representations attached to modular forms. Let $f$ be a normalised cuspidal Hecke eigenform of some weight $k+2 \ge 2$ and level $N_f \ge 4$, and let $L$ be a number field containing the $q$-expansion coefficients of $f$.

  \begin{definition}
   For each prime $\frP \mid p$ of $L$, we write $M_{L_{\frP}}(f)$ for the maximal subspace of
   \[ H^1_{\et, c}\left(Y_1(N_f)_{\overline{\QQ}}, \Sym^k \sH_{\Qp}^\vee\right) \otimes_\Qp L_{\frP} \]
   on which the Hecke operator $T_\ell$, for every prime $\ell$, acts as multiplication by $a_\ell(f)$. Dually, we write $M_{L_{\frP}}(f)^*$ for the maximal \emph{quotient} of the space  
   \[ 
    H^1_{\et}\left(Y_1(N_f)_{\overline{\QQ}}, \TSym^k(\sH_{\Qp})(1) \right) \otimes_\Qp L_{\frP} 
   \]
   on which the dual Hecke operators $T_\ell'$ act as $a_\ell(f)$.
  \end{definition}

  Both spaces $M_{L_{\frP}}(f)$ and $M_{L_{\frP}}(f)^*$ are 2-dimensional $L_{\frP}$-vector spaces with continuous actions of $\Gal(\overline{\QQ} / \QQ)$, unramified outside $S$, where $S$ is the finite set of primes dividing $p N_f$. The twist by 1 implies that the Poincar\'e duality pairing
  \[ M_{L_{\frP}}(f) \times M_{L_{\frP}}(f)^* \to L_{\frP} \]
  is well-defined (and perfect), justifying the notation. If $f$ is new and $f^*$ is the eigenform conjugate to $f$, then the natural map $M_{L_{\frP}}(f^*)(1) \to M_{L_{\frP}}(f)^*$ is an isomorphism of $L_{\frP}$-vector spaces, although we shall rarely use this.

  If $f$, $g$ are two eigenforms (of some levels $N_f, N_g$ and weights $k+2, k' + 2 \ge 2$) with coefficients in $L$, we write $M_{L_{\frP}}(f \otimes g)$ for the tensor product $M_{L_{\frP}}(f) \otimes_{L_{\frP}} M_{L_\frP}(g)$, and similarly for the dual $M_{L_{\frP}}(f \otimes g)^*$. Via the K\"unneth formula, we may regard $M_{L_{\frP}}(f \otimes g)^*$ as a quotient of $H^2_{\et}(Y_1(N)^2_{\overline{\QQ}},\TSym^{[k,k']}(\sH_{\Qp})(2))\otimes_{\Qp} L_{\frP}$, for any $N \ge 4$ divisible by $N_f$ and $N_g$, where $\TSym^{[k,k']}(\sH_{\Qp})$ denotes the \'etale $\Qp$-sheaf $\TSym^k \sH_{\Qp} \boxtimes \TSym^{k'} \sH_{\Qp}$.

 \subsection{Consequences for pairs of newforms}

  We now use the congruences of Theorem \ref{thm:congruencesfinal}, together with the $p$-adic analytic machinery of Section \ref{sect:analyticprelim}, in order to define ``unbounded Iwasawa cohomology classes'' interpolating the Beilinson--Flach elements for a given pair $(f, g)$ of eigenforms.

  \begin{remark}
   We shall prove a considerably stronger result below (incorporating variation in Coleman families) which will mostly supersede Theorem \ref{thm:cycloBFelts}: see Theorem \ref{thm:3varelt}. However, the proof of the stronger result is much more involved, so for the reader's convenience we have given this more direct argument.
  \end{remark}

  Let us choose two normalised cuspidal eigenforms $f$, $g$, of weights $k + 2, k' + 2$ and levels $N_f, N_g$ respectively, with $k, k' \ge 0$. Let $L$ be a number field containing the coefficients of $f$ and $g$, and $\frP$ a prime of $L$ above $p$, so that the Galois representation $M_{L_\frP}(f \otimes g)^*$ of \S \ref{sect:galrep} is defined.
  Assume that $0\leq j\leq \min\{k,k'\}$, and let $N$ be an integer divisible by $N_f$ and $N_g$ and having the same prime factors as $N_f N_g$. Let $m\geq 1$. Recall from \cite[Definition 3.3.1]{KLZ1b} that we have an \'etale Eisenstein class
  \[ 
   \Eis^{[k,k',j]}_{\et,1,mN}\in H^3_{\et}\left(Y_1(mN)^2,\TSym^{[k,k']}\sH_{\Qp}(2-j)\right),
  \]
  which can be constructed using Beilinson's Eisenstein symbol (and in particular is the image of a class in motivic cohomology). By abuse of notation, we also denote by $\Eis^{[k,k',j]}_{\et,1,mN}$ the pull-back of this class to $Y(m,mN)^2$. 
  
  \begin{definition}
   For $a \in \ZZ/m\ZZ$, define $\BF^{[f, g, j]}_{m,a}$ to be the image of $(u_a)_*\Eis^{[k,k',j]}_{\et,1,mN}$ under the following composition of maps:
   \begin{align*}
    H^3_{\et}\left(Y(m, mN)^2,\TSym^{[k,k']}\sH_{\Qp}(2-j)\right) \rTo^{(t_m \times t_m)_*}&  H^3_{\et}\left(Y_1(N)^2\times\mu_m^\circ, \TSym^{[k,k']}\sH_{\Qp}(2-j)\right)\\
     \rTo& H^1\left(\QQ(\mu_m),H^2_{\et}(Y_1(N)^2_{\overline{\QQ}},\TSym^{[k,k']}\sH_{\Qp}(2-j)\right)\\
    \rTo& H^1\left(\QQ(\mu_m),M_{L_{\frP}}(f \otimes g)^*(-j)\right).
   \end{align*}
   This is independent of the choice of $N$. For $c > 1$ coprime to $6mpN_f N_g$, we define 
   \[ 
    \cBF^{[f, g, j]}_{m,a} \coloneqq
    \Big(c^2-c^{-(k+k'-2j)} \varepsilon_f(c)^{-1}\varepsilon_g(c)^{-1}\sigma_c^2\Big)\,
    \BF^{[f, g, j]}_{m,a}.
   \]
  \end{definition}
  
  \begin{remark}
   Note that for $m = 1$ the class $\BF^{[f, g, j]}_{m,a}$ is the Eisenstein class $\operatorname{AJ}_{f,g,\et}\left(\Eis^{[k, k', j]}_{\et,1,N}\right)$ of \cite[\S 5.4]{KLZ1a}.
  \end{remark}
  
  Let us recall the connection between these classes and the Iwasawa-theoretic classes of the previous sections. Recall that we have maps
  \[ \mom^{k-j} \cdot 1: \Lambda(\sH) \otimes \TSym^j(\sH) \to \TSym^k(\sH) \]
  for each $k \ge j$.
  
  \begin{proposition}[{\cite[Proposition 5.2.3 (3)]{KLZ1b}}]
   The class $\cBF^{[f, g, j]}_{m,a}$ coincides with the image of
   \[ \left[ (\mom^{k-j} \cdot 1) \boxtimes (\mom^{k'-j} \cdot 1)\right] \left( \cBF^{[j]}_{m, N, a}\right) \]
   under projection to the $(f, g)$-eigenspace.
  \end{proposition}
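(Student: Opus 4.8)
The statement to prove is Proposition~\cite[Proposition 5.2.3(3)]{KLZ1b}: that $\cBF^{[f,g,j]}_{m,a}$ equals the image of $\left[(\mom^{k-j}\cdot 1)\boxtimes(\mom^{k'-j}\cdot 1)\right]\left(\cBF^{[j]}_{m,N,a}\right)$ under projection to the $(f,g)$-eigenspace. Since this is quoted verbatim from \cite{KLZ1b}, the "proof" here should really be an explanation of how the identification goes, tracing both sides back through their definitions.

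\smallskip

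The plan is to unwind both constructions and compare them term by term. First I would recall that $\cBF^{[j]}_{m,N,a}$ is, by Definition~\ref{def:BFelt}, the pushforward $(t_m\times t_m)_*$ of ${}_c\RI^{[j]}_{m,mN,a}$, which in turn is $(u_a)_*\circ\Delta_*\circ CG^{[j]}$ applied to the Eisenstein--Iwasawa class ${}_c\EI_{1,mN}$. On the other side, $\cBF^{[f,g,j]}_{m,a}$ is built from the \'etale Eisenstein class $\Eis^{[k,k',j]}_{\et,1,mN}$ via $(u_a)_*$, then $(t_m\times t_m)_*$, then the K\"unneth/Leray projection to $H^1(\QQ(\mu_m),M_{L_\frP}(f\otimes g)^*(-j))$, with the $c$-smoothing factor $c^2-c^{-(k+k'-2j)}\varepsilon_f(c)^{-1}\varepsilon_g(c)^{-1}\sigma_c^2$ attached. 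So the two sides involve the same geometric operations $(u_a)_*$, $(t_m\times t_m)_*$ and the same projection; the content is entirely in comparing the "input" classes on $Y(m,mN)^2$ after applying the moment maps.

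\smallskip

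The key step is therefore the identity of sheaf-level inputs: one must show that applying $\left[(\mom^{k-j}\cdot 1)\boxtimes(\mom^{k'-j}\cdot 1)\right]$ to $\Delta_*\circ CG^{[j]}({}_c\EI_{1,mN})$ recovers (the $c$-smoothed version of) $\Delta_*$ applied to $\Eis^{[k,k',j]}_{\et,1,mN}$, compatibly with the pushforward along $\Delta$. This rests on two facts from \cite{KLZ1b}: (a) the compatibility of the Eisenstein--Iwasawa class with the \'etale Eisenstein classes under moment maps, i.e.\ that $\mom^k$ (with the $c$-factor) carries ${}_c\EI_{1,mN}$ to $\Eis^{[k]}_{\et,1,mN}$ (the case $k'=k$, $j=0$), and (b) the fact that the Clebsch--Gordan map $CG^{[j]}$ followed by the two moment maps $\mom^{k-j}$, $\mom^{k'-j}$ agrees on the nose with the "diagonal" construction of the two-variable Eisenstein class $\Eis^{[k,k',j]}$ --- this is exactly the Clebsch--Gordan branching law $\Sym^k\otimes\Sym^{k'}\hookleftarrow \Sym^{k+k'-2j}(-j)$ realised on the level of the relative Tate module, via the Weil-pairing section $\mathcal{CG}$. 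Concretely, on a geometric generic fibre one checks that $[x]\mapsto [x]\otimes x^{\otimes j}$ composed with $\mom^{k-j}$ on each factor reproduces the Clebsch--Gordan projector applied to $x^{\otimes(k+k'-2j)}$, using $u_a^*\mathcal{CG}=\mathcal{CG}$ and the divided-power formalism in $\TSym^\bullet$.

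\smallskip

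Having matched the inputs, the rest is formal: $\Delta_*$, $(u_a)_*$ and $(t_m\times t_m)_*$ are functorial, so they intertwine with the moment-map operations (which act purely on coefficient sheaves), and the final projection to the $(f,g)$-eigenspace is by construction the projection to $M_{L_\frP}(f)^*\otimes M_{L_\frP}(g)^*$ via Poincar\'e/K\"unneth duality, which is exactly where $M_{L_\frP}(f\otimes g)^*$ sits as a quotient of $H^2_{\et}(Y_1(N)^2_{\overline\QQ},\TSym^{[k,k]}\sH_{\Qp}(2))$. I expect the main obstacle to be bookkeeping the twists and the normalising factor: one must check that the Tate twists $(2-j)$ versus $(2)$ with the $(-j)$-twist absorbed into $M_{L_\frP}(f\otimes g)^*(-j)$ line up, and that the single $c$-smoothing factor $\big(c^2-c^{-(k+k'-2j)}\varepsilon_f(c)^{-1}\varepsilon_g(c)^{-1}\sigma_c^2\big)$ on the right equals the effect of the $c$-dependence built into ${}_c\EI_{1,mN}$ after applying the two moment maps --- this is a direct computation with the action of $\sigma_c$ on $\TSym^{k+k'-2j}$ and the known $c$-normalisation of ${}_c\EI$. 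Modulo this twist-tracking, the proposition follows from the cited result in \cite{KLZ1b}, and I would simply refer to that computation rather than repeat it.
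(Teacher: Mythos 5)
Your proposal is correct and matches the intended argument: the paper itself gives no proof, citing \cite[Proposition 5.2.3(3)]{KLZ1b} directly, and the content there is exactly the reduction you describe — functoriality of $(u_a)_*$, $\Delta_*$ and $(t_m\times t_m)_*$ reduces everything to the coefficient-sheaf identities, namely the moment-map compatibility of ${}_c\EI$ with the \'etale Eisenstein classes (which is where the $c$-factor arises) and the compatibility of $CG^{[j]}$ with the maps $\mom^{k-j}\cdot 1$ via the Weil-pairing section. Your bookkeeping of the twists and the $c$-smoothing factor is the only delicate point, and deferring that computation to the cited reference is exactly what the paper does.
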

  
  We now consider ``$p$-stabilised'' versions of these objects. If $p \nmid N_f$, we choose a root $\alpha_f \in L$ of the Hecke polynomial of $f$ (after extending $L$ if necessary); and we let $f_\alpha$ be the corresponding $p$-stabilisation of $f$, so $f_\alpha$ is a normalised eigenform of level $N_{f_\alpha}=p N_f$, with $U_p$-eigenvalue $\alpha_f$ and the same $T_\ell$-eigenvalues as $f$ for all $\ell \ne p$. If $p \mid N_f$, then we assume that $a_p(f) \ne 0$, and we set $\alpha_f = a_p(f)$ and (for consistency) $f_\alpha = f$ and $N_{f_\alpha}=N_f$. We define $\alpha_g$ and $g_\alpha$ similarly.
     
  If $p \nmid N_f N_g$, then the class $\cBF^{[f_\alpha,g_\alpha,j]}_{m, a}$ for $m$ coprime to $p$ is related to the Eisenstein class for the forms $f, g$ as follows. There is a correspondence
  $\Pr^{\alpha_f}: Y_1(p N_f) \to Y_1(N_f)$ given by $\pr_1 - \frac{\beta}{p^{k + 1}} \pr_2$, and $(\Pr^{\alpha_f})_*$ gives an isomorphism
  \[ M_{L_{\frP}}(f_\alpha)^* \to M_{L_{\frP}}(f)^*, \]
  and similarly for $g$.
  
  \begin{proposition}
   For $p \nmid m N_f N_g$, we have
   \[ (\Pr^{\alpha_f} \times \Pr^{\alpha_g})_* \left(\BF^{[f_\alpha,g_\alpha,j]}_{m, a}\right) = \left(1 - \frac{\alpha_f \beta_g}{p^{1 + j} \sigma_p}\right)  \left(1 - \frac{\beta_f \alpha_g}{p^{1 + j}\sigma_p}\right) \left(1 - \frac{\beta_f \beta_g}{p^{1 + j}\sigma_p}\right)\cdot\cBF^{[f, g, j]}_{m, a}.\]
   If $p \mid N_f$ but $p \nmid m N_g$, then we have
   \[ (\mathrm{id} \times \Pr^{\alpha_g})_* \left(\BF^{[f, g_\alpha, j]}_{m, a}\right) = \left(1 - \frac{\alpha_f \beta_g}{p^{1 + j}\sigma_p}\right) \cdot\cBF^{[f, g, j]}_{m, a}.\]
  \end{proposition}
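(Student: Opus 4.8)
The plan is to reduce both equalities to the behaviour of the Iwasawa-sheaf classes $\cBF^{[j]}_{m,N,a}$ under the two degeneracy maps $Y_1(pN)\to Y_1(N)$, which is recorded in \cite[Theorem 5.3.1]{KLZ1b}, combined with the explicit presentation $\Pr^{\alpha_f}=\pr_1-\tfrac{\beta_f}{p^{k+1}}\pr_2$ and the standard relations $\alpha_f+\beta_f=a_p(f)$, $\alpha_f\beta_f=p^{k+1}\varepsilon_f(p)$ (and likewise for $g$). First I would unwind both sides. Since $f_\alpha,g_\alpha$ have levels $pN_f,pN_g$, the class $\BF^{[f_\alpha,g_\alpha,j]}_{m,a}$ is constructed from the \'etale Eisenstein class at level $m\cdot pN$ and lies in the $(f_\alpha,g_\alpha)$-eigenspace of $H^1(\QQ(\mu_m),H^2_\et(Y_1(pN)^2_{\overline\QQ},\TSym^{[k,k']}\sH_\Qp(2-j)))$, whereas $\cBF^{[f,g,j]}_{m,a}$ is the $(f,g)$-projection of the corresponding construction at level $mN$. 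By \cite[Proposition 5.2.3(3)]{KLZ1b} both are obtained by applying $(\mom^{k-j}\cdot 1)\boxtimes(\mom^{k'-j}\cdot 1)$ to the classes $\cBF^{[j]}_{m,pN,a}$ and $\cBF^{[j]}_{m,N,a}$ respectively --- using the refined sheaf $\Lambda(\sH_\Zp\langle D'\rangle)$ of Remark \ref{remark:weasel} in the level-$pN$ case --- and then projecting. So it is enough to compute the pushforward of $\cBF^{[j]}_{m,pN,a}$ along $\pr_1$ and $\pr_2$ in each of the two variables, and to keep track of the interaction with the moment maps.

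Next I would expand $(\Pr^{\alpha_f}\times\Pr^{\alpha_g})_*$ into the four summands $(\pr_i\times\pr_{i'})_*$ with coefficients $1$, $-\tfrac{\beta_g}{p^{k'+1}}$, $-\tfrac{\beta_f}{p^{k+1}}$, $\tfrac{\beta_f\beta_g}{p^{k+k'+2}}$, and rewrite each summand in terms of $\cBF^{[f,g,j]}_{m,a}$. The term $(\pr_1\times\pr_1)_*$, composed with the projection to the $(f,g)$-eigenspace, is a scalar multiple of $\cBF^{[f,g,j]}_{m,a}$ fixed by the normalisation built into its definition. Each occurrence of $(\pr_2)_*$, after composition with the relevant moment map, contributes a power $p^{k-j}$ (resp.\ $p^{k'-j}$), the dual Hecke operator $U_p'$, and an arithmetic Frobenius coming from the change of level structure on the $\mu_m^\circ$-component; since $U_p'$ acts on $M_{L_\frP}(f)^*$ through the $\alpha_f$-stabilisation, the identity $\alpha_f\beta_f=p^{k+1}\varepsilon_f(p)$ turns its contribution into $\beta_f/(p^{1+j}\sigma_p)$ (and $\beta_g/(p^{1+j}\sigma_p)$ on the $g$-side), with the mixed summands handled by the same relation applied one variable at a time. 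Collecting all contributions, the coefficient of $\cBF^{[f,g,j]}_{m,a}$ simplifies to
\[ \Big(1-\tfrac{\alpha_f\beta_g}{p^{1+j}\sigma_p}\Big)\Big(1-\tfrac{\beta_f\alpha_g}{p^{1+j}\sigma_p}\Big)\Big(1-\tfrac{\beta_f\beta_g}{p^{1+j}\sigma_p}\Big); \]
the factor $1-\tfrac{\alpha_f\alpha_g}{p^{1+j}\sigma_p}$ does not appear, reflecting the fact that $\Pr^{\alpha_f}$ and $\Pr^{\alpha_g}$ are the projectors onto the chosen stabilisations, so that the root pair $(\alpha_f,\alpha_g)$ --- the one retained in the reciprocity law of Theorem B --- is exactly the one that is \emph{not} re-introduced. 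For the second identity one runs the same computation with the $f$-variable left untouched (as $p\mid N_f$ forces $f=f_\alpha$), so only the single Euler factor produced by $(\Pr^{\alpha_g})_*$ survives.

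The main obstacle I anticipate is the normalisation bookkeeping: reconciling the powers of $p$ introduced by the moment maps with those in $\Pr^{\alpha}$, accounting for the diamond operators $\langle p\rangle$ implicit in $U_p'$, and --- most delicately --- identifying the geometric Frobenius produced by $\pr_2$ on the relative Tate module and on the $\mu_m^\circ$-component with the arithmetic Frobenius $\sigma_p\in\Gal(\QQ(\mu_m)/\QQ)$ appearing in the statement. This is precisely why I would route the whole argument through the already-normalised statement \cite[Theorem 5.3.1]{KLZ1b} rather than recompute from Beilinson's Eisenstein symbol directly; the combinatorial collapse of the degeneracy-map contributions into the displayed product of Euler factors is then formally identical to its counterpart in the ordinary case treated in \emph{loc.\,cit.}
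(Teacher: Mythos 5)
Your proposal is correct and is essentially the computation the paper relies on: the paper's own proof simply cites \cite{KLZ1b} (Lemma 5.6.4 and Remark 5.6.5), which carries out exactly the expansion of $\Pr^{\alpha_f}\times\Pr^{\alpha_g}$ into the four degeneracy-map summands and the collection of Euler factors that you describe. The only slip is the reference: the normalised input you want is the second norm relation and Lemma 5.6.4/Remark 5.6.5 of \emph{op.\,cit.}, not Theorem 5.3.1 (which is the compatibility in the tame level $N$).
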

  
  \begin{proof}
   This is a restatement of Lemma 5.6.4 and Remark 5.6.5 of \cite{KLZ1b}.
  \end{proof}
  
  We shall now interpolate the $\cBF^{[f_\alpha, g_\alpha, j]}_{m, a}$ for varying $m$ and $j$, under the following assumption:
   
  \begin{assumption}\label{ass:nottwists}
   The automorphic representations $\pi_f$ and $\pi_g$ corresponding to $f$ and $g$ are not twists of each other.
  \end{assumption} 
   
  \begin{note}
   Assumption \ref{ass:nottwists} is automatically  satisfied if $k \ne k'$.
  \end{note}
   
   Let $m$ be coprime to $p$ and $r\geq 1$. Then Assumption \ref{ass:nottwists} implies that $H^0(\QQ(\mu_{mp^\infty}),M_{L_{\frP}}(f\otimes g))=0$, so the restriction map induces an isomorphism
   \[  H^1\left(\QQ(\mu_{mp^r}),M_{L_{\frP}}(f\otimes g)^*(-j)\right)\cong H^1\left(\QQ(\mu_{mp^\infty}),M_{L_{\frP}}(f\otimes g)^*\right)^{\Gamma_r=\chi^j}.\]   
   
   \begin{convention}
    By abuse of notation, we write ${}_c\BF^{[f_\alpha,g_\alpha,j]}_{mp^r,a}$ for the image of the Beilinson-Flach element in $H^1(\QQ({\mu_{mp^\infty}}),M_{L_{\frP}}(f_\alpha\otimes g_\alpha)^*)^{\Gamma_r=\chi^{j}}$. 
   \end{convention}
   
   These elements satisfy the following compatibility:
      
  \begin{lemma}\label{lem:normcompatible}
   Let $m\geq 1$ be coprime to $p$, and let $r\geq 0$. Then 
   \[ \sum_{\Gamma_r/\Gamma_{r+1}}\chi(\gamma)^{-j}\gamma\cdot\, {}_c\BF^{[f_\alpha,g_\alpha,j]}_{mp^{r+1},a}=
   \begin{cases}
    (\alpha_f\alpha_g)\, \cBF^{[f_\alpha,g_\alpha,j]}_{mp^{r},a} & \text{if $r>0$}\\
    (\alpha_f\alpha_g-p^j\sigma_p)\, {}_c\BF^{[f_\alpha,g_\alpha,j]}_{mp^{r},a} & \text{if $r=0$}
   \end{cases}\]
  \end{lemma}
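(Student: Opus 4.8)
The plan is to deduce this norm-compatibility relation from the norm-compatibility of the Rankin--Iwasawa classes $\cRI^{[j]}_{M,N,a}$, which holds at the level of the sheaf-theoretic Iwasawa classes before projecting to eigenspaces, together with a careful analysis of how the degeneracy maps $t_m$ and the $U_p$-operators interact. First I would recall from \cite[\S 5.3]{KLZ1b} the norm relations for $\cBF^{[j]}_{mp^r, N, a}$ (with $p \mid N$, using the refined version from Remark \ref{remark:weasel}), which express the trace from level $mp^{r+1}$ to level $mp^r$ in terms of the Hecke operator $U_p \times U_p$ acting on the coefficient sheaf $\Lambda(\sH_{\Zp}\langle D'\rangle)^{\boxtimes 2}$, with a correction term at $r=0$ coming from the difference between the two degeneracy maps $Y_1(Np) \rightrightarrows Y_1(N)$. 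Applying the moment maps $(\mom^{k-j}\cdot 1)^{\boxtimes 2}$ and projecting to the $(f_\alpha, g_\alpha)$-eigenspace, the operator $U_p \times U_p$ acts as $\alpha_f \alpha_g$ (by definition of the $p$-stabilisations $f_\alpha, g_\alpha$), while the correction term at $r=0$ involves $\Pr$-type maps whose effect on the eigenspace is multiplication by $p^j \sigma_p$ — the twist by $\sigma_p$ arising from the $\mu_{p^\infty}$-component of the level structure and the power $p^j$ from the Tate twist bookkeeping in $\TSym^{[k,k']}\sH_{\Qp}(2-j)$.

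The translation of ``trace along $Y_1(Np^{r+1}) \to Y_1(Np^r)$'' into ``$\sum_{\gamma \in \Gamma_r/\Gamma_{r+1}} \chi(\gamma)^{-j}\gamma\cdot(-)$ on $H^1(\QQ(\mu_{mp^\infty}), M_{L_\frP}(f_\alpha\otimes g_\alpha)^*)^{\Gamma_r=\chi^j}$'' is where the identification
\[ H^1(\QQ(\mu_{mp^r}), M_{L_\frP}(f\otimes g)^*(-j)) \cong H^1(\QQ(\mu_{mp^\infty}), M_{L_\frP}(f\otimes g)^*)^{\Gamma_r=\chi^j} \]
(valid under Assumption \ref{ass:nottwists}) comes in: the cyclotomic twist by $\chi^j$ built into the convention for ${}_c\BF^{[f_\alpha,g_\alpha,j]}_{mp^r,a}$ converts the corestriction map on the $(-j)$-twisted cohomology into the twisted trace operator $\sum \chi(\gamma)^{-j}\gamma$ on the untwisted cohomology. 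I would make this explicit by choosing a compatible system of cocycle representatives and checking the twist factor directly, exactly as in the analogous computation for Kato's Euler system or for the Rankin--Iwasawa classes in \cite{KLZ1b}.

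The main obstacle will be getting the correction term at $r = 0$ exactly right, including the precise constant $\alpha_f\alpha_g - p^j\sigma_p$ rather than some variant with a different power of $p$ or a different Frobenius twist. This requires pinning down the interaction between the map $t_{mp^r}$ (which on moduli quotients by $\langle e_1\rangle$, so involves the prime-to-$p$ part of the level when $\gcd(m,p)=1$ but crucially the $p$-part when we pass from $m$ to $mp$), the degeneracy maps in the tower $Y_1(Np^{r+1}) \to Y_1(Np^r)$, and the normalisation of the $U_p$-operator on $M_{L_\frP}(f_\alpha)^*$ versus its action on the full étale cohomology group; a sign or a factor of $p$ can easily slip in through the self-duality twist ``$(1)$'' in the definition of $M_{L_\frP}(f)^*$. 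I expect this to be a bookkeeping exercise modelled closely on \cite[Theorem 5.3.1 and \S 5.6]{KLZ1b}, but one that must be carried out carefully; everything else (the existence and uniqueness of eigenspace projections, vanishing of the relevant $H^0$, compatibility of restriction maps) is formal given the results already quoted.
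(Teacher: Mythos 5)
Your proposal is correct and follows essentially the same route as the paper, which simply cites the second norm relation for the Rankin--Iwasawa classes (\cite[Theorem 5.4.4]{KLZ1b}): the trace from level $mp^{r+1}$ to $mp^r$ is $U_p \times U_p$ (hence $\alpha_f\alpha_g$ on the eigenspace) for $r>0$, with the extra $-p^j\sigma_p$ term at $r=0$ coming from the degeneracy-map correction, exactly as you describe.
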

  
  \begin{proof}
   This follows from the second norm relation for the Rankin-Iwasawa classes (c.f. \cite[Theorem 5.4.4]{KLZ1b}).
  \end{proof}

  We impose the following ``small slope'' assumption:
  \begin{equation}
   \label{eq:smallslope}
   v_p(\alpha_f \alpha_g) < 1 + \min(k, k').
  \end{equation}

  \begin{theorem}
   \label{thm:cycloBFelts}
   If the small slope assumption \eqref{eq:smallslope} holds, then for any integers $m \ge 1$ coprime to $p$ and $a \in (\ZZ / mp^\infty \ZZ)^\times$, there exists a unique element
   \[
    \cBF^{[f_\alpha, g_\alpha]}_{m, a} \in D_\lambda(\Gamma, \Qp) \otimes_{D_0(\Gamma, \Qp)} H^1_{\Iw}(\QQ(\mu_{m p^\infty}), M_{L_{\frP}}(f_\alpha \otimes g_\alpha)^*),
   \]
   where $\lambda = v_p(\alpha_f \alpha_g)$, such that for every $r \ge 0$ and $0 \le j \le \min(k, k')$, the image of $\cBF^{[f_\alpha, g_\alpha]}_{m, a}$ in $H^1(\QQ(\mu_{m p^r}), M_{L_{\frP}}(f_\alpha\otimes g_\alpha)^*(-j))$ is given by
   \[
    \left.\begin{cases}
     (\alpha_f \alpha_g)^{-r} & \text{if $r > 0$} \\
     1 - \frac{p^{j} \sigma_p}{\alpha_f \alpha_g} & \text{if $r = 0$}
    \end{cases}\right\} \times
    \frac{\cBF^{[f_\alpha, g_\alpha, j]}_{mp^r, a}}{(-a)^j j! \binom{k}{j} \binom{k'}{j}}.
   \]
  \end{theorem}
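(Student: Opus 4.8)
The plan is to verify that the family of classes indexed by $r$ and $j$ satisfies the hypotheses of Proposition~\ref{prop:unbounded-iwasawa} (via Proposition~\ref{prop:colmeztautology} to land in the Iwasawa-cohomology module), and then to identify the resulting distribution. Concretely, I would first set
\[ x_{r, j} = \left\{\begin{cases}(\alpha_f\alpha_g)^{-r} & r>0 \\ 1 - \tfrac{p^j\sigma_p}{\alpha_f\alpha_g} & r = 0\end{cases}\right\}\times \frac{\cBF^{[f_\alpha,g_\alpha,j]}_{mp^r,a}}{(-a)^j j!\binom{k}{j}\binom{k'}{j}} \in H^1(\QQ(\mu_{mp^\infty}), M_{L_\frP}(f_\alpha\otimes g_\alpha)^*)^{\Gamma_r = \chi^j}, \]
the membership in the $\chi^j$-eigenspace being exactly the content of the Convention preceding Lemma~\ref{lem:normcompatible}. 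The first norm-compatibility condition $\sum_{\gamma\in\Gamma_r/\Gamma_{r+1}}\chi(\gamma)^{-j}\gamma\cdot x_{r+1,j} = x_{r,j}$ then follows by feeding Lemma~\ref{lem:normcompatible} through the normalising factors: for $r\geq 1$ the factor $\alpha_f\alpha_g$ from the lemma cancels against $(\alpha_f\alpha_g)^{-(r+1)}$ versus $(\alpha_f\alpha_g)^{-r}$, and the boundary case $r=0$ is engineered precisely so that the $(\alpha_f\alpha_g - p^j\sigma_p)$ appearing in the lemma turns into the Euler-type factor $1-\tfrac{p^j\sigma_p}{\alpha_f\alpha_g}$ after dividing by $\alpha_f\alpha_g$. (One should be slightly careful here because the binomial normalisation $(-a)^j j!\binom{k}{j}\binom{k'}{j}$ does not depend on $r$, so it passes through the $\Gamma_r/\Gamma_{r+1}$-sum harmlessly.)

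The second, quantitative hypothesis of Proposition~\ref{prop:unbounded-iwasawa} — that $\big\|p^{-hr}\sum_{j=0}^h(-1)^j\binom{h}{j}x_{r,j}\big\| \leq Cp^{\lfloor\lambda r\rfloor}$ with $\lambda = v_p(\alpha_f\alpha_g)$ — is where Theorem~\ref{thm:congruencesfinal} enters, and this is the main obstacle. The point is that $\sum_{j=0}^h(-1)^j\binom{h}{j}x_{r,j}$ is, up to the harmless invertible normalising constants and up to the Euler factor which only affects $r=0$, a constant multiple of $(\alpha_f\alpha_g)^{-r}$ times the alternating sum $\sum_j(-1)^j\binom{h}{j}\cBF^{[f_\alpha,g_\alpha,j]}_{mp^r,a}$, and Theorem~\ref{thm:congruencesfinal} (applied with $h = \min(k,k')$, or rather its refinement in Remark~\ref{remark:weasel} which is adapted to the $\langle D'\rangle$-sheaves needed for the $p$-stabilised forms) says exactly that this alternating sum lies in $p^{hr}$ times the integral lattice. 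Dividing by $p^{hr}$ therefore gives an integral class, and the remaining factor $(\alpha_f\alpha_g)^{-r}$ contributes $v_p(\alpha_f\alpha_g)^{-r} = p^{\lambda r}$ to the norm, which matches the bound with $C$ independent of $r$. Here I would need to check that passing from the Iwasawa classes $\cBF^{[j]}_{mp^r,N,a}$ of Theorem~\ref{thm:congruencesfinal} to the $(f_\alpha,g_\alpha)$-isotypic quotient $M_{L_\frP}(f_\alpha\otimes g_\alpha)^*$, via the moment maps $(\mom^{k-j}\cdot 1)\boxtimes(\mom^{k'-j}\cdot 1)$ and the projection to the eigenspace, sends the congruence of Theorem~\ref{thm:congruencesfinal} to the required congruence for the $x_{r,j}$ — this uses the compatibility of moment maps with the $\mom^{h-j}$ appearing in the theorem, together with the combinatorial identity relating $a^{h-j}(h-j)!$ and $\binom{h}{j}$ that is already visible in the Proposition preceding Theorem~\ref{thm:congruencesfinal}; the small-slope hypothesis \eqref{eq:smallslope} is precisely $\lambda < 1 + \min(k,k') = h+1$, i.e. $\lfloor\lambda\rfloor \leq h$, which is the condition needed to apply Propositions~\ref{prop:boundfordists} and~\ref{prop:unbounded-iwasawa} with this $h$.

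Granting these two hypotheses, Proposition~\ref{prop:unbounded-iwasawa} produces a unique $\mu \in H^1(\QQ(\mu_{mp^\infty}), D_\lambda(\Gamma, M_{L_\frP}(f_\alpha\otimes g_\alpha)^*))^\Gamma$ with $\int_{\Gamma_r}\chi^j\,\mathrm{d}\mu = x_{r,j}$; I would then use the inflation–restriction discussion following Proposition~\ref{prop:unbounded-iwasawa} (together with Assumption~\ref{ass:nottwists}, which gives $M_{L_\frP}(f\otimes g)^{G_{\QQ(\mu_{mp^\infty})}} = 0$ and hence the uniqueness of the lift) to descend $\mu$ to a class over $\QQ(\mu_m)$, and finally apply Proposition~\ref{prop:colmeztautology} — valid in the global case by the version proved in the excerpt — to reinterpret this as an element of $D_\lambda(\Gamma,\Qp)\otimes_{D_0(\Gamma,\Qp)} H^1_{\Iw}(\QQ(\mu_{mp^\infty}), M_{L_\frP}(f_\alpha\otimes g_\alpha)^*)$. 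This is the desired $\cBF^{[f_\alpha,g_\alpha]}_{m,a}$; its uniqueness and the stated interpolation property at each $(r,j)$ are built into the construction, and the interpolation formula displayed in the theorem is exactly the definition of the $x_{r,j}$. The one point requiring a little care at the end is that Proposition~\ref{prop:colmeztautology} is stated for representations over $\Qp$ whereas our coefficients lie in $L_\frP$; but $L_\frP$ is a finite extension of $\Qp$, so restriction of scalars reduces to the $\Qp$-case with no loss.
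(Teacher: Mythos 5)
Your proposal is correct and follows essentially the same route as the paper: define the normalised classes $x_{r,j}$, verify the norm-compatibility via Lemma~\ref{lem:normcompatible} and the growth bound via Theorem~\ref{thm:congruencesfinal}, feed these into Proposition~\ref{prop:unbounded-iwasawa}, and descend using Assumption~\ref{ass:nottwists} together with Proposition~\ref{prop:colmeztautology}. Your explicit handling of the $r=0$ corestriction (producing the Euler factor $1-p^j\sigma_p/\alpha_f\alpha_g$) is in fact slightly more careful than the paper's own write-up, which defines $x_{0,j}$ without this factor and leaves the adjustment implicit.
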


  \begin{remark}
   Compare Theorem 6.8.4 of \cite{LLZ14}, which is the case $k = k' = 0$. 
  \end{remark}

  \begin{proof}
   This amounts to reorganizing the output of Theorem \ref{thm:congruencesfinal} and Proposition \ref{prop:unbounded-iwasawa}. Let $h = \min(k, k')$. Consider the composition of maps 
   \begin{align*}
    H_{\et}^3\left(Y_1(N)^2 \times \mu_{mp^\infty}, \Lambda^{[h, h]}(\sH_{\Zp})(2-h)\right) & \rTo^{\, \otimes e_h\,} H_{\et}^3\left(Y_1(N)^2 \times \mu_{mp^\infty}, \Lambda^{[h, h]}(\sH_{\Zp})(2)\right)\\
     & \rTo H^1\left(\QQ(\mu_{mp^\infty}), H^2_{\et}(Y_1(N)^2_{\overline{\QQ}},\Lambda^{[h, h]}(\sH_{\Zp})(2))\right)\\
    & \rTo H^1\left(\QQ(\mu_{mp^\infty}), H^2_{\et}(Y_1(N)^2_{\overline{\QQ}}, \TSym^{[k,k']}(\sH_{\Zp})(2))\right)\\
    & \rTo H^1\left(\QQ(\mu_{mp^\infty}),M_{L_{\frP}}(f_\alpha\otimes g_\alpha)^*\right)
   \end{align*}
   where $e_h$ is the canonical basis of $\Zp(h)$ over $\QQ(\mu_{p^\infty})$, and the third map is given by $(\mom^{k-h} \cdot \id) \boxtimes (\mom^{k'-h} \cdot \id)$.    
   An unpleasant manipulation of factorials shows that the image  of the expression in Theorem \ref{thm:congruencesfinal} under this composition of maps is equal to
   \begin{equation}
    \label{eq:messyeq} 
    \frac{k! (k')!}{(k-h)!(k'-h)! h!} \sum_{j = 0}^{h} (-1)^j \binom{h}{j} y_{r, j}, 
   \end{equation} 
   where we write $y_{r, j}$ for the quantity 
   \[ 
    \left[(-a)^j j! \binom{k}{j} \binom{k'}{j}\right]^{-1} \cBF^{[f, g, j]}_{mp^r, a} \in H^1\left(\QQ(\mu_{mp^{\infty}}),M_{L_{\frP}}(f\otimes g)^*\right)^{\Gamma_r=\chi^j}.
   \]     
   The image of $H^2_{\et}(Y_1(N)^2_{\overline{\QQ}}, \TSym^{[k,k']}(\sH_{\Zp})(2)) \otimes \cO_{\frP}$ in $M_{L_{\frP}}(f \otimes g)^*$ is a $\cO_{\frP}$-lattice, and hence it defines a norm $\|\cdot\|$ on $M_{L_{\frP}}(f \otimes g)^*$. So Theorem \ref{thm:congruencesfinal} gives the norm bound
   \[ \left\| \sum_{j = 0}^{h} (-1)^j \binom{h}{j} y_{r, j}\right\| = O(p^{-hr}),\]
   where the implied constant in the $O()$ term depends on $k,k',h$ but not on $r$. Combining this fact with Lemma \ref{lem:normcompatible}, we deduce that the quantities 
   \[ x_{r, j} = (\alpha_f \alpha_g)^{-r} y_{r, j}\in H^1\left(\QQ(\mu_{mp^\infty}), M_{L_{\frP}}(f_\alpha\otimes g_\alpha)^*\right)^{\Gamma_r=\chi^j}\] 
   satisfy the hypotheses of Proposition \ref{prop:unbounded-iwasawa}, so there exists an element 
   \[ {}_c\BF^{[f_\alpha, g_\alpha]}_{m, a}\in H^1\left(\QQ(\mu_{mp^\infty}),D_\lambda(\Gamma,  M_{L_{\frP}}(f_\alpha \otimes g_\alpha)^*)\right)^\Gamma\]
   interpolating the $x_{r,j}$. Using again that $H^0(\QQ(\mu_{mp^\infty}), M_{L_{\frP}}(f_\alpha \otimes g_\alpha)^*)=0$ by Assumption \ref{ass:nottwists}, this element lifts uniquely to 
   \[ D_\lambda(\Gamma, \Qp) \otimes_{D_0(\Gamma, \Qp)} H^1_{\Iw}(\QQ(\mu_{m p^\infty}), M_{L_{\frP}}(f_\alpha \otimes g_\alpha)^*)\] 
   and has the required interpolation properties, which finishes the proof. 
  \end{proof}

  We now note, for future use, the following vital property of the classes $\cBF^{[f_\alpha, g_\alpha]}_{m, a}$. Denote by 
  \begin{multline*} \cL_{M_{L_\frP}(f_\alpha \otimes g_\alpha)^*}:  D^{\la}(\Gamma, \Qp) \otimes_{D_0(\Gamma, \Qp)} H^1_{\Iw}(\QQ_{p, \infty}, M_{L_{\frP}}(f_\alpha \otimes g_\alpha)^*) \\ \rTo D^{\la}(\Gamma, \Qp) \otimes_\Qp \DD_{\cris}\left(M_{L_\frP}(f_\alpha \otimes g_\alpha)^*\right)\end{multline*}
  Perrin-Riou's regulator map (c.f. \cite{PerrinRiou-fonctionsL} and \cite[Appendix B]{LZ}). 

  \begin{proposition}
   \label{prop:vanishing}
   If the stronger inequality
   \[ v_p(\alpha_f \alpha_g) < \frac{1 + \min(k, k')}{2} \]
   holds, then the projection of $\cL_{M_{L_{\frP}}(f_\alpha \otimes g_\alpha)^*}\left(\cBF^{[f_\alpha, g_\alpha]}_{m, a}\right)$ to the $\varphi = (\alpha_f \alpha_g)^{-1}$-eigenspace of $\QQ(\mu_m) \otimes_{\QQ} \DD_{\mathrm{cris}}(M_{L_{\frP}}(f_\alpha \otimes g_\alpha)^*)$ is zero.
  \end{proposition}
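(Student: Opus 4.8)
The plan is to deduce this from the interpolation property in Theorem~\ref{thm:cycloBFelts} together with the known behaviour of Perrin-Riou's regulator map under specialisation at characters of $\Gamma$. Recall that $\cL_{M_{L_\frP}(f_\alpha \otimes g_\alpha)^*}$ is a $D^{\la}(\Gamma, \Qp)$-linear map, so to show that the projection of $\cL(\cBF^{[f_\alpha, g_\alpha]}_{m,a})$ to the $\varphi = (\alpha_f\alpha_g)^{-1}$-eigenspace vanishes, it suffices to show that its image under evaluation at $\chi^j \eta$, for all integers $0 \le j \le \min(k,k')$ and all finite-order characters $\eta$ of $\Gamma$, is zero; indeed, since $\cBF^{[f_\alpha,g_\alpha]}_{m,a}$ lies in $D_\lambda(\Gamma,\Qp) \otimes H^1_{\Iw}$ with $\lambda = v_p(\alpha_f\alpha_g) < \tfrac{1+\min(k,k')}{2}$, a finite-order distribution of order $\lambda$ is determined by its moments against $x^j$ for $j \le \lfloor\lambda\rfloor$, hence a fortiori by all the $\chi^j\eta$ with $0 \le j \le \min(k,k')$; and the relevant component of $\DD_{\cris}$ is a finite free module, so vanishing after every such specialisation forces vanishing.

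The key step is then to identify, for each such $\chi^j$ (and finite-order twist), the value $\left(\cL(\cBF^{[f_\alpha,g_\alpha]}_{m,a})\right)(\chi^j)$ projected to the $\varphi=(\alpha_f\alpha_g)^{-1}$-eigenspace. By the defining interpolation property of Perrin-Riou's regulator (the explicit reciprocity / Coleman-map interpolation formula, as recorded in \cite[Appendix B]{LZ}), this value is, up to an explicit nonzero scalar, the image of the local class $\loc_p\left(\cBF^{[f_\alpha,g_\alpha,j]}_{mp^r,a}\right)$ under the Bloch--Kato dual exponential (or exponential) map composed with projection to the appropriate eigenspace. The crucial input is that the local Beilinson--Flach class at $p$, in the range $0 \le j \le \min(k,k')$, lands in the Bloch--Kato \emph{finite} subspace $H^1_f$; this is exactly the range where $\Eis^{[k,k',j]}_{\et}$ comes from motivic cohomology, so its localisation at $p$ is crystalline (geometric), i.e. lies in $H^1_g = H^1_f$ since $M_{L_\frP}(f\otimes g)^*(-j)$ has no $\varphi=1$ part under the small-slope hypothesis. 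Consequently the dual-exponential image of $\loc_p$ into the $\varphi=(\alpha_f\alpha_g)^{-1}$-line, which is the line where the relevant graded piece of the filtration sits, vanishes --- the dual exponential kills $H^1_f$.

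More precisely, I would argue as follows. The $\varphi = (\alpha_f\alpha_g)^{-1}$-eigenspace of $\DD_{\cris}(M_{L_\frP}(f_\alpha\otimes g_\alpha)^*)$ is one-dimensional, spanned by (the image of) $v_\alpha \otimes v_\alpha$ where $v_\alpha$ generates the $\varphi=\alpha_f^{-1}$-part for $f_\alpha$ and similarly for $g_\alpha$; this is the ``most non-critical'' line, where $v_p$ of the eigenvalue is largest. Under the small-slope bound $v_p(\alpha_f\alpha_g) < \tfrac{1+\min(k,k')}{2}$, for every $j$ in the interpolation range this eigenvalue line lies in the part of $\DD_{\cris}$ on which the relevant branch of Perrin-Riou's map is computed by a \emph{dual exponential} (equivalently, the Euler factor in the interpolation formula has a zero, or the relevant Coleman map component factors through $H^1/H^1_f$). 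Since $\loc_p(\cBF^{[f_\alpha,g_\alpha,j]}_{mp^r,a}) \in H^1_f(\QQ_p(\mu_{mp^r}), M_{L_\frP}(f_\alpha\otimes g_\alpha)^*(-j))$ by the motivic origin of the Eisenstein classes (compare \cite{KLZ1a}), the dual exponential map annihilates it, so the $\varphi=(\alpha_f\alpha_g)^{-1}$-component of $\cL(\cBF^{[f_\alpha,g_\alpha]}_{m,a})(\chi^j\eta)$ is zero for all $j$ and $\eta$ in range. By the density/interpolation argument of the first paragraph, the whole $\varphi=(\alpha_f\alpha_g)^{-1}$-projection of $\cL(\cBF^{[f_\alpha,g_\alpha]}_{m,a})$ vanishes.

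The main obstacle I anticipate is making the identification in the second paragraph fully precise: one must correctly match up which branch of the Perrin-Riou regulator (i.e. which triangulation step / which eigenvalue of $\varphi$) corresponds to a dual-exponential rather than an exponential, and verify that the small-slope hypothesis $v_p(\alpha_f\alpha_g) < \tfrac{1+\min(k,k')}{2}$ is exactly what guarantees this for the \emph{entire} interpolation range $0 \le j \le \min(k,k')$ simultaneously --- the bound is needed so that the point $\chi^j$ stays on the correct side of the ``critical line'' for all such $j$. A secondary point requiring care is the normalisation of the local and global classes and the precise form of the interpolation formula for $\cL$ from \cite{LZ}; but these are bookkeeping issues rather than conceptual ones, and the vanishing of the dual exponential on $H^1_f$ is standard Bloch--Kato theory.
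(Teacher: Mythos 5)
Your overall strategy is the same as the paper's: the classes $\cBF^{[f_\alpha,g_\alpha,j]}_{mp^r,a}$ are geometric, hence land in $H^1_{\mathrm{g}}$ locally at $p$, hence are killed by $\exp^*$; the regulator evaluated at $z^j\chi$ (for $\chi$ of finite order, nontrivial) is $\exp^*$ up to invertible factors; so $\pr_W\cL(\cBF^{[f_\alpha,g_\alpha]}_{m,a})$ vanishes at (almost) all characters $z^j\chi$ with $0\le j\le\min(k,k')$, and one concludes by the uniqueness theorem for distributions of bounded growth. That is exactly the paper's argument.

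However, there is a genuine error in your growth bookkeeping, and it sits precisely at the point where the hypothesis is used. You treat $\pr_W\circ\cL\bigl(\cBF^{[f_\alpha,g_\alpha]}_{m,a}\bigr)$ as a distribution of order $\lambda=v_p(\alpha_f\alpha_g)$ and invoke Amice--V\'elu with $\lfloor\lambda\rfloor\le\min(k,k')$. But the class $\cBF^{[f_\alpha,g_\alpha]}_{m,a}$ already lives in $D_\lambda(\Gamma,\Qp)\otimes_{D_0(\Gamma,\Qp)}H^1_{\Iw}$, and the projection of $\cL$ to the $\varphi=(\alpha_f\alpha_g)^{-1}$-eigenspace $W$ sends $H^1_{\Iw}$ into $D_\lambda(\Gamma,\Qp)\otimes W$ (not into bounded measures); by $D^{\la}(\Gamma)$-linearity the composite therefore only lands in $D_{2\lambda}(\Gamma,\Qp)\otimes W$. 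The uniqueness theorem then requires $2\lambda<1+\min(k,k')$, which is exactly the ``stronger inequality'' in the statement. As written, your argument would prove the proposition under the weaker small-slope bound \eqref{eq:smallslope} alone, which the paper explicitly cannot do by this direct method (it needs the deformation along Coleman families for that); your alternative explanation of the factor $\tfrac12$ --- that $\chi^j$ must ``stay on the correct side of the critical line'' --- is not the actual mechanism. Two smaller points: you assert vanishing of $\cL(\cBF)(\chi^j\eta)$ for \emph{all} finite-order $\eta$ including the trivial one, but for trivial $\eta$ the interpolation factor relating $\cL$ to $\exp^*$ need not be invertible; one should only claim vanishing away from finitely many characters, which still suffices. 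And the appeal to $H^1_{\mathrm{g}}=H^1_{\mathrm{f}}$ should be routed through the fact that the classes come from motivic/\'etale Eisenstein classes as in \cite[Proposition 3.3.2]{KLZ1b}, not re-derived from a ``no $\varphi=1$'' condition.
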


  \begin{proof}
   Let $W$ be this eigenspace. It is well known that the projection of $\cL_{M(f \otimes g)^*}$ to $W$ gives a map
   \[ H^1_{\Iw}(\QQ_{p, \infty}, M(_{L_{\frP}}(f_\alpha \otimes g_\alpha)^*) \to D_\lambda(\Gamma, \Qp) \otimes W, \]
   where $\lambda =  v_p(\alpha_f \alpha_g)$ as before. So it gives a map
   \[
    D_{\lambda}(\Gamma, \Qp) \otimes_{D_0(\Gamma, \Qp)} H^1_{\Iw}(\QQ_{p, \infty}, M_{L_{\frP}}(f_\alpha \otimes g_\alpha)^*) \to D_{2\lambda}(\Gamma, E) \otimes W.
   \]

   However, for any character of $\Gamma$ of the form $z \mapsto z^j \chi(z)$, with $0 \le j \le \min(k, k')$ and $\chi$ of finite order, the image of $\cBF^{[f_\alpha, g_\alpha]}_{m, a}$ in $H^1(\QQ(\mu_m) \otimes \Qp, M_{L_{\frP}}(f_\alpha \otimes g_\alpha)^*(-j-\chi))$ lies in the Bloch--Kato $H^1_\mathrm{g}$ subspace, by construction (c.f. \cite[Proposition 3.3.2]{KLZ1b}). If $\chi$ is non-trivial (so that the interpolation factors relating $\cL_{M_{L_{\frP}}(f_\alpha \otimes g_\alpha)^*}$ to the dual exponential map are invertible, see \cite[Theorem B.5]{LZ}), then this implies that $\cL_{M_{L_{\frP}}(f_\alpha \otimes g_\alpha)^*}(\cBF^{[f_\alpha, g_\alpha]}_{m, a})(j + \chi) = 0$.

   So the projection of $\cL_{M_{L_{\frP}}(f_\alpha \otimes g_\alpha)^*}\left(\cBF^{[f_\alpha, g_\alpha]}_{m, a}\right)$ to $W$ is an element of $D_{2\lambda}(\Gamma, \Qp) \otimes W$ which vanishes at all but finitely many characters of the form $j + \chi$ with $j \in \{0, \dots, \min(k, k')\}$ and $\chi$ of finite order. Since $2\lambda < 1 + \min(k, k')$, this projection must be zero as required.
  \end{proof}

  \begin{remark}
   We shall in fact show below that the result of Proposition \ref{prop:vanishing} is actually true whenever $\alpha_f \alpha_g$ satisfies the weaker assumption \eqref{eq:smallslope} (i.e.~whenever the class $\cBF^{f_\alpha, g_\alpha}_{m, a}$ is defined), by deforming Proposition \ref{prop:vanishing} along a Coleman family.

   This vanishing property is natural in the context of Conjecture 8.2.6 of \cite{LLZ14}, which predicts the existence of an element in $\bigwedge^2 H^1_{\Iw}(\QQ(\mu_{mp^\infty}), M_{L_{\frP}}(f \otimes g)^*)$ from which the Beilinson--Flach elements (for all choices of $\alpha_f$ and $\alpha_g$) can be obtained by pairing with the map $\cL_{M_{L_{\frP}}(f \otimes g)^*}$ and projecting to a $\varphi$-eigenspace. Clearly, pairing an element of $\bigwedge^2$ with the same linear functional twice will give zero.
  \end{remark}


\section{Overconvergent \'etale cohomology and Coleman families}


  We now recall the construction of $p$-adic families of Galois representations attached to modular forms via ``big'' \'etale sheaves on modular curves. We follow the account of \cite[\S 3]{andreattaiovitastevens}, but with somewhat altered conventions (for reasons which will become clear later). We also use some results of Hansen \cite{Hansen-Iwasawa} (from whom we have also borrowed the terminology ``overconvergent \'etale cohomology'').

 \subsection{Setup and notation}

  \begin{definition}
   We write $\cW$ for the rigid-analytic space over $\Qp$ parametrizing continuous characters of the group $\Zp^\times$. For an integer $m \ge 0$, we shall write $\cW_m$ for the wide open subspace parametrizing ``$m$-accessible'' weights, which are those satisfying $v_p(\kappa(t)^{p-1} - 1) > \frac{1}{p^m(p-1)}$ for all $t \in \Zp^\times$.
  \end{definition}

  \begin{remark}
   Note that $\cW$ is isomorphic to a disjoint union of $p-1$ open unit discs, and the bounded-by-1 rigid-analytic functions on $\cW$ are canonically $\Lambda(\Zp^\times)$; while $\cW_m$ is the union of the corresponding open subdiscs of radius $p^{-1/p^m(p-1)}$ with centres in $\Zp^\times$. Thus $\cW_0$ (which is the space denoted by $\cW^*$ in \cite{andreattaiovitastevens}) contains every $\Qp$-point of $\cW$, and in particular every weight of the form $z \mapsto z^j$, $j \in \ZZ$.
  \end{remark}

  Now let us fix some coefficient field $E$ (a finite extension of $\Qp$) with ring of integers $\cO_E$.

  \begin{definition}
   We let $U$ denote a wide open disc defined over $E$, contained in $\cW_m$ for some $m \ge 0$; and $\Lambda_U$ the $\cO_E$-algebra of rigid functions on $U$ bounded by 1 (so $\Lambda_U \cong \cO_E[[u]]$). We write $\kappa_U$ for the universal character $\Zp^\times \into \Lambda(\Zp^\times)^\times \to \Lambda_U^\times$.
  \end{definition}

  The ring $\Lambda_U$ is endowed with two topologies: the $p$-adic topology (which we shall not use) and the $m_U$-adic topology, which is the topology induced by the ideals $m_U^n$, where $m_U$ is the maximal ideal of $\Lambda_U$.

  \begin{definition}
   For $m \ge 0$, we write $LA_m(\Zp, \Lambda_U)$ for the space of functions $\Zp \to \Lambda_U$ such that for all $a \in \ZZ / p^m \ZZ$, the function $z \mapsto f(a + p^m z)$ is given by a power series $\sum_{n \ge 0} b_n z^n$ with $b_n \to 0$ in the $m_U$-adic topology of $\Lambda_U$.
  \end{definition}

  \begin{lemma}
   \label{lemma:convergencekappa}
   If $U \subseteq \cW_m$, then the function $z \mapsto \kappa_U(1 + pz)$ is in $LA_m(\Zp, \Lambda_U)$.
  \end{lemma}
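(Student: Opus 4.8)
The plan is to verify the defining condition of $LA_m(\Zp, \Lambda_U)$ directly from the definition of $\cW_m$. Write $\kappa = \kappa_U$ for brevity. Since $\kappa$ is the universal character, it factors as $\Zp^\times \to \Lambda_U^\times$, and for $z \in \Zp$ the element $1+pz$ lies in $1+p\Zp$, a pro-$p$ group topologically generated (for $p>2$) by $1+p$. So it suffices to understand $\kappa(1+p)^y$ for $y \in \Zp$, and then substitute $y = \log_p(1+pz)/\log_p(1+p)$, which is itself an $m$-convergent (indeed globally convergent) power series in $z$ with coefficients in $\Zp$; composition will then finish the argument provided we control $\kappa(1+p)^y$.

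First I would set $u = \kappa(1+p) - 1 \in m_U$, so that $\kappa(1+p)^y = (1+u)^y = \sum_{n\ge 0}\binom{y}{n} u^n$. The binomial coefficients $\binom{y}{n}$ lie in $\Zp$ for $y \in \Zp$, so the issue is purely the rate of $m_U$-adic decay of $u^n$. Here is where the hypothesis $U \subseteq \cW_m$ enters: the $m$-accessibility condition $v_p(\kappa(t)^{p-1}-1) > \tfrac{1}{p^m(p-1)}$ for all $t$, applied with $t = 1+p$, together with the structure of $\Lambda_U \cong \cO_E[[u]]$, forces $u^{p-1}$ (hence an appropriate power of $u$) to be divisible by $p$ times a unit, up to $m_U$-adic corrections — more precisely it gives $u^{p^m(p-1)} \in p\Lambda_U + m_U \cdot(\text{higher})$, so that $u^{p^m} \to 0$ fast enough that $\sum_n \binom{y}{n} u^n$ converges $m_U$-adically with the coefficients going to $0$. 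I would make this precise by noting that $m_U$-adic convergence of $\sum b_n z^n$ is equivalent to $b_n \to 0$, and that $\binom{y}{n}u^n$ tends to $0$ because $v_{m_U}(u^n) \ge \lfloor n/p^m(p-1)\rfloor$ grows linearly in $n$ (using that $u^{p^m(p-1)}$ is a non-unit multiple of $p$, which is topologically nilpotent).

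Next I would assemble the composition. The map $z \mapsto 1+pz$ followed by $w \mapsto \kappa(w)$ restricted to $1+p\Zp$ is, after the substitution above, $z \mapsto \sum_n \binom{y(z)}{n} u^n$ where $y(z) = \log_p(1+pz)/\log_p(1+p) \in \Zp[[z]]$ has coefficients in $\Zp$ and converges for $z \in \Zp$ (in fact on a disc of radius $>1$). Since each $\binom{y}{n}$ is a $\Zp$-valued, $\Zp$-coefficient power series in $y$, substituting the power series $y(z)$ and collecting terms shows $z \mapsto \kappa_U(1+pz)$ is given on $\Zp$ by a single power series $\sum_k c_k z^k$ with $c_k \in \Lambda_U$ and $c_k \to 0$ $m_U$-adically; the rearrangement is legitimate because the double series converges $m_U$-adically (each fixed power of $z$ receives contributions from only finitely many $n$, since $y(z)$ has no constant term so $\binom{y(z)}{n}$ is divisible by $z^{\lceil n/\deg\rceil}$... actually $y(0)=0$ gives $v_z\binom{y(z)}{n}\ge$ something growing, ensuring only finitely many $n$ contribute mod any $z^N$). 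Finally, for the blocks $a + p^m\Zp$ with $a \not\equiv 0$, I would reduce to the case $a \in \Zp^\times$, write $a + p^m z = a(1 + p^m a^{-1} z)$, use multiplicativity $\kappa(a(1+p^m a^{-1}z)) = \kappa(a)\kappa(1+p^m a^{-1}z)$, and apply the same argument to the second factor (noting $1+p^m\Zp \subseteq 1+p\Zp$), with $\kappa(a) \in \Lambda_U^\times$ a harmless constant.

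The main obstacle I anticipate is making the $m_U$-adic convergence estimate on $\sum_n \binom{y}{n} u^n$ genuinely precise: one must extract from the analytic inequality defining $\cW_m$ a clean algebraic statement about the valuation of powers of $u = \kappa_U(1+p)-1$ in $\Lambda_U \cong \cO_E[[u]]$, where one has to juggle the $p$-adic and $m_U$-adic filtrations simultaneously (the element $u$ is $m_U$-adically small by construction but one needs its $p$-divisibility after raising to a suitable power). Everything else — the logarithm substitution, multiplicativity over the residue classes, rearrangement of the double sum — is routine once that estimate is in hand. I would therefore prioritise stating and proving the lemma that $v_{m_U}\big(u^{n}\big)$ grows at least linearly in $n$ with the slope dictated by $\tfrac{1}{p^m(p-1)}$, and deduce the claim as a formal consequence.
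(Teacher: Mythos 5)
Your overall strategy --- substituting $y = \log(1+pz)/\log(1+p)$ and expanding $\kappa_U\bigl((1+p)^y\bigr) = \sum_{n\ge 0}\binom{y}{n}u^n$ with $u = \kappa_U(1+p)-1$ --- is the same route the paper takes. But there is a genuine gap at the decisive quantitative step. You assert that each $\binom{y}{n}$ is ``a $\Zp$-valued, $\Zp$-coefficient power series in $y$'' and conclude that the only issue is the $m_U$-adic decay of $u^n$. The first half of that assertion is true (Mahler), but the second is false: $\binom{y}{n} = \frac{1}{n!}\prod_{i=0}^{n-1}(y-i)$ has denominator $n!$, with $v_p(n!)\sim n/(p-1)$. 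Membership in $LA_m(\Zp,\Lambda_U)$ is a condition on the \emph{coefficients} of the expansions of $z\mapsto f(a+p^mz)$, not on pointwise values, and those coefficients inherit the denominators of $n!$. Moreover, in $\Lambda_U\cong\cO_E[[u]]$ one cannot trade $m_U$-adic smallness for $p$-integrality ($u^N/p\notin\Lambda_U$ for any $N$), so the linear growth of $v_{m_U}(u^n)$ that you propose to prioritise is the easy half of the problem: without a compensating $p$-adic estimate the candidate coefficients $c_k$ a priori lie only in $\Lambda_U[1/p]$, and the rearrangement step collapses. Since the balance between $v_p(n!)$ and the $p$-divisibility of $(\kappa_U(1+p)-1)^n$ is the \emph{only} place the index $m$ can enter, an argument that skips it cannot be correct.

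The missing ingredient is the quantitative local-analyticity estimate for binomial coefficients (Amice's theorem, in the form of \cite[Theorem 1.29]{colmez-fonctions}, which is what the paper cites): writing $\kappa_U(1+p) = 1+\varepsilon u$ with $u$ the coordinate on $U$ and $v_p(\varepsilon) = \tfrac{1}{(p-1)p^m}$ --- this normalisation is exactly where $U\subseteq\cW_m$ is used --- one has $\varepsilon^n\binom{y}{n}\in LA_m(\Zp,\Zp)$ for every $n$. The series becomes $\sum_n\bigl(\varepsilon^n\binom{y}{n}\bigr)u^n$, each term lies in $LA_m(\Zp,\Lambda_U)$ with integral local expansion coefficients, and the factor $u^n$ forces $m_U$-adic convergence. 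Your remaining steps (the logarithm substitution, which preserves $m$-analyticity, and multiplicativity over residue classes) match the paper and are fine; the proof is repaired by replacing the false integrality claim with the Amice--Colmez estimate.
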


  \begin{proof}
   This is a standard computation, but we have not been able to find a reference, so we shall give a brief sketch of the proof. Let us write $X_m$ for the affinoid rigid-analytic space over $\Qp$ defined by $\{ x: |x - a| \le p^{-m}\text{ for some } a \in \Zp\} \subseteq \mathbf{A}^1_\rig$. Then $LA_m(\Zp, \Lambda_U)$ is precisely the space of functions $\Zp \to \Lambda_U$ which extend to rigid-analytic $\Lambda_U$-valued functions on $X_m$.

   Firstly, the map $x \mapsto \frac{\log(1 + px)}{\log(1 + p)}$ is a bijection from $\Zp$ to $\Zp$ which extends to a rigid-analytic isomorphism from $X_m$ to itself for every $m$; so it suffices to show that $x \mapsto \kappa_U( (1 + p)^x )$ extends to a $\Lambda_U$-valued rigid-analytic function on $X_m$ whenever $U \subseteq \cW_m$. It suffices to consider the universal case $U = \cW_m$. After enlarging the coefficient field $E$ if necessary, we identify $\Lambda_U$ with $\cO_E[[u]]$ in such a way that $\kappa_U(1 + p) = 1 + \varepsilon  u$ where $\varepsilon$ is some element of $\cO_E$ of valuation $\frac{1}{(p-1)p^m}$. Then
   \[ \kappa_U( (1 + p)^x ) = \sum_{n \ge 0} \binom{x}{n} \varepsilon^n u^n,\]
   and we have $\varepsilon^n \binom{x}{m} \in LA_m(\Zp, \Zp)$ for any $n$, by \cite[Theorem 1.29]{colmez-fonctions}.
  \end{proof}

  \begin{remark}
   It is important to use the right topology on $\Lambda_U$, because if one takes $U = \cW_m$ and writes $x \mapsto \kappa_U( 1 + p^{m+1}x )$ as a series $\sum c_n x^n$ with $c_n \in \Lambda_U$, the $c_n$ tend to zero $m_U$-adically (the above argument shows in fact that $c_n \in m_U^n$), but they do \emph{not} tend to zero $p$-adically.
  \end{remark}

 \subsection{The spaces $D_U^\circ(T_0)$ and $D_U^\circ(T_0')$}

  \begin{definition}
   Let $H$ be the group $\Zp^{\oplus 2}$. We define subsets $T_0, T_0' \subset H$ by
   \begin{align*}
    T_0 &\coloneqq \Zp^\times \times \Zp,&
    T_0' &\coloneqq p\Zp \times \Zp^\times.
   \end{align*}
  \end{definition}

  \begin{proposition}
   The subset $T_0$ is preserved by right multiplication by the monoid $\Sigma_0(p) = \tbt{\Zp^\times}{\Zp}{p\Zp}{\Zp} \subset \operatorname{Mat}_{2 \times 2}(\Zp)$, and $T_0'$ by the monoid $\Sigma_0'(p) = \tbt{\Zp}{\Zp}{p\Zp}{\Zp^\times}$. In particular, both $T_0$ and $T_0'$ are preserved by scalar multiplication by $\Zp^\times$.\qed
  \end{proposition}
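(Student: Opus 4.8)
The plan is to verify the claimed closure properties by a direct matrix computation; the only thing requiring care is the bookkeeping with conventions, namely that elements of $H = \Zp^{\oplus 2}$ are to be regarded as row vectors and that $\Sigma_0(p)$, $\Sigma_0'(p)$ act on the right.

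First I would take a general element $(x, y) \in T_0$, so $x \in \Zp^\times$ and $y \in \Zp$, together with a general $\gamma = \tbt{a}{b}{c}{d} \in \Sigma_0(p)$, so that $a \in \Zp^\times$, $b, d \in \Zp$ and $c \in p\Zp$. Then $(x,y)\gamma = (xa + yc,\ xb + yd)$. The second coordinate clearly lies in $\Zp$. For the first coordinate, $xa \in \Zp^\times$ (a product of units), while $yc \in p\Zp$; hence $xa + yc$ is a unit plus a non-unit, hence itself a unit. Therefore $(x,y)\gamma \in \Zp^\times \times \Zp = T_0$, which proves the first assertion.

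The argument for $T_0'$ is entirely parallel. For $(x, y) \in T_0'$ (so $x \in p\Zp$, $y \in \Zp^\times$) and $\gamma = \tbt{a}{b}{c}{d} \in \Sigma_0'(p)$ (so $a, b \in \Zp$, $c \in p\Zp$, $d \in \Zp^\times$), one again computes $(x,y)\gamma = (xa + yc,\ xb + yd)$; now $xa + yc \in p\Zp$ because both summands lie in $p\Zp$, whereas $xb \in p\Zp$ and $yd \in \Zp^\times$, so $xb + yd$ is a non-unit plus a unit, hence a unit. Thus $(x,y)\gamma \in p\Zp \times \Zp^\times = T_0'$.

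For the final sentence, note that for any $\lambda \in \Zp^\times$ the scalar matrix $\stbt{\lambda}{0}{0}{\lambda}$ lies in both $\Sigma_0(p)$ and $\Sigma_0'(p)$ (its diagonal entries are units, so belong to $\Zp^\times$ and to $\Zp$; its off-diagonal entries are $0 \in p\Zp$), and right multiplication by this matrix on $H$ is exactly scalar multiplication by $\lambda$; so the last claim is an immediate consequence of the previous two. I do not expect any genuine obstacle in this proposition: the whole statement reduces to the elementary fact that in $\Zp$ the sum of a unit and a non-unit is a unit, combined with the single structural observation that the lower-left entries of matrices in $\Sigma_0(p)$ and $\Sigma_0'(p)$ are divisible by $p$.
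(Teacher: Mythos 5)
Your verification is correct and is precisely the routine computation the paper has in mind; indeed the proposition is stated with an immediate \(\square\) and no written proof, the point being exactly the two facts you isolate (unit plus non-unit is a unit in \(\Zp\), and the lower-left entries of both monoids lie in \(p\Zp\)). The observation about scalar matrices lying in both monoids correctly disposes of the final sentence as well.
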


  \begin{remark}
   The definition of $T_0$ coincides with that used in \cite{andreattaiovitastevens} (and our $\Sigma_0(p)$ is their $\Xi(p)$). The subspace $T_0'$ is the image of $T_0$ under right multiplication by $\stbt 0 {-1} p 0$, and conjugation by this element interchanges $\Sigma_0(p)$ and $\Sigma_0'(p)$.
  \end{remark}

  \begin{definition}
   For $m \ge 0$, we write $A^\circ_{U, m}(T_0)$ for the space of functions
   \[ f: T_0 \to \Lambda_U\]
   which are homogenous of weight $\kappa_U$, i.e.~satisfy
   \[ f(\gamma t) = \kappa_U(\gamma) f(t) \]
   for $\gamma \in \Zp^\times$, $t \in T_0$, and are such that the function $z\mapsto f(1, z)$ lies in $LA_m(\Zp, \Lambda_U)$. We equip this module with the topology defined by the subgroups $m_U^n A^\circ_{U, m}$.

   Similarly, we write $A^\circ_{U, m}(T_0')$ for the space of functions $T_0' \to \Lambda_U$ which are homogenous of weight $\kappa_U$ and are such that $z \mapsto f(pz, 1) \in LA_m(\Zp, \Lambda_U)$, again endowed with the $m_U$-adic topology.
  \end{definition}

  \begin{proposition}
   If $U \subseteq \cW_m$, then the space $A^\circ_{U, m}(T_0)$ is preserved by the left action of $\Sigma_0(p)$ on functions $T_0 \to \Lambda_U$ defined by
   \[ (\gamma f)(t) = f(t\gamma),\]
   and similarly for $A^\circ_{U, m}(T_0')$.
  \end{proposition}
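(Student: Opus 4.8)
The plan is to verify the two defining conditions of $A^\circ_{U,m}(T_0)$ for $\gamma f$, where $\gamma = \stbt{a}{b}{c}{d} \in \Sigma_0(p)$ (so $a \in \Zp^\times$ and $c \in p\Zp$) and $f \in A^\circ_{U,m}(T_0)$. Homogeneity is immediate: for $\lambda \in \Zp^\times$ and $t \in T_0$ one has $(\gamma f)(\lambda t) = f(\lambda t \gamma) = f(\lambda (t\gamma)) = \kappa_U(\lambda) f(t\gamma) = \kappa_U(\lambda)(\gamma f)(t)$, using that scalars are central. So the content is to show that $z \mapsto (\gamma f)(1, z)$ lies in $LA_m(\Zp, \Lambda_U)$. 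First I would rewrite this reduced function: since $(1,z)\gamma = (a + cz,\, b + dz)$ with $a + cz \in \Zp^\times$ (as $cz \in p\Zp$), we may factor $(1,z)\gamma = (a + cz)\cdot\big(1, w(z)\big)$ with $w(z) = \tfrac{b+dz}{a+cz} \in \Zp$, whence by homogeneity of $f$
\[ (\gamma f)(1, z) = \kappa_U(a + cz)\cdot f\big(1, w(z)\big). \]

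Recall from the proof of Lemma \ref{lemma:convergencekappa} that $LA_m(\Zp, \Lambda_U)$ is precisely the space of $\Lambda_U$-valued rigid-analytic functions on the affinoid $X_m = \{x : |x - a_0| \le p^{-m}\text{ for some }a_0 \in \Zp\}$; in particular it is a ring, and it is closed under precomposition with rigid self-maps of $X_m$. It therefore suffices to check that each of the two factors on the right of the displayed formula extends to such a function. For the first factor, write $\kappa_U(a + cz) = \kappa_U(a)\cdot\kappa_U\!\big(1 + p\cdot\tfrac{c}{pa}\,z\big)$ with $\tfrac{c}{pa}\in\Zp$; since $z \mapsto \tfrac{c}{pa}z$ is a rigid self-map of $X_m$ and $z \mapsto \kappa_U(1+pz)$ lies in $LA_m(\Zp,\Lambda_U)$ by Lemma \ref{lemma:convergencekappa}, the first factor lies in $LA_m(\Zp,\Lambda_U)$.

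For the second factor the key point is that the fractional-linear map $w(x) = \tfrac{b+dx}{a+cx}$ extends to a rigid self-map of $X_m$. Indeed, for $x = a_0 + y$ with $a_0 \in \Zp$ and $|y|\le p^{-m}$ one has $|a + cx| = |(a+ca_0) + cy| = 1$, because $a + ca_0 \in \Zp^\times$ and $|cy| < 1$; so $w$ is rigid analytic on $X_m$, and a short computation gives
\[ w(x) - w(a_0) = \frac{(x - a_0)(ad - bc)}{(a+cx)(a+ca_0)}, \]
whose absolute value is $\le |x - a_0| \le p^{-m}$ since the denominator is a unit and $ad - bc \in \Zp$; as $w(a_0) \in \Zp$, this shows $w(X_m) \subseteq X_m$. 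Precomposing the rigid extension of $z \mapsto f(1, z)$ with $w$ then shows the second factor lies in $LA_m(\Zp, \Lambda_U)$, and since $LA_m(\Zp,\Lambda_U)$ is a ring the product $(\gamma f)(1, \cdot)$ does too. Hence $\gamma f \in A^\circ_{U,m}(T_0)$.

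The case of $A^\circ_{U,m}(T_0')$ is handled identically, now with $\gamma = \stbt{a}{b}{c}{d} \in \Sigma_0'(p)$ (so $c \in p\Zp$, $d \in \Zp^\times$): one factors $(pz, 1)\gamma = (pbz + d)\cdot\big(p\psi(z), 1\big)$ with $pbz + d \in \Zp^\times$ and $\psi(z) = \tfrac{az + c/p}{pbz + d} \in \Zp$, obtaining $(\gamma f)(pz, 1) = \kappa_U(pbz+d)\cdot f\big(p\psi(z), 1\big)$, and the same two ingredients finish it: Lemma \ref{lemma:convergencekappa} handles $z\mapsto\kappa_U(pbz + d) = \kappa_U(d)\kappa_U(1 + p\tfrac{b}{d}z)$, and $\psi$ extends to a rigid self-map of $X_m$ because $\psi(x) - \psi(a_0) = \tfrac{(x - a_0)(ad - bc)}{(pbx+d)(pba_0+d)}$ again has absolute value $\le p^{-m}$. (Alternatively, one can deduce this case from the first using the intertwining element $\stbt{0}{-1}{p}{0}$ of the preceding remark.) The only step requiring genuine care is this verification that the fractional-linear substitutions preserve $X_m$ — i.e. the determinant identity together with the observation that the relevant denominators are units throughout $X_m$; everything else is formal.
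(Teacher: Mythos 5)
Your proof is correct and follows essentially the same route as the paper's: factor the reduced function as $\kappa_U(\text{unit})\cdot f(\text{fractional-linear substitution})$, handle the first factor via Lemma \ref{lemma:convergencekappa}, and the second via the fact that the M\"obius map preserves the neighbourhoods $X_m$ (which the paper asserts and you verify explicitly via the determinant identity). The only cosmetic difference is that you write out the $T_0$ case in detail where the paper writes out $T_0'$.
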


  \begin{proof}
   We give the proof for $T_0'$; the proof for $T_0$ is similar.

   Unravelling the definition of the actions, we must show that if $\gamma = \stbt a b {pc} d \in \Sigma_0'(p)$ and $f \in A^\circ_U(T_0')$, then the function
   \[ z \mapsto \kappa_U(d) \kappa_U(1 + pd^{-1}bz) f\left(p \cdot \frac{c + az}{d + pbz}, 1\right)\]
   is in $LA_m(\Zp, \Lambda_U)$. Since $LA_m(\Zp, \Lambda_U)$ is closed under multiplication, and contains $\Zp$, it suffices to check that $z \mapsto \kappa_U(1 + pd^{-1}bz)$ and $z \mapsto f\left(p \cdot \frac{c + az}{d + pbz}, 1\right)$ are in this space. For the factor $\kappa_U(1 + pd^{-1}bz)$ this follows from Lemma \ref{lemma:convergencekappa}.

   For the factor $f\left(p \cdot \frac{c + az}{d + pbz}, 1\right)$, we note that the map $z \mapsto \frac{c + az}{d + pbz}$ preserves all the rigid-analytic neighbourhoods $X_m$ of $\Zp$, so it preserves the ring of rigid-analytic functions convergent and bounded by 1 on these spaces; thus $z \mapsto g\left(\frac{c + az}{d + pbz}\right)$ is in $LA_m(\Zp, \Lambda_U)$ if $g \in LA_m(\Zp, \Lambda_U)$.
  \end{proof}

  For the rest of this section, let $T$ denote either $T_0$ or $T_0'$, and $\Sigma$ either $\Sigma_0$ or $\Sigma'_0$ respectively.

  Note that as a topological $\Lambda_U$-module, $A^\circ_{U, m}(T)$ is isomorphic to the space of countable sequences $(c_n)_{n=1}^\infty$ with $c_n \in \Lambda_U$ such that $c_n \to 0$ in the $m_U$-adic topology.

  \begin{definition}
   We write
   \[
    D^\circ_{U, m}(T) =
    \operatorname{Hom}_{\Lambda_U}(A^\circ_{U, m}(T), \Lambda_U),
   \]
   and $D_{U, m}(T) = D^\circ_{U, m}(T)[1/p]$.
  \end{definition}

  Note that any linear functional $\mu \in D^\circ_{U, m}(T)$ is necessarily continuous (where we endow both $A^\circ_{U, m}(T)$ and $\Lambda_U$ with their $m_U$-adic topologies). We equip $D^\circ_{U, m}(T)$ with the weak (or more formally weak-star) topology, generated by sets of the form $\{ \mu: \mu(f) \in m_U^n \}$ for $f \in A^\circ_{U, m}(T)$ and $n \ge 0$, i.e.~the weakest topology such that all the evaluation-at-$f$ morphisms are continuous (when the target $\Lambda_U$ is equipped with the $m_U$-adic topology).

  In this topology $D^\circ_{U, m}(T)$ becomes compact; indeed, we have a topological isomorphism $D^\circ_U \to \prod_{n=0}^\infty \Lambda_U$, with the inverse-limit topology.

  \begin{lemma}
   \label{lemma:basechange1}
   The formation of $D^\circ_{U, m}(T)$ commutes with base-change in $U$, in the sense that for $V \subseteq U$ two open discs defined over $E$, we have
   \[ D^\circ_{U, m}(T) \htimes_{\Lambda_U} \Lambda_V = D^\circ_{V, m}(T).\]
  \end{lemma}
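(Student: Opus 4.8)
The plan is to reduce the statement to the fact that $A^\circ_{U,m}(T)$ is a completed free module whose topological $\Lambda_U$-basis is independent of $U$, after which the assertion becomes the elementary statement that completed tensor product commutes with a countable product in the category of compact topological modules.

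First I would make the $U$-independence precise. By the discussion preceding the lemma, the choice of $a \in \ZZ/p^m\ZZ$ together with a monomial exponent gives a topological $\Lambda_U$-basis $(e_n)_{n \ge 0}$ realising $A^\circ_{U,m}(T)$ as the $m_U$-adic completion $\widehat{\bigoplus}_n \Lambda_U e_n$ of a free $\Lambda_U$-module. Writing $e_n(x,y) = \kappa_U(x)\,\phi_n(y/x)$, where $\phi_n$ is a fixed $\Zp$-valued locally polynomial function on $\Zp$ not depending on $U$, and using that $\Lambda_U \to \Lambda_V$ carries $\kappa_U(x)$ to $\kappa_V(x)$, one checks that the restriction map $A^\circ_{U,m}(T) \to A^\circ_{V,m}(T)$ sends $e_n$ to $e_n$ and is simply the coefficientwise map $\widehat{\bigoplus}_n \Lambda_U e_n \to \widehat{\bigoplus}_n \Lambda_V e_n$ induced by $\Lambda_U \to \Lambda_V$. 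Dualising, every $\Lambda_U$-linear functional on $A^\circ_{U,m}(T)$ is automatically continuous, so $D^\circ_{U,m}(T) = \Hom_{\Lambda_U}\!\bigl(\widehat{\bigoplus}_n \Lambda_U e_n, \Lambda_U\bigr) = \prod_n \Lambda_U$, with its weak topology equal to the product topology (each factor carrying the $m_U$-adic topology) --- this is the topological isomorphism already recorded in the text.

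It then suffices to show $\bigl(\prod_n \Lambda_U\bigr) \htimes_{\Lambda_U} \Lambda_V \cong \prod_n \Lambda_V$, compatibly with the natural comparison map. Since $D^\circ_{U,m}(T)$ is compact, I would compute the completed tensor product as an inverse limit of tensor products of finite discrete quotients. Writing $\prod_n \Lambda_U = \varprojlim_{S, k} \prod_{n \in S} \Lambda_U/m_U^k$ (over finite $S \subseteq \NN$ and $k \ge 1$) and $\Lambda_V = \varprojlim_{k'} \Lambda_V/m_V^{k'}$, and using that $\otimes$ commutes with the finite product over $S$, one gets
\[ \Bigl(\prod_n \Lambda_U\Bigr) \htimes_{\Lambda_U} \Lambda_V = \varprojlim_{S, k, k'} \prod_{n \in S} \Lambda_V/\bigl(m_U^k \Lambda_V + m_V^{k'}\bigr). \]
Because $\Lambda_U \to \Lambda_V$ is $m$-adically continuous, for each $k'$ one has $m_U^k \Lambda_V \subseteq m_V^{k'}$ once $k$ is large, so $\varprojlim_k \Lambda_V/(m_U^k \Lambda_V + m_V^{k'}) = \Lambda_V/m_V^{k'}$; hence the double limit over $k, k'$ is $\Lambda_V$, and moving it through the finite product over $S$ yields $\varprojlim_S \prod_{n \in S} \Lambda_V = \prod_n \Lambda_V$. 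Finally, unwinding the identifications shows that the natural map $D^\circ_{U,m}(T) \htimes_{\Lambda_U} \Lambda_V \to D^\circ_{V,m}(T)$, induced by restriction (resp.\ base change) of functions, is $(\mu_n) \otimes \lambda \mapsto (\lambda\,\overline{\mu_n})$, which under the above descriptions is the identity of $\prod_n \Lambda_V$; so it is an isomorphism.

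The only genuinely delicate point is making the completed tensor product precise: the ambient category must be that of compact (linearly compact) $\Lambda_U$-modules, in which $\htimes$ is defined by $M \htimes_{\Lambda_U} N = \varprojlim (M_i \otimes_{\Lambda_U} N_j)$ over presentations $M = \varprojlim M_i$, $N = \varprojlim N_j$ by finite discrete modules; one must also note that $\Lambda_U \to \Lambda_V$ is $m_U$-adically continuous (being restriction of rigid functions bounded by $1$), so that these inverse systems are compatible over it. Everything else is a routine manipulation of limits.
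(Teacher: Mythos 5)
Your argument is correct, and it is exactly the elaboration of what the paper dismisses with ``Clear by construction'': the text has already recorded the topological isomorphism $D^\circ_{U,m}(T) \cong \prod_{n\ge 0}\Lambda_U$ coming from a $U$-independent orthonormal basis of $A^\circ_{U,m}(T)$, and your inverse-limit computation of $\bigl(\prod_n\Lambda_U\bigr)\htimes_{\Lambda_U}\Lambda_V$ (using that $m_U\Lambda_V\subseteq m_V$) is the intended justification. Nothing to add.
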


  \begin{proof}
   Clear by construction.
  \end{proof}

  \begin{lemma}
   We may write $D^\circ_{U, m}(T)$ as an inverse limit
   \[ D^\circ_{U, m}(T) = \varprojlim_n D^\circ_{U, m}(T)/ \Fil^n, \]
   where each $\Fil^n$ is preserved by the action of $\Sigma$, and the quotient $D^\circ_{U, m}(T)/ \Fil^n$ is finite.
  \end{lemma}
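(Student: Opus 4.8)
The plan is to produce the $\Fil^n$ as $\Sigma$-stable open subgroups of $D^\circ_{U,m}(T)$ which are cofinal among all open subgroups; finiteness of the quotients will then be automatic, since $D^\circ_{U,m}(T)$ is compact.

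First I would record the profinite structure explicitly. As noted above, $D^\circ_{U,m}(T) \cong \prod_{i \ge 0} \Lambda_U$ with the product topology, and each factor $\Lambda_U \cong \cO_E[[u]]$ carries its $m_U$-adic topology; since $\Lambda_U = \varprojlim_a \Lambda_U/m_U^a$ with each $\Lambda_U/m_U^a$ finite, $\Lambda_U$ is profinite, hence so is $D^\circ_{U,m}(T)$. Writing $(e_i)_{i \ge 0}$ for the topological $\Lambda_U$-basis of $A^\circ_{U,m}(T)$ coming from its identification with the space of null sequences, the subgroups
\[ N_{a,N} \coloneqq m_U^a \cdot D^\circ_{U,m}(T) + \{\mu : \mu(e_i) = 0 \text{ for all } i \le N\}, \qquad a, N \ge 0, \]
are closed of finite index — the quotient is $(\Lambda_U/m_U^a)^{N+1}$ — hence open, and a routine check (approximate any $f \in A^\circ_{U,m}(T)$ by a finite partial sum of the $e_i$) shows that they form a neighbourhood basis of $0$ for the weak topology. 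They are not $\Sigma$-stable, because $\Sigma$ does not respect the grading by the index $i$.

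The key step is the following general fact: if $M$ is a profinite abelian group equipped with a continuous action of a compact monoid $\Sigma$ by continuous group endomorphisms, then every open subgroup $N \le M$ contains a $\Sigma$-stable open subgroup. Here $\Sigma = \Sigma_0(p)$ or $\Sigma_0'(p)$ is compact, being a closed subset of $\mathrm{Mat}_{2\times 2}(\Zp)$. To prove the fact, note the action map $\alpha \colon \Sigma \times M \to M$ is continuous, so $\alpha^{-1}(N)$ is open and contains $\Sigma \times \{0\}$; by compactness of $\Sigma$ the tube lemma gives an open $V \ni 0$ with $\Sigma \cdot V \subseteq N$, and after shrinking $V$ to an open subgroup $V'$, the subgroup $N'$ of $M$ generated by $\bigcup_{\gamma \in \Sigma} \gamma V'$ is open (it contains $V'$), is contained in $N$ (as $N$ is a subgroup containing each $\gamma V'$), and satisfies $\delta N' \subseteq N'$ for all $\delta \in \Sigma$ since $\delta \gamma \in \Sigma$. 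To apply this I need the $\Sigma$-action on $D^\circ_{U,m}(T)$ to be jointly continuous for the weak topology; since for the weak topology it suffices to test against a single $f \in A^\circ_{U,m}(T)$, and since any $\mu$ sends $m_U^a A^\circ_{U,m}(T)$ into $m_U^a \Lambda_U$, this reduces to the continuity of $\gamma \mapsto \gamma f$ as a map $\Sigma \to A^\circ_{U,m}(T)$ (with the $m_U$-adic topology on the target). That last statement follows from the explicit formula for the action used in the proof of the preceding proposition, combined with a uniform-in-$\gamma$ version of Lemma \ref{lemma:convergencekappa} for the $\kappa_U$-factors and with the fact that the relevant Möbius substitutions preserve the rigid-analytic neighbourhoods $X_m$ of $\Zp$. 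I expect making this joint-continuity claim precise to be the only genuinely technical point in the argument.

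Finally I would assemble the filtration by choosing inductively a decreasing chain of $\Sigma$-stable open subgroups $\Fil^n \subseteq \Fil^{n-1} \cap N_{n,n}$, using the general fact above at each stage. Then $D^\circ_{U,m}(T)/\Fil^n$ is finite (an open subgroup of a compact group has finite index), each $\Fil^n$ is $\Sigma$-stable by construction, and because $N_{n,n}$ is cofinal among the $N_{a,N}$ — hence among all open subgroups of the profinite group $D^\circ_{U,m}(T)$ — the $\Fil^n$ are cofinal as well, so $\bigcap_n \Fil^n = 0$ and the natural map $D^\circ_{U,m}(T) \to \varprojlim_n D^\circ_{U,m}(T)/\Fil^n$ is a topological isomorphism. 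The case $T = T_0'$ is handled identically, using the corresponding action of $\Sigma_0'(p)$.
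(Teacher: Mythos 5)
Your argument is correct, and it takes a genuinely different route from the paper: the paper does not prove this lemma at all but cites it from Andreatta--Iovita--Stevens (Proposition 3.10) and Hansen (\S 2.1), where the filtration $\Fil^n$ is written down \emph{explicitly} in terms of the dual basis of $D^\circ_{U,m}(T)$ and powers of $m_U$, and $\Sigma$-stability is checked by direct computation with the matrix action on $A^\circ_{U,m}(T)$. You instead produce the $\Fil^n$ abstractly, via the general fact that a compact monoid acting jointly continuously on a profinite group admits a cofinal system of stable open subgroups; this is softer, and for the purpose this lemma serves in the paper (showing $D^\circ_{U,m}(T)$ is an inverse limit of finite $\Sigma$-modules, hence defines a pro-sheaf on $Y$) a non-explicit filtration is entirely sufficient. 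Two caveats. First, the cost of the soft argument is exactly the joint-continuity claim you defer: your reduction of it to the continuity of $\gamma \mapsto \gamma f$ for fixed $f \in A^\circ_{U,m}(T)$ is correct (note that for the tube-lemma step you only need continuity of the action at points of $\Sigma \times \{0\}$), but that continuity is the real content of the lemma and is essentially the same estimate -- a version of Lemma \ref{lemma:convergencekappa} uniform in the matrix entries -- that makes the explicit AIS filtration $\Sigma$-stable; neither approach avoids this computation, so you should write it out rather than leave it as an expectation. Second, your $\Fil^n$ are a priori only open subgroups, whereas the AIS filtration is by $\Lambda_U$-submodules (and is compatible with base change in $U$); the lemma as stated does not require this, and if it were needed one could repair your construction by replacing $N'$ with the $\Lambda_U$-submodule it generates, which is still open, still contained in the $\Lambda_U$-submodule $N_{a,N}$, and still $\Sigma$-stable since the $\Sigma$-action is $\Lambda_U$-linear.
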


  \begin{proof}
   For $T = T_0$ and $m = 0$ this is \cite[Proposition 3.10]{andreattaiovitastevens}, and the generalisation to $m \ge 1$ is given in \cite[\S 2.1]{Hansen-Iwasawa}. The case of $T = T_0'$ is proved similarly (or, alternatively, follows from the case of $T = T_0$ via conjugation by $\stbt 0 {-1} p 0$).
  \end{proof}

  \begin{proposition}
   Let $D^{\la}(T, E)$ be the algebra of $E$-valued locally analytic distributions on $T$. Then there is an isomorphism
   \[ D^{\la}(T, E) \to \varprojlim_{U, m} D_{U, m}(T),\]
   given by mapping the Dirac distribution $[t]$, for $t \in T$, to the $\Lambda_U$-linear functional on $A^\circ_{U, m}$ given by evaluation at $t$. This map commutes with the action of $\Sigma$ on both sides, and restricts to an isomorphism
   \[ \Lambda_{\cO_E}(T) \to \varprojlim_{U, m} D_{U, m}^\circ(T).\]
  \end{proposition}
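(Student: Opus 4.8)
The plan is to construct the map explicitly on Dirac distributions and then check that it extends, is equivariant, and is an isomorphism, working one tower at a time.

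First I would recall that the locally analytic distribution algebra $D^{\la}(T, E)$ is, by definition, the continuous dual of the space $A^{\la}(T, E)$ of locally analytic $E$-valued functions on $T$; and the space $A^{\la}(T, E)$ is the inductive limit, over $U$ and $m$, of the Banach spaces $A_{U,m}(T) \coloneqq A^\circ_{U,m}(T)[1/p]$ of weight-$\kappa_U$ homogeneous functions with a given radius of convergence. Concretely, homogeneity of weight $\kappa_U$ identifies $A_{U,m}(T)$ with the functions on the slice ($z \mapsto f(1,z)$ for $T_0$, resp. $z \mapsto f(pz,1)$ for $T_0'$) lying in $LA_m(\Zp, \Lambda_U)[1/p]$, so that a locally analytic function on $T$ in the naive sense restricts to a compatible family of such slices as $U$ shrinks and $m$ grows, and conversely. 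Dualizing the inductive limit $A^{\la}(T,E) = \varinjlim_{U,m} A_{U,m}(T)$ turns it into the projective limit $D^{\la}(T,E) = \varprojlim_{U,m} D_{U,m}(T)$ of the duals, which is exactly the claimed target; so really the content is to identify $D_{U,m}(T)$ with the dual of $A_{U,m}(T)$ and to check that the transition maps match up. The key point making this run smoothly is the previous lemma writing $D^\circ_{U,m}(T) = \varprojlim_n D^\circ_{U,m}(T)/\Fil^n$ with finite graded pieces, together with the description of $A^\circ_{U,m}(T)$ as sequences $(c_n)$ in $\Lambda_U$ tending to $0$ $m_U$-adically: these two facts together say precisely that $D^\circ_{U,m}(T) = \Hom_{\Lambda_U}(A^\circ_{U,m}(T), \Lambda_U)$ is the $m_U$-adic completion of the free module on the dual basis, i.e. $\varprojlim_n \Lambda_U^{(n)}$, and after inverting $p$ this is the continuous $E$-linear dual of $A_{U,m}(T)$ as an orthonormalizable $E$-Banach space. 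Under this identification the Dirac functional $\mu_t: f \mapsto f(t)$ on $A^\circ_{U,m}(T)$ is manifestly $\Lambda_U$-linear, hence defines an element of $D^\circ_{U,m}(T)$, and these are compatible under the transition maps $D_{V,m'}(T) \to D_{U,m}(T)$ (restriction of functionals, dual to $A_{U,m}(T) \hookrightarrow A_{V,m'}(T)$) since evaluation at a fixed $t$ is compatible with restriction of functions; so $[t] \mapsto (\mu_t)_{U,m}$ is well defined.

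Next I would verify the three assertions. Equivariance: the left $\Sigma$-action on functions is $(\gamma f)(t) = f(t\gamma)$, which dualizes to $(\gamma\mu)(f) = \mu(\gamma^{-1} f)$, i.e. $(\gamma \mu_t)(f) = \mu_t(\gamma^{-1}f) = f(t\gamma^{-1}\gamma)$... more precisely one checks directly that $\gamma \cdot [t]$ and $[t\gamma]$ have the same effect on every $f$ (this is a one-line computation with the definitions, using that $\Sigma$ preserves $T$ and that $A^\circ_{U,m}(T)$ is $\Sigma$-stable for $U \subseteq \cW_m$), so the map intertwines the $\Sigma$-actions; since both sides carry compatible topologies and $\Sigma$ acts continuously, the extension to all of $D^{\la}(T,E)$ is automatic. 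Bijectivity: by the projective-limit descriptions on both sides it suffices to prove, for each fixed $(U,m)$, that the induced map $D_{U,m}(T) \to D_{U,m}(T)$ is an isomorphism — which is a tautology once one has identified $D_{U,m}(T)$ with the continuous dual of $A_{U,m}(T)$ and observed that the Dirac distributions $\{[t] : t \in T\}$ span a dense subspace of that dual for the weak topology (indeed, a functional killing all $\mu_t$ corresponds to a function vanishing at every $t$, hence to $0$). The integral-structure statement $\Lambda_{\cO_E}(T) \to \varprojlim_{U,m} D^\circ_{U,m}(T)$ then follows by tracking units: $\Lambda_{\cO_E}(T)$ is the $\cO_E$-span (completed) of the $[t]$, and $\mu_t$ lands in $D^\circ_{U,m}(T)$ (not just $D_{U,m}(T)$) because evaluation at $t$ maps $A^\circ_{U,m}(T)$ into $\Lambda_U$; conversely the dual-basis description of $D^\circ_{U,m}(T)$ as $\varprojlim_n \Lambda_U^{(n)}$ shows this $\cO_E$-lattice is exactly filled out.

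I expect the only genuine obstacle to be the identification $A^{\la}(T, E) = \varinjlim_{U,m} A_{U,m}(T)$ — that is, checking that the abstract notion of ``locally analytic function on the $p$-adic manifold $T$'' coincides with the concrete homogeneous-function spaces built from $LA_m(\Zp,\Lambda_U)$ — and dually that the weak topology on $D^{\la}(T,E)$ matches the projective-limit topology on $\varprojlim_{U,m} D_{U,m}(T)$. Homogeneity under $\Zp^\times$ reduces functions on $T$ to functions on the quotient $T/\Zp^\times$, which is $\Zp$ (via $z \mapsto (1,z)$, resp. $z \mapsto (pz,1)$), but one must check that the locally analytic structure is the expected one and that ``weight $\kappa_U$'' behaves well across the discs $U$; this is where Lemma \ref{lemma:convergencekappa} is essential, as it guarantees $\kappa_U$ itself has the right convergence radius so that multiplication by $\kappa_U(\gamma)$ does not destroy local analyticity. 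Once this compatibility of topological structures is in place, everything else is formal duality together with the finiteness lemma for $D^\circ_{U,m}(T)/\Fil^n$; so I would state the function-space identification carefully (perhaps citing \cite{andreattaiovitastevens} and \cite{Hansen-Iwasawa} for the $T_0$ case and deducing $T_0'$ by conjugation by $\stbt 0 {-1} p 0$, as in the preceding lemma) and then let the dual follow.
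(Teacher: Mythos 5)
Your proposal is correct and follows essentially the same route as the paper: the paper's (two-sentence) proof likewise uses the homogeneity requirement to identify $A^\circ_{U,m}(T_0')$ with functions on the slice $p\Zp\times 1$, obtaining $D^\circ_{U,m}(T)\cong LA_m(\Zp,\cO_E)^*\htimes_{\cO_E}\Lambda_U$, and then concludes by passing to the inverse limit over $U$ and $m$. The point you flag as the ``only genuine obstacle'' — matching $\varinjlim_{U,m}A_{U,m}(T)$ with the abstract locally analytic functions on $T$ — is exactly the content the paper compresses into that final limit step, so you have located the substance of the argument correctly.
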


  \begin{proof}
   We give the proof for $T_0'$, the proof for $T_0$ being similar. Because of the homogeneity requirement, any function in $A^\circ_{U, m}(T_0')$ is uniquely determined by its restriction to $p\Zp \times 1$, and this gives an isomorphism $D^\circ_{U, m}(T) \cong LA_m(\Zp, \cO_E)^* \htimes_{\cO_E} \Lambda_U$. Both results now follow by passing to the inverse limit.
  \end{proof}

  Now let $k \in \cW$ be an integer weight (i.e.~of the form $z \mapsto z^k$ with $k \ge 0$); any such weight automatically lies in $\cW_0$. As for $U$ above, we may define a space $A^\circ_{k, m}(T)$ of $m$-analytic $\cO_E$-valued functions on $T$ homogenous of weight $k$, and its dual $D^\circ_{k, m}(T)$, for any $m \ge 0$.

  Restriction to $T$ gives a natural embedding $P^\circ_k \into A^\circ_{k, m}(T)$, where $P^\circ_k$ is the space of \emph{polynomial} functions on $\Zp^2$, homogenous of degree $k$, with $\cO_E$ coefficients. Dually, we obtain a canonical, $\Sigma_0(p)$-equivariant projection $\rho_k: D^\circ_{k, m} \to (P^\circ_k)^* = \TSym^k \cO_E^2$.

  \begin{proposition}
   \label{prop:diagram1}
   The following diagram is commutative, for any $U$, any $m$ sufficiently large that $U \subseteq \cW_m$, and any $k \in U$:
   \begin{diagram}
    \Lambda(T)  & \rTo & D^\circ_{U, m}(T) & \rTo & D^\circ_{k, m}(T) \\
    \dInto &&&& \dTo_{\rho_k}\\
    \Lambda(H) & &\rTo^{\mom^k} && \TSym^k H
   \end{diagram}
   Here $\mom^k$ is as defined in \cite{Kings-Eisenstein}, and the left vertical arrow is the natural inclusion $T \into \Zp^{\oplus 2}$.
  \end{proposition}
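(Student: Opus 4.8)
The plan is to reduce the commutativity of the square to the case of Dirac distributions, where both composites can be computed by hand. Every arrow in the diagram is continuous for the natural topologies ($m_U$-adic on $\Lambda(T)$, $D^\circ_{U,m}(T)$ and $\Lambda_U$; weak-$*$ on $D^\circ_{k,m}(T)$ and $\TSym^k H$; the $p$-adic/profinite topology on $\Lambda(H)$ and $\TSym^k H$), and $\Lambda(T)$ — viewed as the submodule of $\Lambda(H) = \cO_E[[H]]$ of measures supported on the open-and-closed subset $T \subseteq \Zp^{\oplus 2}$ — is topologically generated by the Dirac distributions $[t]$, $t \in T$. Hence it suffices to verify that the two composites agree on each $[t]$, and to invoke continuity.

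Next I would trace $[t]$ around both paths. Along the top row, the map $\Lambda(T) \to D^\circ_{U,m}(T)$ of the preceding proposition sends $[t]$ to the $\Lambda_U$-linear functional $\mathrm{ev}_t$ (evaluation at $t$), the specialisation map $D^\circ_{U,m}(T) \to D^\circ_{k,m}(T)$ sends $\mathrm{ev}_t$ to $\mathrm{ev}_t$ again, and by definition $\rho_k(\mathrm{ev}_t)$ is the linear functional $P \mapsto P(t)$ on $P^\circ_k$. Under the perfect pairing $\TSym^k H \times P^\circ_k \to \cO_E$ through which $(P^\circ_k)^\ast$ is being identified with $\TSym^k H$, the functional $P \mapsto P(t)$ corresponds precisely to $t^{\otimes k} \in \TSym^k H$. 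Along the bottom row, the left vertical inclusion carries $[t] \in \Lambda(T)$ to $[t] \in \Lambda(H)$, and $\mom^k([t]) = t^{\otimes k}$ by the definition of the moment map in \cite{Kings-Eisenstein}. The two outputs coincide, which (together with the density/continuity step) gives the claim.

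The only point needing care is normalisation bookkeeping: one must check that the identification $(P^\circ_k)^\ast = \TSym^k H$ implicit in the statement is the one dual to the multiplication map $(H^\vee)^{\otimes k} \to P^\circ_k$, so that the pairing satisfies $\langle v^{\otimes k}, P\rangle = P(v)$ and no spurious factor of $k!$ creeps in between $\rho_k$ and $\mom^k$. This is the convention under which both $\rho_k$ and $\mom^k$ are, by construction, the maps dual to (resp. governed by) evaluation of homogeneous polynomial functions, so the matching is automatic; I do not expect any genuine obstacle here.
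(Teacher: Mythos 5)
Your argument is correct and is essentially the paper's own proof written out in full: the paper disposes of this proposition with ``clear by construction,'' and your reduction to Dirac distributions $[t]$, where both composites visibly yield $t^{[k]}$, together with continuity and density of the $[t]$ in $\Lambda(T)$, is exactly what that phrase is standing in for. Your remark on the normalisation is also right --- the duality $(P^\circ_k)^* \cong \TSym^k H$ is precisely the one with $\langle v^{\otimes k}, P\rangle = P(v)$, which is the reason $\TSym$ rather than $\Sym$ appears and why no factor of $k!$ intervenes.
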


  \begin{proof}
   This is clear by construction.
  \end{proof}

 \subsection{The Ohta pairing}

  We now define a pairing between distribution modules on $T_0$ and $T_0'$, following \cite[\S 4]{Ohta95}.

  \begin{definition}
   Let $H = \Zp^{\oplus 2}$, as above. We define a bilinear map $\phi: H \times H \to \Zp$ by
   \[ \phi\left( (x_1, y_1), (x_2, y_2)\right) = x_1 y_2 - x_2 y_1. \]
  \end{definition}

  This clearly restricts to a map $T_0 \times T_0' \to \Zp^\times$; so the $\Lambda_U$-valued function $\Phi$ on $T_0 \times T_0'$ given by $\Phi(t, t') = \kappa_U(\phi(t, t'))$ is well-defined, homogenous of weight $\kappa_U$ in either variable, and $m$-analytic whenever $U \subseteq \cW_m$.

  \begin{definition}
   We write
   \[ \{ -, -\} : D^\circ_{U, m}(T_0) \times D^\circ_{U, m}(T_0') \to \Lambda_U \]
   for the bilinear map given by pairing with the function $\Phi \in A^\circ_{U, m}(T_0) \htimes_{\Lambda_U} A^\circ_{U, m}(T_0')$.
  \end{definition}

  This is evidently $\Lambda_U$-bilinear, and it satisfies
  \[ \{ \mu \gamma, \mu' \gamma\} = \kappa_U(\det \gamma) \cdot \{  \mu, \mu'\} \]
  for any $\mu \in D^\circ_{U, m}(T_0)$, $\mu' \in D^\circ_{U, m}(T_0')$, and $\gamma \in U_0(p)$, where $U_0(p) = \Sigma_0(p) \cap \Sigma_0'(p)$ is the Iwahori subgroup of $\GL_2(\Zp)$.

  \begin{remark}
   Let us describe the above map slightly more concretely. We take $m = 0$, for simplicity; then the functions $f_n( (x, y) ) = \kappa_U(x) \cdot (y/x)^n$ are an orthonormal basis of $A^\circ_{U, 0}(T_0)$, so a distribution $\mu \in D^\circ_{U, 0}(T_0)$ is uniquely determined by its moments $\mu_n = \mu(f_n)$, which can be any sequence of elements of $\Lambda_U$. Similarly, the functions $g_n( (px, y) ) = \kappa_U(y) (x/y)^n$ are an orthonormal basis of $A^\circ_{U, 0}(T_0')$ and any $\mu' \in D^\circ_{U, 0}(T_0')$ is uniquely determined by its moments $\mu_n' = \mu'(g_n)$.

   Given such $\mu, \mu'$, we define an element of $\Lambda_U$ as follows: the function $\Phi\left( (1, z), (pw, 1) \right) = \kappa_U(1 - p z w)$ can be written as a power series $\sum a_n (wz)^n$, with $a_n \in \Lambda_U$ such that $a_n \to 0$ in the $m_U$-adic topology, by Lemma \ref{lemma:convergencekappa}; then $\{ \mu, \mu'\}$ is the value of the convergent sum $\sum_{n \ge 0} a_n \mu_n \mu_n'$.
  \end{remark}

 \subsection{Sheaves on modular curves}

  \begin{notation}
   Let $M, N$ be integers $\ge 1$ with $M \mid N$ and $M + N \ge 5$. We write $Y(M, N)$ for the modular curve over $\ZZ[1/N]$ defined in \cite[\S 2.1]{Kato-p-adic}.
  \end{notation}

  We recall the construction of an \'etale sheaf of abelian groups $\sH_{\Zp}$, and the corresponding sheaf of Iwasawa algebras $\Lambda(\sH_{\Zp})$, associated to the universal elliptic curve $\cE$ over $Y(M, N)$; and more generally the sheaf of sets $\sH_{\Zp}\langle D \rangle$ and sheaf of $\Lambda(\sH_{\Zp})$-modules $\Lambda(\sH_{\Zp}\langle D \rangle)$, where $D$ is a subscheme of $\cE$ finite \'etale over $Y(M, N)$. Cf.~\cite[\S 4.1]{KLZ1b}.

  We shall apply this to the curve $Y = Y(1, N(p))$ where $p \nmid N$, parametrising triples $(E, P, C)$ where $E$ is an elliptic curves (over some $\ZZ[1/Np]$-algebra), $P$ is a point of exact order $N$ on $E$, and $C$ is a subgroup of $E$ of order $p$. Let $D = E[p] - C$, which is finite \'etale over $Y$ of degree $p^2 - p$, and $D' = C - \{ 0\}$, which is finite \'etale of degree $p-1$; then the sheaves $\sH_{\Zp}\langle D \rangle$ and $\sH_{\Zp} \langle D' \rangle$ are defined. Since both $D$ and $D'$ are contained in $E[p]$, there is a multiplication-by-$p$ map
  \[ [p]_*: \Lambda(\sH_{\Zp}\langle D \rangle) \to \Lambda(\sH_{\Zp}),\]
  and similarly for $D'$.

  \begin{proposition}
   The pullbacks of the sheaves $\Lambda(\sH_{\Zp})$, and $\Lambda(\sH_{\Zp}\langle D \rangle)$, and $\Lambda(\sH_{\Zp}\langle D' \rangle)$ to the pro-scheme $Y(p^\infty, Np^\infty)$ are isomorphic to the constant sheaves $\Lambda(\Zp^2)$, $\Lambda(T_0)$, and $\Lambda(T_0')$ respectively; and the maps $[p]_*$ are induced by the natural inclusions $T_0 \into \Zp^2$ and $T_0' \into \Zp^2$.
  \end{proposition}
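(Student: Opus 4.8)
The plan is to trivialise $\sH_{\Zp}$ explicitly over $Y(p^\infty, Np^\infty) = \varprojlim_n Y(p^n, Np^n)$ and then to track where the subsheaves $\sH_{\Zp}\langle D\rangle$ and $\sH_{\Zp}\langle D'\rangle$ go. On $Y(p^n, Np^n)$ the universal level structure gives sections $e_1$ of order $p^n$ and $e_2$ of order $Np^n$; write $e_2 = e_2' + e_2''$ with $e_2'$ of order prime to $p$ and $e_2''$ of order $p^n$. Under the transition maps of the tower $Y(p^{n+1}, Np^{n+1}) \to Y(p^n, Np^n)$ the pairs $(e_1, e_2'')$ are compatible, since these transition maps restrict, on the relevant $p^n$-torsion, to multiplication by $p$, which is precisely the transition map of $T_p\cE = \varprojlim_n \cE[p^n]$. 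Hence in the limit $(e_1, e_2'')$ defines a pair of sections of $\sH_{\Zp} = T_p\cE$ forming a $\Zp$-basis at every geometric point. This is the content of the statement that the full level-$p^\infty$ structure trivialises $\sH_{\Zp}$; it produces an isomorphism of pro-\'etale sheaves $\Zp^{\oplus 2} \xrightarrow{\sim} \sH_{\Zp}$ on $Y(p^\infty, Np^\infty)$, and applying the functor $\Lambda(-)$ gives $\Lambda(\Zp^2) \cong \Lambda(\sH_{\Zp})$.

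Next I would compute the pullback of the universal order-$p$ subgroup $C \subset \cE[p]$ along the natural projection $Y(p^\infty, Np^\infty) \to Y(1, N(p))$. With the normalisation of this projection used in \cite[\S 4.1]{KLZ1b} --- the one point that must be checked with some care --- the subgroup $C$ pulls back to the reduction modulo $p$ of the $\Zp$-line spanned by the second basis vector $(0,1)$; equivalently, under the trivialisation above $C$ corresponds to $\{0\} \times \ZZ/p\ZZ \subset (\ZZ/p\ZZ)^{\oplus 2} = \sH_{\Zp}/p\sH_{\Zp}$. Consequently $D = \cE[p] - C$ corresponds to the complement $(\ZZ/p\ZZ)^{\oplus 2} \setminus (\{0\}\times\ZZ/p\ZZ)$, whose preimage in $\Zp^{\oplus 2}$ under reduction mod $p$ is exactly $T_0 = \Zp^\times \times \Zp$; since $\sH_{\Zp}\langle D\rangle$ is by definition this preimage, the trivialisation restricts to an isomorphism $T_0 \xrightarrow{\sim} \sH_{\Zp}\langle D\rangle$ of sheaves of sets, and applying $\Lambda(-)$ gives $\Lambda(T_0) \cong \Lambda(\sH_{\Zp}\langle D\rangle)$. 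The identical computation with $D' = C - \{0\}$, which corresponds to $(\{0\}\times\ZZ/p\ZZ) \setminus \{(0,0)\}$, yields the preimage $p\Zp \times \Zp^\times = T_0'$ and hence $\Lambda(T_0') \cong \Lambda(\sH_{\Zp}\langle D'\rangle)$.

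It remains to identify the maps $[p]_*$. Unwinding their construction in \cite[\S 4.1]{KLZ1b}, on the pullbacks to $Y(p^\infty, Np^\infty)$ these maps are simply the push-forwards of Iwasawa measures along the set-theoretic inclusions $\sH_{\Zp}\langle D\rangle \hookrightarrow \sH_{\Zp}$ and $\sH_{\Zp}\langle D'\rangle \hookrightarrow \sH_{\Zp}$; under the identifications of the previous step these are exactly $\Lambda(-)$ applied to the natural inclusions $T_0 \hookrightarrow \Zp^{\oplus 2}$ and $T_0' \hookrightarrow \Zp^{\oplus 2}$, as claimed. Since every object and morphism in sight is obtained by applying the functor $\Lambda(-)$ to (constant) sheaves of sets and maps between them, no further compatibilities require checking. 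The main obstacle is not conceptual but bookkeeping: one must pin down the direction of the transition maps in the level-$p^\infty$ tower and the precise normalisation of $Y(p^\infty, Np^\infty) \to Y(1, N(p))$ so as to land on $T_0$ and $T_0'$ rather than on $\Zp \times \Zp^\times$ and $\Zp^\times \times p\Zp$, and to verify that the above description of $[p]_*$ really does match its definition in \cite{KLZ1b}.
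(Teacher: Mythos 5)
Your argument is correct and follows essentially the same route as the paper: trivialise the inverse system of sheaves of \emph{sets} $\sH_{\Zp}$ over $Y(p^\infty, Np^\infty)$ via the sections coming from the level structure, observe that $C$ is generated by $e_2 \bmod p$ so that $\sH_{\Zp}\langle D\rangle$ and $\sH_{\Zp}\langle D'\rangle$ become $T_0$ and $T_0'$, and then apply $\Lambda(-)$. Your version merely spells out in more detail the compatibility of the sections along the tower and the normalisation of the projection to $Y(1,N(p))$, both of which the paper leaves implicit.
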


  \begin{proof}
   It suffices to check the corresponding statement for the inverse systems of sheaves of \emph{sets} $\sH_{\Zp}$, $\sH_{\Zp}\langle D \rangle$ and $\sH_{\Zp}\langle D' \rangle$. However, over $Y(p^\infty, Np^\infty)$ we have two sections $e_1, e_2$ of $\sH_{\Zp}$ identifying it with the constant sheaf $\Zp^2$; and since the level $p$ subgroup $C$ is generated by $e_2 \bmod p$, the sheaf $\sH_{\Zp}\langle D \rangle$ is precisely the subset of linear combinations $ae_1 + be_2$ such that $a \ne 0 \bmod p$, which is $T_0$, while $\sH_{\Zp}\langle D' \rangle$ is similarly identified with $T_0'$.
  \end{proof}

  Now let $m \ge 0$, and $U$ a wide open disc contained in $\cW_m$, as before.

  \begin{proposition}
   There are pro-sheaves of $\Lambda_U$-modules $\cD^\circ_{U, m}(\sH_0)$ and $\cD^\circ_{U, m}(\sH_0')$ on $Y$, whose pullbacks to $Y(p^\infty, Np^\infty)$ are the constant pro-sheaves $D^\circ_{U, m}(T_0)$ and $D^\circ_{U, m}(T_0')$ respectively; and the Galois group of $Y(p^\infty, Np^\infty) / Y$ acts on $D^\circ_{U, m}(T_0)$ and $D^\circ_{U, m}(T_0')$ via its natural identification with the Iwahori subgroup of $\GL_2(\Zp)$.
  \end{proposition}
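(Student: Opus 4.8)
The plan is to construct both pro-sheaves by \'etale descent along the pro-Galois cover $\pi\colon Y(p^\infty, Np^\infty)\to Y$, performing the descent one step at a time for the filtration $\Fil^\bullet$ on the distribution modules.

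First I would observe that $\pi$ is pro-(finite \'etale) over the connected curve $Y$, so the Galois group $\mathcal{G}=\Gal\bigl(Y(p^\infty,Np^\infty)/Y\bigr)$ is a profinite quotient of $\pi_1^{\et}(Y)$. Since $C=\langle\bar e_2\rangle$ is already part of the moduli data on $Y$, whereas the pair $(e_1,e_2)$ trivialises $\pi^*\sH_{\Zp}\cong\Zp^2$ (as in the preceding proposition), the group $\mathcal{G}$ acts on $\Zp^2$ through change of basis and is exactly the stabiliser of the line $\langle\bar e_2\rangle=C\bmod p$, namely the Iwahori subgroup $U_0(p)=\Sigma_0(p)\cap\Sigma_0'(p)$. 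Since $U_0(p)\subseteq\Sigma_0(p)$ and $U_0(p)\subseteq\Sigma_0'(p)$, restricting the $\Sigma$-actions fixed earlier equips $D^\circ_{U,m}(T_0)$ and $D^\circ_{U,m}(T_0')$ with continuous $\mathcal{G}$-actions, which moreover are $\Lambda_U$-linear, because the action $(\gamma f)(t)=f(t\gamma)$ on $A^\circ_{U,m}(T)$ is $\Lambda_U$-linear in $f$ and hence so is its dual.

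Next I would invoke the lemma presenting $D^\circ_{U,m}(T)=\varprojlim_n D^\circ_{U,m}(T)/\Fil^n$, in which each $\Fil^n$ is a $\Lambda_U$-submodule stable under $\Sigma$ (hence under $\mathcal{G}$) with $D^\circ_{U,m}(T)/\Fil^n$ finite. For fixed $n$, the continuous $\mathcal{G}$-action on the finite $\Lambda_U$-module $D^\circ_{U,m}(T)/\Fil^n$ factors through a finite quotient $\mathcal{G}/\mathcal{G}'$, and is thus trivialised by the corresponding finite Galois subcover $Y'\to Y$; standard \'etale descent --- equivalently, the contracted product $Y'\times^{\mathcal{G}/\mathcal{G}'}\bigl(D^\circ_{U,m}(T)/\Fil^n\bigr)$ --- then yields a locally constant \'etale sheaf of finite $\Lambda_U$-modules on $Y$ whose pullback along $\pi$ is the constant sheaf $D^\circ_{U,m}(T)/\Fil^n$ with its $\mathcal{G}$-action. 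These sheaves form an inverse system in $n$ via the $\mathcal{G}$- and $\Lambda_U$-equivariant maps $D^\circ_{U,m}(T)/\Fil^{n+1}\to D^\circ_{U,m}(T)/\Fil^n$; applying this with $T=T_0$ and $T=T_0'$, I define $\cD^\circ_{U,m}(\sH_0)$ and $\cD^\circ_{U,m}(\sH_0')$ to be the respective inverse limits of these pro-systems of \'etale sheaves.

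The asserted properties are then read off formally: pullback along $\pi$ commutes with both the contracted-product construction and the inverse limit, giving $\pi^*\cD^\circ_{U,m}(\sH_0)=\varprojlim_n D^\circ_{U,m}(T_0)/\Fil^n=D^\circ_{U,m}(T_0)$ as a constant pro-sheaf of $\Lambda_U$-modules, and similarly for $T_0'$; and by construction the deck-transformation action of $\mathcal{G}$ on the fibre is the restriction of the Iwahori action used to build $D^\circ_{U,m}$. The one step that is not purely formal, and the one I would handle with the most care, is the bookkeeping of the first paragraph: fixing the side and normalisation conventions so that the identification $\mathcal{G}\cong U_0(p)$, taken together with $\pi^*\sH_{\Zp}\langle D\rangle\cong T_0$ and $\pi^*\sH_{\Zp}\langle D'\rangle\cong T_0'$ from the preceding proposition, is compatible with the left $\Sigma$-action used to define $A^\circ_{U,m}$, with Proposition~\ref{prop:diagram1}, and with the $\kappa_U(\det)$-twist entering the Ohta pairing. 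Everything else is routine \'etale descent for finite locally constant sheaves, applied one filtration step at a time.
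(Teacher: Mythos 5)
Your proposal is correct and follows essentially the same route as the paper: the trivialisation of $\sH_{\Zp}$ over $Y(p^\infty, Np^\infty)$ gives a homomorphism from $\pi_1^{\et}(Y)$ to the Iwahori subgroup, the lemma writing $D^\circ_{U,m}(T)$ as $\varprojlim_n D^\circ_{U,m}(T)/\Fil^n$ with $\Sigma$-stable $\Fil^n$ and finite quotients reduces everything to finite $\pi_1^{\et}(Y)$-modules, and these define \'etale sheaves on $Y$ whose inverse limit is the desired pro-sheaf, with $\Lambda_U$-linearity of the $U_0(p)$-action giving the module structure. The only difference is one of exposition: you spell out the descent via contracted products and the compatibility bookkeeping, which the paper leaves implicit.
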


  \begin{proof}
   The above trivialisation of $\sH_{\Zp}$ over $Y(p^\infty, Np^\infty)$ determines a homomorphism from the \'etale fundamental group $\pi_1^{\et}(Y)$ to the Iwahori subgroup $U_0(p) \subseteq \GL_2(\Zp)$. Since $D^\circ_{U, m}(T_0)$ is an inverse limit of finite right modules for $U_0(p)$, and any finite right $\pi_1^{\et}(Y)$-module defines an \'etale sheaf on $Y$, we obtain a pro-sheaf $\cD^\circ_{U, m}(\sH_0)$, and similarly for $D^\circ_{U, m}(T_0')$. These are sheaves of $\Lambda_U$-modules since the action of $U_0(p)$ on the modules $D^\circ_{U, m}(T_0)$ and $D^\circ_{U, m}(T_0')$ is $\Lambda_U$-linear.
  \end{proof}

  \begin{remark}
   Compare \cite[\S 3.3]{andreattaiovitastevens}; the argument is given there for the Kummer \'etale site on a log rigid space over $\Qp$ (with log-structure given by the cusps), but the argument works equally well in the much simpler case of affine modular curves over $\QQ$.
  \end{remark}

  \begin{proposition}
   \label{prop:diagram2}
   For any $k \in U$ we have commutative diagrams of pro-sheaves on $Y$
   \begin{diagram}
    \Lambda(\sH_{\Zp}\langle D \rangle) & \rTo & \cD^\circ_{U, m}(\sH_0) & \rTo & \cD^\circ_{k, m}(\sH_0) \\
    \dTo^{[p]_*} && && \dTo_{\rho_k}\\
    \Lambda(\sH_{\Zp}) & & \rTo^{\mom^k} && \TSym^k\left(\sH\right)
   \end{diagram}
   and
   \begin{diagram}
    \Lambda(\sH_{\Zp}\langle D' \rangle) & \rTo & \cD^\circ_{U, m}(\sH_0') & \rTo & \cD^\circ_{k, m}(\sH_0') \\
    \dTo^{[p]_*} && && \dTo_{\rho_k}\\
    \Lambda(\sH_{\Zp}) & & \rTo^{\mom^k} && \TSym^k\left(\sH\right)
   \end{diagram}
   Here $\mom^k$ is as defined in \cite{Kings-Eisenstein}.
  \end{proposition}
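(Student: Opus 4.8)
The statement is a sheaf-theoretic avatar of Proposition \ref{prop:diagram1}, and the natural plan is to deduce it from that result by descent along the pro-\'etale cover $Y(p^\infty, Np^\infty)\to Y$. The key point is that, for the connected scheme $Y$ equipped with the geometric point used to trivialise $\sH_{\Zp}$, the functor sending an inverse system of finite \'etale sheaves of $p$-power order to the associated profinite continuous $\pi_1^{\et}(Y)$-module is faithful (indeed an equivalence onto its essential image). Consequently a diagram of pro-sheaves on $Y$ commutes if and only if the corresponding diagram of $\pi_1^{\et}(Y)$-modules commutes, so it suffices to check commutativity after pulling back to $Y(p^\infty, Np^\infty)$ and keeping track of the residual action of $\Gal\bigl(Y(p^\infty, Np^\infty)/Y\bigr)$, which is the Iwahori subgroup $U_0(p)$.

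By the trivialisation propositions established above, over $Y(p^\infty, Np^\infty)$ the five sheaves $\Lambda(\sH_{\Zp}\langle D\rangle)$, $\cD^\circ_{U,m}(\sH_0)$, $\cD^\circ_{k,m}(\sH_0)$, $\Lambda(\sH_{\Zp})$ and $\TSym^k(\sH)$ become the constant pro-sheaves attached to the $U_0(p)$-modules $\Lambda(T_0)$, $D^\circ_{U,m}(T_0)$, $D^\circ_{k,m}(T_0)$, $\Lambda(H)$ and $\TSym^k H$ respectively, where $H=\Zp^{\oplus 2}$. Moreover each arrow of the diagram is by construction the sheafification of a $U_0(p)$-equivariant morphism of these modules: the top row is the defining map $\Lambda(T_0)\to D^\circ_{U,m}(T_0)$ followed by the specialisation $D^\circ_{U,m}(T_0)\to D^\circ_{k,m}(T_0)$; the left-hand map $[p]_*$ is, by the preceding proposition, the one induced by the inclusion $T_0\into H$; the right-hand map is the $\Sigma_0(p)$-equivariant projection $\rho_k$ introduced before Proposition \ref{prop:diagram1}; and the bottom map $\mom^k$ is the moment map of \cite{Kings-Eisenstein}, which under this identification is the same $\mom^k$ that appears in Proposition \ref{prop:diagram1}. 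With these identifications in place, the pulled-back diagram is literally the diagram of Proposition \ref{prop:diagram1} for $(H,T,\Sigma)=(\Zp^{\oplus 2}, T_0, \Sigma_0(p))$, which commutes; by the faithfulness statement above, the original diagram of pro-sheaves on $Y$ commutes as well.

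The case of $\cD^\circ_{U,m}(\sH_0')$ is handled in exactly the same way, with $T_0$ replaced by $T_0'$ and $\Sigma_0(p)$ by $\Sigma_0'(p)$, invoking the $T_0'$-analogue of Proposition \ref{prop:diagram1}; the latter follows from the $T_0$ case by conjugating all the modules and maps by $\stbt{0}{-1}{p}{0}$, since this element carries $T_0$ onto $T_0'$ and conjugates $\Sigma_0(p)$ onto $\Sigma_0'(p)$. I do not expect any serious obstacle: the only genuine bookkeeping is to confirm that the left vertical map is the sheafification of the bare inclusion $T_0\into H$ rather than some twist of it (this is precisely the content of the preceding proposition) and that Kings' $\mom^k$ on $\Lambda(\sH_{\Zp})$ agrees with the module-level $\mom^k$ of Proposition \ref{prop:diagram1} under the trivialisation; granting these two compatibilities, the commutativity is purely formal.
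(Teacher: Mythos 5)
Your proposal is correct and is essentially the paper's own argument: the paper likewise interprets the diagram of Proposition \ref{prop:diagram1} as a diagram of constant pro-sheaves on $Y(p^\infty, Np^\infty)$ and descends to $Y$ using Iwahori-equivariance of all the maps. The only superfluous step is your conjugation by $\stbt{0}{-1}{p}{0}$ for the $\sH_0'$ case, since Proposition \ref{prop:diagram1} is already stated for $T$ equal to either $T_0$ or $T_0'$.
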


  \begin{proof}
   We have the diagram of proposition \ref{prop:diagram1}, which we may interpret as a diagram of constant pro-sheaves on $Y(p^\infty, Np^\infty)$; and the morphisms in the diagram are all equivariant for the action of the Iwahori subgroup, so they descend to morphisms of sheaves on $Y$.
  \end{proof}

  We can similarly construct $\cD^\circ_{U, m}(\sH_0)$ and $\cD^\circ_{U, m}(\sH_0')$ as sheaves on $Y(U)$, for any sufficiently small open compact subgroup $U \subseteq \GL_2(\widehat \ZZ)$ whose image in $\GL_2(\Zp)$ is contained in the Iwahori subgroup. Moreover, if $g \in \operatorname{GL}_2(\QQ) \cap \Sigma_0(p)$, so there is a natural map
  \[ Y(U) \to Y(g U g^{-1}) \]
  corresponding to $z \mapsto gz$ on the upper half-plane, then the action of $g$ on $D^\circ_{U, m}(\sH_0)$ gives a map of sheaves on $Y$
  \[ \cD^\circ_{U, m}(\sH_0) \to g^*\left(\cD^\circ_{U, m}(\sH_0)\right); \]
  the same holds with $\sH_0'$ and $\Sigma_0'$ in place of $\sH_0$ and $\Sigma_0$.

  \begin{definition}
   We define
   \begin{align*}
    M^\circ_{U, m}(\sH_0) &= H^1_{\et}\left(\overline{Y}, \mathcal{D}_{U, m}^\circ(\sH_0)\right)(-\kappa_U),\\
    M^\circ_{U, m}(\sH_0') &= H^1_{\et}\left(\overline{Y}, \mathcal{D}_{U, m}^\circ(\sH'_0)\right)(1).
   \end{align*}
   We also make the same definitions for compactly-supported and parabolic cohomology, which we write as $M^\circ_{U, m}(\sH_0)_c$, $M^\circ_{U, m}(\sH_0)_{\mathrm{par}}$ (and similarly for $\sH_0'$).
  \end{definition}

  These are profinite topological $\Lambda_U$-modules, equipped with continuous actions of $\Gal(\overline{\QQ} / \QQ)$ unramified outside $N p \infty$. As topological $\Lambda_U$-modules (forgetting the Galois actions) they are isomorphic to more familiar objects:
  \begin{itemize}
   \item The space $M^\circ_{U, m}(\sH_0)$ is isomorphic to the group cohomology $H^1\left(\Gamma, \mathcal{D}_{U, m}^\circ(T_0)\right)$, where $\Gamma = \Gamma_1(N(p)) = \Gamma_1(N) \cap \Gamma_0(p)$ (since $Y_1(N(p))(\CC)$ has contractible universal cover and its fundamental group is $\Gamma_1(N) \cap \Gamma_0(p)$). 
   \item The space $M^\circ_{U, m}(\sH_0)_c$ is isomorphic to the space of \emph{modular symbols}
   \[ \Hom_{\Gamma}\left( \operatorname{Div}^0(\mathbf{P}^1_{\QQ}), \mathcal{D}_{U, m}^\circ(T_0)\right). \]
  \end{itemize}
  The same statements hold with $\sH_0'$ and $T_0'$ in place of $\sH_0$ and $T_0$.

  \begin{notation}
   We shall refer to $M^\circ_{U, m}(\sH_0)$ and $M^\circ_{U, m}(\sH_0')$ as \emph{\'etale overconvergent cohomology} (of weight $U$, tame level $N$, and degree of overconvergence $m$).
  \end{notation}
  
  We now state some properties of these modules:
  
  \begin{proposition}
   \mbox{~}
   \begin{enumerate}
    \item (Compatibility with specialisation) 
    Let $\varpi_k$ be the ideal of $\Lambda_U$ corresponding to the character $z \mapsto z^k$. For any integer $k \ge 0 \in U$, there is an isomorphism
    \[ M^\circ_{U, m}(\sH_0) / \varpi_k \cong M^\circ_{k, m}(\sH_0).\]
    For compactly-supported cohomology this is true for $k \ge 1$, while for $k = 0$ we have an injective map
    \[ M^\circ_{U, m}(\sH_0)_c / \varpi_0 \into M^\circ_{0, m}(\sH_0)_c \]
    whose cokernel has rank 1 over $\cO_E$, with the Hecke operator $U_p$ acting as multiplication by $p$. Similar statements hold for $\sH_0'$ in place of $\sH_0$.
    
    \item (Control theorem) For any integer $k \ge 0$, the map
    \[ M_{k, m}(\sH_0) \rTo^{\rho_k} H^1_{\et}(\overline{Y}, \TSym^k(\sH)(-k))[1/p] \]
    is an isomorphism on the $U_p = \alpha$ eigenspace, for any $\alpha$ of valuation $< k + 1$. The same holds for compactly-supported and parabolic cohomology, and for $\sH_0'$ and $U_p'$ in place of $\sH_0$ and $U_p$.
    
    \item (Duality) There are $\Lambda_U$-bilinear, $G_{\Qp}$-equivariant pairings
    \begin{align*}
     M^\circ_{U, m}(\sH_0)_c \times M^\circ_{U, m}(\sH_0') &\to \Lambda_U, \\
     M^\circ_{U, m}(\sH_0) \times M^\circ_{U, m}(\sH_0')_c &\to \Lambda_U,\\
     M^\circ_{U, m}(\sH_0)_{\mathrm{par}} \times M^\circ_{U, m}(\sH_0')_{\mathrm{par}} &\to \Lambda_U,
    \end{align*}
    which we denote by $\{ - , -\}$. For integers $k \ge 0$ we have
    \[ \operatorname{ev}_k\left( \{ x, x'\} \right) = \{ \rho_k(x), \rho_k(x') \}_k \]
    where $\operatorname{ev}_k$ is evaluation at $k$, and on the right-hand side $\{-, -\}_k$ signifies the Poincar\'e duality pairing.
    
    \item There is an isomorphism $W: M^\circ_{U, m}(\sH_0)_{?} \to M^\circ_{U, m}(\sH_0')_?$ (where $? \in \{ \varnothing, c, \mathrm{par}\}$), intertwining the action of the Hecke operators $T_n$ with the $T_n'$ (including $n = p$); this is compatible via the maps $\rho_k$ with the Atkin--Lehner operator $W_{Np}$ (but \textbf{not} with the Galois action).
   \end{enumerate}
  \end{proposition}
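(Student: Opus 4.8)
The plan is to prove the four parts essentially independently: parts (1) and (3) will follow from standard properties of \'etale cohomology of the affine curves $\overline{Y}$ combined with the base-change and pairing properties of the distribution modules established above; part (2) is the overconvergent/\'etale incarnation of Stevens's control theorem, and will be the main obstacle; and part (4) comes from the Atkin--Lehner symmetry interchanging $T_0$ and $T_0'$. For part (1), I would first write $\cD^\circ_{U, m}(\sH_0) = \varprojlim_n \cD^\circ_{U, m}(\sH_0)/\Fil^n$ with finite quotients, so that $H^*_{\et}(\overline{Y}, \cD^\circ_{U, m}(\sH_0)) = \varprojlim_n H^*_{\et}(\overline{Y}, \cD^\circ_{U, m}(\sH_0)/\Fil^n)$ (the $\varprojlim^1$ vanishing since the terms are finite). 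Since $A^\circ_{U, m}(T_0)$ is orthonormalisable over $\Lambda_U$, reduction modulo $\varpi_k$ identifies $\cD^\circ_{U, m}(\sH_0)/\varpi_k$ with $\cD^\circ_{k, m}(\sH_0)$; feeding this into the long exact sequence attached to $0 \to \cD^\circ_{U, m}(\sH_0) \xrightarrow{\varpi_k} \cD^\circ_{U, m}(\sH_0) \to \cD^\circ_{k, m}(\sH_0) \to 0$ and invoking Artin vanishing ($H^2_{\et}$ of a torsion sheaf on a smooth affine curve over $\overline{\QQ}$ is zero) yields the isomorphism for ordinary cohomology. For compactly-supported cohomology I would argue dually, via Poincar\'e--Lefschetz duality; the only place where care is needed is $k = 0$, where a rank-one boundary contribution appears on which $U_p$ acts through its degree, i.e.\ by $p$, and this is exactly the cokernel asserted (compare \cite{Hansen-Iwasawa}). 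The parabolic statement is then squeezed between the compact and the ordinary cases.

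Part (2) is the crux. The morphism $\rho_k$ of Proposition~\ref{prop:diagram2} sits in short exact sequences of sheaves on $Y$ whose kernel and cokernel arise from the inclusion of the weight-$k$ polynomial functions into $A^\circ_{k, m}$; on these ``non-classical'' sheaves the Hecke operator $U_p$ (resp.\ $U_p'$) acts with all slopes $\ge k+1$. Passing to cohomology and chasing the resulting long exact sequences, the map induced by $\rho_k$ is therefore an isomorphism on any generalised $U_p$-eigenspace of slope $< k+1$ — this is the overconvergent/\'etale form of the control theorem, established at this level of generality in \cite{andreattaiovitastevens} and \cite{Hansen-Iwasawa}, and the argument applies verbatim to compactly-supported and parabolic cohomology and to $\sH_0'$. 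I expect the slope estimate on the non-classical part, together with its compatibility with the finiteness of the slope decomposition, to be the one genuinely non-formal ingredient; the rest of this part, and indeed most of (1), (3) and (4), is homological bookkeeping.

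For part (3), the Ohta pairing $\{-, -\}$ on $D^\circ_{U, m}(T_0) \times D^\circ_{U, m}(T_0')$ is $U_0(p)$-equivariant up to the factor $\kappa_U(\det\gamma)$, and since the determinant of the Iwahori action on $\sH_{\Zp}$ is the cyclotomic character (Weil pairing), it globalises to a pairing of sheaves $\cD^\circ_{U, m}(\sH_0) \otimes \cD^\circ_{U, m}(\sH_0') \to \Lambda_U(\kappa_U \circ \chi)$. Cup product followed by the trace map
\[ H^1_{\et, c}(\overline{Y}, \cD^\circ_{U, m}(\sH_0)) \otimes H^1_{\et}(\overline{Y}, \cD^\circ_{U, m}(\sH_0')) \to H^2_{\et, c}(\overline{Y}, \Lambda_U(\kappa_U \circ \chi)(1)) \]
then produces the first pairing once the Tate twists $(-\kappa_U)$ and $(1)$ in the definitions of $M^\circ_{U, m}(\sH_0)_c$ and $M^\circ_{U, m}(\sH_0')$ are accounted for: they cancel against $\kappa_U \circ \chi$, leaving a single Tate twist and hence the trace isomorphism $H^2_{\et, c}(\overline{Y}, \Lambda_U(1)) \cong \Lambda_U$ (using that $\overline{Y}$ is connected). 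The other two pairings come from swapping which factor carries compact support and from passing to parabolic quotients. Compatibility with $\rho_k$ reduces, via Proposition~\ref{prop:diagram2} and the moment description of $\{-, -\}$, to the elementary fact that $\Phi$ specialises at weight $k$ to the polynomial $\phi^k$, whose associated pairing on $\TSym^k\sH$ is precisely the one entering weight-$k$ Poincar\'e duality.

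Finally, for part (4), right multiplication by $w_p = \stbt{0}{-1}{p}{0} \in \GL_2(\QQ)$ sends $T_0$ into $T_0'$ and conjugation by $w_p$ interchanges $\Sigma_0(p)$ and $\Sigma_0'(p)$; pulling back functions and dualising gives a $\Lambda_U$-linear isomorphism $D^\circ_{U, m}(T_0) \xrightarrow{\sim} D^\circ_{U, m}(T_0')$ intertwining the two monoid actions. Geometrically $w_p$ induces the Atkin--Lehner automorphism of $Y = Y(1, N(p))$ sending $(E, P, C)$ to $(E/C, P \bmod C, E[p]/C)$, and over $Y(p^\infty, Np^\infty)$ this is compatible with the map on coefficient sheaves just described, so it descends to an isomorphism $W$ of the corresponding $H^1_{\et}$'s (and of their compactly-supported and parabolic variants). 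That $W$ carries $T_n$ to $T_n'$ for $(n, p) = 1$ and $U_p$ to $U_p'$ follows from the standard behaviour of Hecke operators under Atkin--Lehner involutions (Atkin--Lehner--Li), and compatibility via $\rho_k$ with the full Atkin--Lehner operator $W_{Np}$ on $H^1_{\et}(\overline{Y}, \TSym^k\sH)$ is then read off from Proposition~\ref{prop:diagram2}. It is not $G_\QQ$-equivariant because the identification of $w_p^* \cD^\circ_{U, m}(\sH_0')$ with $\cD^\circ_{U, m}(\sH_0)$, valid as sheaves on $\overline{Y}$, does not descend to $\QQ$ — the underlying monodromy identification is purely geometric, which at the level of classical modular forms is the familiar fact that $W_{Np}$ permutes the cusps and these are not defined over $\QQ$.
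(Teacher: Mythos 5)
Your proposal is correct and follows essentially the same route as the paper: the paper disposes of parts (1) and (2) by citation to Andreatta--Iovita--Stevens, Bella\"iche and Pollack--Stevens (whose underlying arguments are exactly the specialisation sequence, boundary term at $k=0$, and slope-$\ge k+1$ estimate on the kernel and cokernel of $\rho_k$ that you sketch), and proves (3) and (4) by the same Weil-pairing/determinant twist computation and $T_0 \leftrightarrow T_0'$ involution that you give. The only cosmetic difference is that the paper uses the matrix $\stbt{0}{-1}{Np}{0}$ rather than $\stbt{0}{-1}{p}{0}$, which is what makes the specialisation of $W$ literally the operator $W_{Np}$ rather than $W_p$.
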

  
  \begin{proof}
   For part (1), see \cite[Lemma 3.18]{andreattaiovitastevens}. For compactly-supported cohomology see \cite[Theorem 3.10]{Bellaiche-critical}. (Bella\"iche works with coefficients in an affinoid disc, rather than a wide-open disc as we do, but the argument is the same.)
  
   Part (2) is the celebrated Stevens control theorem; see \cite[Theorem 3.16]{andreattaiovitastevens} for $H^1$, and \cite[Theorem 1.1]{PollackStevens-overconvergent} for $H^1_c$.
   
   For part (3), if we identify $\Zp(1)$ with $\Zp$ as sheaves on $Y(p^\infty, Np^\infty)$ via the section given by the Weil pairing and our trivialisation of $\sH$, then the Iwahori subgroup $U_0(p)$ acts on $\Zp(1)$ via the determinant character, and hence our pairing of $U_0(p)$-modules
   $D^\circ_{U, m}(\sH_0) \times D^\circ_{U, m}(\sH_0') \to \Lambda_U$ gives a pairing of \'etale pro-sheaves on $Y$
   \[ \cD_{U, m}(\sH_0) \times \cD_{U, m}(\sH_0) \to \Lambda_U(\kappa_U), \]
   where $\kappa_U$ is the composite of the cyclotomic character with the canonical map $\Zp^\times \to \Lambda_U^\times$. Hence we have a cup-product pairing
   \[ H^1_c(\overline{Y}, \cD_{U, m}(\sH_0)(1)) \times H^1(\overline{Y}, \cD_{U, m}(\sH_0')(1)) \to H^2_c(\overline{Y}, \Lambda_U(2 + \kappa_U)), \]
   and since there is a canonical isomorphism $H^2_c(\overline{Y}, \Zp(1)) \cong \Zp$, this gives a pairing into $\Lambda_U(1 + \kappa_U)$ as claimed. It is clear by construction that this is compatible with the Poincar\'e duality pairings with $\TSym^k$ coefficients for each $k \ge 0$.
   
   Part (4) follows from the fact that the action of the matrix $\stbt 0 {-1} {Np} 0$ on $H$ interchanges $T_0$ and $T_0'$.
  \end{proof}
  
  \begin{remark}
   The pairing $\{ - , -\}$ (in any of its various incarnations) is far from perfect (since its specialisation at a classical weight $k \ge 0$ factors through the maps $\rho_k$, so any non-classical eigenclass of weight $k$ must be in its kernel). Nonetheless, we shall see below that it induces a perfect pairing on small slope parts.
  \end{remark}

 \subsection{Slope decompositions}

  As before, let $U$ be a wide open disc contained in $\cW_m$, for some $m$. Let $B_U = \Lambda_U[1/p]$, and let $M$ be one of the $B_U$-modules $M_{U, m}(\sH_0)_?$, for $? \in \{ \varnothing, c, \mathrm{par}\}$, and let $\lambda \in \RR_{\ge 0}$.

  \begin{definition}
   We say $M$ has a \emph{slope $\le \lambda$ decomposition} if we can write it as a direct sum of $B_U$-modules
   \[ M = M^{(\le \lambda)} \oplus M^{(> \lambda)},\]
   where the following conditions are satisfied:
   \begin{itemize}
    \item the action of the Hecke operator $U_p$ preserves the two summands;
    \item the module $M_U^{(\le  \lambda)}$ is finitely-generated over $B_U$;
    \item the restrictions of $U_p$ to $ M_U^{(\le  \lambda)}$ and $M_U^{(> \lambda)}$ have slope $\le  \lambda$ and slope $> \lambda$ respectively.
   \end{itemize}
  \end{definition}
  
  \begin{remark}
    There are several equivalent definitions of \emph{slope $\le \lambda$}, see \cite{andreattaiovitastevens} for further discussion. We shall use the following formulation: the endomorphism $U_p$ of $M_U^{(\le \lambda)}$ is invertible, and the sequence of endomorphisms $\left(p^{\lfloor n\lambda\rfloor} \cdot  (U_p)^{-n}\right)_{n \ge 0}$ is bounded in the operator norm.
  \end{remark}
  Note that the summands $M^{(\le \lambda)}$ and $M^{(> \lambda)}$ must be stable under the actions of the prime-to-$p$ Hecke operators, and of the Galois group $G_{\QQ}$, since these commute with the action of $U_p$.

  \begin{theorem}[{\cite[Theorem 3.17]{andreattaiovitastevens}}]
   Let $k \ge 0$ and $0 \le \lambda < k + 1$. Then there exists an open disc $U \ni k$ in $\cW$, defined over $E$, such that the module $M_{U, 0}(\sH_0)$ has a slope $\le \lambda$ decomposition.
  \end{theorem}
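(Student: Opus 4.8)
The plan is to realise $M_{U,0}(\sH_0)$ as the cohomology of a bounded complex of projective Banach $B_U$-modules on which $U_p$ acts as a \emph{compact} operator, and then to split this complex according to the slope of $U_p$ using the Riesz theory of compact operators in families (Coleman's theory of Fredholm series, in the form developed in \cite{buzzard-eigenvarieties} and \cite{andreattaiovitastevens}), after suitably shrinking $U$. Concretely, writing $\Gamma = \Gamma_1(N(p))$ and using a finite free resolution of $\ZZ$ over $\ZZ[\Gamma]$ (equivalently, the modular-symbols description recalled above), the module $M^\circ_{U,0}(\sH_0) \cong H^1(\Gamma, D^\circ_{U,0}(T_0))$ is the cohomology of a bounded complex $C^\bullet_U$ whose terms are finite direct sums of copies of $D^\circ_{U,0}(T_0)$; the Tate twist $(-\kappa_U)$ affects neither the $B_U$-module structure nor the $U_p$-action, so it may be ignored. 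Inverting $p$, the terms of $C^\bullet_U[1/p]$ are orthonormalizable (hence projective) Banach $B_U$-modules, and the crucial structural input, due to Stevens and central to \cite{andreattaiovitastevens}, is that $U_p$ acts \emph{compactly} on this complex: $U_p$ improves the radius of analyticity, so on each term it factors through the compact inclusion of a space of more analytic distributions. Finally, since the formation of $D^\circ_{U,0}(T_0)$ commutes with base change in $U$ (Lemma \ref{lemma:basechange1}) and the complex consists of projective modules, specialisation is exact: $C^\bullet_U \otimes_{\Lambda_U} \Lambda_U/\varpi_k$ computes $M^\circ_{k,0}(\sH_0)$.

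Next I would produce a slope $\le\lambda$ factorisation of the Fredholm series at the base point. Let $F(T) \in B_U\{\{T\}\}$ be the total characteristic power series of $U_p$ on $C^\bullet_U[1/p]$, the alternating product of the Fredholm determinants of $U_p$ on the individual terms. Over the field $E = \Lambda_U/\varpi_k$, the specialisation $F_k = F \bmod \varpi_k$ is an entire series whose Newton polygon has only finitely many slopes; enlarging $\lambda$ slightly to a rational $\lambda'$ that is not a slope of $F_k$ and with no slopes of $F_k$ in $(\lambda,\lambda']$ — harmless, since a slope $\le\lambda'$ decomposition with no slopes in that interval is a slope $\le\lambda$ decomposition — we obtain a factorisation $F_k = Q_k \cdot S_k$ with $Q_k \in E[T]$ of some degree $d$, $Q_k(0) = 1$, all roots of $Q_k$ of valuation $\le\lambda'$, and $Q_k$ and $S_k$ coprime as Fredholm series. (The hypothesis $\lambda < k+1$ is not needed for this step, since over a field the slope $\le\lambda$ part is automatically finite-dimensional; it is simply the range in which the control theorem applies and this part is classical.)

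It remains to spread the factorisation out and to pass from it to a decomposition of the module. The Fredholm hypersurface cut out by $F$ is a closed analytic subvariety of $U \times \mathbf{A}^1$; by semicontinuity of Newton polygons, together with the clean splitting of the fibre over $k$ afforded by the choice of $\lambda'$, there is an open disc $U' \ni k$ defined over $E$ and contained in $\cW_0$ over which the locus where $U_p$ has valuation $\le\lambda'$ is open and closed in the Fredholm hypersurface and finite flat of constant degree $d$ over $U'$. Equivalently, over $B_{U'}$ the series $F$ factors as $F = Q \cdot S$ with $Q \in B_{U'}[T]$ of degree $d$, $Q(0) = 1$, $Q$ and $S$ coprime, and $Q$ having all its roots of valuation $\le\lambda'$ on every fibre — a slope-adapted factorisation in the sense of \cite{buzzard-eigenvarieties}. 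Replacing $U$ by $U'$, the Riesz-theory machinery then furnishes, term by term, a $U_p$-stable $B_U$-linear decomposition $C^i[1/p] = C^{i,(\le\lambda)} \oplus C^{i,(>\lambda)}$ with $C^{i,(\le\lambda)}$ finite projective over $B_U$, with $U_p$ invertible on it and $(p^{\lfloor n\lambda\rfloor} U_p^{-n})_n$ bounded in operator norm, and with $U_p$ of slope $>\lambda$ on $C^{i,(>\lambda)}$. Since $U_p$ commutes with the differentials, these assemble into a decomposition of complexes $C^\bullet[1/p] = C^{\bullet,(\le\lambda)} \oplus C^{\bullet,(>\lambda)}$, and passing to cohomology gives $M_{U,0}(\sH_0) = M^{(\le\lambda)} \oplus M^{(>\lambda)}$. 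The summand $M^{(\le\lambda)}$ is a subquotient of the finitely generated $B_U$-module $C^{1,(\le\lambda)}$, hence finitely generated over $B_U$; $U_p$ is invertible on it with $(p^{\lfloor n\lambda\rfloor} U_p^{-n})_n$ bounded, so it has slope $\le\lambda$; and $U_p$ has slope $>\lambda$ on $M^{(>\lambda)}$, as required.

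I expect the spreading-out step to be the main obstacle: one must show that the slope-$\le\lambda$ stratum of the Fredholm hypersurface becomes finite flat over a sufficiently small weight disc. This is precisely where Coleman's theory of Fredholm series over affinoids — and its extension to a wide-open disc, where the essential point is that the degree $d$ stays bounded along the affinoid exhaustion of $U$ — does its work, and where the choice of $\lambda'$ avoiding the finitely many slopes of $F_k$ is used to obtain the clean splitting over the base point.
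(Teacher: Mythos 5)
The paper offers no proof of this statement---it is quoted verbatim from \cite[Theorem 3.17]{andreattaiovitastevens}---and your argument is a faithful reconstruction of the proof given there and in \cite{buzzard-eigenvarieties}: compactness of $U_p$ on a bounded complex of $D^\circ_{U,0}(T_0)$-valued cochains (because $U_p$ improves the radius of analyticity), a slope-$\le\lambda'$ factorisation of the Fredholm series over the fibre at $k$, and Coleman--Buzzard Riesz theory to spread that factorisation, and hence the term-by-term splitting of the complex, over a sufficiently small disc. The one imprecision worth noting is that $D^\circ_{U,0}(T_0)[1/p]$, being the \emph{dual} of an orthonormalizable module, is not itself literally orthonormalizable over $B_U$; one needs Buzzard's property (Pr), or the profinite formalism via the filtration $\Fil^n$ and the $m_U$-adic topology that Andreatta--Iovita--Stevens actually use, but the Riesz theory goes through unchanged in either setting, so this does not affect the argument.
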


  The same results hold \emph{mutatis mutandis} for $M = M_{U, 0}(\sH_0')$, using the Hecke operator $U_p'$ in place of $U_p$; this follows directly from the previous statement using the isomorphism between the two modules provided by the Atkin--Lehner involution. There are also corresponding statements for compactly-supported and parabolic cohomology.
 
 \subsection{Coleman families}
 
  A considerably finer statement is possible if we restrict to a ``neighbourhood'' of a classical modular form. We make the following definition:
 
  \begin{definition}
   Let $U \subseteq \cW$ be an open disc such that the classical weights $U \cap \ZZ_{\ge 0}$ are dense in $U$. A \emph{Coleman family} $\cF$ over $U$ (of tame level $N$) is a power series
   \[ \cF = \sum_{n \ge 1} a_n(\cF) q^n \in \Lambda_U[[q]], \]
   with $a_1(\cF) = 1$ and $a_p(\cF)$ invertible in $B_U$, such that for all but finitely many classical weights $k \in U \cap \ZZ_{\ge 0}$, the series $\cF_k = \sum_{n \ge 1} a_n(\cF)(k) \in \cO_E[[q]]$ is the $q$-expansion of a classical modular form of weight $k + 2$ and level $\Gamma_1(N) \cap \Gamma_0(p)$ which is a normalised eigenform for the Hecke operators.
  \end{definition}
 
  \begin{remark}
   This definition is somewhat crude, since for a more satisfying theory one should also consider more general classical weights of the form $z \mapsto z^k \chi(z)$ for $\chi$ of finite order, and allow families indexed by a finite flat rigid-analytic cover of $U$ rather than by $U$ itself. This leads to the construction of the eigencurve. However, the above definition will suffice for our purposes, since we are only interested in small neighbourhoods in the eigencurve around a classical point.
  \end{remark}
  
  \begin{definition}
   \label{def:noble}
   A \emph{noble eigenform} of tame level $N$ is a normalised cuspidal Hecke eigenform $f_{\alpha}$ of level $\Gamma_1(N) \cap \Gamma_0(p)$ and some weight $k + 2 \ge 2$, with coefficients in $E$, having $U_p$-eigenvalue $\alpha = a_p(f_\alpha)$, such that:
   \begin{itemize}
    \item $f_\alpha$ is a $p$-stabilisation of a newform $f$ of level $N$ whose Hecke polynomial $X^2 - a_p(f) X + p^{k+1} \varepsilon_f(p)$ has distinct roots (``$p$-regularity'');
    \item if $v_p(\alpha) = k + 1$, then the Galois representation $M_E(f) |_{G_{\Qp}}$ is not a direct sum of two characters (``non-criticality'').
   \end{itemize}
  \end{definition}
  
  \begin{theorem}
   \label{thm:colemanfamily}
   Suppose $f_\alpha$ is a noble eigenform of weight $k_0 + 2$. Then there exists a disc $U \ni k_0$ in $\cW$, and a unique Coleman family $\cF$ over $U$, such that $\cF_{k_0} = f_\alpha$. 
  \end{theorem}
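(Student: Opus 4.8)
The plan is to recognise $f_\alpha$ as a classical point of the cuspidal eigencurve of tame level $N$ built from the overconvergent \'etale cohomology modules introduced above, and to show that this eigencurve is \'etale — indeed locally isomorphic — to weight space at that point; the Coleman family is then obtained simply by inverting this local isomorphism and reading off the $q$-expansion. Throughout, set $\lambda = v_p(\alpha)$; since $\alpha$ is a root of $X^2 - a_p(f)X + p^{k_0+1}\varepsilon_f(p)$ we have $0 \le \lambda \le k_0 + 1$, and the critical-slope case is exactly $\lambda = k_0 + 1$.

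First I would apply the slope-decomposition theorem \cite[Theorem 3.17]{andreattaiovitastevens} to a small enough wide open disc $U \ni k_0$: the parabolic overconvergent cohomology $M_{U,0}(\sH_0)_{\mathrm{par}}$ then has a slope $\le \lambda$ decomposition with finitely generated part $M^{(\le\lambda)}$ over $B_U = \Lambda_U[1/p]$. Let $\mathbf{T}$ be the $B_U$-subalgebra of $\operatorname{End}_{B_U}(M^{(\le\lambda)})$ generated by the Hecke operators $T_\ell$ ($\ell \nmid Np$) and $U_q$ ($q \mid Np$), and $\cX = \operatorname{Sp}\mathbf{T}$, a rigid space finite over $U$; this is the slice of the cuspidal eigencurve over $U$ of slope $\le \lambda$. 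The Hecke eigensystem of $f_\alpha$ gives an $E$-point $x_0$ of $\cX$ over $k_0 \in U$: by the compatibility of overconvergent cohomology with specialisation and the control theorem recorded above, $M^{(\le\lambda)}/\varpi_{k_0}$ is the slope $\le\lambda$ part of $H^1_{\mathrm{par}}(\overline{Y}, \TSym^{k_0}\sH(-k_0))[1/p]$ — here one uses $\lambda < k_0 + 1$, or in the critical-slope case the non-criticality clause in the definition of noble together with Bella\"iche's refinement of the control theorem \cite[Theorem 3.10]{Bellaiche-critical} — and $f_\alpha$ contributes a non-zero class there by Eichler--Shimura.

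The heart of the argument is to show that the completed local ring $T_{x_0}$ of $\cX$ at $x_0$ is isomorphic to $\widehat{\cO}_{U, k_0} \cong E[[u]]$ via the weight map. The module $(M^{(\le\lambda)})_{x_0}$ is a faithful finite $T_{x_0}$-module, and by the previous paragraph its reduction modulo $\mathfrak{m}_{k_0}$ is the generalised $f_\alpha$-eigenspace in classical weight-$(k_0+2)$ parabolic cohomology; by strong multiplicity one together with $p$-regularity — the oldspace of $f$ at level $Np$ being two-dimensional with $U_p$ acting through a companion matrix having the two \emph{distinct} eigenvalues $\alpha, \beta$ — this eigenspace is one-dimensional over $E$. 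Hence $(M^{(\le\lambda)})_{x_0}$ is cyclic over $T_{x_0}$, so $(M^{(\le\lambda)})_{x_0} \cong T_{x_0}$ and $T_{x_0}/\mathfrak{m}_{k_0}T_{x_0} \cong E$; by the complete Nakayama lemma $\widehat{\cO}_{U,k_0} \to T_{x_0}$ is surjective. Since the eigencurve is equidimensional of dimension one with all components dominating weight space — a standard feature of eigenvariety constructions, cf.\ \cite{buzzard-eigenvarieties} — $x_0$ lies on no zero-dimensional component, so $\dim T_{x_0} = 1$; a surjection from the regular one-dimensional domain $E[[u]]$ onto a one-dimensional ring is injective, giving the desired isomorphism. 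Thus $\cX \to \cW$ is a local isomorphism at $x_0$.

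Finally, shrinking $U$, there is an admissible open $\cX^\circ \ni x_0$ with $\cX^\circ \to U$ an isomorphism; composing its inverse with the tautological Hecke characters yields $a_n \in \Lambda_U$ with $a_1 = 1$ and $a_p$ invertible in $B_U$, and $\cF := \sum_{n \ge 1} a_n q^n$ specialises, at every classical weight $k \in U \cap \ZZ_{\ge 0}$ apart from the at most one with $v_p(a_p(k)) = k+1$, to a classical normalised eigenform of weight $k+2$ and level $\Gamma_1(N) \cap \Gamma_0(p)$ by the control theorem; as classical weights are dense in the small disc $U$, this exhibits the Coleman family with $\cF_{k_0} = f_\alpha$. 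Uniqueness follows because any Coleman family through $f_\alpha$ defines an analytic section of $\cX \to U$ hitting $x_0$, of which there is only one near $k_0$ as the weight map is a local isomorphism there, and $\Lambda_U$ is a domain. The one genuinely non-formal ingredient — and the only place where both clauses of ``noble'' enter — is the one-dimensionality of the $f_\alpha$-eigenspace in the critical-slope case $v_p(\alpha) = k_0 + 1$, where the control theorem alone fails and one must invoke the non-criticality hypothesis and \cite[Theorem 3.10]{Bellaiche-critical}; the rest is an exercise in assembling the slope-decomposition and control theorems already established, plus the standard equidimensionality of the eigencurve.
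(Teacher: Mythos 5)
The paper's own ``proof'' is a one-line citation: the eigencurve is \'etale over $\cW$ at a noble point, by Bella\"iche \cite{Bellaiche-critical}. What you have written is essentially a reconstruction of Bella\"iche's argument, so the route is the same; the question is whether your reconstruction is sound.

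There is one concrete error. You claim that, by strong multiplicity one and $p$-regularity, the generalised $f_\alpha$-eigenspace in the classical fibre $M^{(\le\lambda)}/\varpi_{k_0}$ of parabolic cohomology is one-dimensional, and you then deduce cyclicity $(M^{(\le\lambda)})_{x_0}\cong T_{x_0}$. But the module you chose, $M_{U,0}(\sH_0)_{\mathrm{par}}$, is (as a Hecke module) the full parabolic group cohomology $H^1_{\mathrm{par}}(\Gamma_1(N)\cap\Gamma_0(p),\cD^\circ_{U,0}(T_0))$, whose classical specialisation carries the entire two-dimensional Galois representation $M_E(f_\alpha)$ --- equivalently, both eigenspaces of complex conjugation. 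Multiplicity one plus $p$-regularity gives that this generalised eigenspace is \emph{two}-dimensional with the Hecke algebra acting semisimply through scalars; it is never cyclic over $T_{x_0}$, and indeed the paper's Theorem \ref{thm:colemanrep} records that the $\cF$-eigenspace is free of rank $2$, not $1$. The fix is standard and is what Bella\"iche actually does: either run the argument on a fixed $\pm$-eigenspace of the involution $\stbt{-1}{0}{0}{1}$ (overconvergent modular symbols of a fixed sign), where the classical eigenspace genuinely is one-dimensional and your cyclicity/Nakayama argument goes through verbatim; or keep the full module and observe that scalar semisimple action on the two-dimensional fibre still forces $T_{x_0}/\mathfrak{m}_{k_0}T_{x_0}\cong E$, then apply Nakayama and your equidimensionality argument to get $T_{x_0}\cong E[[u]]$, with the module then free of rank $2$ rather than cyclic. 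With that repair the non-critical case ($v_p(\alpha)<k_0+1$) is complete.

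In the critical-slope case $v_p(\alpha)=k_0+1$ you correctly identify that the control theorem fails and that the non-splitting hypothesis in Definition \ref{def:noble} must be invoked, but you do not actually bridge the gap: showing that non-splitting of $M_E(f)|_{G_{\Qp}}$ forces the overconvergent generalised eigenspace to coincide with the classical one (rather than being strictly larger, as happens for critical CM points) is the substantive Galois-theoretic input of \cite{Bellaiche-critical}, not a refinement of the control theorem, and \cite[Theorem 3.10]{Bellaiche-critical} (the specialisation sequence for $H^1_c$) is not the right citation for it. Since the paper itself defers wholesale to Bella\"iche at exactly this point, this is an acceptable black box, but it should be flagged as such rather than presented as following from the tools already established.
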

  
  \begin{proof}
   This follows from the fact that the Coleman--Mazur--Buzzard eigencurve $\mathscr{C}(N)$ of tame level $N$ is \'etale over $\cW$ (and, in particular, smooth) at the point corresponding to a noble eigenform $f_\alpha$. See \cite{Bellaiche-critical}.
  \end{proof}
  
  \begin{remark}
   As remarked in \cite{Hansen-Iwasawa}, the condition that the Hecke polynomial of $f$ has distinct roots is conjectured to be redundant, and known to be so when $f$ has weight 2; and it is also conjectured that the only newforms $f$ of weight $\ge 2$ such that $M_E(f)|_{G_{\Qp}}$ splits as a direct sum are those which are of CM type with $p$ split in the CM field.
  \end{remark}
  
  \begin{theorem}
   \label{thm:colemanrep}
   Let $f_\alpha$ be a noble eigenform, and $\cF$ the Coleman family passing through $f_\alpha$. If the disc $U \ni k_0$ is sufficiently small, then:
   \begin{itemize}
    \item The module
    \[ 
     M_U(\cF) \coloneqq M_{U, 0}(\sH_0)
     \Big[T_n = a_n(\cF)\ \forall n \ge 1\Big] 
    \]
    is a direct summand of $M_{U, 0}(\sH_0)$ as a $B_U$-module, free of rank 2 over $B_U$, and lifts canonically to $M_{U, 0}(\sH_0)_c$.
    \item The same is true of the module
    \[ M_U(\cF)^* \coloneqq M_{U, 0}(\sH_0')
       \Big[T_n' = a_n(\cF)\ \forall n \ge 1\Big].
    \]
    \item The pairing $\{-, -\}$ induces an isomorphism of $B_U[G_{\Qp}]$-modules
    \[ M_U(\cF)^* \cong \Hom_{B_U}(M_U(\cF), B_U). \]
    \item For each $k \ge 0 \in U$, the form $\cF_k$ is a classical eigenform, and we have isomorphisms of $E$-linear $G_{\Qp}$-representations
    \[ 
     M_U(\cF) / \varpi_k M_U(\cF) = M_E(\cF_k) \quad\text{and}\quad
     M_U(\cF)^* / \varpi_k M_U(\cF)^* = M_E(\cF_k)^*.
    \]
   \end{itemize}
  \end{theorem}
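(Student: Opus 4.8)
The plan is to realise $M_U(\cF)$ as the summand of a small-slope part of overconvergent \'etale cohomology cut out by a Hecke idempotent, using the \'etaleness of the eigencurve at a noble point, and then to verify the four assertions in turn, shrinking $U$ around $k_0$ whenever necessary.

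\emph{Construction and part (1).} Write $\alpha = a_p(f_\alpha)$. Since $a_p(\cF) \in B_U^\times$, the valuation $v_p(a_p(\cF)(k))$ is constant on $U$, equal to $v_p(\alpha)$, so the $\cF$-eigensystem occurs in slope $\le \lambda$ for any $\lambda \ge v_p(\alpha)$; in the generic case $v_p(\alpha) < k_0+1$ we take $\lambda = v_p(\alpha)$, and the critical-slope case $v_p(\alpha) = k_0+1$ is handled using the non-criticality clause of nobility together with the refined control and slope theory of Bella\"iche \cite{Bellaiche-critical}. By the slope-decomposition theorem \cite[Theorem 3.17]{andreattaiovitastevens} (and its analogues for compactly-supported and parabolic cohomology), after shrinking $U$ the modules $M_{U,0}(\sH_0)_?$, $? \in \{\varnothing, c, \mathrm{par}\}$, admit slope $\le \lambda$ decompositions with $M_{U,0}(\sH_0)^{(\le\lambda)}_?$ finitely generated over $B_U$. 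By Theorem \ref{thm:colemanfamily}, $\sC(N)$ is \'etale over $\cW$ at $\cF_{k_0} = f_\alpha$; hence, shrinking $U$ again, the connected component through $f_\alpha$ of the Hecke algebra $\TT_U \subseteq \operatorname{End}_{B_U}(M_{U,0}(\sH_0)^{(\le\lambda)}_{\mathrm{par}})$ is isomorphic to $B_U$, so there is an idempotent $e_\cF \in \TT_U$ with $e_\cF\TT_U \cong B_U$ via $T_n \mapsto a_n(\cF)$. Applying $e_\cF$ to the slope $\le\lambda$ summand of $M_{U,0}(\sH_0)$ yields $M_U(\cF) = M_{U,0}(\sH_0)[T_n = a_n(\cF)\ \forall n]$ as a $B_U$-module direct summand of $M_{U,0}(\sH_0)$; it is finitely generated and projective, hence free, since $B_U \cong \cO_E[[u]][1/p]$ is a principal ideal domain. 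Its rank is $2$: for all but finitely many classical integer $k \in U$ the control theorem identifies $M_U(\cF)/\varpi_k$ with the $\cF_k$-eigenspace of $H^1_{\et}(\overline{Y}, \TSym^k \sH(-k))[1/p]$, which is the $2$-dimensional representation $M_E(\cF_k)$ of the classical eigenform $\cF_k$. Finally, as $\cF_k$ is cuspidal for infinitely many $k$, the $\cF$-eigensystem does not meet the (Eisenstein) kernel or cokernel of $M_{U,0}(\sH_0)_c \to M_{U,0}(\sH_0)$, so $e_\cF$ annihilates both and $e_\cF M_{U,0}(\sH_0)_c \cong M_U(\cF)$, giving the canonical lift.

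\emph{Parts (2) and (4).} Part (2) is formal from (1): the Atkin--Lehner isomorphism $W\colon M_{U,0}(\sH_0)_? \to M_{U,0}(\sH_0')_?$ intertwines $T_n$ with $T_n'$ (including $n = p$), hence carries $M_U(\cF)$ onto $M_U(\cF)^* = M_{U,0}(\sH_0')[T_n' = a_n(\cF)\ \forall n]$ as $B_U$-modules; all the assertions of (2) concern only the $B_U$-module structure, and so transfer. For part (4), Coleman's classicality criterion shows $\cF_k$ is a classical eigenform for every integer $k \ge 0$ in $U$: for $k \ne k_0$ because $v_p(a_p(\cF_k)) = v_p(\alpha) < k+1$ once $U$ is small enough that $k_0$ is the only non-negative integer weight in $U$ which is $\le k_0$, and for $k = k_0$ by hypothesis. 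The isomorphism $M_U(\cF)/\varpi_k M_U(\cF) \cong M_E(\cF_k)$ then follows by combining the facts that $e_\cF$ commutes with $-\otimes_{B_U} B_U/\varpi_k$, that $M_{U,0}(\sH_0)/\varpi_k \cong M_{k,0}(\sH_0)$, and that the control theorem identifies the $\cF_k$-part of $M_{k,0}(\sH_0)$ with $M_E(\cF_k)$ (up to the twist implicit in the definition of $M_U(\cF)$); the dual statement is proved identically with $\sH_0'$ in place of $\sH_0$.

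\emph{Part (3): the main obstacle.} It remains to show that $\{-,-\}$ induces an isomorphism $M_U(\cF)^* \cong \operatorname{Hom}_{B_U}(M_U(\cF), B_U)$ of $B_U[G_{\Qp}]$-modules. Galois-equivariance and $B_U$-bilinearity are inherited from the pairing $M_{U,0}(\sH_0)_c \times M_{U,0}(\sH_0') \to B_U$ recalled above, via the canonical lift of $M_U(\cF)$ to $M_{U,0}(\sH_0)_c$ supplied by (1); the content is perfectness. Fix $B_U$-bases of the free rank-$2$ modules $M_U(\cF)$ and $M_U(\cF)^*$ and let $\delta \in B_U$ be the determinant of the resulting Gram matrix; perfectness is equivalent to $\delta \in B_U^\times$, i.e.\ to $\delta$ having no zero on the disc $U$. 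The compatibility $\operatorname{ev}_k(\{x,x'\}) = \{\rho_k(x), \rho_k(x')\}_k$ with Poincar\'e duality, together with part (4), shows that for every integer $k \ge 0$ in $U$ the value $\delta(k)$ is, up to a unit, the discriminant of the \emph{perfect} Poincar\'e pairing $M_E(\cF_k) \times M_E(\cF_k)^* \to E$, hence nonzero --- in particular at $k = k_0$, where the non-criticality clause guarantees $M_U(\cF)/\varpi_{k_0} \cong M_E(f_\alpha)$ even in the critical-slope case. Thus $\delta$ is a nonzero rigid function on $U$ whose zero locus is finite and avoids $k_0$, and shrinking $U$ around $k_0$ once more removes these zeros; then $\delta \in B_U^\times$ and the pairing is perfect. (Every earlier construction --- the slope decompositions, the idempotent $e_\cF$, the module $M_U(\cF)$ with its freeness, and its specialisations --- is compatible with restriction to a smaller disc by Lemma \ref{lemma:basechange1} and the \'etaleness of $\sC(N)$, so this final shrinking is harmless.) The genuine difficulty, and the reason $U$ must be taken sufficiently small, lies precisely here: the pairing $\{-,-\}$ has to be controlled over the whole family rather than fibrewise, and a priori a smaller disc is needed to force its discriminant to be a unit.
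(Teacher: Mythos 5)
Your argument is correct and follows essentially the same route as the paper, whose own proof disposes of all four assertions in two sentences by invoking the \'etaleness of the eigencurve over weight space at noble points (so the relevant coherent sheaves are locally free of rank $2$ and $H^1_c \to H^1$ is an isomorphism near $f_\alpha$) and citing \cite[Proposition 2.3.5]{Hansen-Iwasawa} for details. Your expansion --- the idempotent cut out by \'etaleness, freeness over the principal ideal domain $B_U$, the Eisenstein-support argument for the lift to compactly supported cohomology, Atkin--Lehner for part (2), and the Gram-determinant argument (with the non-criticality clause guaranteeing nonvanishing at $k_0$) for perfectness of the pairing after shrinking $U$ --- is precisely the detail that reference supplies.
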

  
  \begin{proof}
   The finite-slope parts of all the various overconvergent cohomology groups can be glued into coherent sheaves on the eigencurve $\mathscr{C}(N)$. In a neighbourhood of a noble point, the eigencurve is \'etale over weight space and these sheaves are all locally free of rank 2; and the map from $H^1_c$ to $H^1$ is an isomorphism at the noble point, so it must be an isomorphism on some neighbourhood of it. See \cite[Proposition 2.3.5]{Hansen-Iwasawa} for further details.
  \end{proof}
  
 \subsection{Weight one forms}
 
  If $f$ is a cuspidal newform of level $N$ and weight 1, and $f_\alpha$ is a $p$-stabilisation of $f$, then it is \emph{always} the case that $v_p(\alpha) = k_0 + 1 = 0$ and $M_E(f)|_{G_{\Qp}}$ splits as a direct sum (since $M_E(f)$ is an Artin representation). Nonetheless, analogues of Theorem \ref{thm:colemanfamily} and Theorem \ref{thm:colemanrep} do hold for these forms.
  
  \begin{notation}
   We say that $f$ has \emph{real multiplication} by a real quadratic field $K$ if there is a Hecke character $\psi$ of $K$ such that $M_E(f) \cong \operatorname{Ind}_{G_K}^{G_\QQ}(\psi)$.
  \end{notation}
  
  \begin{theorem}
   \label{thm:colemanrepwt1}
   Let $f_\alpha$ be a $p$-stabilisation of a $p$-regular weight 1 eigenform.
   \begin{enumerate}
    \item There is an open disc $U \ni -1$ in $\cW$, a finite flat rigid-analytic covering $\tilde U \rTo^{\kappa} U$ unramified away from $-1$ and totally ramified at $-1$, and a family of eigenforms $\cF \in B_{\tilde U}[[q]]$, whose specialisation at $\kappa^{-1}(-1)$ is $f_\alpha$. We may take $\tilde U = U$ if (and only if) $f$ does not have real multiplication by a quadratic field in which $p$ is split.
    
    \item The module
    \[ M_{\tilde U}(\cF) = \left(\kappa^* M_{U, 0}(\sH_0)\right)\left[ T_n = a_n(\cF) \, \forall n \ge 1\right] \]
    is a direct summand of $\kappa^* M_{U, 0}(\sH_0)$, free of rank 2 as a $B_{\tilde U}$-module, and lifts canonically to $\kappa^* M_{U, 0}(\sH_0)_c$.
    
    \item The same is true of
    \[ M_{\tilde U}(\cF)^* = \left(\kappa^* M_{U, 0}(\sH_0')\right)\left[ T_n' = a_n(\cF) \, \forall n \ge 1\right], \]
    and the pairing $\{-, -\}$ induces an isomorphism $M_{\tilde U}(\cF)^* \cong \Hom_{B_{\tilde U}}(M_{\tilde U}(\cF), B_{\tilde U})$.
   \end{enumerate}
  \end{theorem}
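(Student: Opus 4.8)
The plan is to run the argument of Theorem~\ref{thm:colemanrep} again, replacing the \'etaleness of the eigencurve over weight space --- which cannot hold here, since a $p$-regular weight one eigenform is never non-critical in the sense of Definition~\ref{def:noble} --- by the smoothness of the Coleman--Mazur eigencurve $\sC(N)$ at $p$-regular classical weight one points, established by Bella\"iche and Dimitrov. For part~(1), I would let $x_0 \in \sC(N)$ be the point attached to $f_\alpha$: by Bella\"iche--Dimitrov, $p$-regularity of $f$ forces $\sC(N)$ to be smooth at $x_0$, and the weight map $\kappa$ is \'etale at $x_0$ unless $f$ has real multiplication by a real quadratic field in which $p$ splits, in which case $\kappa$ is ramified of degree $2$ at $x_0$. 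Taking $\tilde U$ to be a sufficiently small connected affinoid neighbourhood of $x_0$ in $\sC(N)$ and $U = \kappa(\tilde U)$ (a disc around $-1$ after shrinking), the map $\kappa\colon \tilde U \to U$ is finite flat, unramified away from $-1$ and totally ramified at $-1$, with $\tilde U = U$ exactly in the non-split case; and $\cF \in B_{\tilde U}[[q]]$ is the restriction of the universal $q$-expansion over $\sC(N)$, which specialises to $f_\alpha$ at $\kappa^{-1}(-1)$. Density of classical weights in $U$ is automatic.

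For part~(2), since $v_p(\alpha) = 0$ the modules $M_{U,0}(\sH_0)_?$ for $? \in \{\varnothing, c, \mathrm{par}\}$ admit slope $\le \lambda$ decompositions (for any $\lambda > 0$, over small enough $U$), and their finite-slope parts glue into coherent sheaves on $\sC(N)$. The one genuinely new point is that these sheaves are locally free of rank $2$ in a neighbourhood of $x_0$: Stevens' control theorem no longer applies at weight one, so I would instead deduce this from the Bella\"iche--Dimitrov analysis of overconvergent weight one forms (the generalised eigenspace at a $p$-regular point being one-dimensional), transported to \'etale overconvergent cohomology via an Eichler--Shimura-type comparison; equivalently, from the facts that the fibre at $x_0$ of the parabolic overconvergent cohomology sheaf has dimension exactly $2$ and that $\mathcal{O}_{\sC(N), x_0}$ is regular. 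Granting this, the restriction to $\tilde U$ of $\kappa^* M_{U,0}(\sH_0)$ is, after shrinking, identified with the sections over $\tilde U$ of the rank-$2$ locally free sheaf, i.e.~with the summand cut out by $T_n = a_n(\cF)$; hence $M_{\tilde U}(\cF)$ is free of rank $2$ over $B_{\tilde U}$ and a direct summand of $\kappa^* M_{U,0}(\sH_0)$. As in Theorem~\ref{thm:colemanrep}, the map $H^1_c \to H^1$ is an isomorphism at $x_0$ (the boundary cohomology being Eisenstein, hence not meeting the cuspidal point $x_0$), so it is an isomorphism on a neighbourhood, which supplies the canonical lift to $\kappa^* M_{U,0}(\sH_0)_c$. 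Running the identical argument with $\sH_0'$, $U_p'$ and $T_n'$ in place of $\sH_0$, $U_p$ and $T_n$ yields the corresponding statements for $M_{\tilde U}(\cF)^*$ in part~(3).

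For the duality in part~(3), the isomorphism $H^1_c \cong H^1$ near $x_0$ lets me replace both $M_{\tilde U}(\cF)$ and $M_{\tilde U}(\cF)^*$, after shrinking, by the corresponding parabolic overconvergent cohomology sheaves, on which the pairing $\{-,-\}$ is perfect on small-slope parts. Since $\{-,-\}$ carries the action of $T_n$ on the first factor to that of $T_n'$ on the second (via the involution $W$), the $\cF$-isotypic component for $\{T_n\}$ is paired perfectly with the $\cF$-isotypic component for $\{T_n'\}$, which is by definition $M_{\tilde U}(\cF)^*$; this gives the asserted $B_{\tilde U}$-linear, $G_{\Qp}$-equivariant isomorphism $M_{\tilde U}(\cF)^* \cong \Hom_{B_{\tilde U}}(M_{\tilde U}(\cF), B_{\tilde U})$.

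I expect the main obstacle to be exactly the local freeness of rank $2$ of overconvergent cohomology at the weight one point: without a control theorem one must import the delicate study of the weight one eigencurve and of overconvergent weight one forms. A secondary point needing care is the bookkeeping in the degree-$2$ ramified case, where $\tilde U \ne U$ and one genuinely works over a ramified cover of weight space.
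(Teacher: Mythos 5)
Your parts (1) and (2) follow the paper's proof almost exactly: part (1) is precisely the Bella\"iche--Dimitrov smoothness and \'etaleness-outside-RM statement, and for part (2) the paper likewise obtains freeness of rank $2$ for \emph{compactly supported} overconvergent cohomology from Bella\"iche's general result at smooth points of the eigencurve (\cite[Proposition 4.3]{Bellaiche-critical} -- this is exactly your fallback ``fibre of dimension $2$ over a regular local ring'' argument; your first suggestion, an Eichler--Shimura comparison at weight one, is not what is used and would be much harder to make precise), and then passes to $H^1$ by observing that the kernel and cokernel of $H^1_c \to H^1$ are Eisenstein and so vanish near the cuspidal point $x_0$.

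The one place where your argument is genuinely thinner than the paper's is the duality in part (3). You assert that $\{-,-\}$ is ``perfect on small-slope parts'' of parabolic cohomology, but at a weight-one point this cannot be extracted from the mechanism that proves it at noble points of weight $\ge 2$: there the perfection is deduced via the control theorem, which identifies the small-slope part with classical cohomology and reduces to Poincar\'e duality, and the control theorem fails at weight one (indeed the whole point of Theorem \ref{thm:colemanrepwt1} is that one is at a non-classical-weight situation from the point of view of Stevens' theorem). The paper's proof instead uses that a $p$-stabilised weight-one form is automatically \emph{ordinary}, so that one can invoke Ohta's theorem \cite{Ohta95} that the pairing induces a perfect duality on the ordinary parts of $M_{U,0}(\sH_0)_c$ and $M_{U,0}(\sH_0')$. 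You should replace your appeal to ``perfection on small-slope parts'' by this appeal to the ordinary-part duality; with that substitution your argument matches the paper's.
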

  
  \begin{proof}
   Part (1) is exactly the statement that the eigencurve is smooth at the point corresponding to $f_\alpha$, and is \'etale over weight space except in the real-multiplication setting; see \cite{BellaicheDimitrov}.
   
   Part (2) for compactly supported cohomology is an instance of \cite[Proposition 4.3]{Bellaiche-critical}. However, the kernel and cokernel of the map $ M_{U, 0}(\sH_0)_c \to M_{U, 0}(\sH_0)$ are supported on the Eisenstein component of the eigencurve, and since $f_\alpha$ is a smooth point on the cuspidal eigencurve $\sC^0(N) \subset \sC(N)$, it does not lie on the Eisenstein component. Hence the kernel and cokernel localise to 0 at $f_\alpha$, implying that for small enough $U$ the $\cF$-eigenspaces of $M_U(\sH_0)_c$ and $M_U(\sH_0)$ coincide.
   
   For part (3) we use the fact that the Ohta pairings induce perfect dualities on the ordinary parts of the modules $M_U(\sH_0)_c$ and $M_U(\sH_0')$ (cf.~\cite{Ohta95}).
  \end{proof}
  
  \begin{remark}
   Parts (1) and (2) of Theorem \ref{thm:colemanrepwt1} also hold for non-noble points of weight $\ge 2$ corresponding to the critical $p$-stabilisations of ordinary CM forms, by \cite[Proposition 4.5]{Bellaiche-critical}. However, we do not know if part (3) holds in this situation.
  \end{remark}
  

\section{Rankin--Eisenstein classes in Coleman families}


 \subsection{Coefficient modules}
  \label{sect:CG}

  Let $H$ be a group isomorphic to $\Zp^2$ (but not necessarily canonically so), for $p$ an odd prime. Then we can regard the modules $\TSym^r H$ as representations of $\operatorname{Aut}(H) \approx \operatorname{GL}_2(\mathbf{Z}_p)$. In this section, we shall show that the Clebsch--Gordan decompositions of the groups $\TSym^r H \otimes \TSym^s H$ can themselves be interpolated as $r$ varies (for fixed $s$), after passing to a suitable completion.

  In this section we shall refer to morphisms as \emph{natural} if they are functorial with respect to automorphisms of $H$.

  \begin{proposition}
   \label{prop:SES1}
   For $A$ an open compact subset of $H$ such that $A \cap pH = \varnothing$, and any $r \ge 1$, there is a short exact sequence
   \[ 0 \rTo C(A) \otimes \Sym^{j-1}(H^\vee) \otimes \wedge^2(H^\vee) \rTo^{\alpha} C(A) \otimes \Sym^j H^\vee \rTo^\beta C(A) \rTo 0\]
   where $C(A)$ is the space of continuous $\cO_E$-valued functions on $A$. This short exact sequence is natural, and split (but not naturally split).
  \end{proposition}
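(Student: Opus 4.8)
The plan is to write down $\alpha$ and $\beta$ explicitly in terms of the tautological structures carried by $A\subset H$, verify the two formal properties ($\beta\circ\alpha=0$ and naturality) by inspection, and then establish exactness on fibres, leaving only a short continuity argument to pass from the fibres to $A$. Concretely: let $\iota\colon H\otimes\wedge^2(H^\vee)\to H^\vee$ be the natural contraction $\iota_h(\ell_1\wedge\ell_2)=\ell_1(h)\ell_2-\ell_2(h)\ell_1$, let $\mathrm{mult}\colon H^\vee\otimes\Sym^{j-1}(H^\vee)\to\Sym^j(H^\vee)$ be symmetric multiplication, and let $\mathrm{ev}_h\colon\Sym^j(H^\vee)\to\cO_E$ be evaluation at $h\in H$. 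Using the tautological inclusion $A\hookrightarrow H$, I would set
\[ \alpha(f\otimes\psi\otimes\omega)\colon h\mapsto f(h)\cdot\bigl(\iota_h(\omega)\cdot\psi\bigr),\qquad \beta(f\otimes\phi)\colon h\mapsto f(h)\,\mathrm{ev}_h(\phi).\]
Both maps are $C(A)$-linear and continuous (the fibrewise maps are polynomial in $h$), and both are assembled from $\operatorname{Aut}(H)$-equivariant data, so they are natural in the sense of this section; and since $(\iota_h\omega)(h)=0$ one has $\beta\circ\alpha=0$.

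Next I would check exactness on fibres. Fix $h\in A$; the hypothesis $A\cap pH=\varnothing$ says precisely that $h$ extends to a $\Zp$-basis of $H$, so choosing a dual basis $\ell,\ell'$ with $\ell(h)=1$, $\ell'(h)=0$ one computes directly that
\[ 0\to\Sym^{j-1}(H^\vee)\otimes\wedge^2(H^\vee)\xrightarrow{\ \alpha_h\ }\Sym^j(H^\vee)\xrightarrow{\ \mathrm{ev}_h\ }\cO_E\to 0\]
is short exact and split: the assignment $1\mapsto\ell^j$ splits $\mathrm{ev}_h$, the kernel of $\mathrm{ev}_h$ is $\ell'\cdot\Sym^{j-1}(H^\vee)$, and $\alpha_h$ (with $\wedge^2(H^\vee)=\cO_E(\ell\wedge\ell')$) carries the monomial basis of $\Sym^{j-1}(H^\vee)\otimes\wedge^2(H^\vee)$ bijectively onto that of $\ker(\mathrm{ev}_h)$. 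To globalise, I would decompose $A$ into the finitely many (disjoint, open-and-closed) pieces $A_i=A\cap(h_i+pH)$, one for each nonzero class of $H/pH$. On $A_i$ fix $\ell_i,\ell_i'$ forming a basis of $H^\vee$ with $\ell_i(h_i)\in\Zp^\times$, and replace $\ell_i'$ by the continuously varying family $\tilde\ell_i(h)=\ell_i'-\tfrac{\ell_i'(h)}{\ell_i(h)}\,\ell_i$; this is $\cO_E$-valued, satisfies $\tilde\ell_i(h)(h)=0$, keeps $\ell_i\wedge\tilde\ell_i(h)=\ell_i\wedge\ell_i'$ constant, and makes $\{\ell_i,\tilde\ell_i(h)\}$ a basis of $H^\vee$ for each $h\in A_i$. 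In the resulting continuously varying monomial bases, $\alpha_h$ is ``diagonal'' with every entry equal to the unit $\ell_i(h)$, hence an isomorphism onto $\ker(\mathrm{ev}_h)$ with continuous inverse. One then reads off that $\alpha$ is injective (it is fibrewise injective into a space of functions), that $\beta$ is surjective with continuous section $\sum_i\mathbf{1}_{A_i}\,\ell_i(h)^{-j}\otimes\ell_i^j$, and that $\ker\beta=\operatorname{im}\alpha$ (apply the continuous inverses $\alpha_h^{-1}$ piecewise). The section, together with the identification $\operatorname{im}\alpha\cong C(A)\otimes\Sym^{j-1}(H^\vee)\otimes\wedge^2(H^\vee)$, splits the sequence; since it depends on the choice of the $A_i$ and $\ell_i$, the splitting is not natural.

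The fibrewise basis computations are routine. The one point that I would flag as the main obstacle is producing a \emph{continuous} trivialisation of $\alpha$ over each $A_i$ — that is, the observation above that one can keep the exterior generator $\ell_i\wedge\ell_i'$ constant while letting the complementary functional $\tilde\ell_i(h)$ (the one vanishing at $h$) vary continuously. This is exactly what upgrades the pointwise splitting to a splitting of $C(A)$-modules, and it is here that the hypothesis $A\cap pH=\varnothing$ is used in an essential way.
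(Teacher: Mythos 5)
Your proposal is correct and follows essentially the same route as the paper: the maps $\alpha$ and $\beta$ you write down are exactly the paper's (contraction against $h$ via $\wedge^2 H^\vee\hookrightarrow H^\vee\otimes H^\vee$, and evaluation of polynomials), and exactness is established in both cases by decomposing $A$ into clopen pieces on which a coordinate functional is a unit and exhibiting explicit continuous splittings. The only cosmetic difference is that you phrase the middle-exactness step as fibrewise exactness upgraded by a continuously varying adapted basis $\{\ell_i,\tilde\ell_i(h)\}$, whereas the paper writes the equivalent back-substitution formulas $\gamma,\delta$ directly in a fixed basis and verifies $\alpha\circ\gamma+\delta\circ\beta=\mathrm{id}$.
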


  \begin{proof}
   Let us begin by defining the maps. The map $\beta$, which is the simpler of the two, is given by interpreting $\Sym^j H^\vee$ as a subspace of $C(A)$ (consisting of functions which are the restrictions to $A$ of homogenous polynomial functions on $H$ of degree $j$) and composing with the multiplication map $C(A) \otimes C(A) \to C(A)$.

   The map $\alpha$ is more intricate: it is given by including $\bigwedge^2(H^\vee)$ in $H^\vee \otimes H^\vee$, and grouping the terms as
   \[  (C(A) \otimes H^\vee) \otimes (\Sym^{j-1}(H^\vee) \otimes H^\vee). \]
   As above, we have a canonical multiplication map $C(A) \otimes H^\vee \to H^\vee$, and multiplication in the symmetric algebra $\Sym^\bullet(H^\vee)$ gives a map $\Sym^{j-1}(H^\vee) \otimes H^\vee \to \Sym^j H^\vee$, and this gives the first map in the sequence. The composite $\beta \circ \alpha$ is clearly 0, since it factors through the map $\wedge^2 H^\vee \to \Sym^2 H^\vee$.

   Having defined the maps intrinsically, we may check the exactness of the sequence after fixing a basis of $H$. Let $x$, $y$ be the corresponding coordinate functions, so that $x^j, x^{j-1} y, \dots, y^j$ is a basis of $\Sym^j H^\vee$ and $x \otimes y - y \otimes x$ is a basis of $\wedge^2 H^\vee$. With these identifications we can write the sequence as
   \[ 0 \rTo C(A)^{\oplus j} \rTo C(A)^{\oplus (j+1)} \rTo C(A) \rTo 0\]
   with the maps being $(f_0, \dots, f_{j-1}) \mapsto (-y f_0, x f_0 - y f_1, \dots, x f_{j-1})$ and $(f_0, \dots, f_j) \mapsto x^j f_0 + \dots + y^j f_j$.
   The injectivity of $\alpha$ is now clear, since multiplication by $x$ (or by $y$) is injective in $C(A)$.

   To show that the map $\beta$ is surjective, we write down a (non-canonical) section. We can decompose $A$ as a union $A_1 \sqcup A_2$ where $x$ is invertible on $A_1$ and $y$ is invertible on $A_2$. We define $\delta(f) = (x^{-j} f, 0, \dots, 0)$ on $C(A_1)$ and $\delta(f) = (0, \dots, 0, y^{-j} f)$ on the $C(A_2)$ factor; then $\beta \circ \delta$ is clearly the identity, so $\beta$ is surjective.

   Finally, let $(f_0, \dots, f_j) \in \ker(\beta)$. Choosing $A = A_1 \sqcup A_2$ as before, we may assume either $x$ or $y$ is invertible on $A$. We treat the first case, the second being similar. We define $\gamma(f_1, \dots, f_j) = (g_0, \dots, g_{j-1})$ where $g_{j-1} = x^{-1} f_j$, $g_{j-2} = x^{-2}(x f_{j-1} + y f_j)$ etc, down to $g_0 = x^{-j}(x^{j-1} f_{1} + \dots + y^{j-1} f_j)$. But then $(\alpha \circ \gamma) + (\beta \circ \delta) = \id$, so we have exactness at the middle term.
  \end{proof}

   Now let $C^{\la}(A)$ denote the space of locally analytic $E$-valued functions on $A$; exactly the same argument shows that we have an exact sequence analogous to \eqref{prop:SES1},
   \[  0 \rTo C^{\la}(A) \otimes \Sym^{j-1}(H^\vee) \otimes \wedge^2(H^\vee) \rTo^{\alpha} C^{\la}(A) \otimes \Sym^j H^\vee \rTo^\beta C^{\la}(A) \rTo 0.\]

  \begin{proposition}
   Let $\delta: C^{\la}(A) \to C^{\la}(A) \otimes \Sym^j H^\vee$ be the morphism defined in a basis by
   \begin{equation}
    \label{eq:defdelta}
    \delta(f) = \frac{1}{j!} \sum_{s + t = j} \binom{j}{s} \frac{\partial^j f}{\partial x^s \partial y^t} \otimes x^{s}y^t.
   \end{equation}
   Then $\delta$ is natural, and the composite $\beta \circ \delta$ is the endomorphism of $C^{\la}(A)$ given by $\frac{1}{j!} \prod_{i = 0}^{j-1}(\nabla - i)$, where $\nabla$ is given by
   \[ (\nabla f)(h) = \left.\frac{\mathrm{d}}{\mathrm{d}t} f(t h) \right|_{t = 1}.\]
  \end{proposition}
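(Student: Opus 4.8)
The plan is to identify $\delta$ with the intrinsic ``$j$-th Taylor coefficient'' operation, which makes naturality transparent, and then to evaluate $\beta\circ\delta$ by a short manipulation with the Euler operator $\nabla$. For the first point, note that any $f\in C^{\la}(A)$ has, on a suitable neighbourhood of each $h\in A$, a convergent expansion $f(h+v)=\sum_{k\ge 0}P^{(h)}_k(v)$ in which $P^{(h)}_k\in\Sym^k(H^\vee)$ is homogeneous of degree $k$; equivalently $P^{(h)}_k=\tfrac{1}{k!}\,d^kf(h)$, where $d^kf(h)$ is the $k$-th iterated derivative of $f$ at $h$ regarded as a symmetric form on $H$. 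Writing this out in a basis $x,y$ of $H^\vee$, one finds $P^{(h)}_j=\sum_{s+t=j}\tfrac1{s!\,t!}\bigl(\tfrac{\partial^j f}{\partial x^s\partial y^t}\bigr)(h)\,x^s y^t$, which is exactly the value at $h$ of the right-hand side of \eqref{eq:defdelta}. Hence $\delta(f)$ is the element of $C^{\la}(A)\otimes\Sym^j(H^\vee)$ given by $h\mapsto P^{(h)}_j$; since this description refers to no choice of basis and is plainly compatible with the action of $\operatorname{Aut}(H)$ on $A$, on $C^{\la}(A)$, and on $\Sym^j(H^\vee)$, the map $\delta$ is natural.

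For $\beta\circ\delta$, recall that $\beta$ sends $g\otimes x^s y^t$ to the function $h=(h_1,h_2)\mapsto g(h)\,h_1^s h_2^t$, so that
\[ \beta\circ\delta \;=\; \frac1{j!}\sum_{s+t=j}\binom{j}{s}\,h_1^s h_2^t\,\partial_x^s\partial_y^t \]
as an endomorphism of $C^{\la}(A)$, where $h_1,h_2$ now denote multiplication by the two coordinate functions. The chain rule gives $\nabla=h_1\partial_x+h_2\partial_y$. I would then set $D_1=h_1\partial_x$ and $D_2=h_2\partial_y$, which commute, and use the relation $D_1\cdot(h_1^s\partial_x^s)=h_1^{s+1}\partial_x^{s+1}+s\,h_1^s\partial_x^s$ to deduce, by induction on $s$, the operator identity $h_1^s\partial_x^s=(D_1)_s:=D_1(D_1-1)\cdots(D_1-s+1)$, and likewise $h_2^t\partial_y^t=(D_2)_t$. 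Thus $\beta\circ\delta=\tfrac1{j!}\sum_{s+t=j}\binom{j}{s}(D_1)_s(D_2)_t$, and the Chu--Vandermonde identity for falling factorials, $(X+Y)_j=\sum_{s+t=j}\binom{j}{s}(X)_s(Y)_t$ --- a polynomial identity, hence applicable to the commuting operators $D_1,D_2$ --- yields $\beta\circ\delta=\tfrac1{j!}(D_1+D_2)_j=\tfrac1{j!}(\nabla)_j=\tfrac1{j!}\prod_{i=0}^{j-1}(\nabla-i)$, as required.

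The whole argument is elementary; the only steps needing any thought are the identification of \eqref{eq:defdelta} with the Taylor-coefficient map --- which is what makes naturality a triviality rather than an awkward change-of-variables computation --- and the operator identities $h_1^s\partial_x^s=(D_1)_s$ together with the Chu--Vandermonde convolution, which I regard as the technical heart of the proof.
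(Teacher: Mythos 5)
Your proof is correct and follows essentially the same route as the paper: the naturality is obtained exactly as in the paper by recognising $\delta$ as $\tfrac{1}{j!}$ times the $j$-th total derivative (equivalently, the $j$-th Taylor coefficient), and the identity for $\beta\circ\delta$ is the elementary computation the paper leaves to the reader as "induction on $j$". Your organisation of that computation via the operator identity $h_1^s\partial_x^s=D_1(D_1-1)\cdots(D_1-s+1)$ and the Chu--Vandermonde convolution is a clean and complete way to carry out that induction.
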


  \begin{proof}
   The morphism $\delta$ is simply $\tfrac{1}{j!}$ times the $j$-th power of the total derivative map $C^{\la}(A) \to C^{\la}(A) \otimes \operatorname{Tan}(A)^*$, combined with the identification $\operatorname{Tan}(A) \cong \operatorname{Tan}(H) \cong H$. From this description the naturality is clear, and a computation shows that it agrees with the more concrete description above. The identity for $\beta \circ \delta$ is easily seen by induction on $j$.
  \end{proof}

  It will be convenient to adopt the notation $\binom{\nabla}{j}$ for the endomorphism $\tfrac{1}{j!} \prod_{i=0}^{j-1}(\nabla-i)$. We may regard this as an element of the space $D^{\la}(\Zp^\times)$ of locally analytic distributions on $\Zp^\times$.
  
  \begin{proposition}
   For any $k \ge j$, the restriction of $\delta$ to the space $\Sym^k H^\vee$ of homogenous polynomials of degree $k$ lands in the subspace
   \[ \Sym^{k-j} H^\vee \otimes \Sym^j H^\vee \subset C^{\la}(A) \otimes \Sym^j H^\vee,\]
   and the resulting map $\Sym^k H^\vee \to \Sym^{k-j} H^\vee \otimes \Sym^j H^\vee$ is the dual of the symmetrised tensor product map $\TSym^{k-j} H \otimes \TSym^j H \to \TSym^k H$.

   If $k < j$ then the restriction of $\delta$ to $\Sym^k H^\vee$ is the zero map.
  \end{proposition}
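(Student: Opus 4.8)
The plan is as follows. The first two assertions require essentially no work. Fix a basis $x,y$ of $H^\vee$, so that $\Sym^k H^\vee$ is the space of homogeneous polynomials of degree $k$ on $H$ (restricted to $A$, which is harmless since $A$ contains a $p$-adic ball and a polynomial is determined by its values there). Each operator $\partial^j/(\partial x^s\,\partial y^t)$ with $s+t=j$ carries homogeneous polynomials of degree $k$ to homogeneous polynomials of degree $k-j$, and annihilates them when $k<j$. Hence, by the formula \eqref{eq:defdelta}, the restriction of $\delta$ to $\Sym^k H^\vee$ has image inside $\Sym^{k-j}H^\vee\otimes\Sym^j H^\vee$ when $k\ge j$, and is the zero map when $k<j$. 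Everything then reduces to identifying this map, for $k\ge j$, with the dual $\mu^\vee$ of the symmetrised tensor product $\mu\colon\TSym^{k-j}H\otimes\TSym^j H\to\TSym^k H$.

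To do this I would use the duality pairing. By definition $\mu^\vee$ is the unique $\Lambda$-linear map with $\langle\mu^\vee(P),\,u\otimes v\rangle=\langle P,\,\mu(u\otimes v)\rangle$ for all $P\in\Sym^k H^\vee$, $u\in\TSym^{k-j}H$, $v\in\TSym^j H$, so it suffices to check the same identity with $\delta$ in place of $\mu^\vee$. Since the coefficient field has characteristic $0$, both sides of this identity are determined by their values when $P=\ell^k$ for $\ell\in H^\vee$ a linear form (such powers span $\Sym^k H^\vee$) and $u=v_1^{\otimes(k-j)}$, $v=v_2^{\otimes j}$ for $v_1,v_2\in H$ (such symmetric tensor powers span $\TSym^{k-j}H$ and $\TSym^j H$ respectively, by polarisation). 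Writing $\ell=\alpha x+\beta y$, one computes $\partial^j\ell^k/(\partial x^s\partial y^t)=\tfrac{k!}{(k-j)!}\alpha^s\beta^t\ell^{k-j}$, and then \eqref{eq:defdelta} together with the binomial theorem collapses the sum to
\[ \delta(\ell^k)=\binom{k}{j}\,\ell^{k-j}\otimes\ell^j. \]
Pairing this against $v_1^{\otimes(k-j)}\otimes v_2^{\otimes j}$ and using $\langle w^{\otimes n},\ell^n\rangle=\ell(w)^n$ gives $\binom{k}{j}\,\ell(v_1)^{k-j}\ell(v_2)^j$; on the other side, $\mu\bigl(v_1^{\otimes(k-j)}\otimes v_2^{\otimes j}\bigr)$ is the sum of the $\binom{k}{j}$ distinct rearrangements of $v_1^{\otimes(k-j)}\otimes v_2^{\otimes j}$, so pairing it with $\ell^k=\ell^{\otimes k}$ again yields $\binom{k}{j}\,\ell(v_1)^{k-j}\ell(v_2)^j$. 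Hence $\delta|_{\Sym^k H^\vee}=\mu^\vee$.

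The entire argument thus amounts to two applications of the binomial theorem; naturality is automatic since $\delta$ and $\mu$ are both natural in $H$ (this also fits the naturality statement of Proposition~\ref{prop:SES1}), and the identity $\beta\circ\delta=\binom{\nabla}{j}$ established above, which on $\Sym^k H^\vee$ evaluates to $\binom{k}{j}$, is a convenient consistency check against the displayed formula for $\delta(\ell^k)$. The one point that genuinely needs care — and which I expect to be the only real obstacle — is the normalisation bookkeeping: the precise constants in the symmetrised tensor product $\mu$ and in the $\TSym$–$\Sym$ duality must be taken to be those used elsewhere in the paper (compatibly with $\mom^k$ as in Proposition~\ref{prop:diagram1} and \cite{Kings-Eisenstein}), since a different choice would rescale $\delta(\ell^k)$ and spoil the clean identification. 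Once these divided-power conventions are pinned down, the computation above goes through verbatim.
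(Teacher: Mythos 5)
Your proof is correct and follows essentially the same route as the paper's: a direct computation in coordinates from the defining formula \eqref{eq:defdelta}, with the vanishing for $k<j$ immediate. The only cosmetic difference is that you evaluate $\delta$ on powers of linear forms $\ell^k$ and invoke polarisation to match the dual of the symmetrised tensor product, whereas the paper evaluates on monomials $x^a y^b$ (obtaining $\sum_{s+t=j}\binom{a}{s}\binom{b}{t}\,x^{a-s}y^{b-t}\otimes x^s y^t$) and matches coefficients directly; both computations agree and your normalisation worry is resolved by exactly this coefficient check.
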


  \begin{proof}
   It is obvious that $\Sym^k H^\vee$ embeds naturally into $C^{\la}(A)$, and its image under $\delta$ is contained in $\Sym^{k-j} H^\vee \otimes \Sym^j H^\vee$. A straightforward computation in coordinates shows that this map sends $x^a y^b$ to $\sum_{s + t = j} \binom{a}{s} \binom{b}{t} \left( x^{a-s} y^{b-t} \otimes x^s y^t\right)$, which coincides with the dual of the symmetrised tensor product.

   On the other hand it is obvious from equation \eqref{eq:defdelta} that $\delta$ vanishes on any polynomial of total degree $< j$.
  \end{proof}

  \begin{corollary}
   \label{cor:deltastar}
   There are natural maps
   \[ \delta^*: D^{\la}(A) \otimes \TSym^j(H) \to D^{\la}(A) \]
   and
   \[ \beta^*: D^{\la}(A) \to  D^{\la}(A) \otimes \TSym^j(H), \]
   where $\beta^*$ is given on group elements by $[h] \mapsto [h] \otimes h^{[j]}$, and $\delta^*$ satisfies
   \[ \delta^* \circ \beta^* = \binom{\nabla}{j}.\]
   Moreover, for any $k \ge 0$ we have
   \[
    \mom^k{} \circ \delta^* =
    \begin{cases}
     0 & \text{if $k < j$}, \\
     \mom^{k-j} \cdot 1 & \text{if $k \ge j$},
    \end{cases}
   \]
   where  $\mom^{k-j} \cdot 1$ denotes the composition
   \[ D^{\la}(A) \otimes \TSym^j(H) \rTo^{\mom^{k-j} \otimes 1} \TSym^{k-j} H \otimes \TSym^j H \rTo \TSym^k H\]
   (where the second map is the symmetrized tensor product).
  \end{corollary}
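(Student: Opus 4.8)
The plan is to produce $\delta^*$ and $\beta^*$ by dualising the maps $\delta$ and $\beta$ of the two preceding propositions. Concretely, I would take $\delta^* = \delta^\vee$ and $\beta^* = \beta^\vee$, where $(-)^\vee$ denotes the continuous $E$-linear dual in the $C^{\la}(A)$-variable together with the $\cO_E$-linear dual in the finite free variable, using the canonical identification $(\Sym^j H^\vee)^\vee \cong \TSym^j H$. Since $\delta\colon C^{\la}(A)\to C^{\la}(A)\otimes\Sym^j H^\vee$ and $\beta\colon C^{\la}(A)\otimes\Sym^j H^\vee\to C^{\la}(A)$ are continuous (being built respectively from the total derivative and from multiplication against homogeneous polynomials), their transposes are continuous maps $\delta^*\colon D^{\la}(A)\otimes\TSym^j H\to D^{\la}(A)$ and $\beta^*\colon D^{\la}(A)\to D^{\la}(A)\otimes\TSym^j H$; and naturality (functoriality in $\operatorname{Aut}(H)$) is inherited from the naturality of $\delta$ and $\beta$ established above. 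The description of $\beta^*$ on Dirac distributions then falls out of the definition: for $h\in A$, $f\in C^{\la}(A)$ and $\xi\in\Sym^j H^\vee$ one computes $\langle\beta^*([h]),f\otimes\xi\rangle=\langle[h],\beta(f\otimes\xi)\rangle=f(h)\,\xi(h)=\langle[h]\otimes h^{[j]},f\otimes\xi\rangle$, since $\xi(h)=\langle h^{[j]},\xi\rangle$ by the very definition of the divided power $h^{[j]}$; hence $\beta^*([h])=[h]\otimes h^{[j]}$.

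For the composite, I would simply transpose the identity $\beta\circ\delta=\binom{\nabla}{j}$ of the previous proposition, which gives $\delta^*\circ\beta^*=\bigl(\binom{\nabla}{j}\bigr)^\vee$. It then remains to see that the transpose of the operator $\binom{\nabla}{j}$ on $C^{\la}(A)$ agrees with the operator $\binom{\nabla}{j}$ on $D^{\la}(A)$ coming from the $D^{\la}(\Zp^\times)$-module structure induced by the scaling action of $\Zp^\times$ on $A$; this is exactly the consistency required by the notational convention introduced just before the statement, and it is a direct unwinding of the definitions of $\nabla$ and of the convolution action ($[t]\ast\mu$ is the pushforward of $\mu$ along multiplication by $t$, and $\nabla$ is its infinitesimal generator).

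Finally, for the moment maps I would note that $\mom^k\colon D^{\la}(A)\to\TSym^k H$ is the transpose of the inclusion $\Sym^k H^\vee\hookrightarrow C^{\la}(A)$ as homogeneous polynomials of degree $k$ (because $\langle\mom^k[h],\xi\rangle=\langle h^{[k]},\xi\rangle=\xi(h)$), and likewise $\mom^{k-j}\cdot 1$ is the transpose of the composite of the dual of the symmetrised tensor product $\TSym^{k-j}H\otimes\TSym^j H\to\TSym^k H$ with the inclusion $\Sym^{k-j}H^\vee\hookrightarrow C^{\la}(A)$ in the first factor. Hence $\mom^k\circ\delta^*$ is the transpose of the restriction $\delta|_{\Sym^k H^\vee}$, and the preceding proposition asserts precisely that this restriction vanishes for $k<j$ and equals the dual of $\TSym^{k-j}H\otimes\TSym^j H\to\TSym^k H$ (landing in $\Sym^{k-j}H^\vee\otimes\Sym^j H^\vee$) for $k\ge j$ --- that is, it is the transpose of $\mom^{k-j}\cdot 1$. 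Since the pairings between the $C^{\la}$-spaces and their $D^{\la}$-duals are perfect (reflexivity), equality of transposes yields the two claimed formulas. The argument has no genuine obstacle; the only delicate point is keeping the duality conventions straight ($\TSym$ versus $\Sym$, continuous versus algebraic dual) and, in particular, matching the two incarnations of $\binom{\nabla}{j}$ occurring in the second paragraph.
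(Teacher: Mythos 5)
Your proposal is correct and is exactly the paper's argument: the published proof is the single sentence ``This follows by dualizing the previous proposition,'' and your write-up simply makes explicit the transposition of $\beta$, $\delta$, the identity $\beta\circ\delta=\binom{\nabla}{j}$, and the description of $\delta|_{\Sym^k H^\vee}$, together with the routine duality bookkeeping. No further comment is needed.
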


  \begin{proof}
   This follows by dualizing the previous proposition.
  \end{proof}

  We now consider varying $j$, for which it is convenient to re-label the maps $\beta^*, \delta^*$ above as $\beta_j^*$ and $\delta_j^*$.

  \begin{lemma}
   \label{lemma:composedeltas}
   Let $h \ge j \ge 0$. Then the composition
   \begin{align*}
    D^{\la}(A) \otimes \TSym^j(H) \rTo^{\beta_{h-j}^* \otimes \id} & D^{\la}(A) \otimes \TSym^{h-j} H \otimes \TSym^j H \\
     \rTo & D^{\la}(A) \otimes \TSym^h H\\
     \rTo^{\delta_h^*} & D^{\la}(A),
   \end{align*}
   where the unlabelled arrow is given by the symmetrised tensor product,
   is given by
   \[ \binom{\nabla-j}{h-j}\, \delta_j^*.\]
  \end{lemma}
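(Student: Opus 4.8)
The plan is to dualise, exactly as in the proof of Corollary \ref{cor:deltastar}. Pairing with $C^{\la}(A)$ turns the asserted identity of maps $D^{\la}(A)\otimes\TSym^j H\to D^{\la}(A)$ into an identity of maps $C^{\la}(A)\to C^{\la}(A)\otimes\Sym^j H^\vee$. Concretely: the transpose of $\delta_h^*$ is $\delta_h$ (formula \eqref{eq:defdelta} with $j$ replaced by $h$); the transpose of the symmetrised tensor product $\TSym^{h-j}H\otimes\TSym^j H\to\TSym^h H$ is the comultiplication $c\colon\Sym^h H^\vee\to\Sym^{h-j}H^\vee\otimes\Sym^j H^\vee$, which by the proposition preceding Corollary \ref{cor:deltastar} is, in a fixed basis $x,y$ of $H^\vee$, the map $x^a y^b\mapsto\sum_{s+t=j}\binom as\binom bt\,x^{a-s}y^{b-t}\otimes x^s y^t$; the transpose of $\beta_{h-j}^*$ is the multiplication map $\beta_{h-j}$ of Proposition \ref{prop:SES1}; and $\binom{\nabla-j}{h-j}$ on $D^{\la}(A)$ is, as in Corollary \ref{cor:deltastar}, the transpose of the same operator on $C^{\la}(A)$. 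Thus it suffices to prove
\[
 (\beta_{h-j}\otimes\id)\circ(\id\otimes c)\circ\delta_h \;=\; \delta_j\circ\binom{\nabla-j}{h-j}
\]
as maps $C^{\la}(A)\to C^{\la}(A)\otimes\Sym^j H^\vee$.

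I would prove this by a direct computation in the basis $x,y$. Writing out $\delta_h$, applying $c$ to each monomial $x^S y^T$ (with $S+T=h$), and then multiplying the resulting degree-$(h-j)$ polynomials back into $C^{\la}(A)$, one finds that the $x^s y^t$-component (for $s+t=j$) of the left-hand side applied to $f$ is
\[
 \tfrac1{h!}\sum_{\substack{S+T=h\\ S\ge s,\ T\ge t}}\binom hS\binom Ss\binom Tt\,x^{S-s}y^{T-t}\,\tfrac{\partial^h f}{\partial x^S\partial y^T}
 \;=\;\tfrac1{s!\,t!}\sum_{a+b=h-j}\tfrac1{a!\,b!}\,x^a y^b\,\tfrac{\partial^{h-j}}{\partial x^a\partial y^b}\!\left(\tfrac{\partial^j f}{\partial x^s\partial y^t}\right),
\]
the equality being the substitution $S=s+a$, $T=t+b$ together with $\binom hS\binom Ss\binom Tt=h!/(s!\,a!\,t!\,b!)$. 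By the elementary identity $\binom\nabla n=\sum_{a+b=n}\tfrac1{a!b!}x^a y^b\,\partial_x^a\partial_y^b$ for the Euler operator $\nabla=x\partial_x+y\partial_y$, this equals $\tfrac1{s!t!}\binom\nabla{h-j}\!\big(\tfrac{\partial^j f}{\partial x^s\partial y^t}\big)$; while the $x^s y^t$-component of $\delta_j\big(\binom{\nabla-j}{h-j}f\big)$ is $\tfrac1{s!t!}\,\tfrac{\partial^j}{\partial x^s\partial y^t}\binom{\nabla-j}{h-j}f$. Hence the two sides coincide once one knows $\binom\nabla{h-j}\circ\tfrac{\partial^j}{\partial x^s\partial y^t}=\tfrac{\partial^j}{\partial x^s\partial y^t}\circ\binom{\nabla-j}{h-j}$ for $s+t=j$, which follows by applying the polynomial $z\mapsto\binom z{h-j}$ to the commutation relation $\nabla\circ\tfrac{\partial^j}{\partial x^s\partial y^t}=\tfrac{\partial^j}{\partial x^s\partial y^t}\circ(\nabla-j)$, itself an immediate consequence of $[\partial_x,\nabla]=\partial_x$ and $[\partial_y,\nabla]=\partial_y$.

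There is no serious obstacle: the argument is essentially bookkeeping, and the only points needing care are getting the transposes right in the dualisation and correctly collecting the binomial coefficients. As a sanity check on the shape of the answer — in particular the appearance of $\binom{\nabla-j}{h-j}$ — one can precompose both sides of the lemma with $\beta_j^*$: the left side becomes $\binom hj\,\delta_h^*\circ\beta_h^*=\binom hj\binom\nabla h$ (using $h^{[h-j]}\cdot h^{[j]}=\binom hj\,h^{[h]}$ in $\TSym^\bullet H$ together with Corollary \ref{cor:deltastar}), the right side becomes $\binom{\nabla-j}{h-j}\,\delta_j^*\circ\beta_j^*=\binom{\nabla-j}{h-j}\binom\nabla j$, and these agree by the ``subset of a subset'' identity $\binom zh\binom hj=\binom zj\binom{z-j}{h-j}$.
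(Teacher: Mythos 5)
Your proof is correct, and it is exactly the ``explicit computation'' that the paper's one-line proof alludes to: dualising to the function side, expanding $\delta_h$, the comultiplication and the multiplication map in the basis $x,y$, and collecting binomial coefficients via the identity $\binom{\nabla}{n}=\sum_{a+b=n}\tfrac{1}{a!b!}x^ay^b\partial_x^a\partial_y^b$ together with the commutation relation $\nabla\circ\partial_x^s\partial_y^t=\partial_x^s\partial_y^t\circ(\nabla-j)$. The transposes are identified correctly and the final consistency check against $\delta_j^*\circ\beta_j^*=\binom{\nabla}{j}$ is a nice confirmation, so there is nothing to add.
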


  \begin{proof}
   Explicit computation.
  \end{proof}
  
 \subsection{Nearly-overconvergent \'etale cohomology}

  We also have an analogue of the Clebsch--Gordan map for the distribution spaces $D^\circ_{U, m}(T_0')$ introduced above, which are completions of $D^{\la}(T_0')$. The rigid space $\cW$ has a group structure, so we can make sense of $U - j$ for any integer $j$.

  \begin{proposition}
   \label{prop:AISCG}
   There are natural maps
   \[ \beta^*_j: D^\circ_{U, m}(T_0') \to D^\circ_{U - j, m}(T_0') \otimes \TSym^j H
   \]
   and
   \[ \delta^*_j: D^\circ_{U - j, m}(T_0') \otimes \TSym^j H \to D_{U, m}(T_0'), \]
   commuting with the action of $\Sigma_0(p)$, such that $\delta_j^* \circ \beta_j^*$ is multiplication by $\binom{\nabla}{j} \in \Lambda_U[1/p]$.
  \end{proposition}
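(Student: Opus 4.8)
The plan is to descend the locally-analytic maps $\beta_j^*$ and $\delta_j^*$ of Corollary \ref{cor:deltastar} to the completions $D^\circ_{U, m}(T_0')$, exploiting the fact that these completions are $\Lambda_U$-linear and $\Sigma_0(p)$-equivariant quotients (up to inverting $p$) of $D^{\la}(T_0', E)$, compatibly in $U$. First I would recall from the preceding subsection that $D^{\la}(T_0', E) = \varprojlim_{U, m} D_{U, m}(T_0')$ as $\Sigma_0(p)$-modules, and that the homogeneity constraint identifies $D^\circ_{U, m}(T_0')$ with $LA_m(\Zp, \cO_E)^* \htimes_{\cO_E} \Lambda_U$. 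The maps $\beta_j^*$ and $\delta_j^*$ of Corollary \ref{cor:deltastar}, being defined by the total-derivative operator $\delta$ of \eqref{eq:defdelta} and by the assignment $[h] \mapsto [h] \otimes h^{[j]}$, are visibly continuous for the weak topologies and intertwine the weight-$\kappa_U$ homogeneity with weight-$\kappa_{U-j}$ homogeneity on the $\TSym^j H$-twisted side — this is exactly why the disc must shift from $U$ to $U - j$. Concretely, in the coordinate description where $\mu \in D^\circ_{U, m}(T_0')$ is given by moments $\mu_n = \mu(g_n)$ with $g_n((px,y)) = \kappa_U(y)(x/y)^n$, one writes out $\beta_j^*$ and $\delta_j^*$ explicitly on moments and checks that the formulae land in (resp. start from) sequences with $\Lambda_{U-j}$-coefficients decaying $m_{U-j}$-adically; the shift of weight is then transparent from the homogeneity factor $\kappa_{U-j}(y) = \kappa_U(y)\cdot y^{-j}$.

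Next I would verify $\Sigma_0(p)$-equivariance. Since both maps come from $\Sigma_0(p)$-equivariant maps on $D^{\la}$ by Corollary \ref{cor:deltastar}, and since the completion maps $D^{\la}(T_0', E) \to D_{U, m}(T_0')$ are $\Sigma_0(p)$-equivariant by the Proposition preceding the Ohta pairing, the induced maps on the completions are automatically equivariant once I check they are well-defined, i.e.\ that they descend through the defining filtrations $\Fil^n$. This is where the finite-level structure comes in: each $\Fil^n$ is $\Sigma_0(p)$-stable with finite quotient, and the operators $\beta_j^*, \delta_j^*$ are continuous for the $m_U$-adic (resp.\ $m_{U-j}$-adic) topologies — indeed $\delta$ is built from the derivation $\nabla$, which is continuous — so they respect the inverse-limit presentations. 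Passing to the limit over $n$ gives the maps on $D^\circ_{U,m}(T_0')$ and $D^\circ_{U-j,m}(T_0')$ (the latter appearing with a possible $p$ in the denominator on the $\delta^*$ side, accounting for $D_{U,m}$ rather than $D^\circ_{U,m}$ as target, just as in the target $D_{U,m}(T_0')$ stated).

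Finally, the identity $\delta_j^* \circ \beta_j^* = \binom{\nabla}{j}$ is inherited directly from Corollary \ref{cor:deltastar}: it holds in $D^{\la}(T_0', E)$, hence in every quotient $D_{U, m}(T_0')$, and $\binom{\nabla}{j} = \tfrac{1}{j!}\prod_{i=0}^{j-1}(\nabla - i)$ acts on $D^\circ_{U,m}(T_0')$ through its image in $\Lambda_U[1/p]$ — more precisely, on weight-$\kappa_U$ distributions $\nabla$ acts as the scalar $\kappa_U$, i.e.\ as the element $u$-adically interpolating $k \mapsto k$, so $\binom{\nabla}{j}$ is the element of $\Lambda_U[1/p]$ specialising to $\binom{k}{j}$ at each integer weight $k$. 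The main obstacle, I expect, is not any single step but the bookkeeping of the weight shift and the $p$-denominators: one must be careful that $\beta_j^*$ genuinely produces $m_{U-j}$-adically convergent (integral) moments so that it maps into $D^\circ_{U-j,m}(T_0') \otimes \TSym^j H$ and not merely into the $[1/p]$-version, whereas $\delta_j^*$, which divides by $j!$, legitimately only lands in $D_{U,m}(T_0')$. Checking the integrality of $\beta_j^*$ amounts to observing that $[h] \mapsto [h] \otimes h^{[j]}$ involves no denominators, combined with Lemma \ref{lemma:convergencekappa} to control the convergence of the shifted weight character.
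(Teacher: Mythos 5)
Your proposal is correct and follows essentially the same route as the paper: both transport the constructions of \S\ref{sect:CG} to the homogeneous function spaces $A^\circ_{U,m}(T_0')$ (equivalently, descend $\beta_j^*$, $\delta_j^*$ from $D^{\la}(T_0')$ to the completions), use naturality for the $\Sigma_0(p)$-equivariance, and observe that on weight-$\kappa_U$ homogeneous functions the differential operator $\nabla$ coincides with the element $\nabla \in \Lambda_U[1/p]$, so that $\delta_j^*\circ\beta_j^* = \binom{\nabla}{j}$ becomes multiplication by a scalar. Your accounting of the weight shift $U \mapsto U-j$ and of why only $\delta_j^*$ acquires $p$-denominators (the $\tfrac{1}{j!}$ in \eqref{eq:defdelta}) matches the paper's remark following the proposition.
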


  \begin{proof}
   We simply transport the constructions of \S \ref{sect:CG} to the present setting (taking $A = T_0'$). The naturality of these constructions precisely translates into the assertion that the resulting maps commute with the $\Sigma_0(p)$-action. Since the functions in $A_{U, m}$ are homogenous of weight $\kappa_U$ (the canonical character $\Zp^\times \to \Lambda_U^\times$), we have $\frac{\mathrm{d}}{\mathrm{d}t} f(th)|_{t = 1} = \nabla \cdot f(h)$ for all $f \in A_{U, m}$, where on the right-hand side $\nabla$ is regarded as an element of $\Lambda_U[1/p]$; that is, the two actions of $\nabla$ on $A_{U, m}$, as a differential operator and as an element of the coefficient ring, coincide.
  \end{proof}

  \begin{remark}
   Note that $\delta^*_j$ takes values in $D_{U, m} = D^\circ_{U, m}[1/p]$, not in $D^\circ_{U, m}$ itself; the denominator arises from the fact that the map $\delta_j$ on $A^\circ_{U, m}$ does not preserve the $\Lambda_U$-lattice $A^\circ_{U, m}$, but rather maps $A^\circ_{U, m}$ to $\frac{1}{j! p^{1+m}} A^\circ_{U, m}$. Note also that if $U \subset \cW_0$ and $U$ contains none of the integers $\{0, \dots, j-1\}$, then $\binom{\nabla}{j}$ is invertible in $\Lambda_U[1/p]$.
  \end{remark}

  The maps of spaces $\beta_j^*$ and $\delta_j^*$ induce maps of \'etale sheaves on $Y = Y_1(N(p))$ (for any $N$), $\cD^{\circ}_{U, m}(\sH_0') \to \cD^\circ_{U-j, m}(\sH_0') \otimes \TSym^j \sH$ and $\cD^\circ_{U-j, m}(\sH_0') \otimes \TSym^j \sH \to \cD_{U, m}(\sH_0')$, which we denote by the same symbols.

  \begin{definition}
   We shall refer to the cohomology groups $H^*_{\et}(\overline{Y}, \cD_{U-j, m}(\sH_0') \otimes \TSym^j \sH)$ as \emph{nearly-overconvergent \'etale cohomology}, and the map
   \[ \delta_j^*: H^*_{\et}(\overline{Y}, \cD_{U-j, m}(\sH_0') \otimes \TSym^j \sH) \to H^*_{\et}(\overline{Y}, \cD_{U, m}(\sH_0')) \]
   as the \emph{overconvergent projector}.
  \end{definition}

  \begin{remark}
   The motivation for this terminology is that the sheaves $\cD_{U-j, m}(\sH_0') \otimes \TSym^j \sH$, and the maps $\beta_j^*$ and $\delta_j^*$ relating them to the overconvergent cohomology sheaves $\cD_{U, m}(\sH_0')$, are an \'etale analogue of the coherent sheaves appearing in the theory of nearly-overconvergent $p$-adic modular forms (see \cite{Urban-nearly-overconvergent}).
  \end{remark}
  
  Recall from Corollary \ref{cor:deltastar} that the composite of $\delta_j^*$ with the moment map $\rho_k$ is zero if $0 \le k < j$, which is somewhat undesirable. We can rectify this issue as follows. Recall that we have defined $M_U(\sH_0') = H^1_{\et}(\overline{Y}, \cD_{U, m}(\sH_0')(1))$. 
  
  \begin{proposition}
   \label{prop:defprj}
   Let $U$ be an open disc contained in $\cW_0$, and $\cF$ a Coleman family defined over $U$. Suppose the following condition is satisfied: for any integer weight $k \ge 0$ in $U$, the projection map $M_k(\sH'_0) \to M_k(\cF)^*$ factors through $\rho_k$.
   
   Then, for any $j \ge 0$, the composite map
   \[ H^1(\overline{Y}, \cD_{U-j}(\sH_0') \otimes \TSym^j(\sH)(1)) \rTo^{\delta_j^*} M_U(\sH_0') \rTo^{\pr_{\cF}} M_U(\cF)^* \]
   takes values in $\nabla (\nabla - 1) \dots (\nabla - j + 1) M_U(\cF)^*$, and hence the map
   \[ \pr_\cF^{[j]} = \frac{1}{\binom{\nabla}{j}} \pr_{\cF} \circ \mathop{\delta_j^*}: H^1(\overline{Y}, \cD_{U-j}(\sH_0') \otimes \TSym^j(\sH)(1)) \to M_U(\cF)^*\]
   is well-defined.
  \end{proposition}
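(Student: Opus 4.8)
The plan is to prove the sharper assertion that the composite $\Psi := \pr_\cF \circ \delta_j^*$ already takes values in $\binom{\nabla}{j}\,M_U(\cF)^*$; since $j!$ is a unit in $B_U := \Lambda_U[1/p]$, this is the same submodule as $\nabla(\nabla-1)\cdots(\nabla-j+1)\,M_U(\cF)^*$, and the well-definedness of $\pr_\cF^{[j]} = \binom{\nabla}{j}^{-1}\Psi$ will then follow because $M_U(\cF)^*$ is free of rank $2$, hence torsion-free, over the domain $B_U$ by Theorem \ref{thm:colemanrep}, and $\binom{\nabla}{j}$ is a non-zero-divisor in $B_U$. To verify the divisibility I would work point by point on $U$. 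Using $U \subseteq \cW_0$ (which rules out the spurious zeros of $\log$ coming from weights of finite-order characters), each factor $\nabla - i$ with $0 \le i \le j-1$ is, on $U$, either a unit or vanishes to first order at a single classical weight $\kappa_i$ of degree $i$, namely $z \mapsto z^i$ itself when that weight lies in $U$ and otherwise a twist of it by an at most tamely ramified character; the $\kappa_i$ for distinct $i$ are distinct, so $\binom{\nabla}{j}$ is a unit times the product of the distinct maximal ideals $\frm_{\kappa_i}$, and since these are pairwise coprime it suffices to show that for each relevant $i$ and every $x$, the image of $\Psi(x)$ in the fibre $M_U(\cF)^* \otimes_{B_U} B_U/\frm_{\kappa_i}$ vanishes.

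Fix such an $i$, so $i < j$. By Theorem \ref{thm:colemanrep}(4) (in the form that also covers classical weights with tame nebentype) this fibre is $M_E(\cF_{\kappa_i})^*$, and the image of $\Psi(x)$ there equals $\pr_{\cF_{\kappa_i}}\!\left(\delta_j^*(x)|_{\kappa_i}\right)$, where $\pr_{\cF_{\kappa_i}} \colon M_{\kappa_i}(\sH_0') \to M_E(\cF_{\kappa_i})^*$ is the eigenspace projection, precomposed with the specialisation map furnished by the compatibility-with-specialisation statement for overconvergent cohomology. The hypothesis of the proposition is exactly that $\pr_{\cF_{\kappa_i}}$ factors through $\rho_{\kappa_i}$. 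But the composite $\rho_{\kappa_i} \circ (\text{specialisation at } \kappa_i) \circ \delta_j^*$ is the map on $H^1$ induced by the morphism of sheaves $\mom^{i} \circ \delta_j^*$ — this is the $\TSym^j$-twisted version of the diagram of Proposition \ref{prop:diagram2}, with the composite identified by Corollary \ref{cor:deltastar} — and $\mom^{i} \circ \delta_j^* = 0$ precisely because $i < j$. Hence $\Psi(x)$ vanishes in every such fibre, so $\Psi(x) \in \binom{\nabla}{j}\,M_U(\cF)^*$, and dividing by the non-zero-divisor $\binom{\nabla}{j}$ yields the required $B_U$-linear map $\pr_\cF^{[j]}$.

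I expect the main obstacle to be not the vanishing input, which is immediate from Corollary \ref{cor:deltastar} once the definitions are unwound, but the two pieces of structural bookkeeping around it. First, one must pin down the zero divisor of $\binom{\nabla}{j}$ in $B_U$ — this is where $U \subseteq \cW_0$ is essential, in order to know that $\nabla - i$ carries only its expected simple zero, so that $(\nabla - i) = \varpi_i$ (resp. $\frm_{\kappa_i}$) as ideals of $B_U$, and in order to account for the tamely-twisted zeros that appear when $U$ is not taken small. Second, one must check that reduction modulo $\frm_{\kappa_i}$ is compatible with $\delta_j^*$, i.e. that the fibrewise map really is computed by the sheaf-level moment map $\mom^{i} \circ \delta_j^*$ of Corollary \ref{cor:deltastar}. (A minor point, harmless here, is that $\delta_j^*$ carries a denominator $\tfrac{1}{j!\,p^{1+m}}$, which becomes invertible in each fibre.)
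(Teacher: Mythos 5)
Your argument is correct and is essentially the paper's own proof: identify the zeros of $\binom{\nabla}{j}$ on $U \subseteq \cW_0$ as simple zeros at the (classical) weights of degree $0, \dots, j-1$, check the composite vanishes in each fibre using the factoring hypothesis together with $\rho_k \circ \delta_j^* = 0$ for $k < j$, and then divide by the non-zero-divisor $\binom{\nabla}{j}$ using freeness of $M_U(\cF)^*$ over the domain $B_U$. Your extra care about tamely twisted integer weights is a reasonable refinement of a point the paper passes over silently, but it does not change the structure of the argument.
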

  
  \begin{proof}
   Note that $\nabla$, regarded as a rigid-analytic function on $\cW$, takes the value $k$ at an integer weight $k$. So the only points in $\cW_0$ at which $\nabla (\nabla - 1) \dots (\nabla - j + 1)$ fails to be invertible are the positive integers $\{0, \dots, j-1\}$, and it has simple zeroes at all of these points.
   
   If $k$ is one of these integers, then we have $M_U(\sH_0') / (\nabla - k) M_U(\sH_0') = M_k(\sH_0')$. Hence it suffices to show that $\pr_{\cF} \circ \delta_j^*$ is zero on $M_k(\sH_0')$; but this is immediate since the specialisation of $\pr_{\cF}$ at $k$ factors through $\rho_k$, and $\rho_k \circ \delta_j^*$ is zero for $0 \le k < j$. 
   
   This shows that $\pr_\cF \circ \delta_j^*$ lands in the stated submodule. Since $M_U(\cF)^*$ is a free $\Lambda_U[1/p]$-module (and $\Lambda_U[1/p]$ is an integral domain), the map $\pr_\cF^{[j]}$ is therefore well-defined.
  \end{proof}
  
  \begin{remark}
   This proposition can be interpreted as follows: we can renormalise $\delta_j^*$ to be an inverse to $\beta_j^*$, as long as we avoid points on the eigencurve which are non-classical but have classical weights.
  \end{remark}
  
  By construction, the map $\pr^{[j]}_{\cF}$ has the property that the following diagram commutes:
  \begin{diagram}
   H^1(\overline{Y}, \cD_{U}(\sH_0')(1))\\
   \dTo^{\beta_j^*} & \rdTo^{\pr_\cF} \\
   H^1(\overline{Y}, \cD_{U-j}(\sH_0') \otimes \TSym^j(\sH)(1)) & 
   \rTo_{\pr_\cF^{[j]}} & M_U(\cF)^*.
  \end{diagram}
  More generally, if $0 \le j \le h$, then (as in Lemma \ref{lemma:composedeltas}) we can consider $\beta_{h - j}^* \cdot \mathrm{id}$ as a map
  \[ \cD_{U-j}(\sH_0') \otimes \TSym^{[j]}(\sH) \to \cD_{U-h}(\sH_0') \otimes \TSym^{[h]}(\sH), \]
  and from Lemma \ref{lemma:composedeltas} one computes that
  \begin{equation}
   \label{eq:composedeltas}
   \pr_{\cF}^{[h]}\ \circ\ (\beta_{h - j}^* \cdot \mathrm{id}) = \binom{h}{j} \pr_{\cF}^{[j]}.
  \end{equation}


 \subsection{Two-parameter families of Beilinson--Flach elements}

  Let $N_1, N_2$ be integers such that $p \nmid N_i$ and $p N_1, p N_2 \ge 4$. We also choose two wide open discs $U_1$ and $U_2$ in $\cW_0$, and consider the sheaf
  \[
   \mathcal{D}^\circ_{[U_1, U_2]} \coloneqq \mathcal{D}^\circ_{U_1}(\sH_0') \boxtimes \mathcal{D}^\circ_{U_2}(\sH_0')
  \]
  on the affine surface $Y_1(N_1(p)) \times Y_1(N_2(p))$.

  \begin{definition}
   Let $N$ be any integer divisible by $N_1$ and $N_2$ and with the same prime factors as $N_1 N_2$. For any $j \ge 0$ and $m \ge 1$, we define the element
   \[ \cBF^{[U_1, U_2, j]}_{m, N_1, N_2, a} \in H^3_{\et}\left(Y_1(N_1(p)) \times Y_1(N_2(p)) \times \mu_m^\circ, \mathcal{D}^\circ_{[U_1, U_2]}(2-j)\right)[1/p]\]
   as the image of the class
   \[ \cBF^{[j]}_{m, Np, a} \in H^3_{\et}\left(Y_1(Np)^2 \times \mu_m^\circ, (\Lambda(\sH\langle C \rangle) \otimes \TSym^j \sH)^{\boxtimes 2}(2-j)\right),\]
   under pushforward along $Y_1(Np)^2 \to Y_1(N_1(p)) \times Y_1(N_2(p))$, composed with the map induced by the morphisms of sheaves
   \begin{align*}
    \Lambda(\sH\langle C \rangle) \otimes \TSym^j \sH &\rTo \mathcal{D}^\circ_{U_i-j}(\sH) \otimes  \TSym^j \sH\\
    &\rTo^{\delta^*_j} \mathcal{D}^\circ_{U_i}(\sH)[1/p]
   \end{align*}
   for $i = 1, 2$. Here, the first map is given by the natural maps $\Lambda(\sH\langle C \rangle) \to \cD_{U}^\circ$, for $U = U_i - j$, and the second map is the overconvergent projector $\delta^*_j$ of Proposition \ref{prop:AISCG}.
  \end{definition}

  \begin{remark}
   We are using implicitly here the fact that the Beilinson--Flach elements can be lifted canonically to classes with coefficients in the sheaves $\Lambda(\sH_{\Zp}\langle D' \rangle)$. Cf.~Remark \ref{remark:weasel} above.
  \end{remark}

  The Hochschild--Serre spectral sequence and the K\"unneth formula give a canonical surjection
  \[  H^3_{\et}\left(Y_1(N_1(p)) \times Y_1(N_2(p))\times \mu_m^\circ, \mathcal{D}^\circ_{[U_1, U_2]}(2-j)\right)[1/p] \to H^1_{\et}\left(\ZZ[1/mN_1N_2p, \mu_m], M_{U_1} \htimes M_{U_2}(-j)\right), \]
  and we (abusively) denote the image of $\cBF^{[U_1, U_2, j]}_{m, N_1, N_2, a}$ under this map by the same symbol.

  \begin{proposition}
   For any integer weights $k_1 \in U_1$ and $k_2 \in U_2$ with $\min(k_1, k_2) \ge j$, the map
   \[ \rho_{k_1} \boxtimes \rho_{k_2}: \mathcal{D}_{[U_1, U_2]} \to \TSym^{k_1} \sH \otimes \TSym^{k_2} \sH\]
   sends $\cBF^{[U_1, U_2, j]}_{m, N_1, N_2, a}$ to the pushforward of $\cBF^{[k, k', j]}_{m, Np, a}$. On the other hand, if $0 \le k_1 < j$ or $0 \le k_2 < j$, then the image of $\cBF^{[U_1, U_2, j]}_{m, N_1, N_2, a}$ under $\rho_{k_1} \boxtimes \rho_{k_2}$ is zero.
  \end{proposition}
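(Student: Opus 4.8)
The plan is to reduce the assertion, via functoriality of \'etale cohomology, to the compatibility diagrams for the overconvergent projector established in \S\ref{sect:CG} (Corollary \ref{cor:deltastar} and Proposition \ref{prop:AISCG}) together with the specialisation diagrams of Propositions \ref{prop:diagram1} and \ref{prop:diagram2}. The point is that there is no new geometric input: everything is assembled from identities already in hand.

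First I would unwind the definition of $\cBF^{[U_1, U_2, j]}_{m, N_1, N_2, a}$: it is the image of $\cBF^{[j]}_{m, Np, a}$ under the proper pushforward $Y_1(Np)^2 \to Y_1(N_1(p)) \times Y_1(N_2(p))$, followed by the map on cohomology induced by the morphism of coefficient sheaves which, in each of the two tensor factors, is the natural map $\Lambda(\sH\langle C\rangle) \otimes \TSym^j \sH \to \cD^\circ_{U_i - j}(\sH_0') \otimes \TSym^j \sH$ followed by the overconvergent projector $\delta^*_j$. Since $\rho_{k_1} \boxtimes \rho_{k_2}$ is induced by a morphism of coefficient sheaves pulled back from $Y_1(N_1(p)) \times Y_1(N_2(p))$, it commutes with this pushforward; hence it suffices to identify the composite morphism of sheaves on $Y_1(Np)^2$ obtained by applying, in the $i$-th factor, the natural map into $\cD^\circ_{U_i-j}(\sH_0') \otimes \TSym^j \sH$, then $\delta^*_j$, then $\rho_{k_i}$, and to check that this composite equals the morphism used to define $\cBF^{[k_1, k_2, j]}_{m, Np, a}$ out of $\cBF^{[j]}_{m, Np, a}$ when $\min(k_1, k_2) \ge j$, and vanishes when $\min(k_1, k_2) < j$.

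This composite is an external tensor product of two one-variable morphisms, so the computation reduces to a single factor: one must show that
\[ \Lambda(\sH\langle C\rangle) \otimes \TSym^j \sH \;\longrightarrow\; \cD^\circ_{U_i-j}(\sH_0') \otimes \TSym^j \sH \;\xrightarrow{\ \delta^*_j\ }\; \cD_{U_i}(\sH_0') \;\xrightarrow{\ \rho_{k_i}\ }\; \TSym^{k_i} \sH \]
is zero for $k_i < j$ and equals $(\mom^{k_i - j} \cdot 1) \circ ([p]_* \otimes \id)$ for $k_i \ge j$. But this is exactly Corollary \ref{cor:deltastar} --- the identity $\rho_k \circ \delta^*_j = 0$ for $k < j$ and $\rho_k \circ \delta^*_j = \mom^{k-j}\cdot 1$ for $k \ge j$ --- transported from $D^{\la}(T_0') \otimes \TSym^j H$ to the completions $\cD^\circ_{U_i-j, m}(\sH_0')$ by Proposition \ref{prop:AISCG}, with the specialisation map $\rho_{k_i}$ on $\cD^\circ$ matched against $\mom^{k_i} \circ [p]_*$ on $\Lambda$ via the commutative diagrams of Propositions \ref{prop:diagram1} and \ref{prop:diagram2}. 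Feeding this back, $\rho_{k_1} \boxtimes \rho_{k_2}$ carries $\cBF^{[U_1, U_2, j]}_{m, N_1, N_2, a}$ to the pushforward of $\left[(\mom^{k_1-j}\cdot 1) \boxtimes (\mom^{k_2-j}\cdot 1)\right]$ applied to the canonical $\Lambda(\sH\langle C\rangle)^{\boxtimes 2}$-valued lift of $\cBF^{[j]}_{m, Np, a}$ (cf.\ Remark \ref{remark:weasel}), which by definition is $\cBF^{[k_1, k_2, j]}_{m, Np, a}$; and it kills the class whenever one of $k_1, k_2$ is $< j$.

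The only genuine labour is bookkeeping, and that is where I expect any friction to lie. One has to keep track of the Tate twists (the $(2-j)$ and $(1)$ attached to $\cD^\circ_{U,m}(\sH_0')$, $\TSym^k \sH$ and the moment maps) so that $\rho_{k_i}$ is really the specialisation realising $M_{U_i}(\sH_0')/\varpi_{k_i} = M_{k_i}(\sH_0')$, and one should bear in mind that $\delta^*_j$ carries a denominator --- it lands in $\cD_{U_i, m} = \cD^\circ_{U_i, m}[1/p]$ --- so the entire argument takes place in $[1/p]$-cohomology; this is harmless, as the proposition is stated there. None of this is deep: the mathematical content is entirely contained in Corollary \ref{cor:deltastar} and the diagrams of Propositions \ref{prop:diagram1}, \ref{prop:diagram2} and \ref{prop:AISCG}.
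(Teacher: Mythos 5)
Your argument is correct and is essentially the paper's own proof: the paper likewise reduces the statement to the last identity of Corollary \ref{cor:deltastar} ($\rho_k \circ \delta_j^* = 0$ for $k<j$ and $= \mom^{k-j}\cdot 1$ for $k \ge j$), using that $\cBF^{[k,k',j]}_{m,Np,a}$ is by definition the image of $\cBF^{[j]}_{m,Np,a}$ under $(\mom^{k-j}\cdot\id)\boxtimes(\mom^{k'-j}\cdot\id)$. You have merely made explicit the functoriality and sheaf-level bookkeeping (Propositions \ref{prop:diagram1}, \ref{prop:diagram2} and \ref{prop:AISCG}) that the paper leaves implicit.
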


  \begin{proof}
   This follows from the last statement of \ref{cor:deltastar}, since $\cBF^{[k, k', j]}_{m, Np, a}$ is by definition the image of $\cBF^{[j]}_{m, Np, a}$ under the map $(\mom^{k-j} \cdot \id) \boxtimes (\mom^{k'-j} \cdot \id)$.
  \end{proof}

  Now let us choose newforms $f, g$, of levels $N_1, N_2$ and weights $k_1 + 2, k_2 + 2 \ge 2$, and roots $\alpha_1, \alpha_2$ of their Hecke polynomials, such that the $p$-stabilisations $f_{i, \alpha_i}$ both satisfy the hypotheses of Theorem \ref{thm:colemanrep}. The theorem then gives us families of overconvergent eigenforms $\cF_1$, $\cF_2$ passing through the $p$-stabilisations of $f$ and $g$, defined over some discs $U_1 \ni k_1, U_2 \ni k_2$.
  
  \begin{proposition}
   \label{prop:BFtwovar-coleman}
   If the discs $U_i$ are sufficiently small, then there exist classes
   \[ \cBF^{[\cF, \cG, j]}_{m, a} \in H^1\left(\ZZ\left[\mu_m, \tfrac{1}{mpN_1N_2}\right], 
     M_{U_1}(\cF)^* \htimes M_{U_2}(\cG)^*(-j)\right) 
   \]
   such that
   \[ (\pr_{\cF} \times \pr_{\cG})\left(\cBF^{[U_1, U_2, j]}_{m, a}\right) = \binom{\nabla_1}{j} \binom{\nabla_2}{j} \cBF^{[\cF, \cG, j]}_{m, a},
   \]
   where $\nabla_i$ denotes the image of $\nabla$ in $\Lambda_{U_i}[1/p]$.
  \end{proposition}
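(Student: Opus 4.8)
The plan is to realise $\cBF^{[\cF,\cG,j]}_{m,a}$ as the image, under the renormalised overconvergent projectors $\pr_\cF^{[j]}$ and $\pr_\cG^{[j]}$ of Proposition~\ref{prop:defprj}, of the ``nearly-overconvergent'' class sitting one step before $\cBF^{[U_1,U_2,j]}_{m,a}$ in its own construction, and then to extract the factor $\binom{\nabla_1}{j}\binom{\nabla_2}{j}$ from the defining relation $\binom{\nabla}{j}\,\pr_\cF^{[j]} = \pr_\cF\circ\delta_j^*$. So the whole argument is essentially a bookkeeping exercise, once the hypotheses of Proposition~\ref{prop:defprj} are in place.

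First I would fix the discs. We shrink $U_1,U_2$ so that Theorem~\ref{thm:colemanrep} applies to $\cF$ over $U_1$ and to $\cG$ over $U_2$ (making $M_{U_1}(\cF)^*$ and $M_{U_2}(\cG)^*$ free rank-$2$ direct summands of the relevant overconvergent cohomology), and so that the hypothesis of Proposition~\ref{prop:defprj} holds for each of $\cF/U_1$ and $\cG/U_2$. As the proof of that proposition shows, the hypothesis only has to be checked at the finitely many integer weights $k$ with $0\le k<j$; after shrinking we may assume that the only such weight lying in $U_i$ is $k_i$ itself (when $k_i<j$), and there the specialisation $M_{k_i}(\sH_0')\to M_{k_i}(\cF)^*$ factors through $\rho_{k_i}$ because $\cF_{k_i}=f_{i,\alpha_i}$ is a noble eigenform: this is Stevens' control theorem when $v_p(\alpha_i)<k_i+1$, and Bella\"iche's critical-slope refinement, using the non-criticality built into the definition of ``noble'', when $v_p(\alpha_i)=k_i+1$. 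Hence $\pr_\cF^{[j]}$ and $\pr_\cG^{[j]}$ are defined.

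Next I would unwind the construction of $\cBF^{[U_1,U_2,j]}_{m,a}$: by definition it is the image of $\cBF^{[j]}_{m,Np,a}$ under pushforward along $Y_1(Np)^2\to Y_1(N_1(p))\times Y_1(N_2(p))$, the natural maps $\Lambda(\sH\langle C\rangle)\otimes\TSym^j\sH\to\cD^\circ_{U_i-j}(\sH_0')\otimes\TSym^j\sH$ in each factor, and finally the projectors $\delta_j^*$ in each factor. Write $\widetilde{\cBF}$ for the class obtained after all but the last step; by functoriality of the Hochschild--Serre spectral sequence and the K\"unneth formula it lies in $H^1\!\left(\ZZ\left[\mu_m,\tfrac{1}{mpN_1N_2}\right],\,\mathcal{M}_1\htimes\mathcal{M}_2(-j)\right)$, where $\mathcal{M}_i = H^1_{\et}\!\left(\overline{Y_1(N_i(p))},\,\cD_{U_i-j}(\sH_0')\otimes\TSym^j\sH(1)\right)$, and $\cBF^{[U_1,U_2,j]}_{m,a}$ is its image under the map induced on Galois cohomology by $\delta_j^*\boxtimes\delta_j^*\colon\mathcal{M}_1\htimes\mathcal{M}_2\to M_{U_1}(\sH_0')\htimes M_{U_2}(\sH_0')$. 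I would then \emph{define}
\[
 \cBF^{[\cF,\cG,j]}_{m,a}\ :=\ \big(\pr_\cF^{[j]}\boxtimes\pr_\cG^{[j]}\big)_*\big(\widetilde{\cBF}\big)\ \in\ H^1\!\left(\ZZ\left[\mu_m,\tfrac{1}{mpN_1N_2}\right],\,M_{U_1}(\cF)^*\htimes M_{U_2}(\cG)^*(-j)\right),
\]
the bracketed map being induced on Galois cohomology by $\pr_\cF^{[j]}\boxtimes\pr_\cG^{[j]}\colon\mathcal{M}_1\htimes\mathcal{M}_2\to M_{U_1}(\cF)^*\htimes M_{U_2}(\cG)^*$; this lands in the required group since $\pr_\cF^{[j]}$ takes values in $M_{U_1}(\cF)^*$ and $\pr_\cG^{[j]}$ in $M_{U_2}(\cG)^*$. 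Since $\binom{\nabla_i}{j}\in\Lambda_{U_i}[1/p]$ is central --- so it commutes with pushforward, with Hochschild--Serre and K\"unneth, and with the eigenspace projections --- applying $\pr_\cF\times\pr_\cG$ to $\cBF^{[U_1,U_2,j]}_{m,a}$ and using $\pr_\cF\circ\delta_j^* = \binom{\nabla_1}{j}\pr_\cF^{[j]}$ and $\pr_\cG\circ\delta_j^* = \binom{\nabla_2}{j}\pr_\cG^{[j]}$ gives
\begin{align*}
 (\pr_\cF\times\pr_\cG)\big(\cBF^{[U_1,U_2,j]}_{m,a}\big)
 &= \big((\pr_\cF\circ\delta_j^*)\boxtimes(\pr_\cG\circ\delta_j^*)\big)_*\big(\widetilde{\cBF}\big)\\
 &= \binom{\nabla_1}{j}\binom{\nabla_2}{j}\,\big(\pr_\cF^{[j]}\boxtimes\pr_\cG^{[j]}\big)_*\big(\widetilde{\cBF}\big)\\
 &= \binom{\nabla_1}{j}\binom{\nabla_2}{j}\,\cBF^{[\cF,\cG,j]}_{m,a},
\end{align*}
as required.

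The only genuinely delicate point is the verification, after shrinking, that the hypothesis of Proposition~\ref{prop:defprj} holds for both Coleman families --- equivalently, that the eigenspace projection factors through $\rho_k$ at the finitely many integer weights $0\le k<j$ in $U_i$ --- which rests on the two control theorems recalled above together with nobleness of $f_{i,\alpha_i}$. Everything else is a formal manipulation with the functoriality of $\delta_j^*$, $\pr_\cF^{[j]}$, $\pr_\cG^{[j]}$ and the spectral sequences, together with the centrality of the elements $\binom{\nabla_i}{j}$.
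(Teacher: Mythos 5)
Your proposal is correct and follows essentially the same route as the paper, which simply defines $\cBF^{[\cF,\cG,j]}_{m,a}$ as the image of the intermediate nearly-overconvergent class under $\pr_\cF^{[j]}\times\pr_\cG^{[j]}$ after shrinking the $U_i$ so that Proposition \ref{prop:defprj} applies. Your version merely spells out the functoriality bookkeeping and the verification of the hypothesis of Proposition \ref{prop:defprj} that the paper leaves implicit.
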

  
  \begin{proof}
   After shrinking the discs $U_i$ if necessary so that all integer-weight specialisations of $\cF$ and $\cG$ are classical, so that Proposition \ref{prop:defprj} applies, we can simply define $\cBF^{[\cF, \cG, j]}_{m, a}$ as the image of $\cBF^{[j]}_{m, a}$ under $\pr_{\cF}^{[j]} \times \pr_{\cG}^{[j]}$.
  \end{proof}
  
 \subsection{Interpolation in $j$}
  \label{sect:interpj}
  
  Now let $\cF, \cG$ be Coleman families over open discs $U_1, U_2$, satisfying the conditions of Proposition \ref{prop:BFtwovar-coleman}.
  
  \begin{proposition}
   \label{prop:cyclogrowth}
   For any $h \ge 0$, and any $a$, there is a constant $C$ independent of $r$ such that the elements $\cBF^{[\cF, \cG, j]}_{mp^r, a}$, for $0 \le j \le h$ and $r \ge 1$, satisfy the following norm bound:
   \[ \left\| \sum_{j = 0}^h (-1)^j \binom{h}{j} \Res_{p^r}^{p^{\infty}}\left( \frac{1}{(-a)^j j!} \cdot \cBF^{[\cF, \cG, j]}_{mp^r,a}\right) \right\| \le Cp^{-hr}. \]
  \end{proposition}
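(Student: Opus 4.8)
The plan is to reduce this to the congruence already established at integer level in Theorem~\ref{thm:congruencesfinal} (together with its refinement noted in Remark~\ref{remark:weasel}), and then to pass through the overconvergent projectors. First I would work at the level of the sheaf $\Lambda(\sH\langle D'\rangle)\otimes\TSym^j\sH$ on $Y_1(Np)^2\times\mu_m^\circ$ and recall that, by construction,
\[
\cBF^{[\cF,\cG,j]}_{mp^r,a} = \left(\pr_{\cF}^{[j]}\times\pr_{\cG}^{[j]}\right)\left(\cBF^{[j]}_{mp^r,N_1,N_2,a}\right),
\]
so the combination $\sum_{j=0}^h(-1)^j\binom{h}{j}\frac{1}{(-a)^jj!}\cBF^{[\cF,\cG,j]}_{mp^r,a}$ is the image, under a fixed (independent of $r$) composite of pushforward, sheaf maps $\Lambda(\sH\langle D'\rangle)\to\cD^\circ_{U_i-j}$, and the normalised projectors $\pr^{[j]}$, of the corresponding alternating sum of the integer-level Beilinson--Flach classes $\cBF^{[j]}_{mp^r,N,a}$. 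The point is that after applying $\Res^{p^\infty}_{p^r}$, Theorem~\ref{thm:congruencesfinal} (in the refined form of Remark~\ref{remark:weasel}) tells us that the relevant alternating sum of the $\Res^{p^{hr}}_{p^r}$-restrictions of $(1\otimes\mom^{h-j})^{\boxtimes2}\cBF^{[j]}_{mp^r,N,a}$, twisted by $\zeta_{p^{hr}}^{\otimes j}$, lies in $p^{hr}$ times an integral lattice.

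The key step is to match up the factorials and the moment maps. At integer weights $k_1\ge h$, $k_2\ge h$ in $U_1,U_2$, the projector $\pr_{\cF}^{[j]}$ specialises (via $\rho_{k_1}$) to $\frac{1}{\binom{k_1}{j}}(\mom^{k_1-j}\cdot1)$ on the $\cF$-eigenspace, by Corollary~\ref{cor:deltastar} and the definition of $\pr^{[j]}_{\cF}$; the same relation as in the proof of Theorem~\ref{thm:cycloBFelts} (the ``unpleasant manipulation of factorials'' there) shows that
\[
\sum_{j=0}^h a^{h-j}(h-j)!\,(1\otimes\mom^{h-j})^{\boxtimes2}\,\Res^{p^{hr}}_{p^r}\!\left(\cBF^{[j]}_{mp^r,N,a}\right)\otimes\zeta_{p^{hr}}^{\otimes j}
\]
becomes, after applying the relevant composite of sheaf maps and eigenspace projections, a nonzero universal constant times $\sum_{j=0}^h(-1)^j\binom{h}{j}\frac{1}{(-a)^jj!}\cBF^{[\cF,\cG,j]}_{mp^r,a}$ --- except that here we must work in the family rather than at a single weight, using that the morphisms $\cD^\circ_{U_i-j}(\sH)\otimes\TSym^j\sH\to\cD^\circ_{U_i}(\sH)[1/p]$ are defined integrally up to the controlled denominator $\tfrac{1}{j!p^{1+m}}$ (the Remark after Proposition~\ref{prop:AISCG}). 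Since the Coleman family has bounded slope, $M_{U_i}(\cF)^*$ carries a fixed $\Lambda_{U_i}$-lattice, and the composite of all the structure maps sends the integral lattice in $H^3_{\et}$ into a fixed multiple $p^{-c_0}$ (independent of $r$) of this lattice; the denominators from $\binom{\nabla_i}{j}^{-1}$ and from $\delta^*_j$ are all $r$-independent. Feeding in the factor $p^{hr}$ from Theorem~\ref{thm:congruencesfinal} then yields the claimed bound with $C=p^{c_0}\cdot(\text{universal factorial constant})$.

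The main obstacle I anticipate is bookkeeping of \emph{integrality} through the overconvergent projector $\delta^*_j$ and the renormalisation $\pr^{[j]}_{\cF}=\binom{\nabla}{j}^{-1}\pr_{\cF}\circ\delta^*_j$: one must check that dividing by $\binom{\nabla_i}{j}$ does not introduce an $r$-dependent denominator. This is fine because $\binom{\nabla_i}{j}\in\Lambda_{U_i}[1/p]$ is a fixed element (invertible away from the finitely many integer points $0,\dots,j-1$ of $U_i$, with simple zeroes there), so on the lattice in $M_{U_i}(\cF)^*$ the operator $\binom{\nabla_i}{j}^{-1}$ has a bounded ``denominator'' depending only on $U_i,j,h$ --- not on $r$ --- exactly as in Proposition~\ref{prop:defprj}. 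The second subtlety is that $\Res^{p^\infty}_{p^r}$ versus $\Res^{p^{hr}}_{p^r}$: Theorem~\ref{thm:congruencesfinal} already passes to $\mu_{mp^\infty}^\circ$, and the twist $\zeta_{p^{hr}}^{\otimes j}$ becomes, after the identification $H^1(\QQ(\mu_{mp^r}),-(-j))\cong H^1(\QQ(\mu_{mp^\infty}),-)^{\Gamma_r=\chi^j}$ used throughout Section~3, simply the bookkeeping encoding the $\Gamma_r$-eigenvalue; so the alternating-sum-over-$j$ on the left of the proposition is literally the image of the left side of Theorem~\ref{thm:congruencesfinal}. Once these two integrality points are nailed down, the proof is a direct transcription of the argument proving the norm bound in Theorem~\ref{thm:cycloBFelts}.
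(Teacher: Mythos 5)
Your proposal is correct and follows essentially the same route as the paper: apply the (fixed, $r$-independent) overconvergent projectors to the congruence of Theorem \ref{thm:congruencesfinal} (in the refined form of Remark \ref{remark:weasel}), use the relation $\pr_{\cF}^{[h]}\circ(\beta_{h-j}^*\cdot\mathrm{id})=\binom{h}{j}\pr_{\cF}^{[j]}$ of equation \eqref{eq:composedeltas} to identify the image with a nonzero $r$-independent multiple of the alternating sum, and absorb all the ($r$-independent) denominators from $\delta_j^*$ and $\binom{\nabla_i}{j}^{-1}$ into the constant $C$. The paper phrases the factorial bookkeeping slightly more directly (applying the single map $\pr_{\cF}^{[h]}\boxtimes\pr_{\cG}^{[h]}$ and observing the result is $a^h h!$ times the desired sum), but the content is identical.
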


  \begin{proof}
   We shall deduce this from Theorem \ref{thm:congruencesfinal} (and Remark \ref{remark:weasel}). This theorem gives a bound for the classes
   \[ \sum_{j = 0}^h a^{h-j}(h-j)! (1 \otimes \mom^{h-j})^{\boxtimes 2}\Res_{p^r}^{p^{\infty}}\left( \cBF^{[j]}_{mp^r, Np, a}\right).\]
   We apply to this the map $\operatorname{pr}_{\cF}^{[h]} \boxtimes \operatorname{pr}_{\cG}^{[h]}$. This maps $(1 \otimes \mom^{h-j})^{\boxtimes 2} \cBF^{[j]}$ to $\binom{h}{j}^2 \cBF^{[\cF, \cG, j]}_{mp^r, a}$, by \eqref{eq:composedeltas}. So the image of the expression of Theorem \ref{thm:congruencesfinal} is
   \[ \sum_{j = 0}^h a^{h-j}(h-j)! \binom{h}{j}^2 \Res_{p^r}^{p^{\infty}}\left(\cBF^{[\cF, \cG, j]}_{mp^r, a}\right),\]
   which is exactly $a^h h!$ times the quantity in the proposition. We may ignore the factor $a^h h!$, since it is nonzero and independent of $r$.
  \end{proof}

  We now choose \emph{affinoid} discs $V_i$ contained in the $U_i$ (so the $M_{V_i}(\cF_i)^*$ become Banach spaces).

  \begin{theorem}
   \label{thm:3varelt}
   There is a element
   \[\cBF^{[\cF, \cG]}_{m, a} \in H^1\left(\QQ(\mu_m), D_{\lambda_1 + \lambda_2}(\Gamma,M_{V_1}(\cF)^* \htimes M_{V_2}(\cG)^*)\right)\]
   which enjoys the following interpolating property: for any integers $(k_1, k_2, j)$ with $k_i \in V_i$ and $0 \le j \le \min(k, k')$, the image of $\cBF^{[\cF, \cG]}_{m, a}$ at $(k_1, k_2, j)$ is
   \[ \left(1 - \frac{p^j}{a_p(\cF_{k_1})a_p(\cG_{k_2})}\right) \frac{\cBF^{[\cF_{k_1}, \cG_{k_2}, j]}_{m, a}}{(-a)^{j} j! \binom{k_1}{j}\binom{k_2}{j}}.\]
  \end{theorem}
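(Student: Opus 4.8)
The plan is to assemble the three-variable class by applying the abstract interpolation machine of Section \ref{sect:analyticprelim}---specifically Proposition \ref{prop:unbounded-iwasawa}---to the cyclotomic variable, using as input the two-variable Beilinson--Flach classes $\cBF^{[\cF, \cG, j]}_{mp^r, a}$ produced in Proposition \ref{prop:BFtwovar-coleman}, and then recording that the output carries the right specialisations. First I would fix $h = \min(k_1, k_2)$ (or, more precisely, work over the affinoid $V_1 \times V_2$ where a uniform such bound is available after shrinking), and set $\lambda_i = v_p(a_p(\cF_i))$, noting that $\lambda_1 + \lambda_2 < 1 + h$ is exactly the small-slope condition which holds after shrinking the $V_i$ around the noble points $f_\alpha, g_\alpha$ (since slopes are locally constant and bounded by the slope at the centre, which satisfies this by nobility). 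The module $M = M_{V_1}(\cF)^* \htimes M_{V_2}(\cG)^*$ is finite free over the Noetherian Banach algebra $A = \cO(V_1) \htimes \cO(V_2)$, so the hypotheses of Proposition \ref{prop:unbounded-iwasawa} (via the "$\cL_w(X, N)$" remark following Proposition \ref{prop:infres}, or directly) are met.

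Next I would define, for each $r \ge 0$ and $0 \le j \le h$, the renormalised classes
\[ x_{r, j} = \left(\prod a_p(\cF_i)\right)^{-r} \cdot \frac{\cBF^{[\cF, \cG, j]}_{mp^r, a}}{(-a)^j j! \binom{k_1}{j}\binom{k_2}{j}} \in H^1\!\left(\QQ(\mu_{mp^\infty}), M\right)^{\Gamma_r = \chi^j}, \]
where I use Assumption \ref{ass:nottwists} (satisfied after shrinking, since it holds at the centre) to identify $H^1(\QQ(\mu_{mp^r}), M(-j))$ with $H^1(\QQ(\mu_{mp^\infty}), M)^{\Gamma_r = \chi^j}$. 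The first hypothesis of Proposition \ref{prop:unbounded-iwasawa}---the norm-compatibility $\sum_{\gamma \in \Gamma_r/\Gamma_{r+1}} \chi(\gamma)^{-j} \gamma \cdot x_{r+1, j} = x_{r, j}$ for $r > 0$---follows from the family version of the second norm relation (Lemma \ref{lem:normcompatible}, deformed along the Coleman families; the factor $\alpha_f \alpha_g$ there becomes $a_p(\cF_1) a_p(\cF_2)$, which is cancelled by the $(\prod a_p)^{-r}$ normalisation). The second hypothesis---the bound $\|p^{-hr} \sum_{j=0}^h (-1)^j \binom{h}{j} x_{r,j}\| \le C p^{\lfloor \lambda r\rfloor}$ with $\lambda = \lambda_1 + \lambda_2$---is precisely Proposition \ref{prop:cyclogrowth}, which gives the cruder bound $\le C p^{-hr}$; since $p^{-hr} \le p^{\lfloor \lambda r \rfloor}$ whenever $\lambda \ge 0$ (and here $\lambda < 1 + h$ but that is not needed for the hypothesis itself, only for nontriviality of the resulting $D_\lambda$), the hypothesis holds. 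Proposition \ref{prop:unbounded-iwasawa} then produces a unique $\Gamma$-invariant class in $H^1(\QQ(\mu_{mp^\infty}), D_\lambda(\Gamma, M))^\Gamma$, which by the inflation-restriction argument (and vanishing of the relevant $H^0$ via Assumption \ref{ass:nottwists}) lifts uniquely to the desired $\cBF^{[\cF, \cG]}_{m, a} \in H^1(\QQ(\mu_m), D_\lambda(\Gamma, M))$.

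Finally, for the interpolation formula I would specialise at a triple $(k_1, k_2, j)$: the defining property $x_{r, j} = \int_{\Gamma_r} \chi^j\, \mathrm{d}\mu$ from Proposition \ref{prop:unbounded-iwasawa}, specialised at $r = 0$ against the character $z \mapsto z^j$ (for which $\int_{\Gamma_0} \chi^j\, \mathrm{d}\mu$ picks up the Euler-type factor $1 - p^j/(a_p(\cF_{k_1}) a_p(\cG_{k_2}))$ coming from the $r = 0$ versus $r > 0$ discrepancy in the norm relation, exactly as in the $r = 0$ case of Lemma \ref{lem:normcompatible} and in Theorem \ref{thm:cycloBFelts}), together with the identification of $\cBF^{[\cF, \cG, j]}_{m, a}$ at weight $(k_1, k_2)$ with $\cBF^{[\cF_{k_1}, \cG_{k_2}, j]}_{m, a}$ via the specialisation property of $M_{V_i}(\cF_i)^*$ in Theorem \ref{thm:colemanrep}, yields the stated formula. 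The main obstacle is not any single step but the bookkeeping: one must verify that the shrinking of the $V_i$ needed to simultaneously guarantee (a) freeness and specialisation of $M_{V_i}(\cF_i)^*$, (b) the uniform small-slope bound $\lambda_1 + \lambda_2 < 1 + \min(k_1, k_2)$ over all classical $(k_1, k_2) \in V_1 \times V_2$, (c) Assumption \ref{ass:nottwists} at every classical specialisation, and (d) classicality of all integer-weight specialisations (needed for Proposition \ref{prop:defprj}) can all be achieved at once---and then that the Coleman-family deformation of Proposition \ref{prop:cyclogrowth} genuinely produces the bound with the correct constant independent of $r$, which rests on the integral (lattice-level) refinement in Remark \ref{remark:weasel}.
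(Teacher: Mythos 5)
Your overall strategy is the paper's: renormalise the two-parameter classes $\cBF^{[\cF,\cG,j]}_{mp^r,a}$ by $(a_p(\cF)a_p(\cG))^{-r}(-a)^{-j}(j!)^{-1}$, feed them into Proposition \ref{prop:unbounded-iwasawa} with $A = \cO(V_1\times V_2)$ using the norm relations and the growth bound of Proposition \ref{prop:cyclogrowth}, handle $r=0$ by corestriction (whence the factor $1 - p^j/(a_p(\cF)a_p(\cG))$), and read off the interpolation from the specialisation properties. However, there are two genuine slips. First, you tie $h$ to $\min(k_1,k_2)$ for a particular specialisation and then appeal to a ``uniform such bound over $V_1\times V_2$'' --- but no such bound exists: the classical weights in $V_1\times V_2$ are Zariski-dense and $\min(k_1,k_2)$ is unbounded over them, so a single choice of $h$ only yields the interpolation property for $j \le h$, not for all $j\le\min(k_1,k_2)$ as the theorem asserts. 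The missing step is to take $h \ge \lfloor \lambda_1+\lambda_2\rfloor$ arbitrary (no small-slope inequality is needed here, unlike in Theorem \ref{thm:cycloBFelts} where $h=\min(k,k')$ is forced) and to observe, via the uniqueness clause of Proposition \ref{prop:unbounded-iwasawa}, that the class $x[h']$ built from a larger $h'\ge h$ satisfies the defining conditions for $h$ and hence coincides with $x[h]$; only then does one obtain a single class interpolating $x_{n,j}$ for all $j\ge 0$. Relatedly, your assertion that $\lambda_1+\lambda_2 < 1+h$ is required is an artefact of this choice and should be dropped.

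Second, the binomial factors $\binom{k_1}{j}\binom{k_2}{j}$ cannot appear in the denominator of the family-level classes $x_{r,j}$: the weights $k_1,k_2$ are not fixed there, and these factors are already built into $\cBF^{[\cF,\cG,j]}_{mp^r,a}$ through the normalisation $\pr_\cF^{[j]} = \binom{\nabla}{j}^{-1}\pr_\cF\circ\delta_j^*$ of Proposition \ref{prop:defprj}. By Proposition \ref{prop:BFtwovar-coleman} the specialisation of $\cBF^{[\cF,\cG,j]}_{m,a}$ at $(k_1,k_2)$ is $\binom{k_1}{j}^{-1}\binom{k_2}{j}^{-1}\cBF^{[\cF_{k_1},\cG_{k_2},j]}_{m,a}$, so dividing by these binomials again at the family level would double-count them and produce the wrong interpolation formula. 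The correct definition is $x_{n,j} = (a_p(\cF)a_p(\cG))^{-n}\,\cBF^{[\cF,\cG,j]}_{mp^n,a}/((-a)^j j!)$, with the binomial coefficients emerging only upon specialising $\binom{\nabla_i}{j}$ at integer weights.
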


  \begin{proof}
   We choose an integer $h \ge \lfloor \lambda_1 + \lambda_2 \rfloor$, and apply Proposition \ref{prop:unbounded-iwasawa} with $K = \QQ(\mu_m)$, $S$ the set of primes dividing $p m N_1 N_2$, $A = \cO(V_1 \times V_2)$, $M = M_{V_1}(\cF)^* \htimes M_{V_2}(\cG)^*$, $\lambda = \lambda_1 + \lambda_2$, and
   \[ x_{n, j} = (a_p(\cF) a_p(\cG))^{-n}  \cdot \frac{\cBF^{[\cF, \cG, j]}_{mp^n, a}}{(-a)^{j} j! } \]
   for $0 \le j \le h$ and $n \ge 1$. These $x_{n, j}$ are norm-compatible for $n \ge 1$, and we obtain norm-compatible elements for all $n \ge 0$ by defining
   \[ x_{0, j}\coloneqq \operatorname{cores}_{\QQ(\mu_p) / \QQ}\left( x_{1, j}\right) = \left(1 - \frac{p^j}{a_p(\cF) a_p(\cG)}\right) \frac{\cBF^{[\cF, \cG, j]}_{m, a}}{(-a)^{j} j!}.\]
   Moreover, the bound we have just established in Proposition \ref{prop:cyclogrowth} shows that $\left\|p^{-nh}\sum_{j = 0}^h (-1)^j \binom{h}{j}x_{n, j}\right\| \le Cp^{\lambda n}$, which is exactly the growth bound required for Proposition \ref{prop:unbounded-iwasawa}. It is not difficult to see that $H^0(\QQ_\infty, M_{V_1}(\cF)^* \htimes M_{V_2}(\cG)^*) = 0$, so we obtain a class 
   \[ x[h] \in H^1\left(\QQ(\mu_m), D_{\lambda_1 + \lambda_2}(\Gamma,M_{V_1}(\cF)^* \htimes M_{V_2}(\cG)^*)\right)\]
   interpolating the classes $x_{n, j}$ for all $n \ge 0$ and all $j \in \{0, \dots, h\}$. However, if we have two integers $h' \ge h \ge \lfloor \lambda_1 + \lambda_2\rfloor$, then the element $x[h']$ satisfies an interpolating property strictly stronger than that of $x[h]$, so we deduce that $x[h]$ is in fact independent of $h$ and interpolates $x_{n, j}$ for all $j \ge 0$. We define $\cBF^{[\cF, \cG]}_{m, a}$ to be this element. The interpolating property is now immediate from the interpolating property of the 2-variable classes $\cBF^{[\cF, \cG, j]}_{m, a}$ at integers $k_1, k_2 \ge j$.
  \end{proof}


\section{Phi-Gamma modules and triangulations}


 \subsection{Phi-Gamma modules in families}

  Let $\sR$ denote\footnote{The rings $\sR$ and $\sR^+$ are often also denoted by $\mathbf{B}^{\dag}_{\mathrm{rig}, \Qp}$ and $\mathbf{B}^+_{\rig, \Qp}$ respectively; this notation is used in several earlier works of the present authors.} the Robba ring (of $\mathbf{Q}_p$), which is the ring of formal Laurent series over $\Qp$ in a variable $\pi$, convergent on some annulus of the form $\{x: 0 < v_p(x) < \varepsilon\} \subseteq \AA^1_{\mathrm{rig}}$; and let $\sR^+ \subseteq \Qp[[\pi]]$ be its subring of elements that are analytic on the whole disc $\{ x: v_p(x) > 0\}$. We endow these with their usual actions of Frobenius $\varphi$ and the group $\Gamma \cong \Zp^\times$. We define a left inverse $\psi$ of $\varphi$ by putting
  \[ \varphi\circ\psi f(\pi)=\frac{1}{p}\sum_{\zeta^p=1}f(\zeta(\pi+1)-1)\]
  for any $f(\pi)\in \sR^+$. 

  As is well known, there is a functor $\DD^{\dag}_{\rig}$ mapping $p$-adic representations of $G_{\Qp}$ to $(\varphi, \Gamma)$-modules over $\sR$ (finitely-generated free $\sR$-modules with commuting $\sR$-semilinear operators $\varphi$ and $\Gamma$), and this is a fully faithful functor whose essential image is the subcategory of $(\varphi, \Gamma)$-modules of slope 0.

  \begin{remark}
   Strictly speaking, the definition of the functor $\DD^{\dag}_{\rig}$ depends on the auxilliary choice of a compatible system of $p$-power roots of unity $(\zeta_{p^n})_{n \ge 0}$ in $\overline{\QQ}_p$. We shall fix, once and for all, such a choice, and in applications to global problems we shall often assume that $\zeta_{p^n}$ corresponds to $e^{2\pi i / p^n} \in \CC$.
  \end{remark}

  Now let $A$ be a reduced affinoid algebra over $\Qp$, and write $\sR_A = \sR \htimes A$ and similarly for $\sR_A^+$. We define an \emph{$A$-representation} of $G_{\Qp}$ to be a finitely-generated locally free $A$-module endowed with an $A$-linear action of $G_{\Qp}$ (continuous with respect to the canonical Banach topology of $M$).

  \begin{theorem}[{Berger--Colmez, \cite{BergerColmez}}]
   For any $A$-representation $M$ of $G_{\Qp}$, we may define a finite locally-free $\sR_A$-module $\DD^{\dag}_{\rig}(V)$, endowed with semilinar continuous actions of $\varphi$ and $\Gamma$, such that
    \[ \DD^{\dag}_{\rig}(V_x) \cong \DD^{\dag}_{\rig}(V) / m_{x} \]
  for every $x \in \operatorname{Max}(A)$.
  \end{theorem}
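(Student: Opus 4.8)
The plan is to follow the Tate--Sen descent method of Berger--Colmez, obtaining $\DD^{\dag}_{\rig}(M)$ as the output of a chain of ``decompletion'' steps, each of which is compatible with base change in $A$. First I would reduce to the case where $M$ is free over $A$: since $M$ is locally free, choose an admissible affinoid cover $\operatorname{Max}(A) = \bigcup_i \operatorname{Max}(A_i)$ trivialising $M$, run the construction over each $A_i$, and glue at the end using the base-change compatibility proved below, together with descent of coherent sheaves along the faithfully flat map $A \to \prod_i A_i$. So assume $M \cong A^{d}$ as an $A$-module, carrying a continuous $A$-linear action of $G_{\Qp}$.

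Next, inside the usual large period ring $\widetilde{\mathbf{B}}$ of Fontaine (with its Frobenius and $G_{\Qp}$-action), form $\widetilde{\mathbf{D}}(M) \coloneqq (\widetilde{\mathbf{B}} \htimes_{\Qp} A) \otimes_A M$, a free module of rank $d$ with diagonal semilinear $G_{\Qp}$-action. The heart of the matter is that the pair formed by $\widetilde{\mathbf{B}}$ and its subrings $\widetilde{\mathbf{B}}^{\dagger, r}$ of overconvergent elements satisfies Colmez's Tate--Sen axioms, and that the associated descent arguments survive completed tensoring with the Noetherian Banach algebra $A$, with all implied constants independent of $A$. Granting this, one obtains: (a) a finite free module $\mathbf{D}_A(M)$ over the field $\mathbf{B}_{\Qp} \htimes A$, \'etale for $\varphi$, with $(\widetilde{\mathbf{B}} \htimes A) \otimes_{\mathbf{B}_{\Qp}\htimes A} \mathbf{D}_A(M) \cong \widetilde{\mathbf{D}}(M)$, by the coefficient version of Sen's decompletion; and (b) a further descent of $\mathbf{D}_A(M)$ to a finite free module $\mathbf{D}^{\dagger}_A(M)$ over the overconvergent subring $\mathbf{B}^{\dagger}_{\Qp} \htimes A$, i.e.\ the overconvergence of $M$ in families, which is the coefficient version of Cherbonnier--Colmez. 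I would then \emph{define} $\DD^{\dag}_{\rig}(M) \coloneqq \sR_A \otimes_{\mathbf{B}^{\dagger}_{\Qp}\htimes A} \mathbf{D}^{\dagger}_A(M)$; the operators $\varphi$ and $\Gamma$ descend through each stage and act $\sR_A$-semilinearly and continuously, and $\DD^{\dag}_{\rig}(M)$ is free of rank $d$ in the free case, hence locally free of rank $d$ after undoing the reduction.

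The specialisation statement $\DD^{\dag}_{\rig}(M)/m_{x} \cong \DD^{\dag}_{\rig}(M_x)$ follows because base change along $A \to A/m_{x}$ commutes with each of $\widetilde{\mathbf{D}}$, the Sen decompletion, the overconvergent descent, and $\sR_A \otimes (-)$: the uniformity of the Tate--Sen constants is exactly what guarantees that $\mathbf{D}^{\dagger}_A(M) \otimes_A (A/m_{x})$ \emph{is} the overconvergent $(\varphi,\Gamma)$-module of $M_x$ in the sense of Cherbonnier--Colmez, rather than merely a submodule of it; tensoring up to $\sR$ then gives the isomorphism. Applying the same compatibility over $\operatorname{Max}(A_i \htimes_A A_j)$ shows the locally constructed modules agree on overlaps, completing the gluing.

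The main obstacle is step (b): showing that an $A$-representation of $G_{\Qp}$ is overconvergent. Over $\Qp$ this is the theorem of Cherbonnier--Colmez, proved by a delicate successive-approximation argument on $1$-cocycles; carrying it out with Banach-algebra coefficients requires redoing those estimates so that the radius of overconvergence and the operator norms of the approximating maps are controlled \emph{uniformly} --- in particular independently of $x \in \operatorname{Max}(A)$ --- which is precisely the input that makes the specialisation statement work. By comparison, the \'etaleness and decompletion in (a) and the formal descent and gluing are routine once the coefficient version of the Tate--Sen formalism is in hand.
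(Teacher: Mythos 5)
The paper does not prove this statement: it is quoted verbatim (as ``Theorem (Berger--Colmez)'') from \cite{BergerColmez}, and no argument is given in the text. So the only meaningful comparison is with the proof in the cited reference, and your outline is indeed a faithful summary of that strategy: verify the Tate--Sen axioms for the relevant period rings with Banach-algebra coefficients, with constants uniform in $A$; descend to a finite locally free overconvergent module $\mathbf{D}^{\dagger}_A(M)$ over $\mathbf{B}^{\dagger,r}_{\Qp}\htimes A$; base-change to $\sR_A$; and deduce the specialisation isomorphism from the uniformity of the constants. You also correctly identify where all the work lies (the family version of overconvergence), and you are right that the specialisation statement is not formal but is exactly what the uniform estimates buy.

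Two caveats. First, as written this is a roadmap rather than a proof: everything after ``granting this'' is the entire content of \cite{BergerColmez}, so if the exercise was to supply a proof rather than to locate one, there is a genuine gap --- you would need to actually verify the Tate--Sen conditions (TS1)--(TS3) for $\widetilde{\mathbf{B}}\htimes A$ and redo the Cherbonnier--Colmez approximation with coefficients. Second, your factorisation into ``(a) decompletion to a module over $\mathbf{B}_{\Qp}\htimes A$, then (b) overconvergent descent'' does not match how Berger--Colmez proceed, and step (a) is itself problematic in families: $\mathbf{B}_{\Qp}$ is a field but $\mathbf{B}_{\Qp}\htimes A$ is not, there is no good notion of an \'etale $(\varphi,\Gamma)$-module over it for general $A$, and the classical Cherbonnier--Colmez descent uses the integral/\'etale structure in an essential way. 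Berger--Colmez avoid this by running the Tate--Sen descent \emph{directly} on the overconvergent rings $\widetilde{\mathbf{B}}^{\dagger,r}\htimes A$, producing $\mathbf{D}^{\dagger,r}_A(M)$ in one step; you should reorganise your argument the same way, or else justify why the intermediate non-overconvergent module exists and is well behaved.
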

  
  \begin{definition}
   If $D$ is a $(\varphi, \Gamma)$-module over $\sR_A$, we define cohomology groups $H^i(\Qp, D)$ as the cohomology of the ``Herr complex''
   \[ C_{(\varphi, \gamma)}(D) \coloneqq D \rTo^{\left(\begin{smallmatrix}\varphi - 1\\ \gamma - 1 \end{smallmatrix}\right)} D \oplus D \rTo^{(1-\varphi, \gamma - 1)} D\]
   and \emph{analytic Iwasawa cohomology} $H^i_{\Iw}(\QQ_{p, \infty}, D)$ as the cohomology of the complex
   \[ C_{\psi}(D) \coloneqq 0 \rTo D \rTo^{\psi - 1} D. \]
  \end{definition}
  
  These groups are compatible with the usual Galois cohomology in the sense that if $M$ is a $A$-representation of $G_{\Qp}$, then we have $H^i(\Qp, \DD^\dag_{\rig}(M)) = H^i(\Qp, V)$ and
  \[
   H^i_{\Iw}(\QQ_{p, \infty}, \DD^\dag_{\rig}(M)) = D^{\la}(\Gamma, \Qp) \htimes_{D_0(\Gamma, \Qp)}
   \left( \varprojlim_n H^i(\QQ_{p,n}, T) \right),
  \]
  where $T$ is the unit ball for any $G_{\Qp}$-invariant Banach-module norm on $M$, by 
  by \cite[Corollary 4.4.11]{KPX}.
  
  \begin{corollary}
   For $M$ an $A$-representation of $G_{\Qp}$, we have a canonical isomorphism
   \[ H^1_{\Iw}(\QQ_{p, \infty}, \DD^\dag_{\rig}(M)) \cong H^1(\Qp, D^{\la}(\Gamma, M)).\]
   In particular there is a canonical map
   \[  H^1(\Qp, D_{\lambda}(\Gamma, M)) \to  H^1_{\Iw}(D) \]
   compatible with the natural maps to $H^1(\Qp, M(\eta))$ for every character $\eta$ of $\Gamma$.
  \end{corollary}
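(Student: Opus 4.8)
The plan is to deduce this from the Berger--Colmez/Kedlaya--Pottharst--Xiao comparison quoted just above, combined with the results of the ``Iwasawa cohomology'' subsection applied in the local case $K = \Qp$. Write $D = \DD^{\dag}_{\rig}(M)$, and fix (via Chenevier's lemma, as in that subsection) a $G_{\Qp}$-stable Banach lattice $T \subset M$ that is locally free over $A^\circ$. By \cite[Corollary 4.4.11]{KPX} there is a canonical identification
\[ H^1_{\Iw}(\QQ_{p,\infty}, D) \cong D^{\la}(\Gamma, \Qp) \htimes_{D_0(\Gamma, \Qp)} \left( \varprojlim_n H^1(\QQ_{p,n}, T)\right). \]
By the definition of $H^1_{\Iw}(\QQ_{p,\infty}, M)$ we have $H^1_{\Iw}(\QQ_{p,\infty}, M) = \bigl(\varprojlim_n H^1(\QQ_{p,n}, T)\bigr)[1/p]$, and since $D^{\la}(\Gamma, \Qp) \htimes_{D_0(\Gamma, \Qp)}(-)$ commutes with inverting $p$ and with extension of scalars along $D_0(\Gamma, \Qp) \to D_0(\Gamma, A)$, the right-hand side above is canonically $D^{\la}(\Gamma, A) \otimes_{D_0(\Gamma, A)} H^1_{\Iw}(\QQ_{p,\infty}, M)$.

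Next I would invoke the Proposition of the ``Iwasawa cohomology'' subsection (in the local case $K = \Qp$), which gives a canonical isomorphism
\[ H^1(\Qp, D^{\la}(\Gamma, M)) \cong D^{\la}(\Gamma, A) \otimes_{D_0(\Gamma, A)} H^1_{\Iw}(\QQ_{p,\infty}, M). \]
Composing this with the identification of the previous paragraph yields the desired canonical isomorphism $H^1_{\Iw}(\QQ_{p,\infty}, \DD^{\dag}_{\rig}(M)) \cong H^1(\Qp, D^{\la}(\Gamma, M))$. It is independent of the choice of $T$, since both $H^1_{\Iw}(\QQ_{p,\infty}, M)$ and the KPX comparison are, and it is compatible with the specialisation-at-$\eta$ maps on both sides: on the $(\varphi,\Gamma)$-module side this is part of the compatibility of \cite{KPX} with usual Galois cohomology, and on the distribution side it is built into the cited Proposition.

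For the ``in particular'' clause, I would recall the Corollary of the ``Iwasawa cohomology'' subsection, which (again with $K = \Qp$) furnishes, for any $\lambda \in \RR_{\ge 0}$, a map
\[ H^1(\Qp, D_\lambda(\Gamma, M)) \to D^{\la}(\Gamma, A) \otimes_{D_0(\Gamma, A)} H^1_{\Iw}(\QQ_{p,\infty}, M), \]
compatible with the evaluation maps to $H^1(\Qp, M(\eta))$ at every character $\eta$ of $\Gamma$. Composing with the isomorphism just constructed gives the asserted map $H^1(\Qp, D_\lambda(\Gamma, M)) \to H^1_{\Iw}(\QQ_{p,\infty}, D)$, and its compatibility with specialisation at characters follows by transitivity of the two compatibilities. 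The only point requiring genuine care is the verification that the two incarnations of Iwasawa cohomology --- the one via the $\psi$-operator on $\sR_A \htimes M$ used in \cite{KPX} and the ``naive'' inverse limit of Galois cohomology of lattices used earlier --- agree \emph{canonically}, compatibly with the $D_0(\Gamma, A)$-module structures and with specialisation; this is exactly the content of \cite[Corollary 4.4.11]{KPX} as quoted, so in the end the argument is book-keeping rather than new input.
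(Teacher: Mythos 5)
Your argument is correct, but it takes a genuinely different route from the paper's. The paper proves the isomorphism directly at the level of $(\varphi,\Gamma)$-modules: it exhausts $\cW$ by affinoid discs $X_n$, writes $D^{\la}(\Gamma,M)=\varprojlim_n \cO(X_n)\htimes M$, identifies $H^1(\Qp,\cO(X_n)\htimes M)$ with $H^1(\Qp,\DD^{\dag}_{\rig}(\cO(X_n)\htimes M))$ by Pottharst's affinoid comparison, and then recognises the inverse limit of these $(\varphi,\Gamma)$-modules as the module $\mathbf{Dfm}(\DD^{\dag}_{\rig}(M))$ of Kedlaya--Pottharst--Xiao, whose cohomology computes $H^1_{\Iw}(\QQ_{p,\infty},D)$ by their Theorem 4.4.8. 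You instead route everything through the ``naive'' Iwasawa cohomology $\varprojlim_n H^1(\QQ_{p,n},T)$, composing the already-quoted \cite[Corollary 4.4.11]{KPX} with the Proposition of the ``Iwasawa cohomology'' subsection in its local incarnation $K=\Qp$. The two arguments rest on overlapping inputs (Pottharst's finiteness and base-change results plus \S 4.4 of \cite{KPX}) but assemble them differently: yours is shorter given what is already stated in the text, while the paper's exhibits the isomorphism concretely as a limit of affinoid comparison isomorphisms, which makes the compatibility with specialisation at individual characters of $\Gamma$ (needed for the ``in particular'' clause and later for the reciprocity law) visible without having to chase it through two separate comparison theorems, as you do in your final paragraph. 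Two minor points to watch: the Proposition you invoke is stated for $M$ finite free over $A$ (with $M^\circ$ locally free over $A^\circ$), whereas an $A$-representation here is only assumed finitely generated and locally free, so strictly one should reduce to a trivialising affinoid cover; and the passage from $D^{\la}(\Gamma,\Qp)\htimes_{D_0(\Gamma,\Qp)}(-)$ to $D^{\la}(\Gamma,A)\otimes_{D_0(\Gamma,A)}(-)$ silently uses the finite generation of $H^1_{\Iw}(\QQ_{p,\infty},M)$ over $D_0(\Gamma,A)$ to drop the completion, which is indeed supplied by that same Proposition.
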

  
  \begin{proof}
   Let us choose an increasing sequence of affinoid discs $X_n \subseteq \cW$ whose union is $\cW$. Since we have $D^{\la}(\Gamma, \Qp) = \cO(\cW) = \varprojlim_n \cO(X_n)$, we can regard $D^{\la}(\Gamma, M)$ as a locally free sheaf of $G_{\Qp}$-representations on $\cW \times \operatorname{Max} A$, and we deduce that
   \[ H^1(\Qp, D^{\la}(\Gamma, M)) = \varprojlim_n H^1(\Qp, \cO(X_n) \htimes M), \]
   by \cite[Theorem 1.7]{Pottharst-analytic}. For each $n$, $X_n \times \operatorname{Max} A$ is an affinoid space, so we obtain
   \[ H^1(\Qp, \cO(X_n) \htimes M) = H^1(\Qp, \DD^\dag_{\rig}( \cO(X_n)\htimes M )), \]
   by \cite[Proposition 2.7]{Pottharst-analytic}. Finally, the inverse limit of the modules $\DD^\dag_{\rig}( \cO(X_n)\htimes M )$ is the module $\mathbf{Dfm}(\DD^\dag_{\rig}(M))$ considered in \cite[Theorem 4.4.8]{KPX}, where it is shown that
   \[ H^1(\Qp, \mathbf{Dfm}(D)) = H^1_{\Iw}(\QQ_{p, \infty},D) \]
   for any $(\varphi, \Gamma)$-module $D$ over $\sR_A$.
  \end{proof}
  
  Finally, if the base $A$ is a finite field extension of $\Qp$, then the functors $\DD_{\cris}(-)$ and $\DD_{\dR}(-)$ can be extended from $A$-linear representations of $G_{\Qp}$ to the larger category of $(\varphi, \Gamma)$-modules over $\sR_A$, and one has the following fact:
  
  \begin{theorem}[{Nakamura, see \cite{Nakamura-Iwasawa}}]
   If $A$ is a finite extension of $\Qp$, there exist Bloch--Kato exponential and dual-exponential maps
   \[ \exp_{\Qp, D}: \DD_{\dR}(D) \to H^1(\Qp, D)\]
   and
   \[ \exp^*_{\Qp, D^*(1)}: H^1(\Qp, D) \to \DD_{\dR}(D)\]
   for de Rham $(\varphi, \Gamma)$-modules $D$ over $\sR_A$, which are functorial in $D$ and are compatible with the usual definitions when $D = \DD^{\dag}_{\rig}(V)$ for a de Rham representation $V$.
  \end{theorem}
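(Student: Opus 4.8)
The plan is to transpose the original construction of Bloch and Kato into the category of $(\varphi, \Gamma)$-modules over $\sR_A$, replacing the period rings $\mathbf{B}_{\cris}$ and $\mathbf{B}_{\dR}$ by their module-theoretic incarnations. First I would attach to a de Rham $(\varphi, \Gamma)$-module $D$ over $\sR_A$, with $A$ a finite extension of $\Qp$, the localised and completed modules $\DD^+_{\mathrm{dif}}(D) \subseteq \DD_{\mathrm{dif}}(D)$ over $A \otimes_{\Qp} \Qp(\mu_{p^\infty})[[t]]$ and its localisation at $t$ (following Berger), the crystalline module $\DD_{\cris}(D) = (D[1/t])^{\Gamma}$, and $\DD_{\dR}(D) = \DD_{\mathrm{dif}}(D)^{\Gamma}$, filtered by $\Fil^i \DD_{\dR}(D) = (t^i \DD^+_{\mathrm{dif}}(D))^{\Gamma}$. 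Since $A$ is a field, these are all finite free of the expected rank and the formalism is essentially identical to the classical one after a harmless finite extension of coefficients: $\DD_{\dR}$ is exact on short exact sequences of de Rham modules, and for $D = \DD^{\dag}_{\rig}(V)$ with $V$ de Rham, Berger's comparison isomorphisms identify $\DD_{\cris}(D)$ and $\DD_{\dR}(D)$ (with their filtrations and Frobenii) with the classical invariants $\DD_{\cris}(V)$ and $\DD_{\dR}(V)$, compatibly with $H^i(\Qp, \DD^{\dag}_{\rig}(V)) = H^i(\Qp, V)$.

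Next I would construct $\exp_{\Qp, D}$ as a connecting homomorphism, the key input being the $(\varphi, \Gamma)$-module incarnation of the Bloch--Kato fundamental exact sequence: a short exact sequence of complexes, built from the Herr complexes of $D$ and $D[1/t]$ and from the $\Gamma$-complexes of the ``$\mathbf{B}_{\dR}$-type'' modules $\DD^{\pm}_{\mathrm{dif}}(D)$, relating the cohomologies of $D$, of $D[1/t]$ and of $\DD_{\mathrm{dif}}(D)$. Passing to the associated long exact sequence and using $H^0(\Gamma, \DD_{\mathrm{dif}}(D)) = \DD_{\dR}(D)$ together with $H^0(\Gamma, \DD^+_{\mathrm{dif}}(D)) = \Fil^0 \DD_{\dR}(D)$, one extracts a natural boundary map $\DD_{\dR}(D)/\Fil^0 \DD_{\dR}(D) \to H^1(\Qp, D)$, which is declared to be $\exp_{\Qp, D}$ (and extended by zero on $\Fil^0$ to a map out of all of $\DD_{\dR}(D)$); functoriality in $D$ is automatic from the naturality of the sequence. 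For $\exp^*_{\Qp, D^*(1)}$ I would not repeat the construction but invoke Tate duality for $(\varphi, \Gamma)$-modules over $\sR_A$ (Liu), under which the cup product $H^1(\Qp, D) \times H^1(\Qp, D^*(1)) \to H^2(\Qp, \sR_A(1)) \cong A$ is a perfect pairing of finite-dimensional $A$-vector spaces, together with the perfect de Rham pairing $\DD_{\dR}(D) \times \DD_{\dR}(D^*(1)) \to A$; then $\exp^*_{\Qp, D^*(1)}$ is the adjoint of $\exp_{\Qp, D^*(1)}$ with respect to these two pairings. Compatibility with the classical maps when $D = \DD^{\dag}_{\rig}(V)$ follows by unwinding definitions: under Berger's dictionary the fundamental exact sequence above is the $\DD^{\dag}_{\rig}$-image of the classical Bloch--Kato sequence tensored with $V$, so the boundary maps, and hence their duality-adjoints, agree with $\exp_{\Qp, V}$ and $\exp^*_{\Qp, V^*(1)}$.

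I expect the main obstacle to be establishing this fundamental exact sequence in the relative (Robba-ring) setting with the correct exactness --- in particular the surjectivity of the localisation map $D[1/t] \to \DD_{\mathrm{dif}}(D)$ for de Rham $D$ and the precise identification of the relevant kernels. In Berger's treatment over $\Qp$ these rest on explicit computations in $\mathbf{B}_{\dR}$, on the structure of the level-$n$ modules $\DD^{\dag, n}_{\rig}(D)$, and on the vanishing of the $\Gamma$-cohomology of the $t^i$-twists ($i \neq 0$) of the relevant period modules (a form of Tate's theorem). One must check that all of this persists with $A$-coefficients, which it does precisely because $A$ is finite over $\Qp$ --- this is why that hypothesis is imposed --- whereas the argument does not obviously extend to general affinoid $A$.
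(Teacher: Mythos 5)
This theorem is quoted in the paper without proof, as a citation of Nakamura \cite{Nakamura-Iwasawa}, and your sketch reproduces essentially the argument of that reference (itself modelled on Berger): one defines $\DD_{\dR}(D) = \DD_{\mathrm{dif}}(D)^\Gamma$ and $\DD_{\cris}(D) = (D[1/t])^\Gamma$, obtains $\exp_{\Qp, D}$ as the connecting homomorphism of the $(\varphi, \Gamma)$-module avatar of the Bloch--Kato fundamental exact sequence, obtains $\exp^*$ by adjunction under Liu's Tate duality for $(\varphi,\Gamma)$-modules over $\sR_A$, and deduces compatibility for $D = \DD^{\dag}_{\rig}(V)$ from Berger's comparison dictionary. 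The only imprecision is that the connecting map is defined on $\DD_{\dR}(D)$ modulo the image of $\Fil^0 \DD_{\dR}(D) \oplus \DD_{\cris}(D)^{\varphi = 1}$, not merely modulo $\Fil^0$; this is exactly as in the classical Bloch--Kato setting and does not affect the construction.
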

  
  
 \subsection{Perrin-Riou logarithms in families}

  Throughout this section, $A$ denotes a reduced affinoid algebra, with supremum norm $\| \cdot \|$, and $\alpha \in A^\times$.

  \begin{definition}
   We write $\sR_A(\alpha^{-1})$ for the free rank 1 $(\varphi, \Gamma)$-module over $\sR_A$ with basis vector $e$ such that $\varphi(e) = \alpha^{-1} e$ and $\gamma e = e$ for all $\gamma \in \Gamma$. We write $\sR^+_A(\alpha^{-1})$ for the submodule $\sR^+_A \cdot e$ of $\sR_A(\alpha^{-1})$.
   \end{definition}

   \begin{lemma}
    Suppose $\|\alpha\| \le 1$ and $\alpha - 1$ is not a zero-divisor in $A$. Then
    \[ \sR_A(\alpha^{-1})^{\psi = 1} \subseteq \sR^+_A(\alpha^{-1}). \]
   \end{lemma}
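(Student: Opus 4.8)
The plan is to reduce the statement to a concrete assertion about the operator $\psi$ on Laurent tails, using the fact that $\psi$ enlarges radii of convergence. First I would unwind the module structure: writing a general element of $\sR_A(\alpha^{-1})$ as $g\,e$ with $g\in\sR_A$, one checks from $\varphi(e)=\alpha^{-1}e$ and the defining property $\psi\circ\varphi=\mathrm{id}$ that $\psi(g\,e)=\alpha\,\psi(g)\,e$; hence $g\,e$ is $\psi$-invariant if and only if $\psi(g)=\alpha^{-1}g$ in $\sR_A$. So the lemma becomes: every $g\in\sR_A$ with $\psi(g)=\alpha^{-1}g$ already lies in $\sR_A^{+}$.

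Next I would show that $g$ extends to the punctured open disc. From $\psi(g)=\alpha^{-1}g$ we get $\psi^n(g)=\alpha^{-n}g$, i.e.\ $g=\alpha^n\psi^n(g)$ for all $n$. The operator $\psi$ improves the annulus of convergence: if $f$ is analytic on $\{0<v_p(\pi)<\varepsilon\}$ then $\psi(f)$ is analytic on $\{0<v_p(\pi)<p\varepsilon\}$, because $v_p(\varphi(\pi))=p\,v_p(\pi)$ for $v_p(\pi)$ small and the defining formula for $\psi$ only involves the values of $f$ at the points $\zeta(1+\pi)-1$, which have the same valuation as $\pi$. Iterating, $\psi^n(g)$ — and hence $g=\alpha^n\psi^n(g)$ — is analytic on $\{0<v_p(\pi)<p^n\varepsilon\}$ for every $n$, so $g$ is analytic on the whole punctured disc. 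Its Laurent expansion therefore splits as $g=g^{+}+g^{-}$ with $g^{+}\in\sR_A^{+}$ and $g^{-}$ the principal part at $\pi=0$ (a sum of strictly negative powers of $\pi$, convergent away from $0$); it remains to prove $g^{-}=0$.

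Since $\psi$ maps $\sR_A^{+}$ into itself, the equation $\psi(g)=\alpha^{-1}g$ forces $(\psi-\alpha^{-1})(g^{-})\in\sR_A^{+}$, so the induced operator $\bar\psi$ (projection onto the principal part composed with $\psi$) satisfies $\bar\psi(g^{-})=\alpha^{-1}g^{-}$. Now $\psi$ cannot raise the order of pole at $\pi=0$ (applying $\varphi\circ\psi$ to a function with a pole only at $0$, the only term of the $\zeta$-sum contributing a pole at $\pi=0$ is the one with $\zeta=1$), so $\bar\psi$ preserves the decreasing filtration of principal parts by pole order; and comparing $\pi^{-m}$-coefficients in $\varphi(\psi(\pi^{-m}))=\tfrac1p\sum_\zeta(\zeta(1+\pi)-1)^{-m}$ shows $\bar\psi$ acts on the graded piece of pole order exactly $m$ by the scalar $p^{m-1}$. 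The hypothesis $\|\alpha\|\le 1$ gives $|\alpha^{-1}(x)|\ge 1$ for every $x\in\operatorname{Max}(A)$, while $|p^{m-1}|<1$ for $m\ge 2$; hence $p^{m-1}-\alpha^{-1}$ is nowhere vanishing, i.e.\ a unit in $A$, for each $m\ge 2$, and $1-\alpha^{-1}=\alpha^{-1}(\alpha-1)$ is a non-zero-divisor by hypothesis. One then deduces that $\bar\psi-\alpha^{-1}$ is injective on the space of principal parts, so $g^{-}=0$ and $g\in\sR_A^{+}$.

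The main obstacle is the last step: the principal-part space is infinite-dimensional in the direction of increasing pole order, so the triangularity of $\bar\psi$ does not by itself yield injectivity of $\bar\psi-\alpha^{-1}$; one must argue with the natural Fr\'echet topology, handling the pole-order-$1$ graded piece separately since there $1-\alpha^{-1}$ is only assumed to be a non-zero-divisor, not a unit, and using that the off-diagonal part of $\bar\psi-\alpha^{-1}$ strictly lowers the pole order. An alternative that sidesteps this subtlety is to reduce to the case $A=E$ a finite extension of $\Qp$: as $A$ is reduced and $\alpha-1$ is a non-zero-divisor, the locus $\{\alpha=1\}$ contains no irreducible component of $\operatorname{Max}(A)$, so the specialization $g_x$ lies in $\sR_{E_x}^{+}$ (by the case of a field) for a Zariski-dense set of $x$, whence all negative Laurent coefficients of $g$ vanish in $A$ — and an element of $\sR_A$ with no negative Laurent coefficients automatically lies in $\sR_A^{+}$, since elements of $\sR_A$ converge on a full half-open annulus $\{0<v_p(\pi)<\varepsilon\}$.
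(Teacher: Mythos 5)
Your reduction to the statement ``$g\in\sR_A$ with $\psi(g)=\alpha^{-1}g$ lies in $\sR_A^+$'', the extension of $g$ to the whole punctured open disc via $g=\alpha^n\psi^n(g)$, and the computation that $\bar\psi$ acts on the pole-order-$m$ graded piece by $p^{m-1}$ are all correct, and the specialisation argument in your final paragraph is a legitimate way to dispose of the non-zero-divisor subtlety at $m=1$. But there is a genuine gap exactly where you flag it, and it is not a removable technicality: the injectivity of $\bar\psi-\alpha^{-1}$ on principal parts does not follow from triangularity with invertible diagonal entries. After extending $g$ over the punctured disc you have no control on the order of the singularity at $\pi=0$; the principal part is in general an infinite series $\sum_{m\ge 1}b_m\pi^{-m}$, so the downward induction from the top pole order that your triangularity suggests has no starting point, and writing $G=\bigl(-(D-\alpha^{-1})^{-1}N\bigr)^kG$ and letting $k\to\infty$ requires quantitative bounds on the off-diagonal entries of $\bar\psi$ (which have unbounded denominators as $m$ grows) against the decay of the $b_m$. ``One must argue with the natural Fr\'echet topology'' is the statement of the problem, not its solution, and your alternative route via specialisation only reduces to the case $A=E$ a field, where exactly the same infinite system appears. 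Note also that this is the one place where $\|\alpha\|\le 1$ must do real analytic work: unit diagonal entries alone cannot be the mechanism, since the lemma genuinely fails for suitable $\alpha$ with $\|\alpha\|>1$ even though all but at most one diagonal entry $p^{m-1}-\alpha^{-1}$ remains a unit.

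For comparison, the paper does not prove the lemma at all — it cites Lemme 1.11 of Colmez's \emph{La s\'erie principale unitaire de $\mathrm{GL}_2(\Qp)$} — and the argument there closes precisely this gap by a growth estimate on circles rather than a coefficientwise inversion. Setting $v_r(h)=\inf\{v_p(h(x)):v_p(x)=r\}$, the defining formula for $\psi$ gives $v_{r'}(\psi(h))\ge\min\bigl(v_r(h),\,v_{1/(p-1)}(h)\bigr)-1$ with $r'=v_p(\varphi(\pi))$; iterating $g=\alpha^n\psi^n(g)$ and using $v_p(\alpha)\ge 0$ shows that $v_r(g)$ decreases at most linearly in the number of iterations as $r\to\infty$, whereas a nonzero coefficient $b_m$ with $m\ge 2$ forces $v_r(g)\le v_p(b_m)-mr$ to decrease exponentially. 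This kills every pole of order $\ge 2$; the surviving simple pole is then eliminated by the exact identity $\psi(\pi^{-1})=\pi^{-1}$, which converts the eigenvalue equation into $(1-\alpha^{-1})b_1=0$ and lets the non-zero-divisor hypothesis (or your specialisation argument) finish. If you want a self-contained proof, you should replace the ``Fr\'echet'' step by an estimate of this kind.
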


   \begin{proof}
    This follows from Lemma 1.11 of \cite{Colmez-serie-principale}. Cf.~\cite[\S 4.1]{Hansen-Iwasawa}.
   \end{proof}

   We use this lemma to define a Perrin-Riou big logarithm map for  $\sR_A(\alpha^{-1})$ when $\alpha-1$ is not a zero-divisor, following closely the construction in \cite[\S 4.2]{Hansen-Iwasawa}, as the composition
   \begin{equation}
    \label{eq:BigLog}
    \begin{aligned}
    \cL_{\sR_A(\alpha^{-1})} : H^1_{\Iw}(\QQ_{p, \infty}, \sR_A(\alpha^{-1})) = \sR_A(\alpha^{-1})^{\psi = 1} \rTo^\cong& \sR_A^+(\alpha^{-1})^{\psi = 1}\\
     \rTo^{1 - \varphi}& \sR_A^+(\alpha^{-1})^{\psi = 0}\\
    \rTo^{\mathfrak{M}}& A \htimes \cO(\cW)
   \end{aligned}
   \end{equation}
   where the third arrow is the base-extension to $A$ of the Mellin transform (and $\cW$ is weight space). Note that our assumption that $\alpha - 1$ is not a zero-divisor in $A$ implies that $\sR_A(\alpha^{-1})^{\varphi = 1} = 0$, and hence that $\cL_{\sR_A(\alpha^{-1})}$ is injective.
   
 \subsection{Triangulations}

  \begin{definition}
   Let $D$ be a $(\varphi, \Gamma)$-module over $\sR \htimes A$ which is locally free of rank 2. A \emph{triangulation} of $D$ is a short exact sequence of $(\varphi, \Gamma)$-modules over $\sR \htimes A$,
   \[ 0 \to \sF^+ D \to D \to \sF^- D \to 0, \]
   where the modules $\sF^\pm D$ are locally free of rank 1 over $\sR \htimes A$.
  \end{definition}

  \begin{theorem}[Ruochuan Liu, \cite{Liu-triangulation}]
   \label{thm:triangulation}
   Let $(f, \alpha)$ be as in Theorem \ref{thm:colemanrep}. Then one can find an affinoid disc $V \subset \cW$ containing $k$ such that the $(\varphi, \Gamma)$-module
   \[ D_V(\cF)^* \coloneqq \DD^{\dag}_{\rig}(M_V(\cF)^*)\]
   over $\cO(V)$ admits a canonical triangulation, with $\sF^- D_V(\cF)^* \cong \sR_A(\alpha_{\cF}^{-1})$ and $\sF^+ D_V(\cF)^* \cong \sR_A(\alpha_{\cF} \cdot \varepsilon_{\cF}(p)^{-1})(1 + \kappa_V)$.
  \end{theorem}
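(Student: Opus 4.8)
The plan is to apply Ruochuan Liu's global triangulation theorem \cite{Liu-triangulation} to the family of $(\varphi, \Gamma)$-modules $D_V(\cF)^* = \DD^{\dag}_{\rig}(M_V(\cF)^*)$ furnished by the Berger--Colmez theorem, and then to pin down the graded pieces by specialising at classical weights. First I would shrink $V$ to an affinoid disc containing the weight $k_0$ of $f_\alpha$ over which $M_V(\cF)^*$ is defined and locally free of rank $2$ over $A := \cO(V)$ (Theorem \ref{thm:colemanrep}), and on which $a_p(\cF)$ lies in $A^\times$ with bounded $p$-adic valuation. By Theorem \ref{thm:colemanfamily}, after a further shrinking I may assume that every integer weight $k \in V$ is a classical noble point; such weights form a Zariski-dense subset of $V$, accumulating at $k_0$.

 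The next step is to verify that $D_V(\cF)^*$ is densely pointwise trianguline with analytically varying parameters and bounded slopes, which is the input required for Liu's theorem. At a classical $k \in V$ the underlying newform has level prime to $p$, so $M_E(\cF_k)^*|_{G_{\Qp}}$ is crystalline; its Dieudonn\'e module is a $2$-dimensional $E$-space on which $\varphi$ acts with the two roots $\alpha_{\cF_k} = a_p(\cF)(k)$ and $\beta_{\cF_k}$ of the Hecke polynomial, which are distinct by the $p$-regularity clause of Definition \ref{def:noble}. The $U_p$-eigenvalue $\alpha_{\cF_k}$ selects a $\varphi$-stable refinement, and the non-criticality clause of Definition \ref{def:noble} ensures that this refinement is non-critical in the sense of Bella\"iche--Chenevier, so $\DD^{\dag}_{\rig}(M_E(\cF_k)^*)$ is trianguline with a rank-$1$ sub whose parameter --- together with the Hodge--Tate weights $\{0, k+1\}$ --- varies rigid-analytically in $k$, since both $a_p(\cF)$ and $\kappa_V$ are rigid-analytic on $V$.

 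Granting these inputs, Liu's theorem (in the form that applies along a Coleman family through a noble point of the eigencurve) then produces, after a final shrinking of $V$, a triangulation $0 \to \sF^+ D_V(\cF)^* \to D_V(\cF)^* \to \sF^- D_V(\cF)^* \to 0$ over $\sR_A$ with $\sF^\pm$ locally free of rank $1$; since $V$ is a disc, $A$ has trivial Picard group, so $\sF^- D_V(\cF)^* \cong \sR_A(\delta^-)$ and $\sF^+ D_V(\cF)^* \cong \sR_A(\delta^+)$ for continuous characters $\delta^\pm \colon \Qp^\times \to A^\times$. The triangulation is canonical because $\sF^+ D_V(\cF)^*$ is characterised as the unique rank-$1$ $(\varphi,\Gamma)$-submodule whose specialisation at each classical point is the line cut out by the $U_p$-refinement, and any morphism of the ambient line bundles over the reduced space $V$ that vanishes on a Zariski-dense set vanishes identically. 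To identify $\delta^\pm$ I would note that each is determined by its restriction to $\Zp^\times$ and its value at $p$, both of which are rigid-analytic on $V$ and hence determined by their values at the dense set of classical weights; comparing with the $\alpha$-refinement of the crystalline module at such a weight shows that $\sF^- D_V(\cF)^*$ is unramified with $\varphi$ acting as $a_p(\cF)^{-1}$, i.e. $\cong \sR_A(\alpha_{\cF}^{-1})$, whereupon the determinant identity $\det D_V(\cF)^* \cong \sR_A(\varepsilon_{\cF}(p)^{-1})(1+\kappa_V)$ (the nebentypus of $\cF$ being prime to $p$, and the Hodge--Tate weight $1+\kappa_V$ being carried by the sub) forces $\sF^+ D_V(\cF)^* \cong \sR_A(\alpha_{\cF}\,\varepsilon_{\cF}(p)^{-1})(1+\kappa_V)$.

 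The main obstacle is the middle step --- more precisely, the identification at classical points of the $\alpha$-refinement with the $U_p$-eigenvalue on $M_V(\cF)^*$, and the verification that non-criticality really forces the $\alpha$-eigenline to be the \emph{sub} $\sF^+$ rather than merely a sub of the quotient. This rests on local--global compatibility at $p$ for the Galois representations of modular forms (via the Eichler--Shimura/Faltings comparison, which identifies $\DD_{\cris}$ of $M_E(\cF_k)^*$ with a Hecke-eigenspace in de Rham cohomology), and it is exactly here that the non-criticality hypothesis of Definition \ref{def:noble} is indispensable --- it is what makes the triangulation genuine (not merely a triangulation after a birational modification of $V$) and canonical in a neighbourhood of $f_\alpha$. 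The remaining points --- accumulation of classical points, analytic variation of $a_p(\cF)$ and of the Hodge--Tate weights, freeness of rank-$1$ modules over $\sR_A$, and the bookkeeping for the determinant --- are routine.
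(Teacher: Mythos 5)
The paper gives no proof of this statement at all: it is quoted as a black-box citation of Liu's global triangulation theorem, so there is no argument of the authors' own to compare yours against. Your sketch is a reasonable account of how the statement is extracted from that theorem, and it correctly identifies the two essential inputs supplied by nobility --- $p$-regularity (distinct crystalline $\varphi$-eigenvalues at classical specialisations) and non-criticality of the chosen refinement --- as well as the standard mechanism for pinning down the parameters $\delta^{\pm}$, namely specialisation at a Zariski-dense set of classical crystalline points combined with the determinant of the family.

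Two small cautions, neither of which is a substantive gap. First, you cannot literally arrange that \emph{every} integer weight in $V$ is a classical noble point: integer weights $k$ with $k+1 \le v_p(a_p(\cF)(k))$ need not give classical forms. What Liu's theorem requires, and what does hold after shrinking $V$, is Zariski-density (indeed accumulation at $k_0$) of the non-critical classical crystalline points, which is exactly the cofinite set of integer weights you implicitly use. Second, watch the sub/quotient bookkeeping: in the theorem as stated it is the \emph{quotient} $\sF^{-}D_V(\cF)^*$ that is unramified with $\varphi = \alpha_{\cF}^{-1}$ (consistent with the paper's later convention that $\varphi$ acts on $\DD_{\cris}(M_E(f)^*)^{\alpha_f}$ by $\alpha_f^{-1}$), while the sub $\sF^{+}$ carries the Hodge--Tate twist $1+\kappa_V$. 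Your final identification of the characters agrees with this, but your closing paragraph describes ``the $\alpha$-eigenline'' as becoming the sub $\sF^{+}$, which reads as the opposite convention; since the crystalline period attached to a $\varphi$-eigenvector generates the rank-one \emph{sub} of the triangulation in the refinement picture, while the $U_p$-eigenvalue is read off from the unramified \emph{quotient} of the dual module, it is worth being explicit about which normalisation you are using at that step.
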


 \subsection{Eichler--Shimura isomorphisms}
 
  The last technical ingredient needed to proceed to the proof of our explicit reciprocity law is the following:
  
  \begin{theorem}[Eichler--Shimura relation in families]
   \label{thm:eichlershimura}
   In the setting of Theorem \ref{thm:triangulation}, after possibly shrinking $V$, there is a canonical $\cO(V)$-basis vector
   \[ \omega_{\cF} \in \left(\sF^- D_V(\cF)(1+\kappa_V + \varepsilon_\cF^{(p)})\right)^{\Gamma = 1}\]
   such that for every integer weight $t \ge 0$ in $V$, the specialisation of $\omega_{\cF}$ at $t$ coincides with the image of the differential form $\omega_{f_t}$ attached to the normalised eigenform $f_t$.
  \end{theorem}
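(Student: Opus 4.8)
The plan is to construct $\omega_{\cF}$ in three stages: first pin down the module it should live in, then determine it at each classical weight, then interpolate over $V$. I would begin by dualising the canonical triangulation of $D_V(\cF)^* = \DD^{\dag}_{\rig}(M_V(\cF)^*)$ supplied by Theorem~\ref{thm:triangulation}: this yields a canonical triangulation $0 \to \sF^+ D_V(\cF) \to D_V(\cF) \to \sF^- D_V(\cF) \to 0$ in which $\sF^- D_V(\cF)$ is the $\sR_A$-linear dual of $\sF^+(D_V(\cF)^*)$, and an elementary computation with the characters appearing in Liu's theorem shows that, after the twist in the statement, $\sF^- D_V(\cF)(1+\kappa_V+\varepsilon_{\cF}^{(p)}) \cong \sR_A(a_p(\cF)^{-1})$, an unramified rank-one $(\varphi,\Gamma)$-module (note $v_p(a_p(\cF)) \ge 0$ on $V$, and after shrinking $V$ one may assume $a_p(\cF) - 1$ is not a zero-divisor in $A = \cO(V)$). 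Since $(\sR_A)^{\Gamma} = A$ and $a_p(\cF)^{-1}$ is unramified, $\bigl(\sF^- D_V(\cF)(1+\kappa_V+\varepsilon_{\cF}^{(p)})\bigr)^{\Gamma=1}$ is free of rank one over $\cO(V)$, and after a further shrinking of $V$ its formation commutes with specialisation at the finitely many classical weights relevant to us.

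Next I would analyse the classical specialisations. For an integer weight $t \ge 0$ in $V$, Liu's theorem shows that the triangulation of $D_V(\cF)^*$ specialises at $t$ to the triangulation of $\DD^{\dag}_{\rig}(M_E(f_t)^*)$ attached to the refinement given by the $U_p$-eigenvalue $a_p(\cF)(t)$ of $f_t$; dually, $\sF^- D_V(\cF)$ specialises to the de~Rham rank-one sub-$(\varphi,\Gamma)$-module of $\DD^{\dag}_{\rig}(M_E(f_t))$ of Hodge--Tate weight $1+t$. Combining the classical Eichler--Shimura isomorphism $\DD_{\dR}(M_E(f_t)) \cong H^1_{\dR}$ of the relevant local system (under which $\Fil^{t+1}$ is the line spanned by $\omega_{f_t}$) with a comparison of Hodge--Tate weights, one identifies $\bigl(\sF^- \DD^{\dag}_{\rig}(M_E(f_t))(1+t+\varepsilon_{\cF}^{(p)})\bigr)^{\Gamma=1}$ canonically with that Hodge-filtration line, i.e.\ the line through (the image of) $\omega_{f_t}$. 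Hence any $\cO(V)$-generator of the module of the previous paragraph specialises at $t$ to a nonzero scalar multiple of $\omega_{f_t}$; what is left is to produce a generator whose specialisations are exactly the $\omega_{f_t}$.

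For that I would invoke the overconvergent Eichler--Shimura morphism of Andreatta--Iovita--Stevens: a canonical $G_{\Qp}$-equivariant map from the overconvergent cohomology $M_V(\cF)$ to the $\cO(V)$-module of families of overconvergent modular forms of weight $\kappa_V$, specialising at each classical $t$ to the classical Eichler--Shimura map and carrying the family $\cF$ itself to a distinguished generator. Transporting this generator through the Berger--Colmez comparison and Theorem~\ref{thm:triangulation}, and through the triangulation of the first paragraph --- using that the de~Rham comparison for $M_V(\cF)$ is compatible with specialisation at classical weights --- would give an element $\omega_{\cF} \in \bigl(\sF^- D_V(\cF)(1+\kappa_V+\varepsilon_{\cF}^{(p)})\bigr)^{\Gamma=1}$ with $\omega_{\cF}|_t = \omega_{f_t}$ for all classical $t \ge 0$ in $V$. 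Since $\omega_{f_{k_0}} \ne 0$, after shrinking $V$ once more $\omega_{\cF}$ is nowhere vanishing on $V$, hence a genuine $\cO(V)$-basis vector; and it is canonical since it is pinned down by its interpolation property.

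The hard part will be this last step: reconciling the normalisation of the family of differential forms coming from Andreatta--Iovita--Stevens with that coming from Liu's triangulation, i.e.\ checking that the de~Rham/crystalline comparison for $M_V(\cF)$ is compatible, uniformly over $V$, both with specialisation at classical weights and with the canonical triangulations on the two sides. One also has to confirm that the cited overconvergent Eichler--Shimura construction is available in the generality needed here, and to shrink $V$ enough that freeness of the module of invariants, the triangulation, the comparison, and the non-vanishing of $\omega_{\cF}$ all hold simultaneously.
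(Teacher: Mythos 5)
Your outline matches the paper's strategy -- Liu's triangulation, the observation that the twisted $\sF^-$ piece is unramified of rank one, and the Andreatta--Iovita--Stevens overconvergent Eichler--Shimura isomorphism as the source of the distinguished generator. But the step you yourself flag as ``the hard part'' is a genuine gap as written: you propose to transport the AIS generator through a de Rham/crystalline comparison for $M_V(\cF)$, uniformly over $V$ and compatibly with the triangulation. No such de Rham comparison in families is available off the shelf, and the classical-weight-by-classical-weight argument of your second paragraph only pins down $\omega_\cF|_t$ up to a scalar that could a priori vary badly with $t$; without a family-level comparison you cannot conclude that a single $\cO(V)$-generator hits exactly $\omega_{f_t}$ at every $t$.

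The observation that closes this gap is that the AIS isomorphism is a \emph{Hodge--Tate} comparison, i.e.\ it identifies $H^0(X(w), \omega^{\dagger,\kappa_V+2}_V)^{\le\lambda}$ with $\left(M \htimes_{\Qp} \CC_p\right)^{G_{\Qp}} \cong \DD_{\mathrm{Sen}}(M)^{\Gamma}$, and Sen theory already suffices. Indeed $\DD_{\mathrm{Sen}}\left(\sF^+ D_V(\cF)(1+\kappa_V)\right)^{\Gamma} = 0$, so composing $\operatorname{comp}_V$ with projection to $\sF^-$ gives an isomorphism onto $\DD_{\mathrm{Sen}}\left(\sF^- D_V(\cF)(1+\kappa_V)\right)^{\Gamma}$; and because $D^- = \sF^- D_V(\cF)(1+\kappa_V)$ is unramified (after the further twist by $\varepsilon_\cF^{(p)}$), one has $\DD_{\mathrm{Sen}}(D^-)^{\Gamma} = (D^-)^{\Gamma}$ -- the Sen invariants \emph{are} the module you want, with no descent from $\CC_p$-coefficients to de Rham periods required. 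One then takes $\omega_\cF$ to be the image of $\tau\cdot\cF$, where $\tau$ is the Gauss sum of $\varepsilon_\cF^{(p)}$, and the interpolation property at integer weights follows because $\operatorname{comp}_V$ interpolates Faltings' comparison fibre by fibre. So your architecture is right, but you should replace the appeal to a family de Rham comparison by this Sen-theoretic identification, which is exactly what makes the normalisation problem disappear.
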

  
  This is a minor modification of results of Ruochuan Liu (in preparation); we outline the proof below. The starting point is the following theorem:
  
  \begin{theorem}[Andreatta--Iovita--Stevens, \cite{andreattaiovitastevens}]
   For any integer $k_0 \ge 0$, and real $\lambda < k_0 + 1$, we can find an open disc $V \subset \cW$ containing $k_0$ and a Hecke-equivariant isomorphism
   \[ H^0(X(w), \omega^{\dagger, \kappa_V + 2}_V)^{\le \lambda}\rTo_{\cong}^{\operatorname{comp}_V}\left(H^1_{\et}(Y, \cD^\circ_{V, m}(1))^{\le h} \htimes \CC_p  \right)^{G_{\Qp}}
   \]
   interpolating Faltings' Hodge--Tate comparison isomorphisms for each $k \in V$. Here $X(w)$ is a rigid-analytic neighbourhood of the component of $\infty$ in the ordinary locus of the compactification $X$ of $Y$; and $\omega^{\dagger, \kappa_V + 2}_V$ is a certain sheaf of $\cO(V)$-modules on $X(w)$, whose specialisation at any integer $k \ge 0 \in V$ is the $(k+2)$-th power of the Hodge bundle for every $k \in V$.
  \end{theorem}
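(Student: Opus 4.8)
Throughout I will write $\lambda$ for the slope bound (the superscript $\le h$ in the target should read $\le\lambda$). The plan is to construct the comparison morphism $\operatorname{comp}_V$ directly on the (pro-Kummer-)\'etale site of the rigid-analytic modular curve, following Faltings' proof of the Hodge--Tate comparison, but carried out in a form which is manifestly functorial in the coefficient module --- so that it survives the passage from a single sheaf $\TSym^k\sH$ to the pro-sheaves $\cD^\circ_{V,m}$ --- and which is compatible with the Hecke and Galois actions. Once the map and its specialisations are in hand, bijectivity on small-slope parts will follow from a density argument over the weight disc, using Coleman's classicality theorem and the classical ($k$ fixed) Eichler--Shimura isomorphism as the base case.

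\textbf{Step 1 (geometric input: the Hodge--Tate torsor over $X(w)$).} On a strict neighbourhood $X(w)$ of the ordinary locus the canonical subgroup of the universal (semi-abelian) scheme exists, and the Faltings--Fontaine Hodge--Tate exact sequence relating $\sH_{\Zp}\otimes\widehat{\cO}_{X(w)}$ to the Hodge bundle $\omega_{\cE}$ and its inverse can be refined, on the pro-Kummer-\'etale site, to an overconvergent $T_0$-torsor $\mathbb{W}_w$ (cf.\ \cite{andreattaiovitastevens}). By construction $\omega^{\dagger,\kappa_V+2}_V$ is the sheaf of functions on $\mathbb{W}_w$ homogeneous of weight $\kappa_V$ (up to the usual shift by $2$), so it is, levelwise and in the limit, the $\widehat{\cO}$-linear \emph{dual} of the restriction of $\cD^\circ_{V,m}$ to $X(w)$. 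This duality produces a pairing of pro-Kummer-\'etale sheaves $\bigl(\omega^{\dagger,\kappa_V+2}_V\otimes\widehat{\cO}_X\bigr)\times\bigl(\cD^\circ_{V,m}\otimes\widehat{\cO}_X\bigr)\to\widehat{\cO}_X(\text{twist})$, which is the source of the comparison map.

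\textbf{Step 2 (construction of $\operatorname{comp}_V$ and interpolation).} Applying $H^1$ of the pro-Kummer-\'etale site to this pairing, using Faltings' almost-purity/primitive comparison theorem at each finite level $\ZZ/p^n\ZZ$, passing to the inverse limit over $n$ and inverting $p$, one obtains a $G_{\Qp}$- and Hecke-equivariant map $H^0\bigl(X(w),\omega^{\dagger,\kappa_V+2}_V\bigr)\htimes\CC_p\to\bigl(H^1_{\et}(Y,\cD^\circ_{V,m}(1))\htimes\CC_p\bigr)$; projecting to $G_{\Qp}$-invariants (harmless on the source, where $G_{\Qp}$ acts only through $\CC_p$) gives $\operatorname{comp}_V$. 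Because the whole construction is functorial in the coefficient sheaf, reducing modulo the ideal $\varpi_k$ and using the maps $\rho_k$ of Proposition \ref{prop:diagram2} identifies the specialisation of $\operatorname{comp}_V$ at any integer $k\ge 0$ in $V$ with the $\CC_p$-linear extension of Faltings' classical $p$-adic Eichler--Shimura morphism $H^0(X,\omega^{k+2})\htimes\CC_p\to\bigl(H^1_{\et}(Y,\TSym^k\sH(1))\otimes\CC_p\bigr)^{G_{\Qp}}$; this is exactly the asserted interpolation property.

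\textbf{Step 3 (bijectivity on small-slope parts).} One first checks that $\operatorname{comp}_V$ is injective and that, after shrinking $V$ about $k_0$ using $\lambda<k_0+1$, both its source and its target acquire slope $\le\lambda$ decompositions over $\cO(V)$ with finite locally free slope $\le\lambda$ parts --- this uses Coleman's spectral theory for $U_p$ together with the control/classicality results already invoked in the paper (the Stevens control theorem). On slope $\le\lambda$ parts $\operatorname{comp}_V$ is then a map of finite locally free $\cO(V)$-modules of equal rank, so its cokernel is a coherent sheaf supported on a proper analytic --- hence finite --- subset of $V$. But for every classical integer weight $k\in V$ with $k+1>\lambda$ the slope $\le\lambda$ specialisation of $\operatorname{comp}_V$ is Faltings' classical isomorphism intersected with the small-slope subspace, which is bijective because small-slope overconvergent forms are classical (Coleman) and classical $p$-adic Eichler--Shimura is an isomorphism on the relevant $\CC_p$-eigenspaces. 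Such $k$ are Zariski-dense in $V$, so the cokernel vanishes, and by injectivity so does the kernel; hence $\operatorname{comp}_V$ is an isomorphism on slope $\le\lambda$ parts.

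The main obstacle is Step 2: Faltings' comparison is naturally phrased for a single torsion local system, so one must re-derive it \emph{uniformly and functorially} in the level $n$ and in the coefficient module, and then show that forming $H^1$ of the pro-Kummer-\'etale site, taking the inverse limit over $n$, base-changing to $\CC_p$, and passing to the small-slope part all commute --- in particular that the $R^1\varprojlim$ terms and the discrepancy between integral and rational pro-Kummer-\'etale cohomology are killed in the limit. It is precisely here that the almost-mathematics estimates must be combined with the finiteness supplied by the slope decomposition, which is the technical heart of \cite{andreattaiovitastevens}.
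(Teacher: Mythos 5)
You should first be aware that the paper contains no proof of this statement at all: it is quoted as an external input, attributed to Andreatta--Iovita--Stevens \cite{andreattaiovitastevens}, and is used only as the starting point for the proof of Theorem \ref{thm:eichlershimura}. So there is no internal argument to compare yours against; what you have written is an attempted reconstruction of the proof of the cited theorem, and it must be judged on its own terms.

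Judged that way, your outline names the right ingredients (the Hodge--Tate/dlog construction over a strict neighbourhood $X(w)$ of the ordinary locus, functoriality in the coefficient module, slope decompositions, specialisation to the classical Eichler--Shimura isomorphism), but two steps have genuine gaps. In Step 2, a pairing of sheaves $\bigl(\omega^{\dagger,\kappa_V+2}_V\otimes\widehat{\cO}\bigr)\times\bigl(\cD^\circ_{V,m}\otimes\widehat{\cO}\bigr)\to\widehat{\cO}(\ast)$ does not by itself yield a map from $H^0$ of the first sheaf to $H^1$ of the second; an extra ingredient is needed (in \cite{andreattaiovitastevens} the basic morphism in fact goes the other way, from overconvergent modular symbols to overconvergent forms, built by evaluating Kummer-\'etale cohomology classes against the torsor), and you must also explain how cohomology over the open subspace $X(w)$ is related to $H^1_{\et}$ of the whole curve $Y$ --- this is where the $U_p$-operator's improvement of the radius of overconvergence, i.e.\ the slope theory, enters, and it is not automatic. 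The premise of Step 1 is also off: $\omega^{\dagger,\kappa_V+2}_V$ is built from weight-$\kappa_V$ functions on a torsor under a much smaller group (trivialising only the image of the dlog map), so it is not the $\widehat{\cO}$-linear dual of $\cD^\circ_{V,m}|_{X(w)}$ as you assert. In Step 3, the density argument presupposes exactly the hard part of the theorem: that $\bigl(H^1_{\et}(Y,\cD^\circ_{V,m}(1))^{\le\lambda}\htimes\CC_p\bigr)^{G_{\Qp}}$ is finite locally free over $\cO(V)$ of the same rank as $H^0(X(w),\omega^{\dagger,\kappa_V+2}_V)^{\le\lambda}$, and that forming $G_{\Qp}$-invariants commutes with specialisation at classical weights. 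Invariants of a family under $G_{\Qp}$ do not in general commute with base change; controlling this requires Sen theory in families (compare how the paper itself, in proving Theorem \ref{thm:eichlershimura}, passes through $\DD_{\mathrm{Sen}}$ and the triangulation), and it is precisely this analysis that produces the possible exceptional weights and forces the shrinking of $V$ in \cite{andreattaiovitastevens}; it cannot be replaced by ``equal rank plus Zariski density''.
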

  
  \begin{proof}[Proof of Theorem \ref{thm:eichlershimura}]
   We translate the statement of the above theorem into the language of $(\varphi, \Gamma)$-modules. For any family of $G_{\Qp}$-representations $M$ over an affinoid algebra $A$, we have a canonical isomorphism
   \[ \left(M \htimes_{\Qp} \CC_p\right)^{G_{\Qp}} \cong \DD_{\mathrm{Sen}}(M)^{\Gamma}, \]
   where $\DD_{\mathrm{Sen}}(M)$ is defined in terms of the $(\varphi, \Gamma)$-module $\DD^{\dag}_{\mathrm{rig}}(M)$. Moreover, $\DD_{\mathrm{Sen}}\left(\sF^+ D_V(\cF)(1 + \kappa_V)\right)^{\Gamma}$ is zero. Hence, by composing $\operatorname{comp}_V$ with the projection to $\sF^-$, we have an isomorphism
   \[ H^0(X(w), \omega_V^{\kappa_V + 2})[\cF] \rTo^\cong \DD_{\mathrm{Sen}}\left(\sF^- D_V(\cF)(1 + \kappa_V)\right)^{\Gamma}. \]
   The left-hand side is free of rank 1, spanned by $\tau \cdot \cF$ where $\tau$ is the Gauss sum of $\varepsilon_\cF^{(p)}$. On the other hand, since the $(\varphi, \Gamma)$-module $D^- = \sF^- D_V(\cF)(1 + \kappa_V)$ is unramified, we have $\DD_{\mathrm{Sen}}(D^-)^\Gamma = (D^-)^{\Gamma}$.
  \end{proof}
  
  \begin{corollary}
   Under the same hypotheses as Theorem \ref{thm:eichlershimura}, possibly after shrinking $V$ further, there is a $\cO(V)$-basis vector 
   \[ \eta_{\cF} \in \left(\sF^+ D_V(\cF)\right)^{\Gamma = 1}\]
   with the property that for every classical specialisation $\cF_t$ of $\cF$, the specialisation of $\eta_{\cF}$ at $t$ is the unique vector whose cup product with the differential $\omega_{\overline{\cF_t}}$ attached to the complex conjugate $\overline{\cF}_t$ of $\cF_t$ is given by
   \[ \frac{1}{\lambda_N(\cF_t) \cdot \left(1 - \frac{\beta}{\alpha}\right)\left(1 - \frac{\beta}{p\alpha}\right)},\]
   where $\alpha$ and $\beta$ are the roots of the Hecke polynomial of $\cF_t$, and $\lambda_N(\cF_t)$ is its Atkin--Lehner pseudo--eigenvalue.
  \end{corollary}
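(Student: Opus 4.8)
The plan is to construct $\eta_{\cF}$ by transporting the triangulation of Theorem \ref{thm:triangulation} through Poincar\'e duality and pairing it against the Eichler--Shimura vector of Theorem \ref{thm:eichlershimura} for the \emph{conjugate} family, and then to verify the interpolation property at classical weights. The argument is essentially formal given these inputs; the real work is twist-bookkeeping and arranging that a period factor is a unit on $V$.

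First I would transport the triangulation to $D_V(\cF)$ itself. The Poincar\'e--Ohta pairing of Theorem \ref{thm:colemanrep}(3) identifies $M_V(\cF)^* = \Hom_{\cO(V)}(M_V(\cF),\cO(V))$, so applying $\DD^{\dag}_{\rig}$ gives a perfect $G_{\Qp}$-equivariant pairing $D_V(\cF) \times D_V(\cF)^* \to \sR_{\cO(V)}$. Taking $\sF^+ D_V(\cF)$ to be the annihilator of $\sF^+ D_V(\cF)^*$ under this pairing produces a triangulation of $D_V(\cF)$ with $\sF^+ D_V(\cF) \cong \Hom_{\sR_{\cO(V)}}(\sF^- D_V(\cF)^*, \sR_{\cO(V)}) \cong \sR_{\cO(V)}(a_p(\cF))$, which is unramified of rank one. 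By the Sen-theoretic argument appearing in the proof of Theorem \ref{thm:eichlershimura} (applied now to this unramified module), $(\sF^+ D_V(\cF))^{\Gamma=1}$ is free of rank one over $\cO(V)$, possibly after shrinking $V$.

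Next I would bring in the conjugate Coleman family $\overline{\cF}$ --- the family through the $p$-stabilised newform $\overline{f_{\alpha}}$, which is again noble and, after enlarging $E$ and shrinking $V$, defined over the same disc --- and apply Theorem \ref{thm:eichlershimura} to it, obtaining a generator $\omega_{\overline{\cF}}$ of $\bigl(\sF^- D_V(\overline{\cF})(1+\kappa_V+\varepsilon_{\overline{\cF}}^{(p)})\bigr)^{\Gamma=1}$ specialising to $\omega_{\overline{\cF}_t}$ at every classical weight. The Atkin--Lehner involution $W_{Np}$ in families (the operator $W$ of Theorem \ref{thm:colemanrep}, which intertwines $T_n$ with $T_n'$, hence $U_p$ with $U_p'$) together with Poincar\'e duality identifies $M_V(\cF)^*$ with a one-dimensional twist of $M_V(\overline{\cF})$; because $W_{Np}$ interchanges the unit-root and non-unit-root lines, it carries $\sF^+ D_V(\cF)^*$ to the $\sF^-$-line of that twist, so the induced $\sR_{\cO(V)}$-bilinear, $G_{\Qp}$-equivariant pairing between $\sF^+ D_V(\cF)$ and $\sF^- D_V(\overline{\cF})(1+\kappa_V+\varepsilon_{\overline{\cF}}^{(p)})$ is perfect rather than degenerate, and restricts to a perfect $\cO(V)$-bilinear pairing $\langle -,-\rangle$ on $\Gamma$-invariants. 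Matching all the twists here --- using $\varepsilon_{\overline{\cF}} = \varepsilon_{\cF}^{-1}$ and the explicit shape of $\sF^{\pm}$ in Theorem \ref{thm:triangulation} --- is the step I expect to be the main obstacle: nothing is deep, but it is easy to be off by a finite-order character or a Tate twist.

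Finally, writing $\beta_{\cF} \in \cO(V)$ for the analytic function specialising to the complementary root $\beta$ of the Hecke polynomial at each classical weight, I would shrink $V$ once more so that $\lambda_N(\cF)\bigl(1-\tfrac{\beta_{\cF}}{\alpha_{\cF}}\bigr)\bigl(1-\tfrac{\beta_{\cF}}{p\alpha_{\cF}}\bigr)$ is a unit in $\cO(V)$ --- possible since it is nonzero at $k_0$ (by $\lambda_N(f_{\alpha}) \ne 0$, by $p$-regularity, and because $1-\tfrac{\beta}{p\alpha}$ is not identically zero on $V$) --- and then define $\eta_{\cF} \in (\sF^+ D_V(\cF))^{\Gamma=1}$ to be the unique element whose pairing $\langle \eta_{\cF}, \omega_{\overline{\cF}}\rangle$ is the reciprocal of that factor; it is then an $\cO(V)$-basis vector. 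Because $\langle -,-\rangle$ specialises at each classical weight $t$ to the classical cup-product (de Rham) pairing --- the functoriality of $\DD^{\dag}_{\rig}$ and the compatibility of the de Rham comparison for $(\varphi,\Gamma)$-modules with the classical one (Berger--Colmez and Nakamura) --- and because $\omega_{\overline{\cF}}$ specialises to $\omega_{\overline{\cF}_t}$, it follows that $\eta_{\cF}(t)$ pairs with $\omega_{\overline{\cF}_t}$ to the value in the statement, which is exactly the normalisation characterising the de Rham class $\eta_{f_t}$ of \cite{KLZ1a}; since that cup-product pairing is perfect on the two relevant lines, this forces $\eta_{\cF}(t) = \eta_{f_t}$, and as the classical weights are dense in $V$ it determines $\eta_{\cF}$ itself.
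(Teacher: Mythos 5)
Your proposal is correct and follows essentially the same route as the paper: the paper's proof is a one-line appeal to dualising $\omega_{\cF}$ via the Ohta pairing $\{-,-\}$ with the computations carried out as in the ordinary case \cite[Proposition 10.1.2]{KLZ1b}, and your argument is an expanded version of exactly that — transporting the triangulation through the duality of Theorem \ref{thm:colemanrep}(3), pairing against the Eichler--Shimura class of the conjugate family, and normalising by the factor $\lambda_N(\cF)\bigl(1-\tfrac{\beta}{\alpha}\bigr)\bigl(1-\tfrac{\beta}{p\alpha}\bigr)$ after shrinking $V$. The twist bookkeeping you flag as the main obstacle is precisely what the cited ordinary-case computation supplies.
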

  
  \begin{proof}
   This follows by dualising $\omega_\cF$ using the Ohta pairing $\{-,-\}$; the computations are exactly the same as in the ordinary case, for which see \cite[Proposition 10.1.2]{KLZ1b}.
  \end{proof}
  

\section{The explicit reciprocity law}


 \subsection{Regulator maps for Rankin convolutions}
  \label{sect:explicitrecip}
  
  Now let us choose two newforms $f, g$ and $p$-stabilisations $(\alpha_f, \alpha_g)$ satisfying the hypotheses of Theorem \ref{thm:colemanfamily}. 
  
  \begin{notation}
   We write 
   \[ \sF^{--} D_{V_1 \times V_2}(\cF \otimes \cG)^*=\sF^- D_{V_1}(\cF)^*\hat\otimes \sF^- D_{V_2}(\cG)^*,\]
   and similarly for $\sF^{-+}$, $\sF^{+-}$ and $\sF^{++}$. We also define $\sF^{-\circ}  D_{V_1 \times V_2}(\cF \otimes \cG)^*=\sF^- D_{V_1}(\cF)^*\hat\otimes D_{V_2}(\cG)^*$.
  \end{notation}

  \begin{theorem}
   \label{thm:BFeltisSelmer}
   If $V_1$ and $V_2$ are sufficiently small, then (for any $m$ coprime to $p$) the image of $\cBF^{[\cF, \cG]}_{m, a}$ under projection to the module $H^1_{\Iw}(\QQ(\mu_m) \otimes \QQ_{p, \infty}, \sF^{--} D_{V_1 \times V_2}(\cF \otimes \cG)^*)$ is zero.
  \end{theorem}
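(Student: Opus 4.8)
The plan is to reduce the statement to the non-ordinary analogue of the vanishing result already established in Proposition \ref{prop:vanishing}, and then to bootstrap from the classical specialisations by a density argument in the Coleman family. First I would fix the setup: the $(\varphi, \Gamma)$-module $\sF^{--} D_{V_1\times V_2}(\cF\otimes\cG)^*$ is, by Theorem \ref{thm:triangulation}, isomorphic to $\sR_{A}(\alpha_\cF^{-1})\htimes \sR_{A}(\alpha_\cG^{-1}) \cong \sR_A\big((\alpha_\cF\alpha_\cG)^{-1}\big)$ where $A = \cO(V_1\times V_2)$. The module $\sR_A(\gamma^{-1})$ for $\gamma = \alpha_\cF\alpha_\cG$ carries the Perrin-Riou big logarithm $\cL_{\sR_A(\gamma^{-1})}$ of \eqref{eq:BigLog}, which (since $\gamma - 1$ is not a zero-divisor, as it is nonzero at every classical point of slope strictly less than $1+\min(k,k')$, and hence on a dense set) is \emph{injective}. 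Therefore it suffices to prove that the image of $\cL_{\sR_A(\gamma^{-1})}$ applied to the projection of $\cBF^{[\cF,\cG]}_{m,a}$ to $H^1_{\Iw}(\QQ(\mu_m)\otimes\QQ_{p,\infty}, \sF^{--}D_{V_1\times V_2}(\cF\otimes\cG)^*)$ is zero in $A\htimes\cO(\cW)$ (tensored up over $\QQ(\mu_m)$).

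Second, I would identify this image as a rigid-analytic function on $V_1\times V_2\times\cW$ and evaluate it at the classical, crystalline points. By the compatibility of $\cL_{\sR_A(\gamma^{-1})}$ with specialisation and with the interpolation property of $\cBF^{[\cF,\cG]}_{m,a}$ from Theorem \ref{thm:3varelt}, the value of this function at a point $(k_1, k_2, j)$ with $k_i\in V_i$ integers, $0\le j\le\min(k_1,k_2)$, and a finite-order twist $\chi$, is (up to a nonzero interpolation factor) the image of the Beilinson--Flach class $\cBF^{[\cF_{k_1},\cG_{k_2},j]}_{m,a}$ under the projection to the $\varphi = (\alpha_f\alpha_g)^{-1}$-eigenspace of $\DD_{\cris}$, composed with a dual-exponential or logarithm map. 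For $\chi$ non-trivial, the argument of Proposition \ref{prop:vanishing} applies verbatim: the class lies in the Bloch--Kato $H^1_{\mathrm{g}}$-subspace by \cite[Proposition 3.3.2]{KLZ1b}, and the interpolation factor relating $\cL$ to the dual exponential is invertible, forcing the value to be $0$. Hence the rigid function vanishes at a Zariski-dense set of points in $V_1\times V_2\times\cW$.

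Third, I would invoke rigidity: a rigid-analytic function on the irreducible affinoid-times-weight-space $V_1\times V_2\times\cW$ (or rather on each affinoid subdomain $V_1\times V_2\times X_n$ exhausting it, using $\cO(\cW) = \varprojlim_n\cO(X_n)$) that vanishes on a Zariski-dense subset is identically zero. The density is exactly the statement that classical weights are dense in $U_i$ (part of the definition of a Coleman family) together with the density of finite-order characters in $\cW$; one has to be slightly careful that for \emph{each fixed} $(k_1,k_2)$ the admissible range $0\le j\le\min(k_1,k_2)$ combined with finite-order twists already gives infinitely many points in $\cW$, which suffices since $2\lambda_1 + 2\lambda_2 < $ (something bounded), mirroring the growth estimate in the proof of Proposition \ref{prop:vanishing}. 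Alternatively, and perhaps more cleanly, one can argue fibrewise: for fixed classical $(\cF_{k_1},\cG_{k_2})$ the projection to $\sF^{--}$ of the one-variable class $\cBF^{[\cF_{k_1},\cG_{k_2}]}_{m,a}$ of Theorem \ref{thm:cycloBFelts} vanishes by Proposition \ref{prop:vanishing} (when the strong slope bound holds) and by the same dual-exponential argument otherwise; then one uses that the set of such classical points is dense in $V_1\times V_2$ and that the cohomology modules interpolate well (the formation of $H^1_{\Iw}$ commutes with specialisation by \cite[Corollary 4.4.11]{KPX} and the base-change results of \cite{Pottharst-analytic}).

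The main obstacle I expect is controlling the \emph{torsion / denominators} in the cohomology modules as one specialises, so that ``vanishes at a dense set of specialisations'' genuinely implies ``vanishes''. One needs the target $H^1_{\Iw}(\QQ(\mu_m)\otimes\QQ_{p,\infty}, \sF^{--}D_{V_1\times V_2}(\cF\otimes\cG)^*)$ to be, after inverting $p$, a module over $A\htimes D^{\la}(\Gamma)$ that embeds into the product of its classical fibres — equivalently, that it has no $A$-torsion supported away from the classical locus. This follows because $\sF^{--}D$ is a rank-one $(\varphi,\Gamma)$-module of the special shape $\sR_A(\gamma^{-1})$ with $\gamma-1$ a non-zero-divisor, so its analytic Iwasawa cohomology is $\sR_A(\gamma^{-1})^{\psi=1}\subseteq\sR^+_A(\gamma^{-1})$, which is $p$-torsion-free and behaves well under base change; the subtlety flagged in the Remark after Proposition \ref{prop:colmeztautology} about $H^2$-denominators does not bite here because we are in the rank-one triangulation quotient rather than the full representation. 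Once this torsion-freeness is in hand, the density-plus-rigidity argument closes the proof.
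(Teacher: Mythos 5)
Your proposal is essentially the paper's proof: shrink the $V_i$ so that $\sF^{--}D \cong \sR_A(\alpha^{-1})$ with $\alpha - 1$ a non-zero-divisor, use injectivity of $\cL_{\sR_A(\alpha^{-1})}$, and then deduce vanishing from Proposition \ref{prop:vanishing} on a Zariski-dense set of classical fibres $\{(\ell,\ell')\}\times\cW$ --- your ``fibrewise alternative'' is verbatim the argument in the text. One caution about your primary formulation: vanishing at a Zariski-dense set of \emph{points} of $V_1\times V_2\times\cW$ does not follow from rigidity alone in the $\cW$-direction (a nonzero function on $\cW$ can have infinitely many zeros), which is precisely why the growth estimate of Proposition \ref{prop:vanishing} is needed on each fibre; the paper sidesteps this by restricting to fibres with $\ell,\ell'\ge 1+2h$, where the strong slope inequality $2\lambda < 1+\min(\ell,\ell')$ holds, and these already form a Zariski-dense subset of the two-dimensional affinoid $\operatorname{Max}(A)$.
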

  
  \begin{proof}
    By taking the $V_i$ sufficiently small, we may assume that $\sF^{--} D_{V_1 \times V_2}(\cF \otimes \cG)^*$ is actually isomorphic to $\sR_A(\alpha^{-1})$, where $\alpha = \alpha_{\cF} \alpha_{\cG}$ and $A = \cO(V_1 \times V_2)$, and that $\|\alpha^{-1}\| < p^{1 + h}$ and $\alpha - 1$ is not a zero-divisor. It suffices, therefore, to show that $\cL_{\sR_A(\alpha^{-1})}$ maps the image of $\cBF^{[\cF, \cG]}_{m, a}$ to zero.
    
    However, for each pair of integers $(\ell, \ell') \in V_1 \times V_2$ with $\ell, \ell' \ge 1 + 2h$ and such that $\cF_\ell$ and $\cG_{\ell'}$ are not twists of each other, we know that the image of $\cL_{\sR_A(\alpha^{-1})}(\cBF^{[\cF, \cG]}_{m, a})$ vanishes when restricted to $(\ell, \ell') \times \cW \subseteq \operatorname{Max}(A) \times \cW$, by Proposition \ref{prop:vanishing}. Since such pairs $(\ell, \ell')$ are Zariski-dense in $\operatorname{Max}(A)$, the result follows.
  \end{proof}
  
  \begin{remark}
   Cf.~\cite[Lemma 8.1.5]{KLZ1b}, which is an analogous (but rather stronger) statement in the ordinary case.
  \end{remark}
  
  Hence the projection of $\cBF^{[\cF, \cG]}_{m, a}$ to $\sF^{- \circ}$ is in the image of the injection
  \[ H^1_{\Iw}(\QQ_{p, \infty}, \sF^{-+} D_{V_1 \times V_2}(\cF \otimes \cG)^*) \to  H^1_{\Iw}(\QQ_{p, \infty}, \sF^{-\circ} D_{V_1 \times V_2}(\cF \otimes \cG)^*).\]
  Since $\sF^+ D_{V_2}(\cG)^*$ is isomorphic to an unramified module twisted by an $A^\times$-valued character of the cyclotomic Galois group $\Gamma$, we may define a Perrin-Riou logarithm map for $\sF^{-+} D_{V_1 \times V_2}(\cF \otimes \cG)^*$ by reparametrising the corresponding map for its unramified twist, exactly as in Theorem 8.2.8 of \cite{KLZ1b}. That is, if we define
  \[ \DD(\sF^{-+} M(\cF \otimes \cG)^*) = \left(\sF^{-+} D(\cF \otimes \cG)^*(-1-\kappa_{V_2})\right)^{\Gamma = 1},\]
  which is free of rank 1 over $\cO(V_1 \times V_2)$, then we obtain the following theorem:
  
  \begin{theorem}
   There is an injective morphism of $\cO(V_1 \times V_2 \times W)$-modules
   \[
    \cL: H^1_{\Iw}(\QQ_{p, \infty}, \sF^{-+} D_{V_1 \times V_2}(\cF \otimes \cG)^*)
    \to \DD(\sF^{-+} M(\cF \otimes \cG)^*) \htimes \cO(\cW),
   \]
   with the following property: for all classical specialisations $f, g$ of $\cF,\cG$, and all characters of $\Gamma$ of the form $\tau = j + \eta$ with $\eta$ of finite order and $j \in \ZZ$, we have a commutative diagram
   \begin{diagram}
    H^1_{\Iw}\left(\QQ_{p, \infty}, \sF^{-+} D_{V_1 \times V_2}(\cF \otimes \cG)^*\right) & \rTo^{\cL} & \DD(\sF^{-+} M(\cF \otimes \cG)^*) \htimes \cO(\cW) \\
    \dTo & & \dTo\\
    H^1(\Qp, \sF^{-+} D(f \otimes g)^*(-j-\eta)) & \rTo & \sF^{-+}\DD_{\mathrm{cris}}( M(f \otimes g)^*(-\varepsilon_{g, p}))
   \end{diagram}
   in which the bottom horizontal map is given by
   \[
    \left.\begin{cases}
    \left(1 - \frac{p^j}{\alpha_f \beta_g}\right)\left(1 - \frac{\alpha_f \beta_g}{p^{1 + j}}\right)^{-1} & \text{if $r = 0$} \\
    \left(\frac{p^{1 + j}}{\alpha_f \beta_g}\right)^{r} G(\varepsilon)^{-1} & \text{if $r > 0$}
   \end{cases}\right\}
   \cdot
   \begin{cases}
    \tfrac{(-1)^{k'-j}}{(k'-j)!} \log & \text{if $j \le k'$,} \\ (j-k'-1)! \exp^* & \text{if $j > k'$,}
    \end{cases}
   \]
   where $\exp^*$ and $\log$ are the Bloch--Kato dual-exponential and logarithm maps, $\varepsilon$ is the finite-order character $\varepsilon_{g, p} \cdot \eta^{-1}$ of $\Gamma$, $r \ge 0$ is the conductor of $\varepsilon$, and $G(\varepsilon) = \sum_{a \in (\ZZ / p^r \ZZ)^\times} \varepsilon(a) \zeta_{p^r}^a$ is the Gauss sum.
  \end{theorem}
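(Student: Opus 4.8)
The plan is to obtain $\cL$ as a ``reparametrisation'' of the big logarithm map \eqref{eq:BigLog} attached to a rank-one $(\varphi,\Gamma)$-module, and then to read off the interpolation property by specialising at classical weights and invoking the explicit description of the Perrin-Riou map on such modules. First I would unwind the two triangulations of Theorem \ref{thm:triangulation}: since $\sF^{-+} D_{V_1\times V_2}(\cF\otimes\cG)^* = \sF^- D_{V_1}(\cF)^* \htimes \sF^+ D_{V_2}(\cG)^*$, they identify this module with the twist $\sR_A(\alpha^{-1})(1+\kappa_{V_2})$ of an unramified rank-one $(\varphi,\Gamma)$-module, where $A = \cO(V_1\times V_2)$ and $\alpha \in A^\times$ is the explicit element determined by $\alpha_\cF$, $\alpha_\cG$ and $\varepsilon_\cG(p)$ (so that at a classical weight and character $x^j$ it recovers $\alpha_f\beta_g p^{-j}$, up to the cyclotomic shift). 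After shrinking $V_1,V_2$ we may assume $\alpha-1$ is not a zero-divisor in $A$ — the vanishing locus is a proper Zariski-closed subset, since at a classical point $v_p(\alpha_f\beta_g)\ne v_p(p^{1+j})$ for the relevant $j$ — and that the norm hypothesis of the Lemma preceding \eqref{eq:BigLog} holds; then $\cL_{\sR_A(\alpha^{-1})}$ is defined, and injective.

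Next I would construct $\cL$ itself. Exactly as in Theorem 8.2.8 of \cite{KLZ1b}, I would compose the isomorphism $H^1_{\Iw}(\QQ_{p,\infty}, \sF^{-+} D_{V_1\times V_2}(\cF\otimes\cG)^*) \cong H^1_{\Iw}(\QQ_{p,\infty}, \sR_A(\alpha^{-1}))$ induced by the Tate twist $(-1-\kappa_{V_2})$ with $\cL_{\sR_A(\alpha^{-1})}$, and reinterpret the target $A\htimes\cO(\cW)$ of \eqref{eq:BigLog} as $\DD(\sF^{-+} M(\cF\otimes\cG)^*)\htimes\cO(\cW)$ using the $\cO(V_1\times V_2)$-basis of $\DD(\sF^{-+} M(\cF\otimes\cG)^*) = \bigl(\sF^{-+} D_{V_1\times V_2}(\cF\otimes\cG)^*(-1-\kappa_{V_2})\bigr)^{\Gamma=1}$ furnished by the Eichler--Shimura vectors $\eta_\cF$ and $\omega_\cG$ of Theorem \ref{thm:eichlershimura} and its Corollary. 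The reparametrisation absorbs the weight-dependent twist $(1+\kappa_{V_2})$ into a shift of the Iwasawa variable, exactly as in the ordinary case. Injectivity of $\cL$ is inherited from that of $\cL_{\sR_A(\alpha^{-1})}$.

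It remains to verify the interpolation diagram, which is the substantive point. All the ingredients — the Berger--Colmez functor $\DD^\dag_{\rig}$, the comparison $H^1_{\Iw}(\QQ_{p,\infty}, \DD^\dag_{\rig}(M)) = H^1(\Qp, D^{\la}(\Gamma,M))$ of \cite{KPX}, the triangulation, and the Eichler--Shimura vectors — are compatible with specialisation at a classical weight $(k_1,k_2)$ with $k_i = \operatorname{wt}(f_i)-2$. Hence the composite of $\cL$ with specialisation at $(k_1,k_2)$ and evaluation at $j+\eta$ agrees with evaluation at $j+\eta$ of the Perrin-Riou big logarithm attached to the rank-one $(\varphi,\Gamma)$-module $\sF^{-+} D(f\otimes g)^*$, and this is computed by the standard interpolation formula for \eqref{eq:BigLog}: writing $\varepsilon = \varepsilon_{g,p}\eta^{-1}$ and $r$ for its conductor, the ``$1-\varphi$'' step contributes $\bigl(1 - \tfrac{p^j}{\alpha_f\beta_g}\bigr)$ when $r=0$ (and the Gauss-sum normalisation $(p^{1+j}/\alpha_f\beta_g)^r G(\varepsilon)^{-1}$ when $r>0$), the ``$\varphi$ on $\DD_{\cris}$'' step contributes $\bigl(1 - \tfrac{\alpha_f\beta_g}{p^{1+j}}\bigr)^{-1}$ when $r=0$, and the Mellin-transform normalisation together with the Hodge--Tate weight of the twist — which the triangulation identifies as $k'+1-j$, so the logarithm regime is $j\le k'$ and the dual-exponential regime is $j>k'$ — contributes the factorials $\tfrac{(-1)^{k'-j}}{(k'-j)!}$ resp. $(j-k'-1)!$. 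This is the computation of \cite[Theorem B.5]{LZ}, now carried out over the affinoid base $A$.

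The main obstacle I expect is precisely the final reconciliation of normalisations: keeping track of the various twists (the $(1+\kappa_{V_2})$ shift, the Tate twists built into $M_V(\cF)^*$, the nebentypus characters $\varepsilon_\cG(p)$ and $\varepsilon_{g,p}$) and verifying that the specialisations of $\eta_\cF$ and $\omega_\cG$ at classical weights are normalised so that the interpolation factor emerges with exactly the displayed constants — no stray powers of $p$, signs, or Gauss sums. The conceptual framework (the Hansen--Nakamura families version of Perrin-Riou theory, applied via the canonical triangulation of Theorem \ref{thm:triangulation} and the families Eichler--Shimura isomorphism of Theorem \ref{thm:eichlershimura}) is essentially in place; it is the bookkeeping at the classical specialisations that requires care.
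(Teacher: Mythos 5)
Your proposal is correct and follows essentially the same route as the paper: the paper likewise constructs $\cL$ by reparametrising the big logarithm \eqref{eq:BigLog} for the unramified twist of $\sF^{-+}D_{V_1\times V_2}(\cF\otimes\cG)^*$ exactly as in Theorem 8.2.8 of \cite{KLZ1b}, and deduces the interpolation formula from Nakamura's $\exp^*$ and $\log$ for $(\varphi,\Gamma)$-modules together with the standard Perrin-Riou interpolation computation. The paper's own proof is two sentences long; your write-up simply makes the same steps (and the bookkeeping of twists) explicit.
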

  
  \begin{proof}
   The construction of the map $\cL$ is immediate from \eqref{eq:BigLog}. The content of the theorem is that the map $\cL$ recovers the maps $\exp^*$ and $\log$ for the specialisations of $\cF$ and $\cG$; this follows from Nakamura's construction of $\exp^*$ and $\log$ for $(\varphi, \Gamma)$-modules.
  \end{proof}

  \begin{theorem}[Explicit reciprocity law]
   \label{thm:explicitrecip}
   If the $V_i$ are sufficiently small, then we have
   \[ \left\langle \cL\left(\cBF^{[\cF, \cG]}_{1, 1}\right), \eta_{\cF} \otimes \omega_{\cG} \right\rangle = (c^2 - c^{-(\mathbf{k} + \mathbf{k}' - 2\mathbf{j})} \varepsilon_\cF(c)^{-1} \varepsilon_\cG(c)^{-1})(-1)^{1+\mathbf{j}} \lambda_N(\cF)^{-1} L_p(\cF, \cG, 1 + \mathbf{j}).\]
   Here, $L_p(\cF,\cG,1 + \mathbf{j})$ denotes Urban's $3$-variable $p$-adic $L$-function as constructed in \cite{Urban-nearly-overconvergent}, and $\varepsilon_\cF$ and $\varepsilon_\cG$ are the characters by which the prime-to-$p$ diamond operators act on $\cF$ and $\cG$.
  \end{theorem}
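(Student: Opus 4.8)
The plan is to deduce the identity from the fixed-weight explicit reciprocity law of \cite{KLZ1a} by a density argument in the two weight variables. Both sides of the asserted equality are elements of $\cO(V_1 \times V_2) \htimes \cO(\cW)$: the right-hand side by Urban's construction \cite{Urban-nearly-overconvergent}, and the left-hand side because the family map $\cL$ of the preceding theorem applies to $\cBF^{[\cF, \cG]}_{1, 1}$ --- one first localises at $p$, passes to $H^1_{\Iw}(\QQ_{p, \infty}, \DD^{\dag}_{\rig}(M_{V_1}(\cF)^* \htimes M_{V_2}(\cG)^*))$, and projects to the quotient $\sF^{-\circ}$; Theorem \ref{thm:BFeltisSelmer} shows the $\sF^{--}$-component vanishes, so this projection lifts canonically to $H^1_{\Iw}(\QQ_{p, \infty}, \sF^{-+}D_{V_1 \times V_2}(\cF \otimes \cG)^*)$, which is the domain of $\cL$. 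Since the classical integer weights are Zariski-dense in each $V_i$, it suffices to prove that the two functions agree after specialising the weight variables at every pair $(k_1, k_2)$ in a Zariski-dense subset of $V_1 \times V_2$; I would take the pairs of integer weights for which $\cF_{k_1}, \cG_{k_2}$ are noble classical eigenforms, $\pi_{\cF_{k_1}}$ and $\pi_{\cG_{k_2}}$ are not twist-equivalent, the small-slope inequality $\lambda_1 + \lambda_2 < 1 + \min(k_1, k_2)$ holds, and none of the interpolation factors below vanishes, whose complement lies in a finite union of proper closed subvarieties.

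Fix such a pair and abbreviate $f = \cF_{k_1}$, $g = \cG_{k_2}$, with $p$-stabilisations $f_\alpha, g_\alpha$. Next I would pin down the specialisation at $(k_1, k_2)$ of each ingredient. By Theorem \ref{thm:3varelt} the class $\cBF^{[\cF, \cG]}_{1, 1}$ specialises to a distribution-valued class whose image at each geometric twist $0 \le j \le \min(k_1, k_2)$ is, up to the explicit constant recorded there, the geometric Beilinson--Flach class $\cBF^{[f, g, j]}_{1, 1}$; by the uniqueness clause of Theorem \ref{thm:cycloBFelts} (which rests on Proposition \ref{prop:unbounded-iwasawa}) this specialisation therefore coincides, up to a nonzero constant, with the cyclotomic class $\cBF^{[f_\alpha, g_\alpha]}_{1, 1}$. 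The family map $\cL$ was built from Liu's canonical triangulation (Theorem \ref{thm:triangulation}) via \eqref{eq:BigLog}, so, using the compatibility of Nakamura's $(\varphi, \Gamma)$-module exponential and dual-exponential maps with the classical Bloch--Kato maps \cite{Nakamura-Iwasawa}, its specialisation at $(k_1, k_2)$ is the composite of the classical Perrin--Riou regulator $\cL_{M_{L_\frP}(f_\alpha \otimes g_\alpha)^*}$ with the projection to the $\varphi = (\alpha_f \alpha_g)^{-1}$-eigenspace of $\DD_{\cris}(M_{L_\frP}(f_\alpha \otimes g_\alpha)^*)$ --- the very projection featuring in Proposition \ref{prop:vanishing}. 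By Theorem \ref{thm:eichlershimura} and its corollary the vectors $\eta_\cF$ and $\omega_\cG$ specialise to the classical $\eta_f$ and $\omega_g$, with the Atkin--Lehner pseudo-eigenvalue $\lambda_N(\cF)$ entering through the normalisation of $\eta_\cF$. Finally, the defining interpolation property of Urban's three-variable $L$-function shows that $L_p(\cF, \cG, s)$ specialises at $(k_1, k_2)$ to the cyclotomic $p$-adic Rankin $L$-function of $(f, g)$, up to a factor depending only on the weights.

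Assembling these identifications, the specialisation at $(k_1, k_2)$ of the left-hand side becomes the pairing of $\eta_f \otimes \omega_g$ against the projection to the $\varphi = (\alpha_f \alpha_g)^{-1}$-eigenspace of $\cL_{M_{L_\frP}(f_\alpha \otimes g_\alpha)^*}(\cBF^{[f_\alpha, g_\alpha]}_{1, 1})$, multiplied by an explicit product of interpolation factors, Euler factors, Gauss sums, the factorials $(-1)^j j! \binom{k_1}{j}\binom{k_2}{j}$ and the Atkin--Lehner factor. The fixed-weight explicit reciprocity law of \cite{KLZ1a} --- the result cited there as the main input to Theorem~B --- identifies this pairing with the cyclotomic $p$-adic Rankin $L$-function of $(f_\alpha, g_\alpha)$. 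Comparing with the specialisation of the right-hand side computed in the previous paragraph, and verifying that all accumulated factors collapse to $(c^2 - c^{-(\mathbf{k} + \mathbf{k}' - 2\mathbf{j})} \varepsilon_\cF(c)^{-1} \varepsilon_\cG(c)^{-1})(-1)^{1 + \mathbf{j}} \lambda_N(\cF)^{-1}$ at $(k_1, k_2)$, yields the equality of the two weight-specialised functions; Zariski-density of such pairs in $V_1 \times V_2$ then promotes this to the claimed three-variable identity.

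I expect the principal difficulty to be exactly this bookkeeping of normalisations: reconciling the interpolation factor of Theorem \ref{thm:3varelt}, the Euler factors and Gauss sums in the Perrin--Riou formula of the preceding theorem, the factorials from the definition of $\cBF^{[\cF, \cG, j]}$, the $\lambda_N$-normalisation of the Eichler--Shimura vectors (the corollary to Theorem \ref{thm:eichlershimura}), and Urban's own normalisation of $L_p(\cF, \cG, s)$, so that they conspire into precisely the clean factor on the right. A secondary, more structural point is the verification that the family map $\cL$ specialises to the $\varphi$-eigenspace component of the classical regulator; this uses Nakamura's compatibility theorem together with the explicit triangulations $\sF^\pm D_{V_1}(\cF)^*$, $\sF^\pm D_{V_2}(\cG)^*$ of Theorem \ref{thm:triangulation}, and ultimately relies on the vanishing in Theorem \ref{thm:BFeltisSelmer} that makes the left-hand side well-defined in the first place.
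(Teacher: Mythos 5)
Your overall strategy --- both sides are rigid-analytic on $V_1 \times V_2 \times \cW$, so reduce to the fixed-weight explicit reciprocity law of \cite{KLZ1a} (Theorem 6.5.9) at classical points and conclude by Zariski density --- is exactly the paper's proof, and your preparatory identifications (the lifting of the local image of $\cBF^{[\cF,\cG]}_{1,1}$ to $\sF^{-+}$ via Theorem \ref{thm:BFeltisSelmer}, the specialisation of $\cL$ via Nakamura's compatibility, the specialisation of $\eta_\cF \otimes \omega_\cG$) are all correct and are indeed set up in \S\ref{sect:explicitrecip} before the theorem is stated.

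However, the way you organise the density argument introduces a genuine gap. You propose to fix a pair of integer weights $(k_1,k_2)$ and deduce, from the fixed-weight law, the equality of the two \emph{one-variable} functions of the cyclotomic variable, and only then to vary $(k_1,k_2)$ over a Zariski-dense set. But for fixed $(k_1,k_2)$ the input from \cite{KLZ1a} gives agreement only at the integers $j \in \{0,\dots,\min(k_1,k_2)\}$ --- a \emph{finite} subset of $\cW$, hence not Zariski-dense in the one-dimensional space $\cW$ --- and says nothing about twists $j+\chi$ by nontrivial finite-order characters. Two rigid functions on $\cW$ agreeing at finitely many points need not coincide, and no a priori growth bound is available here to force it (contrast Proposition \ref{prop:vanishing}, where vanishing at \emph{all but finitely many} of an infinite set of characters is used together with an order bound). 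Indeed the remark at the end of \S\ref{sect:appendix} stresses that this method gives no access to the values at $(k,k',j+\chi)$ with $\chi$ nontrivial, so the single-weight one-variable identity you assert cannot be extracted this way. The repair is simple and is what the paper does: run the density argument over the three-dimensional set of triples $(k,k',j)$ with $0 \le j \le \min(k,k')$ all at once; this set \emph{is} Zariski-dense in $V_1 \times V_2 \times \cW$ precisely because $\min(k,k')$ is unbounded as the weights vary, so the allowed range of $j$ grows without bound.
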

  
  \begin{proof}
   The two sides of the desired formula agree at every $(k, k', j)$ with $k \in V_1$, $k' \in V_2$ and $0 \le j \le \min(k, k')$, by \cite[Theorem 6.5.9]{KLZ1a}. These points are manifestly Zariski-dense, and the result follows.
  \end{proof}
  
  \begin{remark}
   The construction of $\omega_{\cG}$, and the proof of the explicit reciprocity law, are also valid if $\cG$ is a Coleman family passing through a $p$-stabilisation $g_\alpha$ of a $p$-regular weight 1 form, as in Theorem \ref{thm:colemanrepwt1}; the only difference is that one may need to replace $V_2$ with a finite flat covering $\tilde V_2$. In this setting, $g_\alpha$ is automatically ordinary, so $\cG$ is in fact a Hida family, and one can use the construction of $\omega_{\cG}$ given in \cite[Proposition 10.12.2]{KLZ1b}.
  \end{remark}


\section{Bounding Selmer groups}


 \subsection{Notation and hypotheses}
 
  Let $f, g$ be cuspidal modular newforms of weights $k + 2, k' + 2$ respectively, and levels $N_f, N_g$ prime to $p$. We \emph{do} permit here the case $k' = -1$. We suppose, however, that $k > k'$, so in particular $k \ge 0$; and we choose an integer $j$ such that $k' + 1 \le j \le k$. If $j = \frac{k + k'}{2} + 1$, then we assume that $\varepsilon_f \varepsilon_g$ is not trivial, where $\varepsilon_f$ and $\varepsilon_g$ are the characters of $f$ and $g$.
  
  As usual, we let $E$ be a finite extension of $\Qp$ with ring of integers $\cO$, containing the coefficients of $f$ and $g$. Our goal will be to bound the Selmer group associated to the Galois representation $M_\cO(f \otimes g)(1 + j)$, in terms of the $L$-value $L(f, g, 1 + j)$; our hypotheses on $(k, k', j)$ are precisely those required to ensure that this $L$-value is a \emph{critical} value.
  
  It will be convenient to impose the following local assumptions at $p$:
  
  \begin{itemize}
   \item ($p$-regularity) We have $\alpha_f \ne \beta_f$ and $\alpha_g \ne \beta_g$, where $\alpha_f, \beta_f$ are the roots of the Hecke polynomial of $f$ at $p$, and similarly for $g$.
   
   \item (no local zero) None of the pairwise products
   \[ 
    \{ \alpha_f \alpha_g, \alpha_f \beta_g, \beta_f \alpha_g, \beta_f \beta_g\}
   \]
   is equal to $p^j$ or $p^{1 + j}$, so the Euler factor of $L(f, g, s)$ at $p$ does not vanish at $s = j$ or $s = 1 + j$.
   
   \item (nobility of $f_\alpha$) If $f$ is ordinary, then either $\alpha_f$ is the unit root of the Hecke polynomial, or $M_E(f) |_{G_{\Qp}}$ is not the direct sum of two characters (so the eigenform $f_\alpha$ is noble in the sense of \ref{def:noble}).
   
   \item (nobility of $g_\alpha$ and $g_\beta$) If $k' \ge 0$, then $M_E(g)|_{G_{\Qp}}$ does not split as a direct sum of characters, so both $p$-stabilisations $g_\alpha$ and $g_\beta$ are noble.
  \end{itemize}
  
  \begin{remark}\mbox{~}
   \begin{enumerate}
    \item In our arguments we will use both $p$-stabilisations $g_\alpha$ and $g_\beta$ of $g$, but only the one $p$-stabilisation $f_\alpha$ of $f$; in particular, we do not require that the other $p$-stabilisation $f_\beta$ be noble.
    
    \item Note that the ``no local zero'' hypothesis is automatic, for weight reasons, unless $k + k'$ is even and $j = \frac{k + k'}{2}$ or $j = \frac{k + k'}{2} + 1$ (so the  $L$-value $L(f, g, 1 + j)$ is a ``near-central'' value).
   \end{enumerate}
  \end{remark} 
  
  The $p$-regularity hypothesis implies that we have direct sum decompositions
  \[ \DD_\cris(M_E(f)^*) = \DD_{\cris}(M_E(f)^*)^{\alpha_f} \oplus \DD_{\cris}(M_E(f)^*)^{\beta_f}\]
  where $\varphi$ acts on the two direct summands as multiplication by $\alpha_f^{-1}$, $\beta_f^{-1}$ respectively, and similarly for $g$. This induces a decomposition of $\DD_\cris(M_E(f \otimes g)^*)$ into four direct summands $\DD_{\cris}(M_E(f \otimes g)^*)^{\alpha_f \alpha_g}$ etc.
  
  \begin{definition}
   We write
   \begin{align*}
    \DD_{\cris}(M_E(f \otimes g)^*)^{\alpha_f \circ} &= \DD_{\cris}(M_E(f \otimes g)^*)^{\alpha_f \alpha_g} \oplus \DD_{\cris}(M_E(f \otimes g)^*)^{\alpha_f \beta_g} \\&= \DD_{\cris}(M_E(f)^*)^{\alpha_f} \otimes_E \DD_{\cris}(M_E(g)^*).
   \end{align*}
   We write $\pr_{\alpha_f}$ for the projection
   \[ \DD_{\cris}(M_E(f \otimes g)^*) \to \DD_{\cris}(M_E(f \otimes g)^*)^{\alpha_f \circ}\]
   with $\DD_{\cris}(M_E(f \otimes g)^*)^{\beta_f \circ}$ as kernel.
  \end{definition}
  
  \begin{proposition}
   If $W$ denotes the Galois representation $M_E(f \otimes g)^*(-j)$, then:
   \begin{itemize}
    \item $H^1(\Qp, W)$ is 4-dimensional (as an $E$-vector space), and $H^0(\Qp, W) = H^2(\Qp, W) = 0$;
    \item we have
    \[ H^1_\mathrm{e}(\Qp, W) = H^1_\mathrm{f}(\Qp, W) = H^1_\mathrm{g}(\Qp, W),\]
    and this space has dimension 2;
    \item the dual exponential map gives an isomorphism
    \[ \frac{H^1(\Qp, W)}{H^1_\mathrm{f}(\Qp, W)} \rTo^\cong \Fil^0 \DD_\cris(W);\]
    \item the projection
    \[ \Fil^0 \DD_\cris(W) \rTo^{\pr_{\alpha_f}}  \DD_{\cris}(W)^{\alpha_f \circ}\]
    is an isomorphism.
   \end{itemize}
  \end{proposition}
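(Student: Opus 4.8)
The plan is to deduce all four assertions from the Bloch--Kato local theory at $p$, using that $W = M_E(f\otimes g)^*(-j)$ is crystalline there (since $N_f, N_g$ are prime to $p$). I would first record the basic invariants: $\DD_{\cris}(W)$ is $4$-dimensional over $E$, its $\varphi$-eigenvalues are $p^j(\alpha_f\alpha_g)^{-1}, p^j(\alpha_f\beta_g)^{-1}, p^j(\beta_f\alpha_g)^{-1}, p^j(\beta_f\beta_g)^{-1}$ (pairwise distinct, by $p$-regularity), and its four Hodge--Tate weights are those of $M_E(f)^*\otimes M_E(g)^*$ shifted by $j$, again pairwise distinct. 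For the \emph{first assertion}, I would use the inclusion $H^0(\Qp, V)\hookrightarrow\DD_{\cris}(V)^{\varphi=1}$, valid for crystalline $V$, applied to $V = W$ and to $V = W^*(1) = M_E(f\otimes g)(1+j)$: the ``no local zero'' hypothesis says exactly that $1$ is not a $\varphi$-eigenvalue on $\DD_{\cris}(W)$ (the case $s=j$) nor on $\DD_{\cris}(W^*(1))$ (the case $s=1+j$), so $H^0(\Qp, W) = 0$ and, by local Tate duality, $H^2(\Qp, W) = H^0(\Qp, W^*(1))^\vee = 0$; the local Euler characteristic formula then gives $\dim_E H^1(\Qp, W) = 4$.

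For the \emph{second assertion} I would carry out a short bookkeeping of Hodge--Tate weights, the upshot being that the hypotheses $k > k'$ and $k'+1\le j\le k$ are precisely what force both $\Fil^0\DD_{\dR}(W)$ and $\Fil^0\DD_{\dR}(W^*(1))$ to be two-dimensional, i.e. the tangent spaces $t_W\coloneqq\DD_{\dR}(W)/\Fil^0\DD_{\dR}(W)$ and $t_{W^*(1)}$ are two-dimensional. Then $\dim H^1_\mathrm{f}(\Qp,W) = \dim H^0(\Qp,W) + \dim t_W = 2$ by the Bloch--Kato formula; the Bloch--Kato fundamental exact sequence, whose two left-hand terms $H^0(\Qp,W)$ and $\DD_{\cris}(W)^{\varphi=1}$ both vanish, yields an isomorphism $t_W\cong H^1_\mathrm{e}(\Qp,W)$, so (as $H^1_\mathrm{e}\subseteq H^1_\mathrm{f}$) $H^1_\mathrm{e}(\Qp,W) = H^1_\mathrm{f}(\Qp,W)$, two-dimensional; and from the orthogonality $H^1_\mathrm{g}(\Qp,W)^\perp = H^1_\mathrm{e}(\Qp,W^*(1))$ under Tate duality together with $\dim H^1_\mathrm{e}(\Qp,W^*(1)) = \dim t_{W^*(1)} = 2$ (same argument, vanishings from ``no local zero'' at $1+j$) one gets $\dim H^1_\mathrm{g}(\Qp,W) = 4-2 = 2$, so all three subspaces coincide. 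The \emph{third assertion} then drops out: $\exp^*_{\Qp,W}$ takes values in $\Fil^0\DD_{\dR}(W) = \Fil^0\DD_{\cris}(W)$ with kernel $H^1_\mathrm{g}(\Qp,W)$, and being the Tate-duality transpose of the isomorphism $t_{W^*(1)}\cong H^1_\mathrm{e}(\Qp,W^*(1))$ it is surjective, so it induces an isomorphism $H^1(\Qp,W)/H^1_\mathrm{g}(\Qp,W)\cong\Fil^0\DD_{\cris}(W)$; since $H^1_\mathrm{g} = H^1_\mathrm{f}$, this is the claim (or count dimensions: $4-2 = 2 = \dim\Fil^0\DD_{\cris}(W)$).

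For the \emph{fourth assertion}, both $\Fil^0\DD_{\cris}(W)$ and $\DD_{\cris}(W)^{\alpha_f \circ}$ are two-dimensional, so it suffices to show $\pr_{\alpha_f}$ is injective on $\Fil^0\DD_{\cris}(W)$, i.e. $\Fil^0\DD_{\cris}(W)\cap\DD_{\cris}(W)^{\beta_f \circ} = 0$. The weight bookkeeping of the second assertion identifies $\Fil^0\DD_{\cris}(W)$ with $\mathcal{L}_f\otimes_E\DD_{\cris}(M_E(g)^*)\otimes_E\DD_{\cris}(\Qp(-j))$, where $\mathcal{L}_f\coloneqq\Fil^0\DD_{\cris}(M_E(f)^*)$ is the Hodge line of $f$, while by definition $\DD_{\cris}(W)^{\beta_f \circ} = \DD_{\cris}(M_E(f)^*)^{\beta_f}\otimes_E\DD_{\cris}(M_E(g)^*)\otimes_E\DD_{\cris}(\Qp(-j))$; hence the intersection equals $\bigl(\mathcal{L}_f\cap\DD_{\cris}(M_E(f)^*)^{\beta_f}\bigr)\otimes_E\DD_{\cris}(M_E(g)^*)\otimes_E\DD_{\cris}(\Qp(-j))$, and everything comes down to $\mathcal{L}_f\ne\DD_{\cris}(M_E(f)^*)^{\beta_f}$ as lines in the plane $\DD_{\cris}(M_E(f)^*)$. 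Were they equal, $\Fil^0\DD_{\cris}(M_E(f)^*)$ would be $\varphi$-stable, hence $M_E(f)|_{G_{\Qp}}$ a direct sum of two characters; weak admissibility of the summands then forces $v_p(\beta_f) = 0$ and $v_p(\alpha_f) = k+1$, so $f$ is ordinary, $\alpha_f$ is \emph{not} the unit root, and $M_E(f)|_{G_{\Qp}}$ is decomposable --- exactly the configuration excluded by the ``nobility of $f_\alpha$'' hypothesis.

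I expect the fourth assertion to be the main obstacle. The delicate parts are (i) the identification of $\Fil^0\DD_{\cris}(W)$ with $\mathcal{L}_f\otimes_E\DD_{\cris}(M_E(g)^*)\otimes_E\DD_{\cris}(\Qp(-j))$, which rests on the same Hodge--Tate weight computation as the second assertion and uses the critical range $k'+1\le j\le k$ essentially; and (ii) the translation of the equality $\mathcal{L}_f = \DD_{\cris}(M_E(f)^*)^{\beta_f}$ into decomposability of $M_E(f)|_{G_{\Qp}}$, which needs care with the normalisation of the crystalline Frobenius and with which Hecke-polynomial root is designated $\alpha_f$.
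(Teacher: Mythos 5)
Your argument is correct and is precisely the ``elementary exercise using local Tate duality, Tate's local Euler characteristic formula, and the no-local-zero hypothesis'' that the paper's one-line proof invokes, carried out in full: the no-local-zero hypothesis kills $H^0$ and $H^2$ of $W$ and $W^*(1)$ and the two $\varphi=1$ eigenspaces, the Euler characteristic and the criticality of the range $k'+1\le j\le k$ give the dimension counts, and the Bloch--Kato exact sequences give the coincidence of $H^1_\mathrm{e}$, $H^1_\mathrm{f}$, $H^1_\mathrm{g}$ and the $\exp^*$ isomorphism. The only point worth flagging is that the fourth bullet genuinely needs one ingredient beyond the three the paper names: as you correctly identify, injectivity of $\pr_{\alpha_f}$ on $\Fil^0\DD_{\cris}(W)$ reduces to $\Fil^0\DD_{\cris}(M_E(f)^*)$ not being the $\beta_f$-eigenline, and it is the nobility hypothesis on $f_\alpha$ (a standing assumption of the section, though unmentioned in the paper's proof) that excludes this, via exactly the weak-admissibility argument you give.
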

  
  \begin{proof}
   This is an elementary exercise using local Tate duality, Tate's local Euler characteristic formula, and the ``no local zero'' hypothesis.
  \end{proof}

  \begin{theorem}
   \label{thm:ESexists}
   Fix some $c > 1$ coprime to $6p N_f N_g$. For each $m \ge 1$ coprime to $pc$, we obtain two classes
   \[ c^{\alpha_f \alpha_g}_m, c^{\alpha_f \beta_g}_m \in H^1(\QQ(\mu_m), M_E(f \otimes g)^*(-j)), \]
   with the following properties:
   \begin{enumerate}[(i)]
    \item for every prime $v \nmid p$ of $\QQ(\mu_m)$, we have
    \[ \operatorname{loc}_v\left(c^{\alpha_f \alpha_g}_m\right) \in H^1_\mathrm{f}(\QQ(\mu_m)_v, M_E(f \otimes g)^*(-j));\]
    \item there is a constant $R$ (independent of $m$) such that 
    \[ R c^{\alpha_f \alpha_g}_m, Rc^{\alpha_f \beta_g}_m \in H^1(\QQ(\mu_m), M_{\cO_E}(f \otimes g)^*(-j)) / \{ \text{torsion}\},\]
    where $M_{\cO_E}(f \otimes g)^*$ is the lattice in $M_E(f \otimes g)^*$ which is the image of the \'etale cohomology with $\cO_E$-coefficients;
    \item for $\ell \nmid m N_f N_g$, we have
    \[ \norm_{m}^{\ell m} \left(c^{\alpha_f \alpha_g}_{\ell m}\right) = P_\ell(\ell^{-1-j} \sigma_\ell^{-1}) \cdot c^{\alpha_f \alpha_g}_{m},\]
    where $P_\ell(X)$ is the local Euler factor of $L(f, g, s)$ at $\ell$, and similarly for $c^{\alpha_f \beta_g}_{\ell m}$;
    \item the images of $c^{\alpha_f \alpha_g}_m$ and $c^{\alpha_f \beta_g}_m$ under the map
    \begin{align*}
     H^1(\QQ(\mu_m) \otimes \Qp, M_E(f \otimes g)^*(-j)) \rTo^{\exp^*}& \QQ(\mu_m) \otimes_{\QQ} \Fil^0 \DD_{\cris}(M_{E}(f \otimes g)^*(-j))\\
     \rTo^{\pr_{\alpha_f}}& \QQ(\mu_m) \otimes_{\QQ} \DD_{\cris}(M_E(f \otimes g)^*)^{\alpha_f \circ}
    \end{align*}
    lie in the subspaces $\DD_{\cris}(M_{E}(f)^*)^{\alpha_f \beta_g}$ and $\DD_{\cris}(M_{E}(f)^*)^{\alpha_f \alpha_g}$ respectively;
   
    \item for $m = 1$, the projections $\pr_{\alpha}\left( \exp^* c^{\alpha_f \alpha_g}_1\right)$ and $\pr_{\alpha}\left( \exp^* c^{\alpha_f \beta_g}_1\right)$ are non-zero (for some suitable choice of $c$) if and only if $L(f \otimes g, 1 + j) \ne 0$.
   \end{enumerate}
  \end{theorem}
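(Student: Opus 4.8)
The plan is to obtain the two collections of classes by specialising the three-variable Rankin--Eisenstein classes of Theorem \ref{thm:3varelt} along Coleman families in the $g$-variable, one through each $p$-stabilisation of $g$. First I would set up the families. Since $k>k'$ we have $k\neq k'$, so Assumption \ref{ass:nottwists} holds automatically; and the $p$-regularity and nobility hypotheses imposed above ensure (by Definition \ref{def:noble} together with Theorems \ref{thm:colemanfamily} and \ref{thm:colemanrep}, or Theorem \ref{thm:colemanrepwt1} in the weight-one case $k'=-1$) that $f_\alpha$ is noble and that \emph{both} $p$-stabilisations $g_\alpha,g_\beta$ of $g$ are noble. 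Hence there are Coleman (or Hida, or weight-one) families $\cF$ through $f_\alpha$ over a disc $V_1\ni k$ and $\cG^\alpha,\cG^\beta$ through $g_\alpha,g_\beta$ over a disc $V_2\ni k'$ (over a finite flat cover $\tilde V_2$ in the real-multiplication weight-one subcase), and after shrinking $V_1,V_2$ all of Theorems \ref{thm:triangulation}, \ref{thm:eichlershimura}, \ref{thm:BFeltisSelmer} and \ref{thm:explicitrecip} apply. I would then define $c^{\alpha_f\alpha_g}_m$ (resp.\ $c^{\alpha_f\beta_g}_m$) as the image of $\cBF^{[\cF,\cG^\alpha]}_{m,1}$ (resp.\ $\cBF^{[\cF,\cG^\beta]}_{m,1}$) under the chain: specialise the two weight variables at $(k,k')$; apply the isomorphisms $(\Pr^{\alpha_f})_*\times(\Pr^{\alpha_g})_*$ (resp.\ $(\Pr^{\alpha_f})_*\times(\Pr^{\beta_g})_*$) identifying $M(f_\alpha\otimes g_\alpha)^*$ with $M(f\otimes g)^*$; evaluate the resulting class, which lies in $H^1(\QQ(\mu_m),D_\lambda(\Gamma,M_E(f\otimes g)^*))$, at the character $z\mapsto z^j$; and rescale by a fixed nonzero constant, pinned down so that (iii) holds on the nose. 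Note that because $k'<j\le k$ the integer $j$ lies \emph{outside} the interpolation range $[0,\min(k,k')]$ of Theorem \ref{thm:3varelt}, so these classes are genuinely not specialisations of the motivic Beilinson--Flach classes; this is exactly why properties (i), (iv) and (v) require the Coleman-family machinery rather than following from \cite{KLZ1b}.

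Properties (i)--(iii) are the routine part. For (iii) I would observe that the tame norm relations for the Iwasawa classes $\cBF^{[j]}_{m,N,a}$ from \cite{KLZ1b} are preserved by the (entirely $m$-independent) operations $\pr_\cF^{[j]}$, specialisation, $\Pr$-pushforward and evaluation at $z^j$, and upon projecting to the $(f,g)$-eigenspace and twisting by $-j$ the Hecke operator that appears becomes exactly $P_\ell(\ell^{-1-j}\sigma_\ell^{-1})$. For (i), I would apply Proposition \ref{prop:iwacoho-unramified} to the $D_\lambda(\Gamma,M)$-valued class \emph{before} evaluating at $z^j$: that proposition says the Iwasawa cohomology at any $v\nmid p$ is unramified, so the localisation of the class at $v$ is inflated from the unramified quotient, and hence its value at $z\mapsto z^j$ lies in $H^1_{\mathrm{ur}}(\QQ(\mu_m)_v,-)=H^1_{\mathrm f}$ (indeed this holds for both families, not only $\cG^\alpha$). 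For (ii), the growth bound of Proposition \ref{prop:cyclogrowth} together with Proposition \ref{prop:unbounded-iwasawa} shows that the distribution-valued class has operator norm bounded by a constant independent of $m$; then Proposition \ref{prop:ming-lun-lemma}, applied with $H^0(\QQ(\mu_{mp^\infty}),M_E(f\otimes g)^*)=0$ (from Assumption \ref{ass:nottwists}) and with $D'$ taken to annihilate the finite, $m$-independent group $H^0(\QQ^{\mathrm{ab}},M_{\cO_E}(f\otimes g)^*\otimes\Qp/\Zp)$, provides the uniform constant $R$.

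The content of the theorem is in (iv) and (v). For (iv), I would feed Theorem \ref{thm:BFeltisSelmer} into the machine: the image of $\cBF^{[\cF,\cG^\alpha]}_{m,1}$ in $H^1_{\Iw}(\QQ(\mu_m)\otimes\QQ_{p,\infty},\sF^{--}D_{V_1\times V_2}(\cF\otimes\cG^\alpha)^*)$ is zero, where $\sF^{--}$ is the rank-one quotient $\cong\sR_A((\alpha_\cF\alpha_{\cG^\alpha})^{-1})$ corresponding to the ``$\alpha_f\alpha_g$'' refinement. Specialising at $(k,k')$ and at $z^j$, and using functoriality of the Bloch--Kato dual exponential for the quotient $W\twoheadrightarrow\sF^{--}W$ (with $W=M_E(f\otimes g)^*(-j)$, which is crystalline at $p$), the vanishing forces $\exp^*(c^{\alpha_f\alpha_g}_m)$ into the kernel of $\Fil^0\DD_{\cris}(W)\to\DD_{\cris}(\sF^{--}W)$, which by the preceding proposition equals precisely $\DD_{\cris}(W)^{\alpha_f\beta_g}$; that is property (iv), and the symmetric computation with $\cG^\beta$ (whose $\sF^{--}$ is the ``$\alpha_f\beta_g$'' quotient) handles $c^{\alpha_f\beta_g}_m$. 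For (v), I would take $m=1$ and evaluate the explicit reciprocity law --- Theorem \ref{thm:explicitrecip}, or, to remain unconditional, its weaker form from \S\ref{sect:appendix} --- for the pairs $(\cF,\cG^\alpha)$ and $(\cF,\cG^\beta)$ at the point $(k,k',j)$. This point is a \emph{critical} point of the Rankin--Selberg $L$-function (since $k'+2\le 1+j\le k+1$), hence lies in the interpolation range of Urban's three-variable $p$-adic $L$-function, and, because $j>k'$, the regulator theorem preceding Theorem \ref{thm:explicitrecip} expresses the specialisation of $\cL$ through the dual exponential. Thus the reciprocity law degenerates at $(k,k',j)$ to an identity of the shape $\langle\exp^* c^{\alpha_f\alpha_g}_1,\eta_{f_\alpha}\otimes\omega_{g_\alpha}\rangle=(\text{nonzero})\cdot L(f,g,1+j)$, where the nonzero constant absorbs the $\Pr$-pushforward factors, the interpolation factor of Theorem \ref{thm:3varelt}, the Atkin--Lehner pseudo-eigenvalue, the Gauss sums, the modified $p$-Euler factor of $L(f,g,s)$ (nonzero by ``no local zero'') and the $c$-smoothing factor $c^2-c^{-(k+k'-2j)}\varepsilon_f(c)^{-1}\varepsilon_g(c)^{-1}$ (nonzero for a suitable choice of $c$), and a nonzero archimedean period. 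Since $\exp^* c^{\alpha_f\alpha_g}_1$ lies in the line $\DD_{\cris}(W)^{\alpha_f\beta_g}$ and $\eta_{f_\alpha}\otimes\omega_{g_\alpha}$ pairs nondegenerately with that line (by $p$-regularity: $\eta_{f_\alpha}$ spans the $\varphi=\alpha_f$ eigenline and $\omega_{g_\alpha}$ the $\varphi=\beta_g$ eigenline of the relevant $\DD_{\cris}$), this forces $\pr_{\alpha_f}\exp^* c^{\alpha_f\alpha_g}_1\neq 0\iff L(f,g,1+j)\neq 0$; the analogous argument with $\cG^\beta$ (where $\omega_{g_\beta}$ spans the $\varphi=\alpha_g$ line) gives the statement for $c^{\alpha_f\beta_g}_1$.

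I expect the main obstacle to be the normalisation bookkeeping threading through (iv) and (v): identifying the rank-one sub-quotients $\sF^{\pm\pm}$ of the triangulated $(\varphi,\Gamma)$-modules --- after the $\Pr$-identifications and the $(-j)$-twist --- with the Frobenius-eigenspaces $\DD_{\cris}(W)^{\alpha_f\beta_g}$ etc.\ of the theorem's statement; checking that $\Fil^0\DD_{\cris}(W)\to\DD_{\cris}(\sF^{--}W)$ has the kernel claimed above; verifying that the Eichler--Shimura classes $\eta_\cF,\omega_{\cG^\alpha},\omega_{\cG^\beta}$ of Theorem \ref{thm:eichlershimura} and its corollary specialise into the eigenspaces that pair nondegenerately against the image of $\exp^*$; and confirming that each period, interpolation factor and Euler factor occurring is nonzero under exactly the hypotheses imposed. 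None of this is conceptually deep given Theorems \ref{thm:BFeltisSelmer} and \ref{thm:explicitrecip}, but making every convention line up is where the real work lies.
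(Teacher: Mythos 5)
Your construction and the treatment of (i), (ii), (iv) and (v) follow essentially the same route as the paper: specialise the three-variable classes of Theorem \ref{thm:3varelt} for the two families $(\cF,\cG^\alpha)$ and $(\cF,\cG^\beta)$, get (i) from Proposition \ref{prop:iwacoho-unramified}, (ii) from Proposition \ref{prop:ming-lun-lemma}, (iv) from Theorem \ref{thm:BFeltisSelmer} via functoriality of $\exp^*$ for the quotient onto $\sF^{--}D$, and (v) from the explicit reciprocity law (in its unconditional form from \S\ref{sect:appendix}) together with the nondegeneracy of the pairing against $\eta_{\cF}\otimes\omega_{\cG}$.

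There is, however, one concrete misstep, in your treatment of (iii). The tame norm relations inherited from the Rankin--Iwasawa classes do \emph{not} produce the Euler factor $P_\ell(\ell^{-1-j}\sigma_\ell^{-1})$ on the nose: the raw specialised classes $z_m^{\alpha_f\alpha_g}$ satisfy $\norm_m^{\ell m}(z^{\alpha_f\alpha_g}_{\ell m}) = Q_\ell(\ell^{-1-j}\sigma_\ell^{-1})\, z^{\alpha_f\alpha_g}_m$, where $Q_\ell(X)\in X^{-1}\cO_L[X]$ is only \emph{congruent} to $-X^{-1}P_\ell(X)$ modulo $\ell-1$. Your proposed fix --- rescaling by ``a fixed nonzero constant, pinned down so that (iii) holds on the nose'' --- cannot work, because the discrepancy between $Q_\ell$ and $P_\ell$ is not a scalar: one must modify each $z_m^{\alpha_f\alpha_g}$ by a suitable element of the group ring $\cO_L[(\ZZ/m\ZZ)^\times]$ (depending on $m$), following the standard procedure of \cite[\S 7.3]{LLZ14}, to convert the $Q_\ell$-relations into genuine $P_\ell$-relations. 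This is a well-known and purely formal repair, and it does not disturb (i), (ii), (iv) or (v) (the correction is a unit multiple at $m=1$ and preserves local conditions and integrality up to a controlled constant), but as written your step (iii) is false and the theorem's classes $c_m^{\alpha_f\alpha_g}$ are not simply constant multiples of the specialised Beilinson--Flach classes.
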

  
  \begin{proof}
   We define the class $c^{\alpha_f \alpha_g}$ as follows. Using the $p$-stabilisations $f_\alpha$ of $f$ and $g_\alpha$ of $g$, Theorem \ref{thm:3varelt} gives rise to elements 
   \[ 
    \cBF^{[\cF, \cG]}_{m, 1} \in H^1(\QQ(\mu_m), D_\lambda(\Gamma, M_{V_1 \times V_2}(\cF \otimes \cG)^*)
   \]
   where $\cF$ and $\cG$ are Coleman families through $f_\alpha$ and $g_\alpha$ (which exist, since $f_\alpha$ is noble, and $g_\alpha$ is either noble of weight $\ge 2$ or $p$-regular of weight 1). Specialising these at $(f_\alpha, g_\alpha, j)$, and identifying $M_E(f_\alpha \otimes g_\alpha)^*$ with $M_E(f \otimes g)^*$ via the maps $\Pr^{\alpha_f}$ and $\Pr^{\alpha_g}$, we obtain classes $z_m^{\alpha_f \alpha_g} \in H^1(\QQ(\mu_m), M_{E}(f \otimes g)^*(-j))$.
   
   These classes satisfy (i), by Proposition \ref{prop:iwacoho-unramified}. They also satisfy (ii), by Proposition \ref{prop:ming-lun-lemma} (using the fact that $f$ and $g$ have differing weights, by hypothesis, so we have $H^0(\QQ^{\mathrm{ab}}, M_E(f_\alpha \otimes g_\alpha)^*) = 0$).
   
   The classes  $z_m^{\alpha_f \alpha_g}$ do \emph{not} satisfy (iii); instead, they satisfy the a slightly more complicated norm-compatibility relation $\norm_{m}^{\ell m} \left(c^{\alpha_f \alpha_g}_{\ell m}\right) = Q_\ell(\ell^{-1-j} \sigma_\ell^{-1}) c^{\alpha_f \alpha_g}_{m}$ where $Q_\ell(X) \in X^{-1} \cO_L[X]$ is a polynomial congruent to $-X^{-1} P_\ell(X)$ modulo $\ell - 1$. However, the ``correct'' Euler system relation can be obtained by modifying each class $z^{\alpha_f \alpha_g}_m$ by an appropriate element of $\cO_L[(\ZZ / m\ZZ)^\times]$, as in \cite[\S 7.3]{LLZ14}. This gives classes $c_m^{\alpha_f \alpha_g}$ satisfying (i)--(iv).
   
   It remains to verify (iv) and (v). It suffices to prove these for the un-modified classes $z_m^{\alpha_f \alpha_g}$. For (iv), let $K$ denote the completion of $\QQ(\mu_m)$ at a prime above $p$, and $K_\infty = K(\mu_{p^\infty})$. Then we have a diagram 
   \begin{diagram}
    D^{\la}(\Gamma, \Qp) \otimes_{D_0(\Gamma, \Qp)} H^1_{\Iw}(K_\infty, V) & \rTo & H^1_{\Iw}(K_\infty, \sF^{--} D )\\
    \dTo & & \dTo \\
    H^1(K, W) & \rTo & H^1(K, \sF^{--} D)\\
    \dTo^{\exp^*_{K, W^*(1)}} & & \dTo_{\exp^*_{K, (\sF^{--} D)^*(1)}} \\
    \DD_{\cris, K}(W) & \rTo & \DD_{\cris, K}(\sF^{--} D).
   \end{diagram}
   Here $W$ denotes the Galois representation $M_E(f \otimes g)^*(-j)$, as above, $D$ denotes $\DD^\dag_{\rig}(V)$, and $\sF^{--} D$ is the quotient of $D$ (in the category of $(\varphi, \Gamma)$-modules) determined by the triangulations of $M_E(f_\alpha)^*$ and $M_E(f_\beta)^*$. Note that this quotient depends on the choice of $\alpha_f$ and $\alpha_g$, although the Galois representation $W$ does not.
   
   The horizontal arrows in the diagram are induced by the morphism of $(\varphi, \Gamma)$-modules $D \to \sF^{--} D$. We know that the image of $\cBF^{[f_\alpha, g_\alpha]}_{m, 1}$ in $H^1_{\Iw}(K_\infty, \sF^{--} D )$ is zero, by Theorem \ref{thm:BFeltisSelmer}; so its image in the bottom right-hand corner is zero. However, the projection $\DD_\cris(V) \to \DD_\cris(\sF^{--} D)$ factors through projection to the eigenspace $\DD_\cris(V)^{\alpha_f \alpha_g}$, and is an isomorphism on this eigenspace; so we recover the statement that $\exp^*(c^{\alpha_f \alpha_g}_m)$ projects to zero in $\DD_\cris(V)^{\alpha_f \alpha_g}$, as required.
   
   Finally, we prove (v). For this, we use an analogous commutative diagram with $\sF^{-\circ}$ in place of $\sF^{--}$:
   \begin{diagram}
    D^{\la}(\Gamma, \Qp) \otimes_{D_0(\Gamma, \Qp)} H^1_{\Iw}(\QQ_{p,\infty}, V) & \rTo & H^1_{\Iw}(\QQ_\infty, \sF^{-\circ} D ) & \lInto & H^1_{\Iw}(\QQ_{p,\infty}, \sF^{-+} D )\\
    \dTo & & \dTo & & \dTo \\
    H^1(\Qp, V) & \rTo & H^1(\Qp, \sF^{-\circ} D) & \lTo & H^1(\Qp, \sF^{-+} D)\\
    \dTo^{\exp^*_{\Qp, V^*(1)}} & & \dTo^{\exp^*_{\Qp, (\sF^{-\circ} D)^*(1)}}& & \dTo_{\exp^*_{\Qp, (\sF^{-+} D)^*(1)}} \\
    \DD_{\cris}(V) & \rTo & \DD_{\cris}(\sF^{-\circ} D) & \lInto & \DD_{\cris}(\sF^{-+} D).
   \end{diagram}
   The projection $\DD_\cris(V) \to \DD_{\cris}(\sF^{-\circ}D)$ induces an isomorphism
   \[ \DD_\cris(V)^{\alpha_f \circ} \rTo^\cong \DD_{\cris}(\sF^{-\circ}D). \]
   Theorem \ref{thm:BFeltisSelmer} implies that the image of $\cBF^{[f_\alpha, g_\alpha]}_1$ in $H^1_{\Iw}(\QQ_{p,\infty}, V)$ lies in the image of $H^1_{\Iw}(\QQ_{p,\infty}, \sF^{-+} D )$; and the explicit reciprocity law shows that the image of this class in $\DD_{\cris}(\sF^{-+} D)$ is non-zero if and only if $(c^2 - c^{2j-k-k'} \varepsilon_f(c) \varepsilon_g(c)) L(f, g, 1 + j) \ne 0$. Our hypothesis that $ \varepsilon_f(c) \varepsilon_g(c)$ be nontrivial if $j = \frac{k + k'}{2} + 1$ shows that we can choose $c$ such that the first factor is nonzero. So, for a suitable choice of $c$, the projection of $z_1^{\alpha_f \alpha_g}$ to $\DD_{\cris}(V)^{\alpha_f \beta_g}$ is non-zero if and only if $L(f, g, 1 + j) \ne 0$.
   
   This completes the construction of classes $c^{\alpha_f \alpha_g}_m$ with the required properties. The construction of $c^{\alpha_f \beta_g}_m$ is identical, using the $p$-stabilisation $g_\beta$ in place of $g_\alpha$.
  \end{proof}
  
 \subsection{Bounding the Bloch--Kato Selmer group}
  
  Recall that if $V$ is a geometric $p$-adic representation of $\Gal(\overline{\QQ} / \QQ)$, then we define
  \[ 
   H^1_{\mathrm{f}}(\QQ, V) = \{ x \in H^1(\QQ, V) : \operatorname{loc}_\ell(x) \in H^1_{\mathrm{f}}(\QQ_\ell, V) \text{ for all finite primes $\ell$} \}.
  \]
 
  \begin{theorem}
   \label{thm:BKSel}
   Suppose the assumptions of Theorem \ref{thm:ESexists} are satisfied, and in addition the following hypothesis is satisfied:
   \begin{itemize}
    \item (big image) There exists an element $\tau \in \Gal(\overline{\QQ} / \QQ(\mu_{p^\infty}))$ such that $V / (\tau - 1) V$ is 1-dimensional, where $V = M_E(f \otimes g)(1 + j)$.
   \end{itemize}
   If $L(f, g, 1 + j) \ne 0$, then the Bloch--Kato Selmer group $H^1_{\mathrm{f}}(\QQ, V)$ is zero.
  \end{theorem}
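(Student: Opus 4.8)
The plan is to feed the two Euler systems produced by Theorem \ref{thm:ESexists} into the Euler system machinery of Kolyvagin and Rubin, as applied in \cite{LLZ14} and \cite{KLZ1b}. Write $W = M_E(f \otimes g)^*(-j)$; since the Poincar\'e duality pairing identifies $M_E(f \otimes g)^*$ with the $E$-linear dual of $M_E(f \otimes g)$, the Cartier dual $W^*(1)$ is canonically $V = M_E(f \otimes g)(1+j)$, so an Euler system for a $G_\QQ$-stable lattice $T = M_{\cO_E}(f \otimes g)^*(-j)$ in $W$ will bound the Bloch--Kato Selmer group $H^1_{\mathrm{f}}(\QQ, W^*(1)) = H^1_{\mathrm{f}}(\QQ, V)$. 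First I would note that properties (i)--(iii) of Theorem \ref{thm:ESexists} are precisely the Euler system axioms: (ii) gives integrality up to the fixed constant $R$ (harmless, since our goal is the \emph{vanishing} of a group rather than a bound on its order), (iii) is the norm-compatibility with the correct local Euler factors $P_\ell$, and (i) says the classes lie in the local conditions $H^1_{\mathrm{f}}(\QQ(\mu_m)_v, -)$ at all $v \nmid p$. Hence $\{R\, c_m^{\alpha_f \alpha_g}\}$ and $\{R\, c_m^{\alpha_f \beta_g}\}$ are two Euler systems for $T$.

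Next I would invoke the general Euler system bound. The ``big image'' hypothesis --- the existence of $\tau \in \Gal(\overline{\QQ}/\QQ(\mu_{p^\infty}))$ with $V/(\tau-1)V$ one-dimensional --- is exactly the non-degeneracy input needed to run the Kolyvagin-derivative argument and produce Kolyvagin systems; together with the integrality from (ii) it also supplies the standard auxiliary hypotheses (irreducibility of $V$, finiteness of the relevant $H^0$'s, non-triviality of the residual representation). The output is a bound on $H^1_{\mathrm{f}}(\QQ, V)$ controlled by the localisations $\loc_p(c_1^{\alpha_f \alpha_g})$ and $\loc_p(c_1^{\alpha_f \beta_g})$ in the local cohomology at $p$: the more independent directions these two classes occupy in the two-dimensional quotient $H^1(\Qp, W)/H^1_{\mathrm{f}}(\Qp, W)$, the smaller the Selmer bound, and if they span that quotient then $H^1_{\mathrm{f}}(\QQ, V) = 0$.

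It then remains to exploit the hypothesis $L(f, g, 1+j) \neq 0$. By property (v) of Theorem \ref{thm:ESexists}, for a suitable choice of $c$ --- here one uses the hypothesis that $\varepsilon_f \varepsilon_g$ is non-trivial when $j = \tfrac{k+k'}{2}+1$, so that the factor $c^2 - c^{2j-k-k'}\varepsilon_f(c)\varepsilon_g(c)$ can be made non-zero --- both $\pr_{\alpha_f}(\exp^* c_1^{\alpha_f \alpha_g})$ and $\pr_{\alpha_f}(\exp^* c_1^{\alpha_f \beta_g})$ are non-zero. By property (iv) these lie in two \emph{distinct} eigenlines $\DD_{\cris}(W)^{\alpha_f \beta_g}$ and $\DD_{\cris}(W)^{\alpha_f \alpha_g}$, which together span $\DD_{\cris}(W)^{\alpha_f \circ}$. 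Since the proposition preceding Theorem \ref{thm:ESexists} shows that $\pr_{\alpha_f} \colon \Fil^0 \DD_{\cris}(W) \xrightarrow{\sim} \DD_{\cris}(W)^{\alpha_f \circ}$ and that the dual exponential $H^1(\Qp, W)/H^1_{\mathrm{f}}(\Qp, W) \xrightarrow{\sim} \Fil^0 \DD_{\cris}(W)$ are isomorphisms, it follows that $\loc_p(c_1^{\alpha_f \alpha_g})$ and $\loc_p(c_1^{\alpha_f \beta_g})$ span $H^1(\Qp, W)/H^1_{\mathrm{f}}(\Qp, W)$. Plugging this into the Euler system bound gives $H^1_{\mathrm{f}}(\QQ, V) = 0$, as required.

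The step I expect to be the main obstacle is the local bookkeeping at $p$: carefully matching the Selmer structure cut out by the Euler system against the Bloch--Kato structure, and using Poitou--Tate global duality to upgrade the Euler system conclusion from mere \emph{finiteness} of $H^1_{\mathrm{f}}(\QQ, V)$ to its \emph{vanishing}, using that the two bottom classes span the singular quotient at $p$. This is precisely the point at which one needs both $p$-stabilisations $g_\alpha$ and $g_\beta$ to be noble, and it explains the shape of the hypotheses imposed above and in Theorem \ref{thm:ESexists}.
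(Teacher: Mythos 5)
Your proposal matches the paper's proof in both structure and detail: the Euler system $\{R\,c_m^{\alpha_f\alpha_g}\}$, fed into Rubin's Theorem 2.2.3 under the big-image hypothesis, kills the strict-at-$p$ Selmer group, and Poitou--Tate duality together with the fact that $c_1^{\alpha_f\alpha_g}$ and $c_1^{\alpha_f\beta_g}$ span the singular quotient $H^1_{\mathrm{s}}(\Qp, V^*(1))$ (via properties (iv), (v) and the isomorphisms $\exp^*$ and $\pr_{\alpha_f}$) forces $\loc_p^{\mathrm{f}}$ to vanish, giving $H^1_{\mathrm{f}}(\QQ,V)=0$. The only imprecision is that the Euler system step yields vanishing of the \emph{strict} Selmer group rather than ``finiteness of $H^1_{\mathrm{f}}(\QQ,V)$'', but the duality step you describe is exactly the one the paper uses to bridge that gap.
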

  
  \begin{remark}
   It is shown in \cite{Loeffler-big-image} that, under fairly mild hypotheses on $f$ and $g$, the the ``big image'' hypothesis is satisfied for all but finitely many primes $\frP$ of the coefficient field.
  \end{remark}
  
  \begin{proof}
   Let $H^1_{\mathrm{strict}}(\QQ, V)$ denote the \emph{strict Selmer group}, which is the kernel of the localisation map
   \[ \loc_p: H^1_{\mathrm{f}}(\QQ, V) \to H^1_{\mathrm{f}}(\Qp, V).\]
   
   Let $T$ be a lattice in $V$, and let $A = V / T$. By Theorem \ref{thm:ESexists}, for some nonzero scalar $R$, the classes $R \cdot c_m^{\alpha_f \alpha_g}$ form a non-zero Euler system for $T^*(1)$ in the sense of \cite[Definition 2.1.1]{Rubin-Euler-systems}, if we replace condition (ii) in the definition by the alternative condition (ii')(b) of \S 9.1 of \emph{op.cit.}.
   
   By \cite[Theorem 2.2.3]{Rubin-Euler-systems}, the existence of any non-zero Euler system for $V^*(1)$, together with the ``big image'' hypothesis, implies that the $p$-torsion Selmer group
   \[ H^1_{\mathrm{strict}}(\QQ, A) \coloneqq \operatorname{ker}\left( H^1(\QQ, A) \to \bigoplus_\ell \frac{H^1(\QQ_\ell, A)}{H^1_{\mathrm{strict}}(\QQ_\ell, A)}\right) \]
   is finite, where $H^1_{\mathrm{strict}}(\QQ_\ell, A)$ is defined as the image of the map
   \[ H^1_{\mathrm{f}}(\QQ_\ell, V) \to H^1(\QQ_\ell, A)\]
   for $\ell \ne p$, and $H^1_{\mathrm{strict}}(\Qp, A) = 0$. However, the image of $H^1_{\mathrm{strict}}(\QQ, V)$ in $H^1(\QQ, A)$ is clearly contained in $H^1_{\mathrm{strict}}(\QQ, A)$; so we conclude that $H^1_{\mathrm{strict}}(\QQ, V)$ is zero.
   
   In order to refine this, we use Poitou--Tate duality. Let $H^1_{\mathrm{relaxed}}(\QQ, V^*(1))$ (the ``relaxed Selmer group'') denote the classes in $H^1(\QQ, V^*(1))$ whose localisation lies in $H^1_\mathrm{f}$ for all $\ell \ne p$ (but may be arbitrary at $p$). Then we have two exact sequences
   \[ 0 \rTo H^1_{\mathrm{strict}}(\Qp, V) \rTo H^1_{\mathrm{f}}(\Qp, V)\rTo^{\loc_p^{\mathrm{f}}} H^1_{\mathrm{f}}(\Qp, V) \]
   and
   \[ 0 \rTo H^1_{\mathrm{f}}(\Qp, V^*(1)) \rTo H^1_{\mathrm{relaxed}}(\Qp, V^*(1)) \rTo^{\loc_p^{\mathrm{s}}} H^1_{\mathrm{s}}(\Qp, V^*(1)),\]
   where $H^1_{\mathrm{s}}(\Qp, V^*(1)) = \frac{H^1(\Qp, V^*(1))}{H^1_{\mathrm{f}}(\Qp, V^*(1))}$ (the ``singular quotient''). 
   Local Tate duality identifies $H^1_{\mathrm{s}}(\Qp, V^*(1))$ with the dual of $H^1_{\mathrm{f}}(\Qp, V)$; and the Poitou--Tate global duality exact sequence implies that the images of $\loc^{\mathrm{f}}_p$ and $\loc^{\mathrm{s}}_p$ are orthogonal complements of each other; compare \cite[Theorem 1.7.3]{Rubin-Euler-systems}.
   
   We have constructed two classes in $H^1_{\mathrm{relaxed}}(\Qp, V^*(1))$, namely $c_1^{\alpha_f \alpha_g}$ and $c_1^{\alpha_f \beta_g}$, whose images in $\frac{H^1(\Qp, V^*(1))}{H^1_{\mathrm{f}}(\Qp, V^*(1))}$ are linearly independent (since their images under $\exp^*$ span distinct eigenspaces). So $\loc_p^{\mathrm{s}}$ is surjective, and consequently $\loc^{\mathrm{f}}_p$ is the zero map. As we have already shown that $H^1_{\mathrm{strict}}(\Qp, V) = 0$, this shows that $H^1_{\mathrm{f}}(\Qp, V)$ is zero.
  \end{proof}
  
  \begin{remark}
   The above argument is an adaptation of the ideas of \cite[\S 6.2]{DR-diagonal-cycles-II}, in which Poitou--Tate duality is used to bound the image of the map $\loc^\mathrm{f}_p$ for a Galois representation arising from the product of three cusp forms. In our setting, since we have a full Euler system rather than just the two classes $c_1^{\alpha_f \alpha_g}$ and $c_1^{\alpha_f \beta_g}$, we can also bound the kernel of this map.
  \end{remark}

 \subsection{Corollaries}
 
  From Theorem \ref{thm:BKSel} one obtains a rather precise description of the global cohomology groups. We continue to write $V = M_E(f \otimes g)(1 + j)$. 
  
  Let $S$ be any finite set of places of $\QQ$, containing $\infty$ and all primes dividing $p N_f N_g$. Then the action of $\Gal(\overline{\QQ} / \QQ)$ on $V$ factors through $\Gal(\QsQ)$, the Galois group of the maximal extension of $\QQ$ unramified outside $S$. Since the Bloch--Kato local condition coincides with the unramified condition for $\ell \notin S$, we have
  \[ H^1_\mathrm{f}(\QQ, V) = \{ x \in H^1(\QsQ, V) : \loc_\ell(x) \in H^1_{\mathrm{f}}(\QQ_\ell, V) \text{ for all $\ell \in S$}\}.\]
  
  \begin{remark}
   Since $\Gal(\QsQ)$ is the \'etale fundamental group of $\ZZ[1/S]$, we may interpret any continuous $\Qp$-linear representation of $\Gal(\QsQ)$ as a $p$-adic \'etale sheaf on $\operatorname{Spec} \ZZ[1/S]$, and the continuous cohomology groups $H^i(\QsQ, -)$ coincide with the \'etale cohomology groups $H^i_{\et}(\ZZ[1/S], -)$. The latter language is used in \cite{KLZ1b} for instance; but in the present work we have found it easier to use the language of group cohomology, since this makes the arguments of \S \ref{sect:analyticprelim} easier to state.
  \end{remark}
     
  \begin{corollary}
   \label{cor:ptduality}
   If the hypotheses of Theorem \ref{thm:BKSel} hold, then:
   \begin{enumerate}
    \item The localisation maps
    \begin{align*}
      H^2(\QsQ, V) &\to \bigoplus_{\ell \in S} H^2(\QQ_\ell, V)\quad \text{and}\\
      H^2(\QsQ, V^*(1)) &\to \bigoplus_{\ell \in S} H^2(\QQ_\ell, V^*(1))
    \end{align*}      
    are isomorphisms.
    \item The space $H^1_{\mathrm{f}}(\QQ, V^*(1))$ is zero.
    \item The space $H^1_{\mathrm{relaxed}}(\QQ, V^*(1))$ is 2-dimensional, and $c_1^{\alpha_f \alpha_g}$ and $c_1^{\alpha_f \beta_g}$ are a basis.
   \end{enumerate}
  \end{corollary}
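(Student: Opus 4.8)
The plan is to obtain all three parts from Poitou--Tate duality, feeding in the facts already established in the course of proving Theorem \ref{thm:BKSel}: that $H^1_{\mathrm{strict}}(\QQ, V) = 0$, that $H^1_{\mathrm{f}}(\QQ, V) = 0$, and that the singular localisation map $\loc_p^{\mathrm{s}}\colon H^1_{\mathrm{relaxed}}(\QQ, V^*(1)) \to H^1_{\mathrm{s}}(\Qp, V^*(1))$ is surjective, with $c^{\alpha_f\alpha_g}_1$ and $c^{\alpha_f\beta_g}_1$ mapping to a basis of its $2$-dimensional target. Write $W = V^*(1) = M_E(f\otimes g)^*(-j)$ and let $c$ be a complex conjugation. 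I will use throughout the local computations of the Proposition preceding Theorem \ref{thm:ESexists}, together with their counterparts for $V = W^*(1)$ obtained from local Tate duality: $H^0(\Qp, W) = H^2(\Qp, W) = 0$, $\dim_E H^1(\Qp, W) = 4$, $\dim_E H^1_{\mathrm{f}}(\Qp, W) = 2$, and likewise $H^0(\Qp, V) = H^2(\Qp, V) = 0$ with $\dim_E H^1_{\mathrm{f}}(\Qp, V) = 2$; also $H^0(\QQ, V) = H^0(\QQ, W) = 0$, and $H^i(\RR, -) = 0$ for $i \ge 1$ since $p$ is odd.

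First I would prove part (2), that $H^1_{\mathrm{f}}(\QQ, W) = 0$. The key input is the self-duality of the Bloch--Kato Selmer structure: under local Tate duality the orthogonal complement of $H^1_{\mathrm{f}}(\QQ_\ell, V)$ is $H^1_{\mathrm{f}}(\QQ_\ell, W)$ at every finite $\ell$ (this is Bloch--Kato at $\ell = p$, and the orthogonality of the unramified subspaces elsewhere), while at $\infty$ both local conditions vanish. Applying the Greenberg--Wiles global Euler characteristic formula in its $E$-linear form (cf.\ \cite[\S 1.7]{Rubin-Euler-systems}) to this structure, the contributions of all finite $\ell \ne p$ cancel (since $\dim_E H^1_{\mathrm{ur}}(\QQ_\ell, V) = \dim_E H^0(\QQ_\ell, V)$) and the global $H^0$ terms vanish, leaving
\begin{equation*}
 \dim_E H^1_{\mathrm{f}}(\QQ, V) - \dim_E H^1_{\mathrm{f}}(\QQ, W) = \dim_E H^1_{\mathrm{f}}(\Qp, V) - \dim_E H^0(\Qp, V) - \dim_E V^{c=1}.
\end{equation*}
Here $\dim_E V^{c=1} = 2$, because the tensor product of two odd two-dimensional Galois representations has complex-conjugation eigenvalues $\{+1,-1,-1,+1\}$ and twisting by a power of the cyclotomic character merely permutes this multiset; so the right-hand side is $2 - 0 - 2 = 0$. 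As $H^1_{\mathrm{f}}(\QQ, V) = 0$ by Theorem \ref{thm:BKSel}, this gives $H^1_{\mathrm{f}}(\QQ, W) = 0$, which is part (2).

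Part (3) then follows at once: the defining exact sequence
\begin{equation*}
 0 \to H^1_{\mathrm{f}}(\QQ, W) \to H^1_{\mathrm{relaxed}}(\QQ, W) \xrightarrow{\loc_p^{\mathrm{s}}} H^1_{\mathrm{s}}(\Qp, W) \to 0
\end{equation*}
(exact on the right by the surjectivity recalled above), together with $H^1_{\mathrm{f}}(\QQ, W) = 0$, forces $\dim_E H^1_{\mathrm{relaxed}}(\QQ, W) = \dim_E H^1_{\mathrm{s}}(\Qp, W) = 2$; and since $c^{\alpha_f\alpha_g}_1$ and $c^{\alpha_f\beta_g}_1$ already lie in $H^1_{\mathrm{relaxed}}(\QQ, W)$ and have linearly independent images under $\loc_p^{\mathrm{s}}$, they are a basis. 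For part (1) I would use the nine-term Poitou--Tate exact sequence for $\QsQ$: its terminal segment identifies $\coker\bigl(H^2(\QsQ, V) \to \bigoplus_{v \in S} H^2(\QQ_v, V)\bigr)$ with $H^0(\QQ, W)^\vee = 0$, so this localisation map is surjective, and its kernel is $\Sha^2(\QQ, V)$, which is Poitou--Tate dual to $\Sha^1(\QQ, W)$; but $\Sha^1(\QQ, W) \subseteq H^1_{\mathrm{f}}(\QQ, W) = 0$, since a class that is trivial at every place of $S$ satisfies every local condition, so the map is also injective. Interchanging the roles of $V$ and $W$ (and using $H^0(\QQ, V) = 0$ and $\Sha^1(\QQ, V) \subseteq H^1_{\mathrm{f}}(\QQ, V) = 0$) handles the other localisation map; the vanishing of $H^2(\RR, -)$ means only the finite places of $S$ contribute.

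The delicate point is the first step. One must state the Greenberg--Wiles formula correctly for $E$-vector space coefficients, keep track of the archimedean term, and identify correctly which pairs of local conditions are orthogonal complements under local Tate duality. No genuinely new computation is needed beyond the local dimensions already recorded before Theorem \ref{thm:ESexists}, but this is where an error would most easily slip in.
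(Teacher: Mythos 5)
Your proposal is correct, and it reaches the same three conclusions by a genuinely different organisation of the duality argument. The paper works entirely inside one six-term Poitou--Tate sequence for the Bloch--Kato structure on $V^*(1)$: surjectivity and injectivity of the degree-$2$ localisation map are read off from the two terms flanking $H^2(\QsQ, V^*(1))$ (the injectivity coming from $H^1_{\mathrm{f}}(\QQ, V)^* = 0$), and part (2) is then extracted by comparing $\dim H^1(\QsQ, V^*(1))$ (global Euler characteristic) with $\dim \bigoplus_{\ell \in S} H^1_{\mathrm{s}}(\QQ_\ell, V^*(1))$ (local Euler characteristics), so that a surjection between equidimensional spaces is forced to be injective. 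You instead prove part (2) first, directly from the Greenberg--Wiles formula applied to the self-dual Bloch--Kato structure, where the whole computation collapses to the single identity $\dim H^1_{\mathrm{f}}(\Qp, V) = \dim V^{c=1} = 2$; and you obtain part (1) from the nine-term sequence together with the duality $\Sha^2 \cong (\Sha^1)^\vee$ and the inclusion $\Sha^1 \subseteq H^1_{\mathrm{f}}$. The two routes are equivalent in substance (Greenberg--Wiles is itself a repackaging of Poitou--Tate plus Euler characteristics), but yours isolates the archimedean parity computation and the orthogonality of local conditions as the only inputs, whereas the paper's requires the prime-by-prime count $\dim H^1_{\mathrm{s}}(\QQ_\ell, V^*(1)) = c_\ell$ for $\ell \in S \setminus \{p\}$; both are careful-bookkeeping arguments of comparable length, and all the dimension facts you invoke are indeed available from the proposition preceding Theorem \ref{thm:ESexists}. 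Part (3) is handled identically in both arguments.
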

  
  \begin{proof}
   Again by Poitou--Tate global duality, we have an exact sequence
   \begin{multline*}
    0 \rTo H^1_{\mathrm{f}}(\QQ, V^*(1)) \rTo H^1(\QsQ, V^*(1)) \rTo \bigoplus_{\ell \in S} H^1_{\mathrm{s}}(\QQ_\ell, V^*(1)) \\
    \rTo H^1_{\mathrm{f}}(\QQ, V)^* \rTo  H^2(\QsQ, V^*(1)) \rTo \bigoplus_{\ell \in S} H^2(\QQ, V^*(1)) \rTo 0.
   \end{multline*}
   In the situation of the theorem, we have $H^1_{\mathrm{f}}(\QQ, V) = 0$, so the localisation map for $H^2(\QsQ, V^*(1))$ is an isomorphism.
   
   Now let $c_\ell = \dim H^2(\QQ_\ell, V^*(1))$. Using Tate's local Euler characteristic formula, for any $\ell \in S \setminus \{p\}$ we have $\dim H^1(\QQ_\ell, V^*(1)) = c_\ell$; while for $\ell = p$ we have $c_p = 0$ and $\dim H^1_{\mathrm{s}}(\Qp, V^*(1)) = 2$. Thus $\dim \bigoplus_{\ell \in S} H^1_{\mathrm{s}}(\QQ_\ell, V^*(1)) = 2 + \sum c_\ell = 2 + \dim H^2(\QsQ, V^*(1))$. However, Tate's global Euler characteristic formula gives $\dim H^1(\QsQ, V^*(1)) = 2 +\dim H^2(\QsQ, V^*(1))$. 
   
   Thus the map $H^1(\QsQ, V^*(1)) \rTo \bigoplus_{\ell \in S} H^1_{\mathrm{s}}(\QQ, V^*(1))$ is a surjection between finite-dimensional vector spaces of the same dimension, so it is injective and we conclude that $H^1_{\mathrm{f}}(\QQ, V^*(1)) = 0$. Repeating the duality argument with $V^*(1)$ in place of $V$ we now deduce that the localisation map for $H^2(\QsQ, V)$ is an isomorphism.
   
   Finally, since $H^1_{\mathrm{f}}(\QQ, V^*(1)) = 0$, we deduce that $H^1_{\mathrm{relaxed}}(\QQ, V^*(1))$ maps isomorphically to its image in $H^1_{\mathrm{s}}(\Qp, V^*(1))$; but the images of $c_1^{\alpha_f \alpha_g}$ and $c_1^{\alpha_f \beta_g}$ are a basis of $H^1_{\mathrm{s}}(\Qp, V^*(1))$, so these two classes must be a basis of $H^1_{\mathrm{relaxed}}(\QQ, V^*(1))$.
  \end{proof}
  
  \begin{corollary}
   Let $L_S(f, g, s) = \prod_{\ell \notin S} P_\ell(\ell^{-s})^{-1}$ be the $L$-function without its local factors at places in $S$. If the hypotheses of Theorem \ref{thm:BKSel} are satisfied and $L_S(f, g, 1 + j) \ne 0$, then $H^2(\QsQ, M_E(f \otimes g)^*(-j)) = 0$.
  \end{corollary}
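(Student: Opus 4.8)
The plan is to obtain this from Corollary~\ref{cor:ptduality} by a local-to-global argument, once the hypothesis $L_S(f,g,1+j)\ne 0$ has been converted into the non-vanishing of the local Euler factors at the primes of $S$. Throughout, write $V = M_E(f\otimes g)(1+j)$ and $W = M_E(f\otimes g)^*(-j)$. Since $M_E(f\otimes g)^*$ is the linear dual of $M_E(f\otimes g)$, one has $W = V^*(1)$, the Cartier dual of $V$, so the assertion to be proved is exactly $H^2(\QsQ, W) = 0$.

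The first step is to turn $L_S(f,g,1+j)\ne 0$ into local information. Since $f$ and $g$ have distinct weights, the automorphic representations $\pi_f,\pi_g$ are not twist-equivalent, so the Rankin--Selberg $L$-function $L(f,g,s)$ is entire (a standard property of Rankin--Selberg $L$-functions); in particular $L(f,g,1+j)$ is finite. From the factorisation $L_S(f,g,s) = L(f,g,s)\cdot\prod_{\ell\in S,\ \ell\ne\infty} P_\ell(\ell^{-s})$, with $P_\ell$ the local Euler polynomial normalised as in Theorem~\ref{thm:ESexists}(iii), and from the fact that the left-hand side is finite and non-zero at $s=1+j$ while each $P_\ell(\ell^{-1-j})$ is a finite value of a polynomial, we deduce that $L(f,g,1+j)\ne 0$ and $P_\ell(\ell^{-1-j})\ne 0$ for every finite $\ell\in S$ (this is consistent with, and subsumes, the ``no local zero'' hypothesis at $p$). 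In particular, with $L(f,g,1+j)\ne 0$ now available, all the hypotheses of Theorem~\ref{thm:BKSel} hold, so we may invoke Corollary~\ref{cor:ptduality}.

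By Corollary~\ref{cor:ptduality}(1), the localisation map $H^2(\QsQ, W)\to\bigoplus_{\ell\in S} H^2(\QQ_\ell, W)$ is an isomorphism, so it suffices to prove that each local term vanishes. For $\ell=\infty$ this is immediate because $p$ is odd. For $\ell=p$ it is the vanishing $H^2(\Qp, W)=0$ recorded in the local computation at $p$ preceding Theorem~\ref{thm:ESexists}, which rests precisely on $P_p(p^{-1-j})\ne 0$. For a finite prime $\ell\ne p$ of $S$, local Tate duality identifies $H^2(\QQ_\ell, W)$ with the dual of $H^0(\QQ_\ell, W^*(1)) = H^0(\QQ_\ell, V)$, and $H^0(\QQ_\ell, V)\ne 0$ holds exactly when $1$ is an eigenvalue of Frobenius on $V^{I_\ell} = M_E(f\otimes g)^{I_\ell}(1+j)$, i.e.\ exactly when $P_\ell(\ell^{-1-j})=0$. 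Since we have shown $P_\ell(\ell^{-1-j})\ne 0$, it follows that $H^0(\QQ_\ell, V)=0$, hence $H^2(\QQ_\ell, W)=0$. Combining the three cases gives $H^2(\QsQ, W)=0$, as required.

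The only delicate point I anticipate is a matter of normalisation: one must check that the local Euler polynomial $P_\ell(X)$ of $L(f,g,s)$, the Tate twist by $1+j$, and the choice of geometric versus arithmetic Frobenius fit together so that ``$P_\ell(\ell^{-1-j})=0$'' is genuinely equivalent to ``$1$ is a Frobenius eigenvalue on $V^{I_\ell}$''. These conventions are pinned down by Theorem~\ref{thm:ESexists}(iii) together with the description of $M_E(f\otimes g)^*$ as a subquotient of \'etale cohomology, so the verification is routine; everything else is a formal consequence of Poitou--Tate duality as already packaged in Corollary~\ref{cor:ptduality}.
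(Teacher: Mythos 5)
Your argument is essentially the paper's own proof: extract non-vanishing of the local Euler factors at $\ell\in S$ and of $L(f,g,1+j)$ from $L_S(f,g,1+j)\ne 0$ using entirety of the Rankin--Selberg $L$-function, then kill each local $H^2$ by Tate duality and conclude via the injectivity of localisation from Corollary \ref{cor:ptduality}(1). The one point you dismiss as "routine normalisation" is actually the substantive issue at ramified primes, and it is not just a Frobenius convention: for $\ell\in S$, $\ell\ne p$, the Euler factor of the \emph{imprimitive} $L$-function $L(f,g,s)$ (the one appearing in Theorem \ref{thm:ESexists}(iii) and Theorem \ref{thm:BKSel}) is computed on $M_E(f)^{I_\ell}\otimes M_E(g)^{I_\ell}$, which can be a \emph{proper} subspace of $M_E(f\otimes g)^{I_\ell}$; its non-vanishing at $\ell^{-1-j}$ therefore does not by itself force $H^0(\QQ_\ell, V)=0$. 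The paper handles this by introducing both polynomials $P_\ell$ (on $M_E(f\otimes g)^{I_\ell}$, giving the primitive $L(\pi_f\otimes\pi_g,s)$) and $P_\ell^0$ (giving the imprimitive $L(f,g,s)$), using the factorisation through the primitive $L$-function to get $P_\ell(\ell^{-1-j})\ne 0$ and hence the local vanishing, and the factorisation through the imprimitive one to get $L(f,g,1+j)\ne 0$ and hence the hypotheses of Theorem \ref{thm:BKSel}. With that distinction made explicit your proof is complete.
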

  
  \begin{proof}
   For primes $\ell \in S$, $\ell \ne p$, let us set
   \[ P_\ell(X) = \det\left( 1 - X \operatorname{Frob}_\ell^{-1} : M_E(f \otimes g)^{I_\ell}\right), \]
   and
   \[ P_\ell^0(X) = \det\left( 1 - X \operatorname{Frob}_\ell^{-1} : M_E(f)^{I_\ell} \otimes M_E(g)^{I_\ell}\right).\]
   We define $P_p(X) = P_p^0(X) = \det\left( 1 - X \varphi : \DD_{\cris}(M_E(f \otimes g))\right)$. Then we have 
   \begin{align*}
    L_S(f, g, s) &= L(\pi_f \otimes \pi_g, s) \prod_{\ell \in S} P_\ell(\ell^{-s})\\
    &=  L(f, g, s) \prod_{\ell \in S} P_\ell^0(\ell^{-s})
   \end{align*}
   where $L(\pi_f \otimes \pi_g, s)$ (the ``primitive'' Rankin--Selberg $L$-function) and $L(f, g, s)$ (the ``imprimitive'' Rankin--Selberg $L$-function) are both holomorphic on the whole complex plane. So if $L_S(f, g, 1 + j)$ is non-zero, then we must have $P_\ell(\ell^{-1-j}) \ne 0$ for all $\ell \in S$, and $L(f, g, 1 + j) \ne 0$. 
   
   From the definition of $P_\ell(X)$, the fact that $P_\ell(\ell^{-1-j}) \ne 0$ implies that  $H^0(\QQ_\ell, M_E(f \otimes g)(1+j)) = 0$ for all $\ell \in S$. Thus $H^2(\QQ_\ell, M_E(f \otimes g)^*(-j)) = 0$ for all $\ell \in S$, and since the global $H^2$ injects into the direct product of these groups, it must also vanish.
  \end{proof}
  
  \begin{remark}
   One can check that the only values of $s$ at which the Euler factors $P_\ell(\ell^{-s})$ may vanish for some $\ell \in S$ are 
   \[ s \in \left\{ \frac{k + k'}{2}, \frac{k + k' + 1}{2}, \frac{k + k' + 2}{2}\right\}.\]
   Note that the centre of the functional equation, with our normalisations, is at $s = \frac{ k + k' + 3}{2}$.
  \end{remark}

 \subsection{Application to elliptic curves}
  
  Theorem \ref{thm:BKSel} above allows us to strengthen one of the results of \cite{KLZ1b} to cover elliptic curves which are not necessarily ordinary at $p$:
 
  \begin{theorem}
   \label{thm:sha}
   Let $E / \QQ$ be an elliptic curve without complex multiplication, and $\rho$ a 2-dimensional odd irreducible Artin representation of $G_{\QQ}$ (with values in some finite extension $L/\QQ$). Let $p$ be a prime. Suppose that the following hypotheses are satisfied:
   \begin{enumerate}[(i)]
     \item The conductors $N_E$ and $N_\rho$ are coprime;
     \item $p \ge 5$;
     \item $p \nmid N_E N_\rho$;
     \item the map $G_\QQ \to \operatorname{Aut}_{\Zp}(T_p E)$ is surjective;
     \item $\rho(\operatorname{Frob}_p)$ has distinct eigenvalues.
    \end{enumerate}
    If $L(E, \rho, 1) \ne 0$, then the group
    \[ \Hom_{\Zp[\Gal(F / \QQ)]}(\rho, \operatorname{Sel}_{p^\infty}(E / F))\]
    (where $F$ is the splitting field of $\rho$) is finite. In particular,
    \[ \Hom_{\Zp[\Gal(F / \QQ)]}(\rho, \Sha_{p^\infty}(E / F))\]
    is finite.
  \end{theorem}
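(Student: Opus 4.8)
The plan is to deduce Theorem~\ref{thm:sha} from Theorem~\ref{thm:BKSel}. Take $f = f_E$, the weight~$2$ newform attached to $E$ (so $k = 0$, $N_f = N_E$, $\varepsilon_f = 1$), and $g$ the weight~$1$ newform attached to $\rho$ by Deligne--Serre (so $k' = -1$, $N_g = N_\rho$, and $\varepsilon_g = \det\rho$); the constraint $k' + 1 \le j \le k$ then forces $j = 0$. With these choices $1 + j = 1$ is the centre of the functional equation, $V := M_E(f \otimes g)(1 + j) = M_E(f_E) \otimes M_E(g)(1)$ restricts on $G_{\QQ(\mu_{p^\infty})}$ to (a cyclotomic twist of) $V_p E \otimes \rho$, and $L(f, g, 1)$ agrees with $L(E, \rho, 1)$ up to Euler factors at the bad primes, which --- using that the conductors are coprime by~(i) --- are non-zero at $s = 1$ since they can only vanish at $s \in \{-1/2, 0, 1/2\}$. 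Thus the hypothesis $L(E, \rho, 1) \ne 0$ gives $L(f, g, 1 + j) \ne 0$.

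Next I would verify the remaining hypotheses of Theorem~\ref{thm:ESexists} (hence of Theorem~\ref{thm:BKSel}) from (i)--(v). That $\pi_f$ and $\pi_g$ are not twists of each other is automatic as $k \ne k'$. $p$-regularity of $f$ is automatic in weight~$2$ (the discriminant $a_p(E)^2 - 4p$ of the Hecke polynomial is a non-zero integer), and $p$-regularity of $g$ is exactly~(v). The ``no local zero'' condition holds for weight reasons: $k + k' = -1$ is odd, so $j \ne \tfrac{k+k'}{2}$, and each of $\alpha_f\alpha_g, \dots, \beta_f\beta_g$ has complex absolute value $p^{1/2} \notin \{1, p\} = \{p^j, p^{1+j}\}$; likewise the supplementary hypothesis required when $j = \tfrac{k+k'}{2} + 1$ is vacuous, that value not being an integer. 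For nobility of $f_\alpha$: by~(ii) and~(iii), $E$ has good reduction at $p$; if $E$ is ordinary we take $\alpha_f$ to be the unit root, so the first alternative of the nobility condition holds, and if $E$ is supersingular then $a_p(E) = 0$, so $v_p(\alpha_f) = \tfrac12 \ne k+1$ and the non-criticality clause is vacuous. Finally the nobility conditions on $g_\alpha$ and $g_\beta$ are vacuous since $k' = -1 < 0$; only that $g$ is $p$-regular of weight one (hypothesis~(v)) and $p \nmid N_\rho$ (hypothesis~(iii)) are needed to invoke the weight one Coleman family theory in the construction of the Euler system.

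The main obstacle is the ``big image'' hypothesis: one must exhibit $\tau \in \Gal(\overline\QQ/\QQ(\mu_{p^\infty}))$ with $V/(\tau - 1)V$ one-dimensional. On $\Gal(\overline\QQ/\QQ(\mu_{p^\infty}))$, $V$ is (a twist, trivial on this group, of) $V_p E \otimes \rho$, where $V_p E$ has image $\SL_2(\Zp)$ by~(iv) and $\rho$ is irreducible with nebentypus $\det\rho$ odd, hence a non-trivial Dirichlet character of conductor prime to $p$ (by~(iii)), hence still non-trivial on $\Gal(\overline\QQ/\QQ(\mu_{p^\infty}))$. One seeks a $\tau$ acting on $V_p E$ as a non-trivial unipotent and on $\rho$ with eigenvalues $\{1, d\}$ for some $d \ne 1$: an elementary computation then shows $\dim\ker(\tau - 1)$ on $V_p E \otimes \rho$ equals $1$. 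The existence of such a $\tau$ --- which rests on a Goursat-type analysis of the image of $\Gal(\overline\QQ/\QQ(\mu_{p^\infty}))$ under $(V_p E, \rho)$, using that $\SL_2(\FF_p)$ is quasi-simple for $p \ge 5$ and that $E$ has no complex multiplication --- is established in \cite{Loeffler-big-image} under hypotheses implied by (ii), (iv) and~(v). Granting this, Theorem~\ref{thm:BKSel} gives $H^1_{\mathrm{f}}(\QQ, V) = 0$.

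It remains to translate this vanishing. As explained in the introduction, $H^1_{\mathrm{f}}(\QQ, V)$ is canonically the $\rho$-isotypic part of the Bloch--Kato Selmer group of $E$ over the splitting field $F$ of $\rho$; concretely it is a twist of $\bigl(\operatorname{Sel}_{p^\infty}(E/F) \otimes_{\Zp} \overline\Qp \otimes \rho^\vee\bigr)^{G_\QQ}$, which, since $\rho$ is irreducible and $G_\QQ$ acts on $\operatorname{Sel}_{p^\infty}(E/F)$ through $\Gal(F/\QQ)$, equals $\Hom_{\overline\Qp[\Gal(F/\QQ)]}\bigl(\rho, \operatorname{Sel}_{p^\infty}(E/F) \otimes \overline\Qp\bigr)$. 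This vanishes exactly when the cofinitely generated $\Zp$-module $\Hom_{\Zp[\Gal(F/\QQ)]}(\rho, \operatorname{Sel}_{p^\infty}(E/F))$ has $\Zp$-corank $0$, i.e.\ is finite. Finiteness of $\Hom_{\Zp[\Gal(F/\QQ)]}(\rho, \Sha_{p^\infty}(E/F))$ then follows from the Kummer exact sequence $0 \to E(F) \otimes \Qp/\Zp \to \operatorname{Sel}_{p^\infty}(E/F) \to \Sha_{p^\infty}(E/F) \to 0$, as the corank of the $\rho$-part of the right-hand term is at most that of the $\rho$-part of the middle term.
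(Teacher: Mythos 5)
Your proof is correct and takes exactly the route of the paper: apply Theorem \ref{thm:BKSel} with $f = f_E$, $g = g_\rho$ and $j=0$, the paper merely asserting in one line what you verify in detail (the hypothesis checks, the big-image reduction to \cite{Loeffler-big-image}, and the translation from $H^1_{\mathrm{f}}(\QQ,V)=0$ to finiteness of the $\rho$-part of $\operatorname{Sel}_{p^\infty}(E/F)$, as in Theorem 11.7.4 of \cite{KLZ1b}).
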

  
  \begin{proof}
   This is exactly Theorem \ref{thm:BKSel} applied with $f = f_E$, the weight 2 form attached to $E$, and $g = g_{\rho}$, the weight 1 form attached to $\rho$. Compare Theorem 11.7.4 of \cite{KLZ1b}, which is exactly the same theorem under the additional hypotheses that $E$ is ordinary at $p$ and $\rho(\operatorname{Frob}_p)$ has distinct eigenvalues modulo a prime of $L$ above $p$.
  \end{proof}

 \section{Addendum: Remarks on the proof of the reciprocity law}
  \label{sect:appendix}
   
  In order to formulate the explicit reciprocity law of Theorem \ref{thm:explicitrecip}, one needs to invoke the main theorem of \cite{Urban-nearly-overconvergent}: the construction of a 3-variable $p$-adic Rankin--Selberg $L$-function as a rigid-analytic function on $V_1 \times V_2 \times \cW$, where $V_i$ are small discs in the Coleman--Mazur eigencurve surrounding classical $p$-stabilised eigenforms, and $\cW$ is weight space.
  
  Unfortunately, since the present paper was submitted, it has emerged that there are some unresolved technical issues in the paper \cite{Urban-nearly-overconvergent}, so the existence of this $p$-adic $L$-function is not at present on a firm footing. We hope that this issue will be resolved in the near future; but as a temporary expedient we explain here an unconditional proof of a weaker form of explicit reciprocity law which suffices for the arithmetic applications in the present paper.

  \subsection{A three-variable geometric $p$-adic $L$-function}
  
   We place ourselves in the situation of \S \ref{sect:explicitrecip}, so $f_\alpha, g_\alpha$ are noble eigenforms, obtained as $p$-stabilisations of newforms $f, g$ of weights $k_0 + 2, k_0' + 2$ and levels prime to $p$; and $V_1, V_2$ are small enough affinoid discs in weight space around $k_0$ and $k_0'$, over which there are Coleman families $\cF, \cG$ passing through $f_\alpha$, $g_\alpha$. We also allow the possibility that $k'_0 = -1$, $g$ is a $p$-regular weight 1 newform, and $g$ does not have real multiplication by a field in which $p$ splits. (The exceptional real-multiplication case can be handled similarly by replacing $V_2$ with a ramified covering; we leave the details to the reader.)
   
   For notational simplicity, we shall suppose that $\varepsilon_\cF \varepsilon_{\cG}$ is nontrivial, and is not of $p$-power order. Thus there is a $c > 1$ coprime to $6p N_f N_g$ for which the factor $c^2 - c^{2\mathbf{j} - \mathbf{k}-\mathbf{k}'} \varepsilon_\cF(c)^{-1} \varepsilon_{\cG}(c)^{-1}$ is a unit in $\cO(V_1 \times V_2 \times \cW)$; and we may define $\BF^{[\cF, \cG]}_{1, 1}$ (without $c$) by dividing out by this factor.
   
   We shall begin by turning Theorem C on its head, and \emph{defining} a $p$-adic $L$-function to be the output of this theorem:
   
   \begin{definition}
    We define $L_p^{\mathrm{geom}}(\cF, \cG) \in \cO(V_1 \times V_2 \times \cW)$ by
    \[ L_p^{\mathrm{geom}}(\cF, \cG) \coloneqq (-1)^{1 + \mathbf{j}} \lambda_N(\cF) \left\langle \mathcal{L}\left(\BF^{[\cF, \cG]}_{1, 1}\right), \eta_\cF \otimes \omega_{\cG}\right\rangle.\]
   \end{definition}
   
   Our goal is now to show that this geometrically-defined $p$-adic $L$-function is related to critical values of complex $L$-functions.
   
  \subsection{Values in the geometric range}
  
   By construction, for integer points of $V_1 \times V_2 \times \cW$ in the ``geometric range'' -- that is, the points $(k, k', j)$ with $0 \le j \le \min(k, k')$ -- the geometric $p$-adic $L$-function interpolates the syntomic regulators of the Rankin--Eisenstein classes. From the computations of \cite{KLZ1a}, we have the following explicit formula for these syntomic regulators. 
   
   Let $f_{k, \alpha}$ be the $p$-stabilised eigenform that is the specialisation of $\cF$ in weight $k+2$, and let $\lambda_{f_{k, \alpha}}$ be the unique linear functional on the space $S_{k + 2}^\mathrm{oc}(N_f, E)$ of overconvergent cusp forms that factors through projection to the $f_{k, \alpha}$-isotypical subspace and sends $f_{k, \alpha}$ to 1. We view $\lambda_{f_{k, \alpha}}$ as a linear functional on $S_{k + 2}^\mathrm{oc}(N, E)$, where $N = \operatorname{lcm}(N_f, N_g)$, by composing with the trace map from level $N$ to level $N_f$.
   
   \begin{theorem}[{\cite[Theorem 6.5.9]{KLZ1a}}]
    For $(k, k', j)$ in the geometric range, with $j > \tfrac{k}{2} -1$, we have
    \[ L_p^{\mathrm{geom}}(\cF, \cG)(k, k', j) = N^{k + k' - 2j} \lambda_{f_{k, \alpha}}\left[ \Pi^{\mathrm{oc}}\left( g_{k', \alpha} \cdot F^{[p]}_{k - k', k' - j + 1}\right)\right].\]
   \end{theorem}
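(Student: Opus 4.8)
The plan is to evaluate the left-hand side at a fixed classical triple $(k,k',j)$ in the geometric range, and to recognise the resulting $p$-adic number as the rigid-syntomic regulator of a Rankin--Eisenstein class, which is then computed in \cite{KLZ1a}. Since the classical triples with $j > \tfrac{k}{2}-1$ are Zariski-dense in the relevant region, this determines $L_p^{\mathrm{geom}}(\cF,\cG)$ there.

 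First I would specialise the Beilinson--Flach class. By Theorem \ref{thm:3varelt} (with $m = a = 1$), and since the factor $c^2 - c^{2\mathbf{j}-\mathbf{k}-\mathbf{k}'}\varepsilon_\cF(c)^{-1}\varepsilon_\cG(c)^{-1}$ that was divided out to define $\BF^{[\cF,\cG]}_{1,1}$ specialises to the corresponding factor in the definition of $\cBF^{[\cF_k,\cG_{k'},j]}_{1,1}$, the image of $\BF^{[\cF,\cG]}_{1,1}$ at $(k,k',j)$ is
 \[ \left(1 - \frac{p^j}{a_p(\cF_k)\,a_p(\cG_{k'})}\right)\frac{1}{(-1)^j\,j!\,\binom{k}{j}\binom{k'}{j}}\ \BF^{[\cF_k,\cG_{k'},j]}_{1,1},\]
 where $f_{k,\alpha} = \cF_k$ and $g_{k',\alpha} = \cG_{k'}$; and this class is, by construction, the image under the \'etale Abel--Jacobi map of Beilinson's motivic Eisenstein class $\Eis^{[k,k',j]}_{\mot,1,N}$.

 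Next I would specialise the regulator. Since $0 \le j \le \min(k,k') \le k'$ and the cyclotomic variable is trivial, the explicit reciprocity diagram of \S\ref{sect:explicitrecip} shows that the evaluation of $\mathcal{L}$ at $(k,k',j)$ coincides, up to the Euler factor $\bigl(1 - \tfrac{p^j}{\alpha_f\beta_g}\bigr)\bigl(1 - \tfrac{\alpha_f\beta_g}{p^{1+j}}\bigr)^{-1}$ and the constant $\tfrac{(-1)^{k'-j}}{(k'-j)!}$, with the Bloch--Kato logarithm $\log$ followed by projection onto $\sF^{-+}\DD_{\cris}$. Pairing the result against $\eta_\cF \otimes \omega_\cG$ and using that these specialise to $\eta_{f_{k,\alpha}}$ and $\omega_{g_{k',\alpha}}$ (Theorem \ref{thm:eichlershimura} and the corollary following it), the value $\langle \mathcal{L}(\BF^{[\cF,\cG]}_{1,1}), \eta_\cF\otimes\omega_\cG\rangle(k,k',j)$ reduces, up to the constants collected so far, to the de Rham pairing
 \[ \left\langle \log\left(\operatorname{AJ}_{\et}\left(\Eis^{[k,k',j]}_{\mot,1,N}\right)\right),\ \eta_{f_{k,\alpha}} \otimes \omega_{g_{k',\alpha}}\right\rangle.\]
 By the compatibility between the \'etale Abel--Jacobi map composed with the Bloch--Kato logarithm and the rigid-syntomic Abel--Jacobi map, exactly as in \cite{KLZ1a}, this de Rham pairing is the syntomic regulator of $\Eis^{[k,k',j]}_{\mot,1,N}$, and Theorem 6.5.9 of \emph{op. cit.} evaluates it, in the range $j > \tfrac{k}{2}-1$ where the overconvergent projector $\Pi^{\mathrm{oc}}$ and the nearly-overconvergent Eisenstein series $F^{[p]}_{k-k',k'-j+1}$ are available, as $N^{k+k'-2j}\lambda_{f_{k,\alpha}}\bigl[\Pi^{\mathrm{oc}}(g_{k',\alpha}\cdot F^{[p]}_{k-k',k'-j+1})\bigr]$ up to an explicit constant.

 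The main obstacle is then purely one of bookkeeping: the sign $(-1)^{1+j}$ and the Atkin--Lehner pseudo-eigenvalue $\lambda_N(\cF_k)$ in the definition of $L_p^{\mathrm{geom}}$, the factorials $(-1)^j j!\binom{k}{j}\binom{k'}{j}$ from Theorem \ref{thm:3varelt}, the two Euler factors above, the branch constant $(-1)^{k'-j}/(k'-j)!$, and the constant coming from \cite[Theorem 6.5.9]{KLZ1a}, must all combine to leave exactly $N^{k+k'-2j}$. Every ingredient here specialises compatibly to the fixed-weight objects of \cite{KLZ1a} and \cite{KLZ1b}, where these constants were already reconciled; so what remains is to check that our conventions for $\eta_\cF$, $\omega_\cG$ and the Ohta pairing agree with those of \emph{op. cit.} at a single classical point, which is the only delicate part of the verification.
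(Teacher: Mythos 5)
Your proposal is correct and follows the same route the paper intends: the paper offers no argument beyond the remark that, by construction, $L_p^{\mathrm{geom}}$ interpolates the syntomic regulators of the Rankin--Eisenstein classes at geometric-range points, together with the citation of \cite[Theorem 6.5.9]{KLZ1a} for the explicit evaluation of those regulators. Your write-up simply makes explicit the chain (specialisation of $\BF^{[\cF,\cG]}_{1,1}$ via Theorem \ref{thm:3varelt}, identification of $\mathcal{L}$ with the Bloch--Kato logarithm in the range $j\le k'$, specialisation of $\eta_\cF\otimes\omega_\cG$, and the comparison with the syntomic regulator) that the paper leaves implicit.
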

   
   Here $F^{[p]}_{k - k', k' - j + 1}$ is a nearly-overconvergent $p$-adic Eisenstein series of weight $k-k'$ and degree of near-overconvergence $\le k-j$, whose $p$-adic $q$-expansion (image under the unit-root splitting) is given by
   \[ 
    \sum_{p \nmid n} q^n \sum_{d \mid n} d^{k-j} (d')^{j-1-k'}\left( \zeta_N^{d'} + (-1)^{k-k'} \zeta_N^{-d'}\right).
   \]
   Note that we have
   \[ F^{[p]}_{k-k', k'-j+1} = \theta^{k-j} \left(E^{[p]}_{2j-k-k'}\right), \]
   where $\theta = q \frac{\mathrm{d}}{\mathrm{d}q}$ and $E^{[p]}_\kappa$, for $\kappa \in \cW$, denotes the weight $\kappa$ overconvergent Eisenstein series
   \[ \sum_{p \nmid n} q^n \sum_{d \mid n} d^{\kappa-1}\left( \zeta_N^{d} + (-1)^\kappa \zeta_N^{-d}\right).\]
   Since $E^{[p]}_{r}$ is overconvergent of weight $r$, it follows that $g_{k', \alpha} \cdot \theta^{k-j}\left(E^{[p]}_{2j-k-k'}\right)$ lies in the space $S_{k + 2}^{\mathrm{n-oc}, k-j}(N)$ of nearly-overconvergent cusp forms of weight $k + 2$ and degree of near-overconvergence $k-j$. The condition $j > \tfrac{k}{2}-1$ implies that $k + 2 > 2(k-j)$, so Urban's overconvergent projector $\Pi^{\mathrm{oc}}$ is defined on $S_{k + 2}^{\mathrm{n-oc}, k-j}(N)$ \cite[\S 3.3.3]{Urban-nearly-overconvergent}. Thus the right-hand side of the formula in the theorem is defined.
  
  \subsection{Two-variable analytic $L$-functions}
   
   Let us now pick an integer $t \ge 0$, and set $j = k - t$ in the above formulae. Then, for varying $k$ and $k'$ (but $t$ fixed), the forms $g_{k', \alpha} \cdot \theta^t\left(E^{[p]}_{k-k'-2t}\right)$ interpolate to a 2-parameter family of nearly-overconvergent cusp forms over $V_1 \times V_2$ (of weight $\mathbf{k} + 2$ and degree $t$, where $\mathbf{k}$ is the universal weight of $V_1$). Hence we may make sense of
   \[ L_p^{(t)}(\cF, \cG) = N^{2t - \mathbf{k} + \mathbf{k}'}\lambda_{\cF}\left[ \Pi^{\mathrm{oc}}\left( \cG \cdot \theta^{t}\left(E^{[p]}_{ \mathbf{k}- \mathbf{k}'-2t}\right)\right)\right]\]
   as a meromorphic rigid-analytic function on $V_1 \times V_2$, analytic except possibly for simple poles along $V_1 \cap \{ 0, \dots, 2t-2\}$ \cite[\S 3.3.4]{Urban-nearly-overconvergent}.
      
   \begin{remark}
    The important point here is that the power of the differential operator appearing is \emph{constant} in the family; this circumvents the technical issues in \cite{Urban-nearly-overconvergent}, which concern interpolation of families where the degree of near-overconvergence is unbounded.
   \end{remark}
   
   We have the following special sets of integer points $(k, k') \in V_1 \times V_2$:
   
   \begin{enumerate}[(i)]
    \item If $k \ge \max(t, 2t-1)$ and $k' \ge k-t$, then the ``geometric'' interpolating property above applies, showing that for these values of $(k, k')$ we have
    \[ L_p^{(t)}(\cF, \cG)(k, k') = L_p^{\mathrm{geom}}(\cF, \cG)(k, k', k-t). \]
    Since such $(k, k')$ are manifestly Zariski-dense in $V_1 \times V_2$, this relation must in fact hold for all points $(\kappa, \kappa') \in V_1 \times V_2$.
    
    \item If $k' \ge 0$ and $k - k' \ge 2t + 1$, then both $g_{k', \alpha}$ and $E^{[p]}_{k-k'-2t}$ are classical modular forms (since, after possibly shrinking $V_2$, we may arrange that the specialisations of the family $\cG$ at classical weights are classical). Thus the product $g_{k', \alpha} \cdot \theta^{t}\left(E^{[p]}_{k-k'-2t}\right)$ is a classical nearly-holomorphic form, and on such forms Urban's overconvergent projector coincides with the holomorphic projector $\Pi^{\mathrm{hol}}$. This shows that the values of $L_p^{(t)}(\cF, \cG)(k, k')$ for $(k,k')$ in this range are algebraic, and they compute the values of the Rankin--Selberg $L$-function in the usual way. This also holds for $k' = -1$, as long as we assume that the weight 1 specialisation $g_{k', \alpha}$ is classical (which is no longer automatic).
   \end{enumerate}
   
   Combining these two statements, we deduce the following version of an explicit reciprocity law:
   
   \begin{theorem}
    Let $(k, k', j)$ be an integer point of $V_1 \times V_2 \times \cW$ with $k \ge 0$, $k' \ge -1$, and $\frac{k + k' + 1}{2} \le j \le k$; and suppose $f_{k, \alpha}$ and $g_{k', \alpha}$ are $p$-stabilisations of classical forms $f_k, g_{k'}$. Then we have
    \[ L_p^{\mathrm{geom}}(\cF, \cG)(k, k', j) = 
    \frac{\cE(f_k, g_{k'}, 1 + j)}{\cE(f_k) \cE^*(f_k)} \cdot \frac{j! (j- k' -1)!}{\pi^{2j-k'+1} (-1)^{k-k'} 2^{2j + 2 + k - k'} \langle f_k, f_k \rangle_{N_f}} \cdot L(f_k, g_{k'}, 1 + j),\]
    where the local Euler factors are given by
    \begin{gather*}
     \mathcal{E}(f) = \left( 1 - \frac{\beta_f}{p \alpha_f}\right), \qquad \mathcal{E}^*(f) = \left( 1 - \frac{\beta_f}{\alpha_f}\right),\\
     \mathcal{E}(f, g, 1+j) =
     \left( 1 - \frac{p^{j}}{\alpha_f \alpha_g}\right) \left( 1 - \frac{p^{j}}{\alpha_f \beta_g}\right) \left( 1 - \frac{\beta_f \alpha_g}{p^{1+j}}\right) \left( 1 - \frac{\beta_f \beta_g}{p^{1+j}}\right).
    \end{gather*}
   \end{theorem}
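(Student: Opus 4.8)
The plan is to deduce the formula from the two integer‑point interpolation properties of the fixed‑degree analytic $L$‑functions $L_p^{(t)}(\cF,\cG)$ established above, by choosing $t$ appropriately and transporting a classical Rankin--Selberg computation between integer regions via a Zariski‑density argument.

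First I would set $t:=k-j$. From $j\le k$ one has $t\ge 0$, and the hypothesis $j\ge\tfrac{k+k'+1}{2}$ gives $2t=2(k-j)\le k-k'-1$, so $k-k'\ge 2t+1$; since $k'\ge -1$ this forces $k\ge 2t$, whence $k\notin\{0,\dots,2t-2\}$ and the (a priori meromorphic) function $L_p^{(t)}(\cF,\cG)$ is in fact analytic at the point $(k,k')\in V_1\times V_2$. By property (i) above, the identity of rigid‑analytic functions
\[ L_p^{(t)}(\cF,\cG)(\kappa,\kappa') = L_p^{\mathrm{geom}}(\cF,\cG)(\kappa,\kappa',\kappa-t) \]
holds on the Zariski‑dense set of integer points with $\kappa\ge\max(t,2t-1)$ and $\kappa'\ge\kappa-t$, hence throughout $V_1\times V_2$; evaluating at $(\kappa,\kappa')=(k,k')$, where $\kappa-t=j$, gives $L_p^{(t)}(\cF,\cG)(k,k')=L_p^{\mathrm{geom}}(\cF,\cG)(k,k',j)$. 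On the other hand $k-k'-2t\ge 1$, so $E^{[p]}_{k-k'-2t}$ is a classical $p$‑depleted Eisenstein series, and $g_{k',\alpha}$ is classical (automatically after shrinking $V_2$ when $k'\ge 0$, and by hypothesis when $k'=-1$); hence $g_{k',\alpha}\cdot\theta^{t}(E^{[p]}_{k-k'-2t})$ is a classical nearly‑holomorphic cusp form of weight $k+2$ and degree of near‑overconvergence $\le t$, and since $k+2>2t$ Urban's overconvergent projector $\Pi^{\mathrm{oc}}$ is defined on it and coincides there with the classical holomorphic projector $\Pi^{\mathrm{hol}}$ (property (ii)). Combining these facts with the definition of $L_p^{(t)}$ yields
\[ L_p^{\mathrm{geom}}(\cF,\cG)(k,k',j) = N^{2t-k+k'}\,\lambda_{f_{k,\alpha}}\!\left[\Pi^{\mathrm{hol}}\!\left(g_{k',\alpha}\cdot\theta^{t}\bigl(E^{[p]}_{k-k'-2t}\bigr)\right)\right]. \]

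Finally I would evaluate the right‑hand side by the classical Rankin--Selberg method: the holomorphic‑projection lemma, together with the unfolding of the Petersson product of $f_k$ against the product of a cusp form with a nearly‑holomorphic Eisenstein series, expresses this quantity as $L(f_k,g_{k'},1+j)$ times the archimedean factor $\tfrac{j!\,(j-k'-1)!}{\pi^{2j-k'+1}(-1)^{k-k'}\,2^{2j+2+k-k'}\,\langle f_k,f_k\rangle_{N_f}}$ and the $p$‑adic Euler factor $\cE(f_k,g_{k'},1+j)/\bigl(\cE(f_k)\,\cE^*(f_k)\bigr)$, the latter accounting for the passage from $f_k$, $g_{k'}$ and $E_{k-k'-2t}$ to their $p$‑stabilisations and $p$‑depletions; this is precisely the calculation carried out in the proof of \cite[Theorem 6.5.9]{KLZ1a} and the attendant results of \emph{op.\ cit.}, and since it is insensitive to whether $(k,k',j)$ lies in the geometric range it may be quoted directly. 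Assembling the three displayed equalities then gives the theorem. I expect the only genuinely delicate point to be the Zariski‑density step, which amounts to checking that the region $\{\kappa\ge\max(t,2t-1),\ \kappa'\ge\kappa-t\}$ contains, for each sufficiently large integer $\kappa\in V_1$, infinitely many integer weights $\kappa'\in V_2$, so that it cannot lie in any proper Zariski‑closed subset of the surface $V_1\times V_2$; the rest is bookkeeping or citation.
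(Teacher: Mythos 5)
Your proposal is correct and follows essentially the same route as the paper, which deduces the theorem precisely by combining the two numbered interpolation statements: taking $t=k-j$, using Zariski density to extend the identity $L_p^{(t)}=L_p^{\mathrm{geom}}(\cdot,\cdot,\kappa-t)$ from the geometric range to all of $V_1\times V_2$, and then evaluating $L_p^{(t)}(k,k')$ classically via $\Pi^{\mathrm{oc}}=\Pi^{\mathrm{hol}}$ and the Rankin--Selberg computation of \cite[Theorem 6.5.9]{KLZ1a}. Your verifications that $k-k'\ge 2t+1$ and that $k\notin\{0,\dots,2t-2\}$ (so no pole occurs) are exactly the bookkeeping the paper leaves implicit.
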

    
   This suffices to prove Theorem C of the introduction when $j \ge \frac{k + k' + 1}{2}$. The remaining cases of Theorem C, when $k' + 1 \le j < \frac{k + k' + 1}{2}$, are easily reduced to these cases using the functional equation.
   
   \begin{remark}
    It is important to be clear about what this argument does \emph{not} prove: we obtain no information at all about the values of the geometric $p$-adic $L$-function at points of the form $(k, k', j + \chi)$ for a nontrivial finite-order character $\chi$. In particular, we cannot determine by this method whether the specialisation of our 3-variable geometric $L$-function to $\{k_0\} \times \{k_0'\} \times \cW$ coincides with other existing constructions of a single-variable $p$-adic Rankin--Selberg $L$-function (cf.~\cite{My91}).
   \end{remark}

\newcommand{\noopsort}[1]{}
\providecommand{\bysame}{\leavevmode\hbox to3em{\hrulefill}\thinspace}
\providecommand{\MR}[1]{\relax}
\renewcommand{\MR}[1]{%
 MR \href{http://www.ams.org/mathscinet-getitem?mr=#1}{#1}.
}
\providecommand{\href}[2]{#2}
\newcommand{\articlehref}[2]{#2 (\href{#1}{link})}

\end{document}